\tikzset{roundnode/.style={circle,draw=black!50,fill=black!100,inner sep=1.2pt}}
\DeclareMathOperator{\rank}{rank}
\newcommand{\kk}{\mathbb{F}}
\newcommand{\J}{{\rm J}}
\newcommand{\V}{{\mathcal{V}}}
\newtheorem{theorem}{Theorem}[section]
\theoremstyle{definition}
\newtheorem{definition}[theorem]{Definition}
\newtheorem{remark}[theorem]{Remark}
\newtheorem{example}[theorem]{Example}
\theoremstyle{plain}
\newtheorem{corollary}[theorem]{Corollary}
\newtheorem{lemma}[theorem]{Lemma}
\newtheorem{prop}[theorem]{Proposition}
\newtheorem{conjecture}[theorem]{Conjecture}
\newtheorem{claim}{Claim}
\newtheorem{setup}[theorem]{Setup}
\newcommand{\MV}{V}
\newcommand{\cX}{{\cal X}}
\newcommand{\cS}{{\cal S}}
\newcommand{\bp}{{\bm p}}
\newcommand{\blfootnote}[1]{{%
  \let\thempfn\relax
  \footnotetext[0]{#1}
}}
\DeclareMathOperator{\rk}{rank} 
\DeclareMathOperator{\perm}{perm} 
\DeclareMathOperator{\Span}{span}
\definecolor{mediumtealblue}{rgb}{0.0, 0.33, 0.71}
\definecolor{caribbeangreen}{rgb}{0.0, 0.8, 0.6}
\definecolor{capri}{rgb}{0.0, 0.75, 1.0}
\title{Identifiability of Points and Rigidity of Hypergraphs\\ under Algebraic Constraints\blfootnote{2020 Mathematics Subject Classification: Primary 52C25, 05E14; Secondary 05C65, 68R05, 14N07}.}
\author{James Cruickshank, Fatemeh Mohammadi, Anthony Nixon, and Shin-ichi Tanigawa}
\date{ }
\begin{document}

\maketitle


\begin{abstract}
The identifiability problem arises naturally in a number of contexts in mathematics and computer science. Specific instances include local or global rigidity of graphs and unique completability of partially-filled tensors subject to rank conditions. The identifiability of points on secant varieties has also been a topic of much research in algebraic geometry. It is often formulated as the problem of identifying a set of points satisfying a given set of algebraic relations. A key question then is to prove sufficient conditions for relations to guarantee the identifiability of the points.
 
This paper proposes a new general framework for capturing the identifiability problem when a set of algebraic relations has a combinatorial structure and develops tools to analyse the impact of the underlying combinatorics on the local or global identifiability of points. Our framework is built on the language of graph rigidity, where the measurements are Euclidean distances between two points, but applicable in the generality of hypergraphs with arbitrary algebraic measurements. We establish necessary and sufficient (hyper)graph theoretical conditions for identifiability by exploiting techniques from graph rigidity theory and algebraic geometry of secant varieties. In particular our work analyses combinatorially the effect of non-generic projections of secant varieties.
\end{abstract}

\section{Introduction}

Suppose one is given a set of points in $\mathbb{R}^d$  whose positions are unknown and a measuring device which provides relations among those points. 
The fundamental question arises: Can the locations of the points be uniquely identified from the measurements?
This identifiability problem is ubiquitous  across various applications in data science and engineering. Furthermore, it appears in several context of mathematics such as the identifiability of secant varieties in algebraic geometry. 
A key question in the identifiability problem is to determine a sufficient condition for the observations to guarantee the unique identification of the points. Such a question is often challenging if the observations are not sufficiently generic.

In this paper, we introduce a general framework for capturing the identifiability problem when a set of algebraic relations has a combinatorial structure as follows.
Suppose there are $n$ unknown points $p_1, p_2, \dots, p_n$ in $\mathbb{R}^d$.
Let $g$ be a polynomial map, which is a measurement function representing a measurement device,
and suppose that the value of $g$ is determined for each tuple of $k$ points in $\mathbb{R}^d$.

To represent a set of possible observations, we utilise a $k$-uniform hypergraph denoted as $G=(V,E)$. In this case, $V$ corresponds to the set $\{1, \dots, n\}$ representing the points. Essentially, the observer can obtain measurements $g(p_{v_1},\dots, p_{v_k})$ for all $\{v_1, \dots, v_k\}\in E$.
Then the identifiability problem revolves around determining whether the polynomial system given by:
\begin{equation}\label{eq:intro}
g(x_{v_1},\dots, x_{v_k})=g(p_{v_1},\dots, p_{v_k}) \qquad (\{v_1,\dots, v_k\}\in E)
\end{equation}
has a unique solution (up to certain symmetry) in the variables $x_1, x_2, \dots, x_n$ in  $\mathbb{R}^d$. 
This formulation is based on graph rigidity theory, which specifically addresses the case when $g$ represents the Euclidean distance between two points. Our framework extends beyond this specific case, providing a general approach for the identifiability problem in cases involving algebraic relations with combinatorial structures.

Graph rigidity theory has a long history with its roots in mathematics - arising from Cauchy and Euler's investigations of Euclidean polyhedra~\cite{Cauchy} - and engineering - Maxwell's analysis of the stiffness of frames~\cite{maxwell}.
The question is related to various branches of mathematics such as graph theory, matroid theory, and algebraic geometry, and it also appears in various modern engineering topics such as molecular conformation, network localisation, and multi-agent formation control, see, e.g.,~\cite{whiteley1996some,connelly_guest_2022}.
The two central notions in rigidity theory are {\em global rigidity} and {\em local rigidity}, which concern whether the system (\ref{eq:intro}) has a unique solution or a finite number of solutions (up to Euclidean isometry), respectively, when $G$ is an ordinary graph and $g$ is the Euclidean distance.
Accordingly, one can define the notion of {\em local rigidity} and {\em global rigidity of hypergraphs} for each model of measurement map $g$ using the system (\ref{eq:intro}).

A key aim of the paper is to present a generalisation of graph rigidity theory that covers a wide range of algebraic topics by looking at general measurement maps $g$.
While the concept of graph rigidity is intuitive, several variants of the problem have already been explored. Notable examples include rigidity in different metric spaces such as spherical space or $\ell_p$-space, as well as rigidity concerning other geometric constraints (see, for instance, \cite{borcea, bulavka2022volume, Dewar23, SW07, singer2010uniqueness, kiraly2015algebraic}).
While it is indeed possible to consider more general algebraic systems, doing so may result in a loss of the combinatorial perspective, which serves as the core of rigidity theory.
In this paper, we demonstrate that this new rigidity model of hypergraphs is general enough to address the identifiability problem across various 
applications and it  also provides new mathematical challenges from the graph rigidity viewpoint.

One prominent example of our rigidity model is the uniqueness problem of low-rank tensor completions. Identifiability in tensor decompositions arises naturally in a panorama of application areas, including phylogenetics \cite{phylogenetics}, quantum information \cite{grover} and signal processing \cite{signal}. Our model can further deal with situations when some entries are missing, thus enhancing its applicability.
Our rigidity-based approach to the unique low-rank tensor completion problem can be viewed as an extension of the rigidity-based analysis of low-rank matrix completions pioneered by 
Singer-Cucuringu~\cite{singer2010uniqueness} and Kir{\'a}ly-Theran-Tomioka~\cite{kiraly2015algebraic}. However, as we will demonstrate in Section~\ref{sec:examples2},
the tensor case gives rise to substantially more challenging mathematical questions.

The key idea of combinatorial rigidity theory is the genericity assumption on point configurations.
In the case of local rigidity, under this genericity assumption, the core question becomes equivalent to determining the dimension of the image of a measurement map. Notably, these images typically correspond to projections of
well-studied algebraic varieties such as the secant variety of a Veronese variety or of a Grassmaniann variety. Consequently, our rigidity problem can be formulated as an algebraic sensing problem, 
similar to the approach presented 
in  \cite{breiding2021algebraic}. While Noether's normalization lemma shows how the dimension changes under a generic projection~\cite{breiding2021algebraic}, this general theory does not provide a solution to our specific problem. In our case, the projections are non-generic as they are consistently performed along a coordinate axis of the ambient affine space.

A particularly interesting case is when the measurement map is the sum of simpler measurement maps. 
In such a case, the image of the measurement map is a secant of a simpler variety.  
The computation of the dimension of secant varieties and the identifiability of secant points have been studied extensively in algebraic geometry (see, e.g.,~\cite{Zak,bernardi}), and results in that context give rise to rigidity theorems for complete hypergraphs in our language. 
For example, the Alexander-Hirschowitz theorem \cite{alexander1995polynomial} on the dimension of secant varieties of Veronese varieties provides a characterisation for complete hypergraphs to be locally rigid, specifically when the measurement map $g$ is the sum of copies of the product map over coordinates. These results highlight the connection between algebraic geometry and our notion of rigidity, offering new insights into the identifiability and rigidity properties of complete hypergraphs.
Subsequent results~\cite{chiantini2017generic,galuppi2019identifiability} on identifiability even give a characterisation of global rigidity for complete hypergraphs.
(More details will be discussed in Section~\ref{sec:examples2}.) 
However, there seems to be few results on the projections of secant varieties.
Our rigidity problem for general hypergraphs deals with two structures: the geometry of secant varieties and the combinatorics of coordinate projections.
The goal is to establish a link between these two structures, shedding light on their connections and implications. 

By leveraging techniques from both rigidity theory and secant varieties, we will derive several necessary and sufficient conditions for local and global rigidity.
Our contributions include a graph-packing-type sufficient condition for local rigidity and a stress-matrix-type sufficient condition for global rigidity.   
 We believe that our new rigidity model and the results of this article are the start of a larger effort to exploit the link between combinatorial rigidity and the geometry of secant varieties. This connection is likely to be instrumental in solving further rigidity and identifiability problems in the future. To justify this we provide two concrete applications of our work. The first concerns random projections of secant varieties along coordinate axes. 
 Our rigidity framework enables us to reduce this to the problem of analysing a property of the Erdős-Rényi random hypergraph, and we establish  the threshold dimension for a random axis-parallel projection of a secant variety to be full dimension with high probability.
The second analyses the identifiability of the $p$-Cayley-Menger variety (an analogue of the Cayley-Menger variety for $\ell_p$-norms) and global rigidity in the $\ell_p$-space.
A combination of tools from algebraic geometry such as tangential weak defectiveness and graph-theoretical tools from rigidity theory leads to a precise combinatorial characterisation of the 2-identifiability of the axis-parallel projections of the $p$-Cayley-Menger variety. 
This result is also important in rigidity theory since it affirmatively answers the question of whether or not global rigidity in the $\ell_p$-plane is a generic property of the graph when $p$ is even. 
 Both applications provide novel connections between our graph-theoretical approach of rigidity theory to the non-degeneracy and the identifiability of secant varieties in algebraic geometry.
\medskip 

\noindent{\bf Outline.} In Section \ref{sec:rigid}, we recall the basics of Euclidean rigidity theory and provide a new generalisation to the setting of arbitrary algebraic constraints on hypergraphs. Section \ref{sec:examples} presents a range of examples that 
are modelled by our generalisation,
including those arising from rigidity theory, matrix and tensor product completions and Chow decompositions. In Section \ref{sec:tools}, we provide an analysis of the basic theorems from rigidity theory in our general context providing, for example, a precise analogue of Asimow and Roth's theorem. This section also presents our generalised rigidity theory in the algebraic geometry context, in particular explaining the notions of defectivity and identifiability. Further, we define the generic $g$-rigidity matroid, compute the combinatorially expected rank of this matroid and pose the combinatorial characterisation problem.

Section \ref{sec:examples2} provides detailed information and references for what is known in each of the concrete instances from Section \ref{sec:examples}, particularly illustrating the abstract theory of Section \ref{sec:tools}. In section \ref{sec:suff}, we extend combinatorial techniques from graph rigidity theory to $k$-uniform hyper-frameworks. In particular, we present a technique to construct a larger rigid hyperframework from a given one, and, in one of our main results, we provide a 
packing condition that is sufficient
for rigidity when the constraints arise from multilinear $k$-forms (Theorem \ref{thm:packing}). We then move on to analyse global rigidity for the generalised rigidity model in Section \ref{sec:global}, noting first that global rigidity is a generic property in the complex case but not in the real case. Our main results here are Theorems \ref{thm:global_determinant} and \ref{thm:global_tensor} which provide sufficient conditions for global rigidity in the determinant and tensor product cases, analogous to a well known sufficient condition in the Euclidean rigidity case due to Connelly \cite{Con05}. 

We then consider some variants of our generalisation in Section \ref{sec:variants}. Section \ref{sec:apps} contains two  applications of our work. Theorem \ref{thm:random} which is motivated by sampling complexity for low rank tensor completions shows the threshold, in the Erdős-Rényi subgraph model, for a random hypergraph for which the dimension of a random axis-parallel projection of a secant variety is full dimensional.
The second analyses global rigidity in the $\ell_p$-plane.
We finish with some comments for further exploration (Section \ref{sec:concluding}) including a potential application in algebraic statistics.

\medskip

\noindent{\bf Notation.} 
For a positive integer $n$, let $[n]=\{1,2, \dots, n\}$. 
Throughout the paper, let $\mathbb{F}$ be either the real field $\mathbb{R}$ or the complex field $\mathbb{C}$. 
For a finite set $V$, a map $p:V\rightarrow \mathbb{F}^d$ is called a {\em point-configuration} in $\mathbb{F}^d$.
We use $(\mathbb{F}^d)^{V}$ to denote the set of all point-configurations in $\mathbb{F}^d$. Obviously $(\mathbb{F}^d)^V$ is isomorphic to $\mathbb{F}^{d|V|}$, and hence we sometimes regard  each $p\in (\mathbb{F}^d)^V$ as a $d|V|$-dimensional vector.
If $\MV=[k]$, then $(\mathbb{F}^d)^{[k]}$ is simply denoted by $(\mathbb{F}^d)^k$.

We will need the following terminology related to hypergraphs. Let $\genfrac{\{}{\}}{0pt}{}{X}{k}$ be the set of all multisets of $k$ elements of a finite set $X$ and $X\choose k$ be the subset of $\genfrac{\{}{\}}{0pt}{}{X}{k}$ consisting of sets having no repeated elements. Throughout the paper, a {\em $k$-uniform hypergraph} $G$ is defined as a pair $G=(V, E)$ of a finite set $V$ and $E\subseteq \genfrac{\{}{\}}{0pt}{}{V}{k}$, and $G$ is said to be {\em simple} if $E\subseteq {V\choose k}$.
Note that $|\genfrac{\{}{\}}{0pt}{}{|V|}{k}|={|V|+k-1\choose k}$.
We use  $K_n^k$ to denote the complete $k$-uniform hypergraph with $n$ vertices, that is, 
$K_n^k=([n], \genfrac{\{}{\}}{0pt}{}{[n]}{k})$.
Similarly,  let $\tilde{K}_n^k=([n], {[n]\choose k})$ be the simple complete $k$-uniform hypergraph on $n$ vertices.
For a hyperedge $e\in  \genfrac{\{}{\}}{0pt}{}{V}{k}$ and $u\in e$, 
we use $e-u$ to denote the multiset obtained from $e$ by reducing the multiplicity of $u$ by one.
Similarly, for $v\in V$, let $e+v$ be the multiset obtained from $e$ by increasing the multiplicity of $v$ by one.
For $e\in  \genfrac{\{}{\}}{0pt}{}{V}{k}$ and $v\in V$,
the multiplicity of $v$ in $e$ is denoted by $m_e(v)$.

\section{Rigidity of Hypergraphs}
\label{sec:rigid}
\subsection{Euclidean rigidity}
In this subsection, we review basic terminologies of the Euclidean rigidity.
The ordinary Euclidean rigidity concerns the rigidity of graphs drawn in Euclidean space.
Such a realisation of a graph in Euclidean space is called a {\em (bar-and-joint) framework}.
Conventionally a framework is  defined as a pair $(G,p)$ of a finite graph $G$ and a point-configuration $p:V\rightarrow \mathbb{R}^d$. 
The framework is \emph{rigid} if the only edge-length preserving continuous motions of its vertices arise from isometries of $\mathbb{R}^d$, and otherwise it is called {\em flexible}. (See Figure \ref{fig:flex} for basic examples.) The study of the rigidity of frameworks has its origins in the work of Cauchy and Euler on Euclidean polyhedra \cite{Cauchy} and Maxwell \cite{maxwell} on frames. 

\begin{figure}[h]
\begin{center}
\begin{tikzpicture}[scale=0.8]
\filldraw (1,6) circle (3pt);
\filldraw (-1,6) circle (3pt);
\filldraw (1,8) circle (3pt);
\filldraw (-1,8) circle (3pt);

\draw (-.7,7.9) circle (3pt);
\draw (1.3,7.9) circle (3pt);

 \draw[black,thick]
  (1,6) -- (-1,6) -- (-1,8) -- (1,8) -- (1,6);

  \draw[dashed]
  (-1,6) -- (-.7,7.9) -- (1.3,7.9) -- (1,6);


\filldraw (5,6) circle (3pt);
\filldraw (7,6) circle (3pt);
\filldraw (5,8) circle (3pt);
\filldraw (7,8) circle (3pt);

 \draw[black,thick]
  (7,6) -- (5,6) -- (5,8) -- (7,8) -- (7,6) -- (5,8);
  
\end{tikzpicture}
\end{center}
\caption{Simple examples of frameworks in the Euclidean plane: (left) a flexible framework with motion indicated; and (right) a rigid framework.}
\label{fig:flex}
\end{figure}
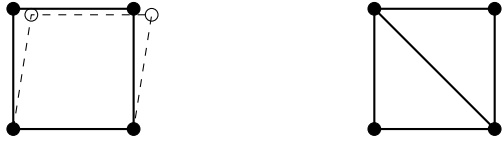

Abbot \cite{abbot} showed that it is  NP-hard to determine whether a given $d$-dimensional framework is rigid whenever $d\geq 2$. The problem becomes more tractable for generic frameworks $(G,p)$ since we can linearize the problem and consider `infinitesimal rigidity' instead. In this setting, the \emph{measurement map} $f_{G}:(\mathbb{R}^d)^{V}\rightarrow \mathbb{R}^{E}$ is defined by putting $f_{G}(p)=(\dots, \|p(i)-p(j)\|^2,\dots)_{ij\in E}$, where 
$\lVert.\lVert$ denotes the Euclidean metric.
The \emph{rigidity matrix} $R_d(G,p)=\frac{1}{2}df_{G}$ is the $|E|\times d|V|$ matrix in which, for $e=ij\in E$, the entries in row $e$ and columns $i$ and $j$ are $p(i)-p(j)$ and $p(j)-p(i)$, respectively, and all other entries in row $e$ are zero. Here, $g$ is the usual Euclidean distance.
A map $\dot{p}:V\rightarrow \mathbb{R}^d$ is called an {\em infinitesimal motion} of $(G,p)$ if
$$ \langle p(i)-p(j),\dot{p}(i)-\dot{p}(j)\rangle =0 \mbox{ for all } ij\in E,$$
where $\langle\cdot,\cdot\rangle$ denotes the Euclidean inner product.
An infinitesimal motion $\dot{p}$ is \emph{trivial} if there exists a skew-symmetric matrix $S$ and a vector $t$ such that $\dot{p}(i)=Sp(i)+t$ for all $i\in V$. A framework $(G,p)$ is \emph{infinitesimally rigid} if every infinitesimal flex of $(G,p)$ is trivial. Equivalently, $(G,p)$ is infinitesimally rigid if $G$ is complete on at most $d+1$ vertices or $|V|\geq d+2$ and $\rank\, R_d(G,p)=d|V|-{d+1 \choose 2}$.
Asimow and Roth \cite{asimow} showed that infinitesimal rigidity is equivalent to rigidity for generic frameworks (and hence that generic rigidity depends only on the underlying graph of the framework). Figure \ref{fig:inf} illustrates these concepts in dimension 2.

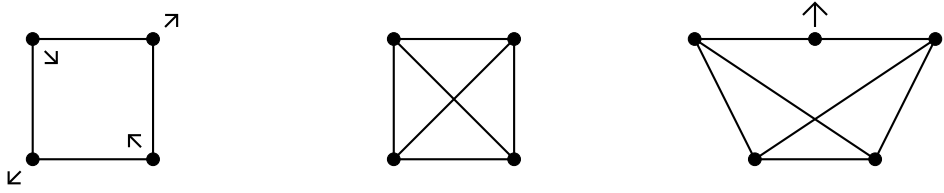
\begin{figure}[h]
\begin{center}
\begin{tikzpicture}[scale=0.8]
\filldraw (1,6) circle (3pt);
\filldraw (-1,6) circle (3pt);
\filldraw (1,8) circle (3pt);
\filldraw (-1,8) circle (3pt);

 \draw[black,thick]
  (1,6) -- (-1,6) -- (-1,8) -- (1,8) -- (1,6);

      \draw[black,thick]
    (-.8,7.8) -- (-.6,7.6);

  \draw[black,thick]
   (-.8,7.6) -- (-.6,7.6) -- (-.6,7.8);

    \draw[black,thick]
(.8,6.2) -- (.6,6.4);

 \draw[black,thick]
 (.6,6.2) -- (.6,6.4) -- (.8,6.4);

  \draw[black,thick]
  (-1.2,5.8) -- (-1.4,5.6);

   \draw[black,thick]
   (-1.2,5.6) -- (-1.4,5.6) -- (-1.4,5.8);

    \draw[black,thick]
    (1.2,8.2) -- (1.4,8.4);

     \draw[black,thick]
    (1.2,8.4) -- (1.4,8.4) -- (1.4,8.2);

\filldraw (7,6) circle (3pt);
\filldraw (5,6) circle (3pt);
\filldraw (7,8) circle (3pt);
\filldraw (5,8) circle (3pt);

 \draw[black,thick]
  (7,6) -- (5,6) -- (5,8) -- (7,8) -- (7,6) -- (5,8);

 \draw[black,thick]
 (5,6) -- (7,8);


\filldraw (13,6) circle (3pt);
\filldraw (11,6) circle (3pt);
\filldraw (10,8) circle (3pt);
\filldraw (12,8) circle (3pt);
\filldraw (14,8) circle (3pt);

 \draw[black,thick]
 (11,6) -- (13,6) -- (14,8) -- (11,6); 
 
  \draw[black,thick]
  (13,6) -- (10,8) -- (11,6);

   \draw[black,thick]
   (10,8) -- (12,8) -- (14,8);

    \draw[black,thick]
    (12,8.2) -- (12,8.6);

     \draw[black,thick]
   (11.8,8.4) -- (12,8.6) -- (12.2,8.4);

\end{tikzpicture}
\end{center}
\caption{(Left) An infinitesimal flex of the flexible framework in Figure \ref{fig:flex}. (Middle) A generically rigid graph whose edge set is dependent in the generic 2-dimensional rigidity matroid since $|E|=6>2|V|-3$. (Right) A generically rigid graph realised as a framework that has an infinitesimal flex (indicated).}
\label{fig:inf}
\end{figure}

The {\em generic $d$-dimensional rigidity matroid} of a graph $G=(V,E)$ is the matroid $\mathcal{R}_d(G)$ on $E$ in which a set of edges $F\subseteq E$ is independent if the corresponding rows of $R(G,p)$ are linearly independent, for some (or, equivalently, for every) generic $p$. 

A framework $(G,p)$ in $\mathbb{R}^d$ is \emph{globally rigid} if every $q\in f_G^{-1}(f_G(p))$ satisfies $t\cdot q=p$ for some isometry $t$ of $\mathbb{R}^d$ (i.e. some composition of translations and rotations).
See Figure \ref{fig:global} for an example.

\begin{figure}[h]
\begin{center}
\begin{tikzpicture}[scale=0.8]
\filldraw (0,0) circle (3pt);
\filldraw (2,0) circle (3pt);
\filldraw (-1,1.5) circle (3pt);
\filldraw (1,2) circle (3pt);
\filldraw (3,1.5) circle (3pt);
\filldraw (1,3) circle (3pt);

 \draw[black,thick]
  (0,0) -- (2,0) -- (3,1.5) -- (1,2) -- (-1,1.5) -- (0,0) -- (1,2) -- (2,0);

 \draw[black,thick]
 (-1,1.5) -- (1,3) -- (3,1.5);


 \filldraw (7,0) circle (3pt);
\filldraw (9,0) circle (3pt);
\filldraw (6,1.5) circle (3pt);
\filldraw (8,2) circle (3pt);
\filldraw (10,1.5) circle (3pt);
\filldraw (8,0.5) circle (3pt);

 \draw[black,thick]
  (7,0) -- (9,0) -- (10,1.5) -- (8,2) -- (6,1.5) -- (7,0) -- (8,2) -- (9,0);

 \draw[black,thick]
 (6,1.5) -- (8,.5) -- (10,1.5);


 \filldraw (14,0) circle (3pt);
\filldraw (16,0) circle (3pt);
\filldraw (13,1.5) circle (3pt);
\filldraw (15,2) circle (3pt);
\filldraw (17,1.5) circle (3pt);
\filldraw (15,3) circle (3pt);

 \draw[black,thick]
  (14,0) -- (16,0) -- (17,1.5) -- (15,2) -- (13,1.5) -- (14,0) -- (15,2) -- (16,0);

 \draw[black,thick]
 (13,1.5) -- (15,3) -- (17,1.5);

 \draw[black,thick]
 (15,2) -- (15,3);
  
\end{tikzpicture}
\end{center}
\caption{(Left and middle) rigid but not globally rigid frameworks in the plane. (Right) a globally rigid framework in the plane.}
\label{fig:global}
\end{figure}
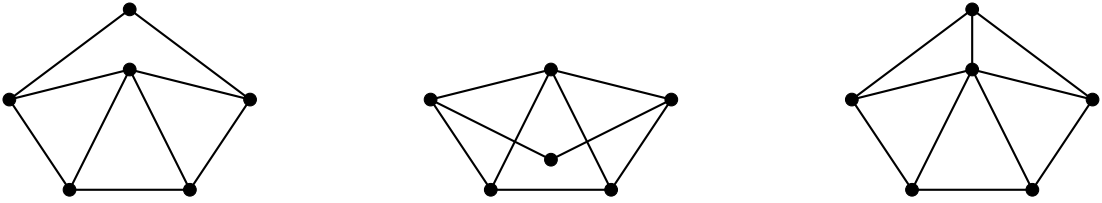

\subsection{Rigidity under algebraic constraints}
The concept of rigidity is flexible in the sense that all notations can be extended  by replacing the Euclidean distance function with a general smooth function. Since a measurement may involve more than two vertices, we will use hypergraphs to capture the underlying combinatorics.

Extending the central object from rigidity,  a pair $(G,p)$ of a hypergraph $G$ and a point-configuration $p\in (\mathbb{F}^d)^{V}$ is called a {\em $d$-dimensional hyper-framework} or a {\em hyper-framework in $\mathbb{F}^d$}.

\smallskip
Throughout the paper, we use the following setup.
\begin{setup}\label{setup}
Let $G=(V,E)$.
Suppose we are given a $k$-uniform hyper-framework $(G,p)$ and a polynomial map $g:(\mathbb{F}^{d})^k\rightarrow \mathbb{F}$.
The {\em $g$-measurement map} of $G$ is defined as a polynomial map $f_{g,G}:(\mathbb{F}^d)^{V}\rightarrow \mathbb{F}^E$ that sends $p$ to the list of the $g$-values of the tuples $(p(v_1), p(v_2), \dots, p(v_k))$ over the hyperedges $\{v_1,\ldots,v_k\}$ in $E$, i.e., 
\[
f_{g,G}(p):=\left(g(p(v_1), p(v_2), \dots, p(v_k)): e=\{v_1,\dots, v_k\}\in E\right).
\]
\end{setup}

In terms of the discussion in the introduction, $g$ represents a measurement device and $f_{g,G}(p)$ is the actual list of measurements that the observer can obtain. The observer is asked to recover $p$ from $f_{g,G}(p)$. The $g$-rigidity defined below captures the identifiability of $p$.

Note that, in order to make $f_{g,G}$ well-defined, $g$ must be either {\em symmetric} or {\em anti-symmetric} with respect to the ordering of the points, and if $g$ is anti-symmetric 
we always assume that $v_1,\dots, v_k$ are aligned in increasing order in $g(p(v_1),\dots, p(v_k))$, assuming a (fixed) total order on the vertices of $G$.

In most practical applications, there is a nontrivial group action that does not change the value of the $g$-measurement map, and rigidity has to be defined by taking care of the degree of freedom caused by such actions.
Suppose the general affine group ${\rm Aff}(d,\mathbb{F})$ acts on $\mathbb{F}^d$ by $\gamma\cdot x= Ax+t$ for $x\in \mathbb{F}^d$ and each pair $\gamma=(A,t)$ of $A\in {\rm GL}(d,\mathbb{F})$ and $t\in \mathbb{F}^d$.
The action of ${\rm Aff}(d,\mathbb{F})$ on $(\mathbb{F}^d)^{V}$ is also defined by $(\gamma \cdot p)(v)=Ap(v)+t\ (v\in V)$ for any $\gamma=(A,t)\in {\rm Aff}(d,\mathbb{F})$ and $p\in (\mathbb{F}^d)^{V}$.
Then the induced action on a polynomial map $g:(\mathbb{F}^d)^k\rightarrow \mathbb{F}$ is given by 
$\gamma\cdot g(x_1,\dots, x_k)=g(\gamma^{-1}\cdot x_1,\dots, \gamma^{-1} \cdot x_k)$ for $x_1,\dots, x_k\in \mathbb{F}^d$ and $\gamma\in {\rm Aff}(d,\mathbb{F})$.
We say that $\gamma$ {\em stabilises}  $g$ if $g$ is invariant by the action of $\gamma$. 
The set of pairs $(A,t)$ that stabilise $g$ forms a subgroup of ${\rm Aff}(d,\mathbb{F})$, called the {\em stabiliser} $\Gamma_g$  of $g$.
Since the action is smooth, $\Gamma_g$ is a closed subgroup of ${\rm Aff}(d,\mathbb{F})$, so $\Gamma_g$ is also a Lie group by the closed-subgroup theorem.

\smallskip
We are now ready to give a formal definition of $g$-rigidity.
\begin{definition}
    We say that $(G,p)$ is {\em globally $g$-rigid}  if 
for any $q\in f^{-1}_{g,G}(f_{g,G}(p))$ there is $\gamma \in \Gamma_g$ such that 
$q=\gamma \cdot p$.
We say that $(G,p)$ is {\em locally $g$-rigid} if there is an open neighbourhood $N$ of $p$ in $(\mathbb{F}^d)^{V}$ (in the Euclidean topology) such that
for any $q\in f^{-1}_{g,G}(f_{g,G}(p))\cap N$ there is $\gamma \in \Gamma_g$ such that $q=\gamma \cdot p$.
\end{definition}

\begin{remark}
In ordinary Euclidean rigidity, the following fact is implicitly used to capture rigidity in several different but equivalent forms.

\begin{prop}\label{prop:Euclidean_isometry}
Suppose $V$ is a finite set and $p, q\in (\mathbb{R}^d)^{V}$.
Then 
\[
\|p(u)-p(v)\|=\|q(u)-q(v)\|\qquad (u, v\in V)
\]
holds if and only if 
there is an Euclidean isometry $(A,t)$ such that $q(u)=Ap(u)+t\ (u\in V)$.
\end{prop}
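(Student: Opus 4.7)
The backward direction is immediate: if $q(u)=Ap(u)+t$ with $A$ orthogonal and $t\in \mathbb{R}^d$, then $q(u)-q(v)=A(p(u)-p(v))$, and orthogonality gives $\|q(u)-q(v)\|=\|p(u)-p(v)\|$. The forward direction is the substantive content, and the plan is a standard Gram-matrix argument. Fix an arbitrary base vertex $v_0\in V$, and reduce to the case $p(v_0)=q(v_0)=0$ by replacing $p$ and $q$ with their translates; it suffices to find an orthogonal $A\in O(d)$ with $Ap(u)=q(u)$ for all $u\in V$, and then set $t=q(v_0)-Ap(v_0)$ at the very end to undo the translation.

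After the reduction, distance preservation gives $\|p(u)\|=\|p(u)-p(v_0)\|=\|q(u)-q(v_0)\|=\|q(u)\|$ for every $u$, and similarly $\|p(u)-p(v)\|=\|q(u)-q(v)\|$. Expanding via the polarization identity
\[
\langle x,y\rangle = \tfrac{1}{2}\bigl(\|x\|^2+\|y\|^2-\|x-y\|^2\bigr),
\]
one gets $\langle p(u),p(v)\rangle = \langle q(u),q(v)\rangle$ for all $u,v\in V$. In other words, the Gram matrices of the families $\{p(u)\}_{u\in V}$ and $\{q(u)\}_{u\in V}$ coincide.

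Let $U=\operatorname{span}\{p(u):u\in V\}$ and $U'=\operatorname{span}\{q(u):u\in V\}$. Choose vertices $u_1,\dots,u_r\in V$ so that $p(u_1),\dots,p(u_r)$ is a basis of $U$; since linear dependences among vectors are detected by the Gram matrix, $q(u_1),\dots,q(u_r)$ is then a basis of $U'$ and $\dim U=\dim U'=r$. Define a linear map $A_0\colon U\to U'$ by $A_0(p(u_i))=q(u_i)$ and extend by linearity; inner product preservation on basis elements implies $A_0$ is a linear isometry from $U$ onto $U'$. Extend $A_0$ to an orthogonal map $A\colon \mathbb{R}^d\to \mathbb{R}^d$ by sending any orthonormal basis of $U^\perp$ to an orthonormal basis of $(U')^\perp$ (both have dimension $d-r$).

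It remains to check that $Ap(u)=q(u)$ for every $u\in V$, not only the chosen basis vertices. Write $p(u)=\sum_i \alpha_i p(u_i)$, so $Ap(u)=\sum_i \alpha_i q(u_i)\in U'$, and also $q(u)\in U'$. For every $j$,
\[
\bigl\langle Ap(u)-q(u),\,q(u_j)\bigr\rangle = \sum_i \alpha_i\langle q(u_i),q(u_j)\rangle - \langle q(u),q(u_j)\rangle = \sum_i \alpha_i\langle p(u_i),p(u_j)\rangle - \langle p(u),p(u_j)\rangle = 0,
\]
because $\sum_i \alpha_i p(u_i)=p(u)$. Thus $Ap(u)-q(u)$ lies in $U'$ and is orthogonal to a spanning set of $U'$, so it vanishes. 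This completes the construction. No step presents a serious obstacle; the only mild care is in the orthogonal extension across $U^\perp$, which is where the ambient dimension $d$ (rather than just the affine span) enters.
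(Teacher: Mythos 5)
The paper states Proposition~\ref{prop:Euclidean_isometry} without proof, treating it as a folklore fact about Euclidean spaces, so there is no in-paper argument to compare against. Your proof is correct: the translation to a common basepoint, the polarization step recovering the Gram matrix from squared distances, the construction of the linear isometry $A_0$ on a basis of the span, the orthogonal extension across $U^\perp$ (which is exactly where the ambient dimension $d$ is used and would fail if one only tracked affine spans), and the final verification that $Ap(u)=q(u)$ for non-basis vertices by testing against the spanning set $\{q(u_j)\}$ are all sound, and this is the standard Gram-matrix argument one would write.
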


Let $|V|=n$ and $K_{n}^2$ denote the complete graph on the vertex set $V.$
Proposition~\ref{prop:Euclidean_isometry} implies that the point configuration is uniquely determined up to Euclidean isometry by measuring inter-point distances for all edges in 
$K_{n}^2$,
and hence the framework of a complete graph is always locally/globally rigid in the ordinary Euclidean rigidity. 
By this fact, we may also define the local/global rigidity of a graph $G$ by comparing $f^{-1}_{g,G}(f_{g,G}(p))$ with $f^{-1}_{g,K_n^2}(f_{g,K_n^2}(p))$.
However, for a general measurement $g$, the analogue of Proposition~\ref{prop:Euclidean_isometry} is no longer true, that is if we replace $\lVert.\lVert$ with a general measurement $g$, the statement does not hold in general.
A notable example is the case of symmetric tensor completion,
where the Alexander-Hirschowitz theorem (cf.~Theorem~\ref{thm:ah}) shows that Proposition~\ref{prop:Euclidean_isometry} does not hold for the corresponding polynomial map $g$. A more basic example is provided by rigidity theory under a smooth $\ell_p$ norm; as described in \cite[Figure 2]{Dewar22gen}, the proposition fails in the $\ell_4$-plane where two realisations of the complete graph $K_4$ can have the same edge lengths but there is no isometry mapping one onto the other.
In general, the validity of the counterpart statement to Proposition~\ref{prop:Euclidean_isometry} also depends on the number of vertices $|V|$ and the size of each hyperedge $k$. It can be either true or false in different cases. However, our definition of $g$-rigidity for hyper-frameworks, as stated earlier, is formulated to solely depend on the measurement map $g$ itself, remaining independent of factors such as $|V|$ and $k$.
\end{remark}

\section{Examples of $g$-rigidity Models}\label{sec:examples}
We give a list of primary examples of $g$-rigidity. 

\paragraph{Ordinary Euclidean rigidity.}
A fundamental example is the case when 
$G$ is 2-uniform (i.e., a graph) and
$g(x,y)=(\lVert x-y\lVert_2)^2=\sum_{i=1}^d (x_i-y_i)^2$ for $x,y\in \mathbb{R}^d$.
Then the Euclidean group $E(d)$ is the stabilizer of $g$,
and its Lie algebra is the set of pairs $(S,t)$ of skew-symmetric matrices $S$ and $t\in \mathbb{R}^d$.
In this case, $g$-rigidity is nothing but the standard rigidity of bar-and-joint frameworks.

\paragraph{Rigidity in pseudo-Euclidean space.} 
In the pseudo-Euclidean rigidity context
$G$ is 2-uniform (i.e., a graph) and 
$g(x,y)=\sum_{i=1}^{d_1} (x_i-y_i)^2-\sum_{i=d_1+1}^{d} (x_i-y_i)^2$ for $x,y\in \mathbb{R}^d$ and $d_1\leq d$. 

\paragraph{$\ell_p$-norm rigidity.}
An alternative generalisation of Euclidean rigidity is to allow the distance function to be replaced by distance under another norm \cite{KP14}. Specifically $G$ is 2-uniform (i.e., a graph) and 
$g(x,y)=(\lVert x-y\lVert_p)^p=\sum_{i=1}^d |x_i-y_i|^p$ for $x,y\in \mathbb{R}^d$ and $1<p<\infty$. In the case where $p$ is a positive even integer, $g$ is a polynomial. 

\paragraph{Volume-constrained rigidity.}
Given a $d$-dimensional pure simplicial complex realised in $\mathbb{R}^d$,
the notion of volume-constrained rigidity concerns whether there is a motion of vertices keeping the (signed) volume of each $d$-simplex \cite{bulavka2022volume}. 
A $d$-dimensional pure simplicial complex can be identified with a $(d+1)$-uniform hyper-framework $(G,p)$ with a simple hypergraph $G$.
Hence, the volume-constrained rigidity can be captured as the $g$-rigidity of a simple $(d+1)$-uniform hyper-framework $(G,p)$ with $g:(\mathbb{R}^d)^{d+1}\rightarrow \mathbb{R}$ defined by 
\[
g(x_1, x_2, \dots, x_{d+1})=\det \begin{pmatrix} x_1 & x_2 & \dots & x_{d+1} \\
1 & 1 & \dots & 1\end{pmatrix}.
\]
A rigidity-based analysis was initiated by Borcea and Streinu~\cite{borcea}. Recent studies can be found in \cite{bulavka2022volume,Southgate}. 

\paragraph{Positive semidefinite symmetric matrix completion.} 

Let $T$ be a positive semidefinite symmetric matrix of size $n$ over $\mathbb{R}$.
If the rank of $T$ is $d$, then the spectral decomposition implies that 
\begin{equation}\label{eq:symmetric_matrix}
T= \sum_{i=1}^{d} x_i x_i^{\top}
\end{equation}
for some  vectors $x_1, x_2, \dots, x_d\in \mathbb{R}^n$.
Let $p$ be a $d\times n$-matrix obtained by aligning $x_i$ as the $i$-th row vector.
Then $p(i)\cdot p(j)$ is equal to the $(i,j)$-th entry $t_{ij}$ of $T$.

In the positive semidefinite symmetric matrix completion problem, we are given a partially-filled positive semidefinite symmetric matrix $T$ and asked to recover the positive semidefinite symmetric matrix by filling missing entries.
If we use a graph $G=([n],E)$ (which may contain self-loops) to denote a set of indices $e=\{i,j\}$ of known entries $t_e$ of $T$, then the problem is to find $p\in (\mathbb{R}^d)^{n}$ satisfying 
\begin{equation}\label{eq:matrix_completion}
\langle p(i), p(j)\rangle=t_e\qquad (e=\{i,j\}\in E),
\end{equation}
where $\langle\cdot, \cdot\rangle$ denotes the Euclidean inner product.
We are, in particular, interested in the uniqueness of the completions rather than finding a completion. 
In the unique completion problem, we are given a solution $p\in (\mathbb{R}^d)^{V}$ of Equation (\ref{eq:matrix_completion}) and are asked if it is the unique solution of (\ref{eq:matrix_completion}).
This uniqueness question is equivalent to asking the $g$-rigidity of  a framework $(G,p)$ for an appropriate choice of $g$. 

Specifically, consider a 2-uniform hypergraph (i.e., a graph) $G$ and 
$g(x,y)=\langle x, y\rangle$ for $x,y\in \mathbb{R}^d$.
Then the orthogonal group $O(d)$ is the stabiliser of $g$,
and the global $g$-rigidity decides if Equation (\ref{eq:matrix_completion}) has the unique solution up to the action of $O(d)$, or equivalently a completion is unique.

This rigidity-based formulation of matrix completion coincides with that introduced by Singer and Cucuringu~\cite{singer2010uniqueness} whose paper also describes a number of detailed examples of matrix completions from this viewpoint.

\paragraph{Symmetric tensor completions.} 
The idea of converting the unique low-rank matrix completion problem to the $g$-rigidity of frameworks can be extended naturally to tensors as follows.

For a vector space $V$ of dimension $n$ over $\mathbb{C}$, let $V^{\otimes k}$ be the $k$-fold tensor product of $V$.
We fix a basis of $V$, and assume that each $T\in V^{\otimes k}$ is represented by a $k$-dimensional array over $\mathbb{C}$. 
A tensor $T\in V^{\otimes k}$ is said to be {\it symmetric} if for any permutation $\sigma$ on $[k]$
we have $T_{i_1,i_2,\dots, i_k}=T_{\sigma(i_1), \sigma(i_2),\dots, \sigma(i_k)}$. The set of symmetric tensors in $V^{\otimes k}$ is denoted by 
$S^k(V)$.
It is known that any symmetric tensor can be written as 
\begin{equation}\label{eq:symmetric_tensor}
T= \sum_{i=1}^{d}  x_i^{\otimes k}:=\sum_{i=1}^d  x_i\otimes x_i\otimes \dots \otimes x_i
\end{equation}
for some  vectors $x_1, x_2, \dots, x_d\in V$.
For $T\in S^k(V)$, the smallest possible $d$ for which we can write $T$ in the form of Equation (\ref{eq:symmetric_tensor}) is called the {\it symmetric rank} of $T$. 

Once we introduce a notion of rank, the corresponding low-rank completion problem can be defined automatically.
In the symmetric tensor completion problem,  
given a partially-filled tensor of order $k$ and size $n$, 
we are asked to fill the remaining entries to obtain a symmetric tensor of symmetric rank at most $d$. 
Recall that $\genfrac{\{}{\}}{0pt}{}{X}{k}$ denotes the set of multisets of $k$ elements of a finite set $X$. 
Due to the symmetry condition, each entry of a symmetric tensor can be indexed by an element in $\genfrac{\{}{\}}{0pt}{}{[n]}{k}$.
In this manner, we encode the underlying combinatorics of each instance of the completion problem using a $k$-uniform hypergraph $([n], E)$.

We can also convert the decomposition in Equation (\ref{eq:symmetric_tensor}) to a form of an algebraic relation among points in $\mathbb{C}^d$.
For this, let $p$ be the $d\times n$ matrix with the $i$-th row equal to $x_i$.
Then, for each $e\in \genfrac{\{}{\}}{0pt}{}{[n]}{k}$, the $e$-th entry of Equation (\ref{eq:symmetric_tensor}) is equal to $\mathbf{1}\cdot \bigodot_{v\in e} p(v)$,
where $\mathbf{1}$ denotes the all-one vector and $\bigodot$ denotes the Hadamard product of vectors, that is the component-wise product.
Hence, the symmetric tensor completion problem can be reformulated as a hypergraph realisation problem as follows:
Given a $k$-uniform hypergraph  $G=([n],E)$ and  $a_e\in \mathbb{C}$ for each $e\in E$, find $p\in (\mathbb{C}^d)^{V}$ such that 
\begin{equation}\label{eq:system1}
\mathbf{1}\cdot \bigodot_{v\in e} p(v)=a_e \qquad \text{for $e\in E$},
\end{equation}
The corresponding unique completion problem is captured by $g$-rigidity for $g: (\mathbb{C}^d)^k \rightarrow \mathbb{C}$ with 
\begin{equation}\label{eq:sym_tensor_g}
g(y_1,\dots,y_k) = \mathbf{1}\cdot \bigodot_{i \in \{1,\dots,k\}} y_i.
\end{equation}
The stabiliser $\Gamma_g$ of $g$ is the set of matrices  of the form $\Sigma D$ with a permutation matrix $\Sigma$ and a diagonal matrix $D$ over $\mathbb{C}$ whose diagonal entries are  $k$-th roots of unity.  

Let $h_{\rm prod}:\mathbb{F}^k\rightarrow \mathbb{F}$ be the product map that takes $k$ values and returns their product.
Then the function $g$ defined in Equation (\ref{eq:sym_tensor_g}) is written as the sum of $d$ copies of $h_{\rm prod}$.
This structure will be important in the analysis of $g$-rigidity in the subsequent discussion.

\paragraph{Skew-symmetric tensor completions.}

A tensor $T\in V^{\otimes k}$ is said to be {\it skew-symmetric} if for any permutation $\sigma$ on $[k]$
we have $T_{i_1,i_2,\dots, i_k}={\rm sign}(\sigma)T_{\sigma(i_1), \sigma(i_2),\dots, \sigma(i_k)}$. 
The set of skew-symmetric tensors in $V^{\otimes k}$ is denoted by 
$A^k(V)$.
It is known that $A^k(V)$ is linearly isomorphic to the $k$-th component $\bigwedge^k V$ of the exterior algebra $\bigwedge V$
and  any skew-symmetric tensor can be written as 
\begin{equation}\label{eq:skew_symmetric_tensor}
T=\sum_{i=1}^r  x_1^i\wedge x_2^i\wedge \dots \wedge x_k^i
\end{equation}
for some  $kr$ vectors $x_j^i$ ($1\leq i\leq r$, $1\leq j\leq k$), 
where $\wedge$ denotes the tensor product of vectors. 
The smallest possible $r$ for which we can write $T$ in the form of Equation (\ref{eq:skew_symmetric_tensor}) is called the {\it skew-symmetric rank} of $T$. 
When $V=\mathbb{F}^n$, each element in $A^k(V)$ is called a {\it skew-symmetric tensor of order $k$ of size $n$} (over $\mathbb{F}$).

Recall that ${X \choose k}$ denotes the family of sets of $k$ elements of a finite set $X$. 
Due to the skew-symmetry condition, each entry of a skew-symmetric tensor can be indexed by an element in 
${X \choose k}$. 
Hence,  we encode the underlying combinatorics of each instance of the skew-symmetric completion problem using a simple $k$-uniform hypergraph $([n], E)$ (where each hyperedge is a set unlike the symmetric case).

In the same manner as the correspondence between Equations (\ref{eq:symmetric_tensor}) and (\ref{eq:system1}) in the symmetric tensor case, we shall convert Equation (\ref{eq:skew_symmetric_tensor}) to a realisation problem of the underlying hypergraph.
This can be done by picking column vectors in the matrix obtained by stacking $x_j^i$ as row vectors. Specifically, consider the $k\times n$ matrix $Q_i$ obtained by stacking $x_j^i$ for $1\leq j\leq k$ for each $1\leq i\leq r$ as row vectors,
and let $P$ be the $kr\times n$ matrix obtained by stacking $Q_i$'s.
Let $q_i\in (\mathbb{C}^k)^n$ (and respectively $p\in (\mathbb{C}^{kr})^n$) be the point configuration such that $q_i(v)$ (resp., $p(v)$) is the $v$-th column of $Q_i$ (resp., $P$).
Then, for each $e=\{v_1,\dots, v_k\}$, the $e$-th entry of $T$ in Equation (\ref{eq:skew_symmetric_tensor}) is equal to 
$\sum_{i=1}^r \det \begin{pmatrix} q_i(v_1) \dots q_i(v_k)\end{pmatrix}$.
This gives the following equivalent formulation of the skew-symmetric tensor completion problem:
Given a simple $k$-uniform hypergraph  $G=([n],E)$ and  $a_e\in \mathbb{C}$ for each $e\in E$, find an $r$-tuple $p=(q_1,\dots, q_r)$ of  $q_i\in (\mathbb{C}^k)^{V}$ such that 
\begin{equation}\label{eq:system_skew2}
\sum_{i=1}^r \det \begin{pmatrix} q_i(v_1) \dots q_i(v_k)\end{pmatrix}
=a_e \qquad \text{for  $e=\{v_1,\dots, v_k\}\in E$}.
\end{equation}

Let $h_{\rm det}:(\mathbb{C}^k)^k\rightarrow \mathbb{C}$ be the determinant as a multilinear $k$-form over $\mathbb{C}^k$.
Then the corresponding unique completion problem is captured by $g$-rigidity for $g:(\mathbb{C}^{kr})^k \rightarrow \mathbb{C}$ defined as the sum of $r$ copies of $h_{\rm det}$. 
Since the stabiliser of $h_{\rm det}$ is ${\rm SL}(k,\mathbb{C})$,
the stabiliser of $g$ is 
$$\{(A_1\oplus \dots \oplus A_r)(\Sigma\otimes I_k)\mid  A_i\in {\rm SL}(k,\mathbb{C}), \Sigma: \text{ a permutation matrix of size $r$}\}.$$

\paragraph{Chow decompositions.}
Replacing the determinant with the permanent in the above discussion, we obtain the corresponding problem for Chow decompositions. 
Suppose $f$ is a homogeneous polynomial in $n$ variables of degree $k$.
A {\em Chow decomposition} of $f$ is a representation of $f$ as the sum of $r$ polynomials written as a product of $k$ linear forms, i.e., 
\begin{equation}\label{eq:chow}
f(x_1,\dots, x_n)=\sum_{i=1}^r \prod_{j=1}^k(a_{i,j,1}x_1+\dots + a_{i,j,n}x_n)
\end{equation}
for some $a_{i,j,1},\dots, a_{i,j,n}\in \mathbb{F}$  ($1\leq i\leq r$ and $ 1\leq j\leq d$).
The smallest possible $r$ for which $f$ has an expression of the form given in Equation (\ref{eq:chow}) is called the {\em Chow rank} of $f$.
A low Chow-rank decomposition problem asks to compute a Chow decomposition of a given polynomial $f$ that achieves its Chow rank \cite{torrance2017generic}. 
In this paper, we consider the completion version, where we are given a partial list of the coefficients of a polynomial $f$ of Chow rank at most $r$ and we are asked to recover $f$.

Now we reformulate this decomposition or completion problem in terms of a hypergraph realisation problem.
As usual, the list of indices of given coefficients of $f$ is represented by a $k$-uniform hypergraph on $[n]$.
Consider the right hand side of Equation (\ref{eq:chow}).
Let $Q_i$ be the $k\times n$ matrix whose $j$-th row is 
$(a_{i,j,1}, a_{i,j,2},\dots, a_{i,j,n})\in \mathbb{F}^n$
for each $1\leq i\leq r$,
and let $P$ be the $kr\times n$ matrix obtained by stacking $Q_i$'s.
Let $q_i\in (\mathbb{F}^k)^n$ (and respectively $p\in (\mathbb{F}^{kr})^n$) be the point configuration such that $q_i(v)$ (resp., $p(v)$) is the $v$-th column of $Q_i$ (resp., $P$).
Then, the coefficient of the squarefree monomial $x_{v_1}x_{v_2}\dots x_{v_k}$ 
in the right hand side of Equation (\ref{eq:chow}) is equal to 
\begin{equation}\label{eq:perm}
\sum_{i=1}^r \perm \begin{pmatrix} q_i(v_1) \dots q_i(v_k)\end{pmatrix},
\end{equation}
where ${\rm perm}$ denotes the permanent.
Even if $x_{v_1}x_{v_2}\dots x_{v_k}$ is not squarefree, the coefficient of $x_{v_1}\dots x_{v_k}$ is still a scalar multiple of Equation (\ref{eq:perm}).
Thus, (by scaling coordinates along each axis appropriately) the low Chow-rank completion problem can be reduced to the following  hypergraph embedding problem:
Given a $k$-uniform hypergraph  $G=([n],E)$ and  $a_e\in \mathbb{F}$ for each $e\in E$, find an $r$-tuple $p=(q_1,\dots, q_r)$ of $q_i\in (\mathbb{F}^k)^{V}$ such that 
\begin{equation}\label{eq:system_skew1}
\sum_{i=1}^r \perm \begin{pmatrix} q_i(v_1) \dots q_i(v_k)\end{pmatrix}
=a_e \qquad \text{for  $e=\{v_1,\dots, v_k\}\in E$}.
\end{equation}

The corresponding unique completion problem checks if $f$ can be uniquely recovered from a partial list of coefficients  using the information that the Chow rank of $f$ is at most $r$.
Let $h_{\rm perm}:(\mathbb{F}^k)^r\rightarrow \mathbb{F}$ be the permanent as a multilinear $k$-form.
Then the unique completion problem is captured by $g$-rigidity for $g:(\mathbb{F}^{kr})^k \rightarrow \mathbb{F}$ defined by the sum of $r$ copies of $h_{\rm perm}$. 

By \cite{botta}, the stabilizer $\Gamma_{h_{\rm perm}}$ of $h_{\rm perm}$ is 
the set of matrices of the form $\Sigma D$
with a permutation matrix $\Sigma$ and a diagonal matrix $D$ with $\det D=1$.
The stabilizer of $g$ is $\{(A_1\oplus \dots \oplus A_r)(\Sigma\otimes I_k): A_i\in \Gamma_{h_{\rm perm}}, \Sigma: \text{ a permutation matrix of size $r$}\}$.

\section{Tools for Analysing $g$-rigidity}\label{sec:tools}

In this section, we give several fundamental tools for analysing local/global $g$-rigidity.
Applications of these tools to specific problems are explained in Section~\ref{sec:examples2}. 

\subsection{Basic facts on polynomial maps}
Determining the local or global $g$-rigidity of a hyper-framework is a NP-hard computational problem. A formal proof of its computational hardness can be found in \cite{abbot, saxe}, specifically addressing ordinary Euclidean rigidity. Therefore, a common approach to studying $g$-rigidity involves analyzing the linear approximation of the concept and leveraging existing tools to fill the remaining gaps. 
In this strategy the following facts are often used in graph rigidity theory.

\begin{definition}[Generic point]
Let $\kk \in \{\mathbb{R},\mathbb{C}\}$.
A  point-configuration $p\in (\mathbb{F}^d)^n$ 
is said to be {\em generic} if 
the set of coordinates of $p$ is algebraically independent over the rationals.
More generally, a point in an algebraic set ${\cal A}$ defined over $\mathbb{Q}$ is generic if its coordinates do not satisfy any $\mathbb{Q}$-polynomial besides those that are satisfied by every point on ${\cal A}$. 
\end{definition}

Let $X$ be a smooth manifold and $f: X \rightarrow \mathbb{F}^m$ be a smooth map. Then $x\in X$ is
said to be a \emph{regular point} of $f$ if the Jacobian matrix $\J f(x)$ of $f$ has maximum rank. Otherwise, $x$ is called a {\em critical point} of $f$. Also $f(x)$ is said to be a \emph{regular value} of $f$ if, for all $y \in f^{-1}(f(x))$, $y$
is a regular point of $f$.

The following proposition summarises the basic geometric tools we shall use. All four parts are known in the rigidity literature~(see, e.g., \cite{Gortler2014,GHT10}) though may not have previously been stated in the full generality of $g$-rigidity.

\begin{prop}\label{prop:matroid}
Let $\kk \in \{\mathbb{R},\mathbb{C}\}$.
\begin{itemize}
    \item[{\rm (i)}]  Let $f: \kk^n\rightarrow \kk^m$ be a polynomial map and $p\in \kk^n$.
If $f(p)$ is a regular value, then $f^{-1}(f(p))$ is a smooth manifold with codimension equal to the $\rank$ of $\J f(p)$, where $\J f(p)$ is the Jacobian derivative of $f$ evaluated at $p$. 

\item[{\rm (ii)}]  
Let $V, W$ be smooth manifolds,
$f:V\rightarrow W$ a surjective $\mathbb{Q}$-polynomial map
and $p\in V$.
If $p$ is generic in $V$, then $f(p)$ is generic in $W$.

\item[\rm{(iii)}] 
Let $f: \kk^n\rightarrow \kk^m$ be a $\mathbb{Q}$-polynomial map
and $p\in\kk^n$ a generic point.
Then, $f(p)$ is generic if and only if $\J f(p)$ is row independent.

\item[{\rm (iv)}] 
Let $f: \mathbb{F}^n\rightarrow \mathbb{F}^m$ be a $\mathbb{Q}$-polynomial map and $p\in \mathbb{F}^n$ a generic point. Then $f(p)$ is a regular value~of~$f$.
\end{itemize}
\end{prop}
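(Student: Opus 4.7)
The plan is to dispatch (i) by the classical regular-value/submersion theorem from differential topology and then treat (ii)-(iv) as Zariski-genericity statements, with (iv) ultimately depending on generic smoothness in characteristic zero. Throughout I will assume $V, W, \mathbb{F}^n, \mathbb{F}^m$ are irreducible as algebraic varieties so that "generic point" and "dominant map" have unambiguous meanings, and regard $f$ as a $\mathbb{Q}$-polynomial map.

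For (i), since $f(p)$ is a regular value, every $q \in f^{-1}(f(p))$ has $\J f(q)$ of the same maximal rank $r$; the submersion theorem supplies local coordinates in which $f$ is a linear projection near $q$, exhibiting $f^{-1}(f(p))$ as a smooth submanifold of codimension exactly $r = \rank \J f(p)$. For (ii), I argue contrapositively: if $h \in \mathbb{Q}[W]$ vanishes at $f(p)$ but not identically on $W$, then $h \circ f \in \mathbb{Q}[V]$ vanishes at $p$; genericity of $p$ forces $h \circ f \equiv 0$ on $V$, and surjectivity of $f$ then forces $h \equiv 0$ on $W$, a contradiction.

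For (iii), I use that $\{x : \rank \J f(x) \geq s\}$ is Zariski-open (complement of the vanishing locus of the $(s+1)$-minors), so $\rank \J f(p)$ equals the generic rank, which in turn equals $\dim \overline{f(\mathbb{F}^n)}$. If $f(p)$ is generic in $\mathbb{F}^m$ then $\overline{f(\mathbb{F}^n)} = \mathbb{F}^m$ (else $f(p)$ would sit in a proper subvariety), giving full row rank. Conversely, independent rows yield $\dim \overline{f(\mathbb{F}^n)} = m$, so $f$ is dominant, and the pull-back argument of (ii) applies again: any $h$ vanishing at $f(p)$ pulls back to a polynomial vanishing on all of $\mathbb{F}^n$ by genericity, whence $h$ vanishes on $\overline{f(\mathbb{F}^n)} = \mathbb{F}^m$. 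For (iv), let $r$ be the generic rank of $\J f$, let $Z \subseteq \mathbb{F}^n$ be the proper closed subvariety on which $\rank \J f < r$, and set $Y = \overline{f(\mathbb{F}^n)}$. Invoking generic smoothness of the dominant morphism $f : \mathbb{F}^n \to Y$ (valid in characteristic zero), there is a dense open $U \subseteq Y$ over which $f$ is smooth, so $f^{-1}(U) \cap Z = \emptyset$ and $\overline{f(Z)} \subseteq Y \setminus U$ is a proper closed subvariety of $Y$. Dominance of $f$ then makes $f^{-1}(\overline{f(Z)})$ a proper closed subvariety of $\mathbb{F}^n$, so genericity of $p$ yields $f(p) \notin \overline{f(Z)}$, which forces $f^{-1}(f(p)) \cap Z = \emptyset$, i.e.\ $f(p)$ is a regular value.

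The main obstacle is (iv): everything else is either classical differential topology or a routine pull-back of $\mathbb{Q}$-polynomials, but (iv) genuinely requires that the critical-value set $\overline{f(Z)}$ be a proper subvariety of the image closure. The naive bound $\dim f(Z) \leq \dim Z \leq n - 1$ is too weak, and one needs the algebraic Sard/generic smoothness theorem; this is precisely where the characteristic-zero hypothesis $\mathbb{F} \in \{\mathbb{R}, \mathbb{C}\}$ enters essentially.
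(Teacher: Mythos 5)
Your proof is correct and follows essentially the same architecture as the paper's: parts (i) and (ii) are identical, and part (iv) rests on the same generic-smoothness / Bertini argument that the paper uses, framed in terms of the critical-value set being a proper closed subvariety of the image closure.

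The one place you take a genuinely different route is (iii). For the forward direction the paper uses the inverse function theorem to exhibit $f(V)$ locally as an $m$-dimensional manifold near $f(p)$ and then invokes (ii) on the restriction $f|_V$, whereas you argue algebraically: full row rank of $\J f(p)$ at a generic $p$ forces $\overline{f(\kk^n)}=\kk^m$, and a direct pull-back argument (not an application of (ii) verbatim, since $f$ need not be surjective) finishes it. For the converse the paper proves the contrapositive via local constancy of rank and a quantifier-elimination argument, while you observe that genericity of $f(p)$ forces $\overline{f(\kk^n)}=\kk^m$ and hence generic rank $m$. Your version of (iii) is cleaner, but notice that it quietly uses "generic rank of $\J f$ equals $\dim\overline{f(\kk^n)}$" in both directions, which is itself a generic-smoothness fact; the paper's route through the inverse function theorem is self-contained and does not lean on this.

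The one issue to flag, more of rigor than of substance: both in (iii) and (iv) you apply genericity of $p$ (and of $f(p)$) to conclude that $p$ avoids a certain "bad" set, but genericity as defined in the paper is avoidance of proper subvarieties \emph{defined over $\mathbb{Q}$}. For the sets you produce — $\overline{f(\kk^n)}$ and $\overline{f(Z)}$, hence $f^{-1}(\overline{f(Z)})$ — you need to know that these Zariski closures are $\mathbb{Q}$-varieties. Over $\mathbb{C}$ this is Chevalley's constructibility theorem, and over $\mathbb{R}$ it is Tarski–Seidenberg quantifier elimination; the paper invokes both explicitly for exactly this reason, whereas you only implicitly gesture at "characteristic zero" entering through generic smoothness. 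That is the main sharpening your write-up would need.
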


\begin{proof}
\begin{itemize}
    \item[{\rm (i)}] This is simply a consequence of the implicit function theorem, see, e.g., ~\cite{milnor1965topology}.
The inverse function theorem states that, if $q\in \kk^n$ is a regular point, then there is a (Euclidean) neighbourhood $N$ of $q$ such that   $f^{-1}(f(q)) \cap N$ is  a smooth manifold of codimension equal to $\rank \J f(q)$.
But, since $f(p)$ is a regular value, this property holds for any $q\in f^{-1}(f(p))$, meaning that $f^{-1}(f(p))$ is a manifold.

\item[{\rm (ii)}] Suppose that there is a polynomial $g$ that is vanishing at $f(p)$. Then $g\circ f: V\rightarrow W$ is a polynomial map. 
Since $p$ is generic in $V$, we have that $g\circ f$ is vanishing on $V$.
Since $f$ is surjective, this implies that $g$ is vanishing on $W$.

\item[{\rm (iii)}] Suppose that  $\J f(p)$ is row independent.
Then $p$ is a regular point of $f$, and hence $f({\mathbb{F}}^n)\cap M$ is an $m$-dimensional manifold in a neighbourhood $M$ of $f(p)$ by the inverse function theorem. This in turn implies that, for a small neighbourhood $V$ of $p$ in $\mathbb{F}^n$, the restriction $f|_{V}$ of $f$ to $V$ is a surjective polynomial map from $V$ to an $m$-dimensional manifold $f(V)$. Applying (ii) to $f|_{V}$, we have that $f(p)$ is generic in $f(V)$. Since $f(V)$ is $m$-dimensional, we conclude that $f(p)$ is also generic in $\kk^m$. 

Conversely, if $\J f(p)$ is row dependent then, since the rank of $\J f$ is locally constant in a neighbourhood of $p$, if follows that $f(p)$ lies on a proper algebraic subset of $\mathbb F^m$. Using either the Tarski-Seidenberg Theorem (when $\mathbb F = \mathbb R$) or Chevalley's Theorem (when $\mathbb F = \mathbb C$) we can show that this algebraic subset is defined over $\mathbb Q$ and so $f(p)$ is not generic. (A short exposition on Chevalley's theorem is given in the proof below.)

\item[{\rm (iv)}]  
Suppose $\mathbb{F}=\mathbb{C}$.
Let $X$ be the set of points $q\in \mathbb{F}^n$ such that $f(q)$ is not regular.
Chevalley's theorem implies that any quantifier can be eliminated in an expression of a set given by boolean operations, quantifiers, and $\mathbb{Q}$-algebraic equations, see, e.g.,~\cite{basu2006algorithms}.
This, in particular, implies that $X$ is a $\mathbb{Q}$-algebraically constructible set.
On the other hand, Bertini's theorem states that there is a dense open set $U\subseteq \overline{{\rm im} f}$ in which every point is regular, see, e.g.,~\cite{shafarevich1994basic}.  Since $\overline{{\rm im} f}$ is irreducible, $U^c$ is a lower dimensional algebraic subset of $\overline{{\rm im} f}$. Since $f(X)=U^c$,  $X$ must be a lower-dimensional $\mathbb{Q}$-algebraically constructible set in $\mathbb{C}^n$, and the genericity of $p$ implies $p\notin X$.

When $\mathbb{F}=\mathbb{R}$, we can apply the same argument by using Tarski-Seidenberg quantifier elimination (see, e.g.,~\cite{basu2006algorithms}) instead of Chevalley's theorem.
\end{itemize}
\end{proof}

\subsection{Infinitesimal $g$-rigidity}\label{subsec:infinitesimal_rigidity}
Let $(G,p)$ be a $k$-uniform hyper-framework 
with $G=(V,E)$ and $g:(\mathbb{F}^{d})^k\rightarrow \mathbb{F}$ a polynomial map.
Then the Jacobian $\J f_{g,G}(p)$ of the measurement map $f_{g,G}:(\mathbb{F}^d)^V\rightarrow \mathbb{F}^E$
is a linear map from $(\mathbb{F}^d)^V$ to $\mathbb{F}^E$.
Hence the right kernel of $\J f_{g,G}(p)$ is a linear subspace $(\mathbb{F}^d)^V$, which is defined as the space of  {\em infinitesimal $g$-motions} of the framework $(G,p)$.

If the stabiliser $\Gamma_g$ of $g$ is positive dimensional, 
every framework $(G,p)$ has a nonzero infinitesimal $g$-motion.
This fact can be seen as follows.
Recall that $\Gamma_g$  is a Lie group,
so let $\mathfrak{g}$ be the Lie algebra of $\Gamma_g$. 
Observe that $\Gamma_g$ acts on $(\mathbb F^d)^V$ via $(\gamma\cdot p)(v)=  \gamma \cdot (p(v))$ for $v \in V$. This induces a map from $\mathfrak{g} \times (\mathbb F^d)^V $ to $(\mathbb F^d)^V$, denoted $(\dot \gamma, p) \mapsto \dot \gamma \cdot p$, whose image lies in the right kernel of $\J f_{g,G}(p)$.
Then $\dot{p}\in (\mathbb{F}^d)^\MV$ defined by 
$\dot{p}=\dot{\gamma} \cdot p$ for $\dot{\gamma} \in \mathfrak{g}$ is called a {\em trivial infinitesimal $g$-motion} of $(G,p)$.
For $p\in (\mathbb{F}^d)^{n}$, define the space of trivial 
infinitesimal motions of $(G,p)$ to be
$${\rm triv}_g(p)=\{\dot{\gamma}\cdot p: \dot{\gamma}\in \mathfrak{g}\}.$$
Note that ${\rm triv}_g(p)\subseteq \ker \J f_{g,G}(p)$.
\begin{definition}\label{def:inf}
Let $(G,p)$ be a $k$-uniform hyper-framework and $g:(\mathbb{F}^{d})^k\rightarrow \mathbb{F}$ a polynomial map. We say that $(G,p)$ is {\em infinitesimally $g$-rigid} if 
 ${\rm triv}_g(p)=\ker \J f_{g,G}(p)$.
\end{definition}
The following proposition generalises the fundamental theorem of Asimow and Roth \cite{asimow} which provides the central motivation to look at infinitesimally rigidity in graph rigidity theory. 

\begin{prop}\label{prop:inf}
Let $g:(\mathbb{F}^d)^k\rightarrow \mathbb{F}$ be a $\mathbb{Q}$-polynomial map 
and $(G,p)$ be a $k$-uniform  hyper-framework. 
\begin{itemize}
\item If $(G,p)$ is infinitesimally $g$-rigid, then it is locally $g$-rigid.
\item If $p$ is generic, then $(G,p)$ is infinitesimally $g$-rigid if and only if it is locally $g$-rigid.
\end{itemize}
\end{prop}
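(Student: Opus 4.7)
The plan is to generalise the Asimow--Roth theorem by a direct analogue of its classical proof, replacing the maximum-rank argument by two semicontinuity statements. First, $q\mapsto \rank \J f_{g,G}(q)$ is lower semicontinuous on $(\mathbb{F}^d)^V$, which is immediate since any nonzero minor of fixed size persists under small perturbations. Second, because $\Gamma_g$ is a Lie group acting smoothly on $(\mathbb{F}^d)^V$, the dimension $\dim \mathrm{Stab}_{\Gamma_g}(q)$ is upper semicontinuous in $q$; equivalently, $\dim {\rm triv}_g(q)=\dim \Gamma_g-\dim \mathrm{Stab}_{\Gamma_g}(q)$ is lower semicontinuous. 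Combined with the constant-rank theorem and Proposition~\ref{prop:matroid}, these facts will suffice.

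For the first assertion, the inclusion ${\rm triv}_g(q)\subseteq \ker \J f_{g,G}(q)$ holding for every $q$ yields $\rank \J f_{g,G}(q)\leq d|V|-\dim {\rm triv}_g(q)$. Infinitesimal $g$-rigidity at $p$ forces equality at $p$. Lower semicontinuity of $\dim {\rm triv}_g$ then gives $\rank \J f_{g,G}(q)\leq \rank \J f_{g,G}(p)$ on a neighborhood of $p$, while lower semicontinuity of the rank gives the reverse inequality on a possibly smaller neighborhood $N$. Thus $\rank \J f_{g,G}$ is constant on $N$, and the constant-rank theorem makes $f_{g,G}^{-1}(f_{g,G}(p))\cap N$ a smooth submanifold of dimension $\dim {\rm triv}_g(p)$. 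The orbit $\Gamma_g\cdot p$ is a smooth immersed submanifold of $(\mathbb{F}^d)^V$ of the same dimension $\dim \Gamma_g-\dim \mathrm{Stab}_{\Gamma_g}(p)=\dim {\rm triv}_g(p)$, and lies in $f_{g,G}^{-1}(f_{g,G}(p))$ because $\Gamma_g$ stabilises $g$. Two smooth submanifolds of equal dimension, one contained in the other, coincide in a neighborhood of $p$, which is precisely local $g$-rigidity.

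For the second assertion, the backward direction is the first. For the forward direction, suppose $p$ is generic and $(G,p)$ is not infinitesimally $g$-rigid, so that $\dim \ker \J f_{g,G}(p)>\dim {\rm triv}_g(p)$. Proposition~\ref{prop:matroid}(iv) ensures $f_{g,G}(p)$ is a regular value of $f_{g,G}$, and then Proposition~\ref{prop:matroid}(i) makes $f_{g,G}^{-1}(f_{g,G}(p))$ a smooth manifold of dimension $\dim \ker \J f_{g,G}(p)$ in some neighborhood $N$ of $p$. Since this strictly exceeds $\dim \Gamma_g\cdot p=\dim {\rm triv}_g(p)$, the orbit $\Gamma_g\cdot p\cap N$ is a proper lower-dimensional, and hence nowhere dense, submanifold of $f_{g,G}^{-1}(f_{g,G}(p))\cap N$. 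Every neighborhood of $p$ in the fiber therefore contains points off the orbit, contradicting local $g$-rigidity.

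I expect the most delicate ingredient to be the upper semicontinuity of $\dim \mathrm{Stab}_{\Gamma_g}$ at an arbitrary, possibly non-generic, $p$, which is required only for the first assertion; this follows either from the closed-subgroup theorem applied to Hausdorff limits of stabilisers of sequences $q_n\to p$, or from the slice theorem for smooth Lie group actions. Once this is in hand the rest of both arguments is formal and parallels the classical Asimow--Roth proof almost verbatim.
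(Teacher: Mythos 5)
Your proof is correct and follows essentially the same route as the paper's: lower semicontinuity of both $\dim{\rm triv}_g(\cdot)$ and $\rank\J f_{g,G}(\cdot)$, the resulting rank sandwich, the constant-rank theorem, and, for the second assertion, genericity combined with Proposition~\ref{prop:matroid}. The one ingredient you flag as ``delicate'' --- upper semicontinuity of $\dim\mathrm{Stab}_{\Gamma_g}(q)$ --- is not: the paper simply notes that $\dim{\rm triv}_g(q)=\rank(T_q)$ where $T_q:\mathfrak{g}\to(\mathbb{F}^d)^V$, $\dot\gamma\mapsto\dot\gamma\cdot q$, is a linear map depending polynomially on $q$, so lower semicontinuity is automatic by the same nonzero-minor argument you already use for the Jacobian, with no need for the closed-subgroup theorem on Hausdorff limits or the slice theorem.
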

\begin{proof}
Let $T_p: \mathfrak{g} \rightarrow (\mathbb{F}^d)^V$ be the linear map $\dot\gamma \mapsto \dot\gamma\cdot p$. Then
$\dim({\rm triv}_g(p))$ = $\rank(T_p)$ is a lower semicontinuous function of $p$. It follows that there is a neighbourhood $U$ of $p$ such that,
for $q \in U$, $\dim({\rm triv}_g(q)) \geq \dim({\rm triv}_g(p))$.
Also for some neighbourhood $W$ of $p$ we have $\rank(\J f_{g,G}(q)) \geq \rank(\J f_{g,G}(p))$ for $q \in W$, again by the semicontinuity of rank.  

If $(G,p)$ is infinitesimally $g$-rigid 
then, by definition, $\rank 
(\J f_{g,G}(p))  = d|V| –  \dim({\rm triv}_g(p))$. 
Therefore, for $q \in U \cap W$, we have $\rank(\J f_{g,G}(q)) \geq \rank(\J f_{g,G}(p))  = d|V|– \dim({\rm triv}_g(p)) \geq d|V| – \dim({\rm triv}_g(q)) \geq \rank (\J f_{g,G}(q))$. 
So we have equality throughout and we can apply the constant rank theorem to deduce that, in a neighbourhood of $p$, $f_{g,G}$ is locally equivalent (i.e. up to a choice of coordinates) to a coordinate projection from $\mathbb{F}^{d|V|} \rightarrow \mathbb{F}^{d|V| -\dim({\rm triv}_g(p))}$. It follows that $(G,p)$ is locally $g$-rigid. This proves the first statement.

The second statement, that generic locally $g$-rigid hyper-frameworks are infinitesimally $g$-rigid, follows from 
Proposition~\ref{prop:matroid}(i).
\end{proof}

We now compare the dimension of $\Gamma_g$ and the dimension of the space of all trivial infinitesimal $g$-motions.
Define $d_{\Gamma_g}$ to be the dimension of $\Gamma_g$ as a Lie group. 
Since $\Gamma_g$ is a Lie subgroup of ${\rm Aff}(d,\mathbb{F})$, if $p$ is generic and $n$ is sufficiently large, then $\dim {\rm triv}_g(p)=d_{\Gamma_g}$.
We use $n_{\Gamma_g}$ to denote the minimum integer $n$ such that $\dim {\rm triv}_g(p)=d_{\Gamma_g}$ for some $p\in (\mathbb{F}^d)^{n}$. 

\begin{example}
The following presents two specific instances that exemplify these definitions.
\begin{itemize}
    \item Suppose $g:(\mathbb{R}^d)^2\rightarrow \mathbb{R}$ is the Euclidean inner product in $\mathbb{R}^d$.
    Then $\Gamma_g$ is the orthogonal group $O(d)$, so $d_{\Gamma_g}={d\choose 2}$.
    The Lie algebra $\mathfrak{g}$ is the set of skew-symmetric matrices,
    and $\dim {\rm triv}_g(p)={d\choose 2}-{d-n\choose 2}$
    if $p$ is a generic set of $n$ points with $n\leq d$.
    Hence, $n_{\Gamma_g}=d-1$.
    \item Suppose $g:(\mathbb{C}^d)^k\rightarrow \mathbb{C}$ is as in Equation (\ref{eq:sym_tensor_g}).
    Then $\Gamma_g$ is the set of matrices of the form $\Gamma D$ with a permutation matrix $\Gamma$ and a diagonal matrix $D$ whose diagonal entries are $k$-th roots of unity.
    Then $d_{\Gamma_g}=0$ and the Lie algebra is trivial.
    So $n_{\Gamma_g}=0$.
\end{itemize}
\end{example}

Since ${\rm triv}_g(p)$ is the space of trivial infinitesimal $g$-motions, if $|V(G)|\geq n_{\Gamma_g}$ and $p$ is generic, then Definition~\ref{def:inf} implies that $(G,p)$ is infinitesimally $g$-rigid if and only if $\dim\ker \J f_{g,G}(p) = d_{\Gamma_g}$.
Combining this with Proposition~\ref{prop:inf}, we have the following linear algebraic characterisation of generic local $g$-rigidity.

\begin{prop}\label{prop:inf_rigid}
Assume Setup~\ref{setup}. Let $g:(\mathbb{F}^d)^k\rightarrow \mathbb{F}$ be a polynomial map whose stabilizer is $\Gamma_g$, 
and $(G,p)$ a generic $k$-uniform hyper-framework with $G=(V,E)$ with $|\MV|\geq n_{\Gamma_g}$.
Then 
\[
\rank \J f_{g,G}(p)\leq d|\MV|-d_{\Gamma_g},
\]
 and the equality holds if and only if $(G,p)$ is locally $g$-rigid.
\end{prop}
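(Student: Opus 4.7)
The plan is to reduce the statement to the two preceding observations: the inclusion $\mathrm{triv}_g(p)\subseteq \ker \J f_{g,G}(p)$ noted before Definition~\ref{def:inf}, and the equivalence of infinitesimal and local $g$-rigidity for generic $p$ given in Proposition~\ref{prop:inf}. The essential bookkeeping is to show that the genericity hypothesis combined with $|V|\geq n_{\Gamma_g}$ forces $\dim \mathrm{triv}_g(p)=d_{\Gamma_g}$.

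For the rank inequality, I would first argue that the dimension of $\mathrm{triv}_g(p)$ attains its maximum value $d_{\Gamma_g}$ at a generic $p$. Consider the linear map $T_p:\mathfrak{g}\to (\mathbb{F}^d)^V$ defined by $\dot\gamma\mapsto \dot\gamma\cdot p$; then $\dim\mathrm{triv}_g(p)=\rank T_p$, and the entries of $T_p$ depend polynomially on $p$. By definition of $n_{\Gamma_g}$, there exists a configuration $p_0$ on $n_{\Gamma_g}$ points where $\rank T_{p_0}=d_{\Gamma_g}$. Extending $p_0$ by arbitrary additional points yields a configuration on $|V|$ vertices whose $T$-map has rank at least $d_{\Gamma_g}$, hence exactly $d_{\Gamma_g}$ (the bound $d_{\Gamma_g}$ is an upper bound since $\dim\mathfrak{g}=d_{\Gamma_g}$). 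Maximum rank is a Zariski-open condition cut out by the non-vanishing of some minor, and since a generic point satisfies every $\mathbb{Q}$-algebraic condition satisfied on a dense open subset, the generic $p$ yields $\dim \mathrm{triv}_g(p)=d_{\Gamma_g}$. Combining with $\mathrm{triv}_g(p)\subseteq \ker \J f_{g,G}(p)$ gives $\dim\ker \J f_{g,G}(p)\geq d_{\Gamma_g}$, from which the rank bound $\rank \J f_{g,G}(p)\leq d|V|-d_{\Gamma_g}$ follows by the rank-nullity theorem.

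For the equivalence, equality in the rank bound is, by rank-nullity, the statement $\dim\ker \J f_{g,G}(p)=d_{\Gamma_g}=\dim\mathrm{triv}_g(p)$. Since $\mathrm{triv}_g(p)$ is a subspace of $\ker \J f_{g,G}(p)$ of the same dimension, equality of dimensions is equivalent to equality of subspaces, which is precisely the definition of infinitesimal $g$-rigidity. Applying the second bullet of Proposition~\ref{prop:inf}, which asserts that infinitesimal $g$-rigidity and local $g$-rigidity coincide at generic points, completes the equivalence.

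I do not anticipate a serious obstacle; the only point requiring care is the passage from "there exists a configuration on $n_{\Gamma_g}$ points with maximal $\mathrm{triv}$-dimension" to "a generic configuration on at least $n_{\Gamma_g}$ points has maximal $\mathrm{triv}$-dimension," which is handled by the extension-by-padding argument together with the standard semicontinuity of rank and the genericity property recorded in Proposition~\ref{prop:matroid}.
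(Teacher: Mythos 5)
Your proof is correct and takes essentially the same route that the paper sketches in the paragraph immediately preceding the proposition: identify infinitesimal $g$-rigidity with the condition $\dim\ker\J f_{g,G}(p)=d_{\Gamma_g}$ once genericity and $|V|\geq n_{\Gamma_g}$ force $\dim\mathrm{triv}_g(p)=d_{\Gamma_g}$, then invoke Proposition~\ref{prop:inf}. You have simply made explicit the Zariski-openness/padding argument that the paper leaves implicit in its definition of $n_{\Gamma_g}$.
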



\begin{example}\label{eq:global_tensor0}
    Consider the symmetric tensor completion problem of symmetric rank one and order three, that is, $d=1$, $k=3$ and $g=h_{\rm prod}:(\mathbb{C}^d)^k\rightarrow \mathbb{C}$, where $h_{\rm prod}$ denotes the product map of $k$ variables.
    Then $d_{{\Gamma}_g}=n_{\Gamma_g}=0$ as seen in Example~4.5.
    Consider a framework
     $(G,p)$, where 
    $G=(\{a,b,c\}, \{aaa, aab,abc\})$ and $p: a\mapsto x_a, b\mapsto x_b, c\mapsto x_c$.
    Then,
    \[
\J f_{g,G}(p)=
\kbordermatrix{
 & a & b & c\\
 aaa & 3x_a^2 & 0 & 0 \\
 aab & 2x_ax_b & x_a^2 & 0 \\
 abc & x_bx_c & x_cx_a & x_ax_b
    }.
    \]
    Since $\rank \J f_{g,G_i}(p)=3=d|V(G)|-d_{\Gamma_g}$,  $(G, p)$ is locally $g$-rigid.
\end{example}

\smallskip

The following geometric implication of local $g$-rigidity is also important.
\begin{prop}\label{prop:local_rigidity_finite}
Let $g:(\mathbb{F}^d)^k\rightarrow \mathbb{F}$ be a polynomial map whose stabiliser is $\Gamma_g$.
Suppose $p$ is generic and $G=(V,E)$ is a $k$-uniform hypergraph with $|V|\geq n_{\Gamma_g}+d_{\Gamma_g}$.
Then $(G,p)$ is locally $g$-rigid 
if and only if $f^{-1}_{g,G}(f_{g,G}(p))/\Gamma_g$ is finite.
\end{prop}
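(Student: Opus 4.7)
Writing $\mathcal F := f^{-1}_{g,G}(f_{g,G}(p))$, the plan is to translate the finiteness of $\mathcal F/\Gamma_g$ into the dimension statement $\dim \mathcal F = d_{\Gamma_g}$, which by Proposition~\ref{prop:inf_rigid} is equivalent to local $g$-rigidity of $(G,p)$. Since $p$ is generic, Proposition~\ref{prop:matroid}(iv) ensures $f_{g,G}(p)$ is a regular value of $f_{g,G}$, and Proposition~\ref{prop:matroid}(i) then makes $\mathcal F$ a smooth manifold of dimension $\dim\ker \J f_{g,G}(p)$, with every $q\in\mathcal F$ a regular point of $f_{g,G}$. The group $\Gamma_g$ acts smoothly on $(\mathbb{F}^d)^V$ preserving $\mathcal F$, and for each $q\in\mathcal F$ the orbit $\Gamma_g\cdot q \subseteq \mathcal F$ is an immersed submanifold whose tangent space at $q$ is ${\rm triv}_g(q)$, so $\dim(\Gamma_g\cdot q)=\dim {\rm triv}_g(q)\leq d_{\Gamma_g}$. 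In particular, $|V|\geq n_{\Gamma_g}$ together with the genericity of $p$ yields $\dim(\Gamma_g\cdot p)=d_{\Gamma_g}$, whence $\dim \mathcal F\geq d_{\Gamma_g}$.

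The easy direction ``$\Leftarrow$'' then reads as follows: if $\mathcal F/\Gamma_g$ is finite, $\mathcal F$ is a finite union of orbits, each of dimension at most $d_{\Gamma_g}$, whence $\dim \mathcal F\leq d_{\Gamma_g}$. Combined with the lower bound above this gives $\dim \mathcal F = d_{\Gamma_g}$ and hence local $g$-rigidity by Proposition~\ref{prop:inf_rigid}.

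For the converse ``$\Rightarrow$'', I would assume $(G,p)$ is locally $g$-rigid, so $\dim \mathcal F=d_{\Gamma_g}$. Then $\Gamma_g\cdot p$, having the same dimension as $\mathcal F$ and being locally closed as an orbit of an algebraic group action, is open in $\mathcal F$. To deduce finiteness of $\mathcal F/\Gamma_g$ the plan is to show that \emph{every} orbit in $\mathcal F$ is open, i.e.\ $\dim{\rm triv}_g(q)=d_{\Gamma_g}$ for every $q\in\mathcal F$; once this is established, since the real or complex algebraic set $\mathcal F$ has only finitely many connected components, each orbit is a union of such components and there can only be finitely many of them.

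The main obstacle is precisely the claim that every $q\in\mathcal F$ satisfies $\dim{\rm triv}_g(q)=d_{\Gamma_g}$. My plan is to introduce the Zariski-closed locus $Z:=\{q\in (\mathbb{F}^d)^V:\dim {\rm triv}_g(q)<d_{\Gamma_g}\}$, defined by the vanishing of the $d_{\Gamma_g}\times d_{\Gamma_g}$ minors of the derivative of the $\Gamma_g$-action $\mathfrak g\to T_q(\mathbb{F}^d)^V$, and to show $\mathcal F\cap Z=\emptyset$. By Proposition~\ref{prop:matroid}(ii) the value $f_{g,G}(p)$ is generic in $\overline{{\rm im}\, f_{g,G}}$, so it suffices to prove that the constructible set $f_{g,G}(Z)$ has dimension strictly less than $\dim\overline{{\rm im}\, f_{g,G}}=d|V|-d_{\Gamma_g}$. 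The stronger hypothesis $|V|\geq n_{\Gamma_g}+d_{\Gamma_g}$ is designed to supply exactly the slack needed for this codimension estimate on $Z$, via a rank analysis of the derivative of the action combined with the minimality defining $n_{\Gamma_g}$; carrying out this estimate is the technical heart of the proof.
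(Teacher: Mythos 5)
Your architecture largely mirrors the paper's: translate finiteness of the quotient into the dimension statement $\dim f^{-1}_{g,G}(f_{g,G}(p))=d_{\Gamma_g}$, invoke Proposition~\ref{prop:matroid}(i),(iv) to make the fiber a smooth manifold of dimension $\dim\ker\J f_{g,G}(p)$, compare with orbit dimensions, and, for the forward direction, observe that once every orbit has dimension $d_{\Gamma_g}$ each orbit is open and hence a union of the finitely many connected components of the algebraic set. However, there is a genuine gap exactly where you flag the ``technical heart'': the claim that $\dim{\rm triv}_g(q)=d_{\Gamma_g}$ for \emph{every} $q$ in the fiber. You propose to prove this through the bad locus $Z=\{q:\dim{\rm triv}_g(q)<d_{\Gamma_g}\}$ and a bound $\dim f_{g,G}(Z)<d|V|-d_{\Gamma_g}$, but you do not carry the bound out, and it is not easy: the natural fiber-dimension estimate for $f_{g,G}|_Z$ goes the wrong way, because for $q\in Z$ the piece $\Gamma_g\cdot q$ of the fiber through $q$ has dimension $\dim{\rm triv}_g(q)<d_{\Gamma_g}$, so fibers over $Z$ may be \emph{small} and the image correspondingly large. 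Nothing in your sketch rules out that a high-dimensional piece of $Z$ maps almost injectively into $\overline{{\rm im}\,f_{g,G}}$.

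The paper sidesteps $Z$ entirely and argues fiber-specifically by transcendence degree. Assuming $(G,p)$ is locally $g$-rigid, the transcendence degree of $\mathbb{Q}(f_{g,G}(p))$ over $\mathbb{Q}$ is $d|V|-d_{\Gamma_g}$, hence so is that of $\mathbb{Q}(f_{g,G}(q))$ for any $q$ in the fiber, and therefore the transcendence degree of $\mathbb{Q}(q)$ is at least $d|V|-d_{\Gamma_g}$. A greedy matroid argument — this is precisely where $|V|\geq n_{\Gamma_g}+d_{\Gamma_g}$ is used, and the ``slack'' you correctly anticipated — then extracts a subset $X\subset V$ with $|X|\geq n_{\Gamma_g}$ and $q|_X$ generic, so $\dim{\rm triv}_g(q|_X)=d_{\Gamma_g}$ and hence $\dim{\rm triv}_g(q)=d_{\Gamma_g}$. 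To make your route close you would need an analogous bound on the transcendence degree of points of $Z$, at which point you are essentially reproducing this argument; I would therefore abandon the global estimate on $\dim f_{g,G}(Z)$ and argue directly about $q$ in the fiber as the paper does.
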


\begin{proof}
Suppose $n=|V|$ and $q\in f^{-1}_{g,G}(f_{g,G}(p))$. Recall that ${\rm triv}_g(q)=\{\dot{\gamma}\cdot p: \dot{\gamma}\in \mathfrak{g}\}$.
We first show that 
\begin{equation}\label{eq:local_g_rigidity_finite}
\dim {\rm triv}_g(q)=d_{\Gamma_g}.
\end{equation}
For a vector $x$ over $\mathbb{F}$, let 
$\mathbb{Q}(x)$ be the field extension of $\mathbb{Q}$ generated by the entries of $x$.
By Proposition~\ref{prop:matroid}(iii), 
the transcendence degree of $\mathbb{Q}(f_{g,G}(p))$ over $\mathbb{Q}$ is $dn-d_{\Gamma_g}$.
Since $f_{g,G}(p)=f_{g,G}(q)$,
the transcendence degree of $\mathbb{Q}(f_{g,G}(q))$ over $\mathbb{Q}$ is also $dn-d_{\Gamma_g}$.
Hence the transcendence degree of $\mathbb{Q}(q)$ over $\mathbb{Q}$ is at least $dn-d_{\Gamma_g}$.
Therefore, since $n\geq n_{\Gamma_g}+d_{\Gamma_g}$, 
there is a vertex subset $X\subset V$ such that $|X|\geq n_{\Gamma_g}$ and $q|_{X}$ is generic.\footnote{Formally the existence of $X$ can be seen by the following algorithm.
First, set $X=\emptyset$ and 
repeat the following procedure from $i=1$ through $n$.
Check if the entries of $q(X\cup\{i\})$ are algebraically independent over $\mathbb{Q}$. If yes, update $X$ to $X\cup \{i\}$; otherwise reject $i$.
Since the transcendence degree of $\mathbb{Q}(q)$ over $\mathbb{Q}$ is at least $dn-d_{\Gamma_g}$,
the fact that algebraic independence defines a matroid implies that the number of rejections in the above procedure is at most $d_{\Gamma_g}$.
Since $n\geq n_{\Gamma_g}+d_{\Gamma_g}$, we have 
$|X|\geq n_{\Gamma_g}$ at the end of the procedure.
}
Hence, $\dim {\rm triv}_g(q_{|X})=d_{\Gamma_g}$,
and Equation (\ref{eq:local_g_rigidity_finite}) follows.

Parts (i) and (iv) of Proposition~\ref{prop:matroid} imply that 
$f_{g,G}^{-1}(f_{g,G}(p))$ is a smooth manifold
of dimension equal to $\dim \ker \J f_{g,G}(p)$.
On the other hand, Equation (\ref{eq:local_g_rigidity_finite}) implies that, for any $q\in f^{-1}_{g,G}(f_{g,G}(p))$, 
$\Gamma_g \cdot q=\{ \gamma \cdot q: \gamma \in \Gamma_g \}$ is a $d_{\Gamma_g}$-dimensional smooth submanifold of $f_{g,G}^{-1}(f_{g,G}(p))$.
Since $p$ is generic, there is a neighbor $N$ of $p$ such that $(f_{g,G}^{-1}(f_{g,G}(p))\cap N) /\Gamma_g$ is a manifold of dimension equal to 
$\dim \ker \J f_{g,G}(p)-d_{\Gamma_g}$.
By Proposition~\ref{prop:inf}, the latter value is zero if and only if $(G,p)$ is locally $g$-rigid.
Hence, if $(G,p)$ is not locally $g$-rigid,
then $f_{g,G}^{-1}(f_{g,G}(p))/\Gamma_g$ is not finite.

Conversely, if $(G,p)$ is locally $g$-rigid, then 
the dimension of $f_{g,G}^{-1}(f_{g,G}(p))$ is $d_{\Gamma_g}$. 
Since $\Gamma_g \cdot q$ is a $d_{\Gamma_g}$-dimensional smooth submanifold  for any $q\in f_{g,G}^{-1}(f_{g,G}(p))$, the connected component of $f_{g,G}^{-1}(f_{g,G}(p))$ containing $q$ coincides with the connected component of $\Gamma_g \cdot q$ containing $q$.
Since $f_{g,G}^{-1}(f_{g,G}(p))$ is also an algebraic variety, it has finitely many components and hence
$f_{g,G}^{-1}(f_{g,G}(p))/\Gamma_g$ is finite.
\end{proof}

In the context of tensor completion, $g$ is  defined as in Equation (\ref{eq:sym_tensor_g}) and $d_{\Gamma_g}=n_{\Gamma_g}=0$.
As a consequence, Proposition~\ref{prop:local_rigidity_finite} holds unconditionally, independent of the number of vertices. In general we believe that the assumption $|V|\geq n_{\Gamma_g}+d_{\Gamma_g}$ in Proposition~\ref{prop:local_rigidity_finite} can be eliminated.
Conversely, the requirement of genericity for the point configuration $p$ is crucial. 
To illustrate this significance, an example showcasing non-finite $f_{g,G}^{-1}(f_{g,G}(p))/\Gamma_g$ is provided in Figure~\ref{fig:non_finite} within the realm of ordinary Euclidean rigidity.

\begin{figure}[h]
\begin{center}
\begin{tikzpicture}[scale=0.7]
\filldraw (0,0) circle (3pt);
\filldraw (1.5,0) circle (3pt);
\filldraw (3,0) circle (3pt);
\filldraw (4.5,0) circle (3pt);
\filldraw (6,0) circle (3pt);
\filldraw (3,1.5) circle (3pt);
\filldraw (4.5,3) circle (3pt);
\filldraw (1.5,3) circle (3pt);

 \draw[black,thick]
(0,0) -- (1.5,0) -- (3,0) -- (4.5,0) -- (6,0) -- (3,1.5) -- (1.5,3) -- (4.5,3) -- (6,0);

 \draw[black,thick]
 (1.5,3) -- (0,0) -- (3,1.5) -- (4.5,3);

\filldraw (10,0) circle (3pt);
\filldraw (9.7,1.3) circle (3pt);
\filldraw (10.3,2.6) circle (3pt);
\filldraw (10,3.9) circle (3pt);
\filldraw (11.5,4.5) circle (3pt);
\filldraw (11.4,3) circle (3pt);
\filldraw (12.8,1.4) circle (3pt);
\filldraw (14,3.1) circle (3pt);

 \draw[black,thick]
 (10,0) -- (9.7,1.3) -- (10.3,2.6) -- (10,3.9) -- (11.5,4.5) -- (14,3.1) -- (12.8,1.4) -- (11.4,3) -- (14,3.1);

  \draw[black,thick]
 (11.4,3) -- (10,0) -- (12.8,1.4) -- (11.5,4.5);
  
\end{tikzpicture}
\end{center}
\caption{Consider the ordinary Euclidean rigidity in the plane.
(Left) a locally $g$-rigid framework $(G,p)$.
(Right) the framework $(G,q)$  satisfies 
$f_{g,G}(p)=f_{g,G}(q)$ but $(G,q)$ is not locally $g$-rigid
since the left path on five vertices in the framework can move continuously without changing the edge lengths.
So $f_{g,G}^{-1}(f_{g,G}(p))/\Gamma_g$ is not finite but $(G,p)$ is locally $g$-rigid.}
\label{fig:non_finite}
\end{figure}
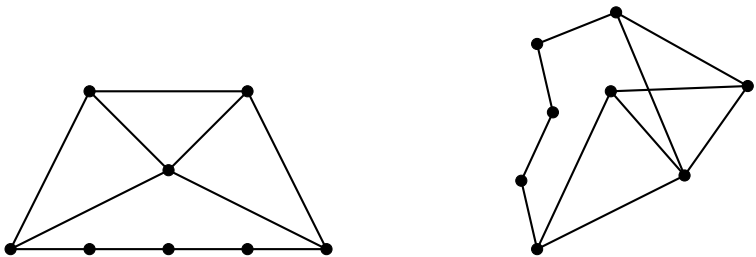

\subsection{Generic local $g$-rigidity and coordinate projections of affine varieties}
Let $G=(V,E)$ be a $k$-uniform  hypergraph,
and consider $\overline{{\rm im} f_{g,G}}$, that is,  the Zariski closure of the image of the $g$-measurement map $f_{g,G}$ in the affine space $\mathbb{F}^{E}$.
If $p$ is generic, then $f_{g,G}(p)$ is a generic point in $\overline{{\rm im} f_{g,G}}$, and hence
$\rank \J f_{g,G}(p)$ coincides with the dimension of $\overline{{\rm im} f_{g,G}}$.
Since the latter value is determined by $G$ (and independent of $p$), we can define the local $g$-rigidity as a property of hypergraphs (using Proposition \ref{prop:inf_rigid}).
This extends the remarkable idea of Asimov and Roth~\cite{asimow} that used this property as the definition of rigidity in the Euclidean distance case.

Specifically, a hypergraph $G$ with $n=|V|$ vertices is said to be {\em locally $g$-rigid} if 
$(G,p)$ is infinitesimally $g$-rigid for some/all generic $p$,
and equivalently
\[
\dim \overline{{\rm im} f_{g,G}}=dn-d_{\Gamma_g}
\quad\text{if\  }n\geq n_{\Gamma_g}.\]

It is often convenient to analyse each hypergraph  as a subgraph of the complete $k$-uniform hypergraph $K_n^k$ with the same vertex set. For this,  suppose $G=([n],E)$ is a subgraph of $K_n^k$.
Let $\pi_G: \mathbb{F}^{\genfrac{\{}{\}}{0pt}{}{[n]}{k}}\rightarrow \mathbb{F}^{E}$ be the {\em coordinate projection} to the subspace indexed by the hyperedges in $G$.
Then, 
\[
f_{g,G}=\pi_G\circ f_{g, K_n^k}.
\]
Hence, checking the local $g$-rigidity of $G$ is equivalent to checking the equality:
\[
\dim \overline{\pi_G({\rm im} f_{g,K_n^k})}=dn-d_{\Gamma_g}.
\]
One notable advantage of this viewpoint is that 
$\overline{{\rm im} f_{g,K_n^k}}$ is often well-investigated in the literature, which allows us to leverage established tools.
For instance, in the case where $g$ represents the squared distance function, 
$\overline{{\rm im} f_{g,K_n^k}}$ is known as the Cayley-Menger variety  
\cite{Borcea02}.
Similarly, when $g$ is the product map, 
$\overline{{\rm im} f_{g,K_n^k}}$ coincides with the affine cone of the Veronese variety.
We will discuss these examples in more detail in Section~\ref{sec:examples2}.

\subsection{Secant, non-defectivity, and identifiability of varieties}
Let ${\cal V}$ be a homogeneous affine variety in $\mathbb{C}^m$.
Its {\em $r$-secant} is defined as 
\begin{equation}\label{eqn:secant}
{\rm Sect}_r({\cal V})=\overline{\bigcup_{x_1, \dots, x_r\in {\cal V}} \langle x_1, \dots, x_r\rangle}.
\end{equation}
In applications, it is often the case that a homogeneous polynomial map $g:(\mathbb{C}^d)^k\rightarrow \mathbb{C}$ is written as the sum of polynomial maps $h:(\mathbb{C}^s)^k\rightarrow \mathbb{C}$ for $s<d$. In such a case, the concept of $g$-rigidity is closely related to the non-defectivity and identifiability of varieties.

To formally establish this connection, let us consider a polynomial map $g:(\mathbb{C}^d)^k\rightarrow \mathbb{C}$.
The argument of $g$ is a tuple $(p_1, p_2, \dots, p_k)$ consisting of $k$ points $p_i$ in $\mathbb{C}^d$. We denote the coordinates of each $p_i$ by $p_i=(p_{i1}, p_{i2}, \dots, p_{id})^{\top}$.
Suppose $g$ is separable into coordinate-wise maps, that is, 
\[
g(p_1,\dots, p_k)=\sum_{i=1}^d h(p_{1i}, \dots, p_{ki})
\]
for some $h:(\mathbb{C}^1)^k \rightarrow \mathbb{C}$. In this case, it follows that $\overline{{\rm im} f_{g,G}}$ is the $d$-secant of $\overline{{\rm im} f_{h,G}}$.
We need slightly more involved notation to deal with the case when $g\in (\mathbb{C}^d)^k$ is decomposed into the sum of copies of $h\in (\mathbb{C}^s)^k$. (Such situations appear in skew-symmetric completions and Chow decompositions given in Section~\ref{sec:examples}.)
Specifically, consider the identification between  $(\mathbb{C}^d)^k$ and $((\mathbb{C}^s)^k)^t$ with $st=d$ such that 
$p\in (\mathbb{C}^d)^k$ is identified with a $t$-tuple $(q_1,\dots, q_t)$ of $q_i\in (\mathbb{C}^s)^k$.
If $g$ is written as 
\begin{equation}\label{eq:sum}
g(p)=\sum_{i=1}^t h(q_i)
\end{equation}
for some $h:(\mathbb{C}^s)^k \rightarrow \mathbb{C}$, then $\overline{{\rm im} f_{g,G}}$ is the $t$-secant of $\overline{{\rm im} f_{h,G}}$.
In such a case, we simply say that {\em $g$ is the sum of $t$ copies of $h$}.

Computing the dimension of secant varieties has been extensively studied in algebraic geometry (see, e.g.,~\cite{bernardi,Zak}). 
In particular, the algebraic notions of non-defectivity and identifiability of these varieties are  closely related to the notion of $g$-rigidity. 
Although these concepts are typically defined for projective varieties, we maintain our focus on affine varieties to preserve the connection with rigidity theory.

\paragraph{Non-defectivity and local $g$-rigidity.}
Given an affine variety ${\cal V}$ in $\mathbb{C}^m$, the $r$-th secant of ${\cal V}$ has dimension at most $\min\{r\dim {\cal V}, m\}$. The latter number is called the {\em expected dimension}, and ${\cal V}$ is said to be {\em $r$-defective} if $\dim {\rm Sect}_r({\cal V})$ is smaller than the expected dimension. 

Assume Setup~\ref{setup}, and suppose further that $g$ is the sum of $t$ copies of $h$.
If $\overline{{\rm im} f_{h,G}}$ is not $t$-defective, then $\dim(\overline{ {\rm im} f_{g,G}})=\min\{t\dim(\overline{{\rm im} f_{h,G}}), |E|\}$, 
and hence checking local $g$-rigidity is reduced to checking local $h$-rigidity. 
The defectivity of classical algebraic varieties such as Veronese, Segre, and Grassmannian varieties has been extensively studied (see, e.g.,~\cite{bernardi}). In the classical setting, the underlying hypergraphs of such varieties are always complete, and hence we can apply the existing results to prove the local $g$-rigidity of complete hypergraphs. 
In particular, when the underlying hypergraph is complete, then the notion of local  $g$-rigidity often coincides with the classical notion of non-defectivity of the corresponding secant varieties. See Section~\ref{sec:examples2} for concrete examples. 

However, for the analysis of general hypergraphs, we need to understand the influence of coordinate projections on defectivity. 
Proposition~\ref{prop:sparsity}, below, shows that the dimension for a general hypergraph can be much smaller than the expected dimension for combinatorial reasons. Therefore, we focus 
on the combinatorial properties of the underlying hypergraphs. 

It is also important to point out that our primal applications include the case when $g$ and $h$ are quadratic. In such cases, $\overline{{\rm im} f_{h,G}}$ will always be defective whereas $G$ can be still locally $g$-rigid. 
So the non-defectivity and local $g$-rigidity are different notions in general.
An example is described below.

\paragraph{Tangent spaces.}
Suppose $g$ is the sum of $t$ copies of $h$.
Then, by arranging the column ordering appropriately, we have
\begin{equation}\label{eq:Jacobian_block}
\J f_{g,G}(p)=\begin{pmatrix} \J f_{h,G}(q_1) \dots \J f_{h,G}(q_t)\end{pmatrix}.
\end{equation}
Hence the tangent space of $\overline{{\rm im} f_{g,G}}$ at a generic point $f_{g,G}(p)$ is the linear span of 
those of $\overline{{\rm im} f_{h,G}}$ at 
$f_{h,G}(q_1), \dots, f_{h,G}(q_t)$.
This fact corresponds to the Teraccini lemma in algebraic geometry (see, e.g.,~\cite{bernardi}).

\paragraph{Identifiability and global $g$-rigidity.}
Suppose $g$ is homogeneous.
For a variety ${\V}$ in $\mathbb{C}^m$, a generic point $y\in {\rm Sect}_r({\V})$ can be written as 
$y=\sum_{i=1}^r y_i$ for some $y_i\in {\V}$. 
The variety ${\mathcal{V}}$ is said to be {\it $r$-identifiable} if $y_1,\ldots, y_r$ are uniquely determined up to permutations of indices and scaling of each $y_i$.
As we will see in Section~\ref{sec:global}, identifiability is a useful property for checking global $g$-rigidity. For example, in the case of symmetric tensors, a complete hypergraph is globally $g$-rigid if and only if the corresponding secant variety is identifiable. However, in general, global $g$-rigidity and identifiability are not comparable. We now provide two examples to demonstrate this. 

\paragraph{Example.} To see the difference among local/global rigidity, defectiveness, and identifiability, 
we consider the symmetric rank-$r$ matrix completion of order $k$. In this case  
$g:(\mathbb{C}^r)^k\rightarrow \mathbb{C}$ is the sum of $r$ copies of the product map
$h_{\rm prod}: \mathbb{C}^k\rightarrow \mathbb{C}$.
Then 
$\overline{{\rm im} f_{g, G}}$ is the $r$-secant of $\overline{{\rm im} f_{h_{\rm prod}, G}}$.

\begin{itemize}
\item Suppose $k=2$.
This case  corresponds to the rank-$r$ matrix completion.
It is known (see, e.g., \cite{singer2010uniqueness}) that 
$\J f_{h_{\rm prod}, K_n}=n$ and $\J f_{g, K_n}=rn-{r\choose 2}$.
So $\overline{{\rm im} f_{h_{\rm prod}, K_n}}$
is $r$-defective and not $r$-identifiable.
However, since $d_{\Gamma_g}={r\choose 2}$, 
$K_n$ is locally $g$-rigid by Proposition~\ref{prop:inf_rigid}.
In fact, $K_n$ is known to be globally $g$-rigid, see, e.g.~\cite{singer2010uniqueness}.

\item Suppose $r=1$.
Then 
$\overline{{\rm im} f_{h_{\rm prod}, G}}$ is $1$-identifiable by definition.
However, the local rigidity and the global $g$-rigidity of a generic framework $(G,p)$  depends on the combinatorial structure of $G$. Indeed the global $g$-rigidity captures the uniqueness of symmetric rank-one tensor completions. So, if $G$ is very sparse, $(G,p)$ cannot be globally $g$-rigid.
\end{itemize}

\subsection{Generic $g$-rigidity matroids}
Our goal in this subsection is to establish a connection between $g$-rigidity and combinatorial properties of hypergraphs. For this purpose, we formulate our problem using the framework of algebraic matroids.

\begin{definition}
For a $\mathbb{Q}$-polynomial map $f:\mathbb{F}^n \rightarrow \mathbb{F}^m$, consider the following collections of subsets~of~$[m]$:
 \begin{enumerate}
\item For a generic $p\in \mathbb{F}^{n}$, 
\[
\mathcal{I}_1= \{
S \subseteq [m]: \text{the rows of the Jacobian matrix } \J f(p) \text{ indexed by $S$ are independent}
\}.
\]
\item Let $V_f={\rm im} f$ and $I(V_f)$ be the associated ideal of the variety $\overline{{\rm im} f}$, and 
\[
\mathcal{I}_2 = \{
S \subseteq [m]: I(V_f)\cap \mathbb{F}[S] = \{0\}
\}.
\]
\item 
Let $\pi_S$ be the coordinate projection to the subspace indexed by $S\subseteq [m]$, and let
\[
\mathcal{I}_3 = \{
S \subseteq [m] : \overline{\pi_S(V_f)} = \mathbb{F}^S
\}.
\]
\end{enumerate}
The pair $([m], \mathcal{I}_1)$ is called the {\em algebraic matroid} of $f$. It is known that $\mathcal{I}_1=\mathcal{I}_2=\mathcal{I}_3$ (see, for example, \cite{rosen2020algebraic} which also provides a detailed exposition on algebraic matroids). 
It is also known that $\mathcal{I}_i$ is invariant over the choice of  $\mathbb{F}\in \{\mathbb{R}, \mathbb{C}\}$, which underlines the robustness of algebraic matroids in capturing relevant properties of the corresponding function $f$.

\end{definition}
\begin{definition}
For a $\mathbb{Q}$-polynomial map $g:(\mathbb{F}^d)^k\rightarrow \mathbb{F}$ and a positive integer $n$, the algebraic matroid of $f_{g, K_n^k}$ is called the {\em $g$-rigidity matroid} ${\cal M}_{g,n}$ (or simply ${\cal M}_g$ if $n$ is not important).
\end{definition}
The $g$-rigidity matroid is the matroid on the hyperedges of $K_n^k$.

\subsection{Combinatorial characterisation problem}
\label{subsec:combinatorial}

Given that generic $g$-rigidity is a property of hypergraphs, it is natural to ask whether this property can be characterised purely in terms of graph theory concepts. This pursuit of a combinatorial characterisation forms a central theme within the field of graph rigidity. 
The most famous example is the following Geiringer-Laman theorem \cite{Laman,HPG}.

\begin{theorem}\label{thm:Laman}
Suppose $g$ is the squared distance function in two-dimensional Euclidean space. Then $G$ is $g$-rigid if and only if $G$ has a subgraph $H$ satisfying the following conditions:
\begin{itemize}
\item $|E(H)|=2|V(G)|-3$, and
\item for any subgraph $H'=(V',E')$ of $H$ with $|E'|\geq 1$, $|E'|\leq 2|V'|-3$.
\end{itemize}
\end{theorem}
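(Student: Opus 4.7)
The plan is to reduce the statement to a linear-algebraic condition on the rigidity matrix and then invoke Henneberg's inductive construction of $(2,3)$-tight graphs. With $g(x,y)=\|x-y\|_2^2$ in dimension $d=2$ we have $d_{\Gamma_g}=3$ and $n_{\Gamma_g}=2$, so Proposition~\ref{prop:inf_rigid} says that generic $g$-rigidity of $G=(V,E)$ is equivalent to $\rank R_2(G,p)=2|V|-3$ at a generic $p$. Call a subgraph $(2,3)$-\emph{sparse} if $|E'|\leq 2|V'|-3$ for every subgraph with $|E'|\geq 1$ and $(2,3)$-\emph{tight} if in addition $|E|=2|V|-3$. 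The theorem then asserts that the bases of $\mathcal{R}_2$ in $G$ are precisely the spanning $(2,3)$-tight subgraphs.

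For necessity, suppose $G$ is $g$-rigid, so $\rank R_2(G,p)=2|V|-3$ at generic $p$. Choose a maximal linearly independent set of rows of $R_2(G,p)$; the corresponding edges form a spanning subgraph $H$ with $|E(H)|=2|V|-3$. For any subgraph $H'=(V',E')$ of $H$, the sub-framework $(H',p|_{V'})$ carries at least the three-dimensional space of trivial infinitesimal motions, so the corresponding rows have rank at most $2|V'|-3$. Since those rows are linearly independent inside $R_2(G,p)$, we conclude $|E'|\leq 2|V'|-3$, giving the sparsity condition.

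For sufficiency, assume $G$ contains a $(2,3)$-tight subgraph $H$. Since adding edges cannot decrease $\rank R_2$, it suffices to show $H$ is generically $g$-rigid. The combinatorial backbone is Henneberg's theorem: every $(2,3)$-tight graph can be built from a single edge by a sequence of \emph{0-extensions} (attach a new vertex to two existing ones) and \emph{1-extensions} (delete an edge $uv$ and add a new vertex $w$ joined to $u$, $v$ and one further vertex $z$). We then argue inductively that both operations preserve generic $g$-rigidity in $\mathbb{R}^2$. The $0$-extension is straightforward: the rigidity matrix of the extended framework has block-triangular form, the new $2\times 2$ diagonal block being full rank for a generic placement of the new vertex.

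The main obstacle is the $1$-extension step. One must show that if $(H,p)$ is infinitesimally rigid and $H'$ is obtained from $H$ by a $1$-extension removing $uv$ and adding a new vertex $w$ joined to $u,v,z$, then $(H',p')$ is infinitesimally rigid for a generic extension $p'$ of $p$. The standard technique, going back to Geiringer and Laman, is a limit argument: place $w$ on the line through $u$ and $v$; the constraints that $w$ be at prescribed distances from $u$ and $v$ then force the infinitesimal velocities at $w$, $u$, $v$ to agree along that line, effectively restoring the removed edge $uv$. Lower-semicontinuity of rank then implies that generic placements of $w$ give full rank, provided the original framework had it. Equivalently, a putative rank drop would produce a nontrivial self-stress supported on a subgraph of $H'$ forcing a subgraph of $H$ to violate $|E'|\leq 2|V'|-3$, contradicting $(2,3)$-sparsity. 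This genericity argument is the delicate technical core; once it is in place, induction on $|V(H)|$ completes the proof.
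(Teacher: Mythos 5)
The paper does not prove Theorem~\ref{thm:Laman}; it is quoted as the classical Geiringer--Laman theorem with a citation to \cite{Laman,HPG}, so there is no proof in the paper to compare against. Your sketch is the standard Henneberg-style inductive argument, and the overall strategy and the necessity direction are sound.

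One sentence in the sufficiency direction is, however, not a correct alternative and should be removed or repaired. You write that, \emph{equivalently}, a rank drop at the $1$-extension step ``would produce a nontrivial self-stress supported on a subgraph of $H'$ forcing a subgraph of $H$ to violate $|E'|\le 2|V'|-3$, contradicting $(2,3)$-sparsity.'' This is circular: deducing a sparsity violation from the existence of a self-stress is precisely the implication ``$(2,3)$-sparse $\Rightarrow$ independent in $\mathcal{R}_2$'' that the theorem is supposed to establish. Moreover the self-stress could be supported on all of $H'$, so there is no smaller graph to which an inductive hypothesis applies. The collinearity/limit argument you give first is the correct one and does not need this ``equivalent'' reformulation: placing $w$ on the line through $u$ and $v$ shows the two new constraints at $w$ imply the deleted constraint on $uv$, hence any infinitesimal flex of $(H',p')$ restricts to a flex of $(H,p)$, which is trivial by the induction hypothesis; the third edge $wz$ (with $z$ generic, hence off the line) then pins down $\dot{p}(w)$, so $(H',p')$ is infinitesimally rigid, and lower semicontinuity of rank upgrades this to generic $p'$. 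Beyond that, the argument quietly relies on two standard but nontrivial lemmas that a complete proof must supply: (i) Henneberg's theorem that every $(2,3)$-tight graph is obtained from $K_2$ by a sequence of $0$- and $1$-extensions (the existence of a degree-$3$ vertex admitting an inverse $1$-extension that preserves $(2,3)$-tightness is the combinatorial heart), and (ii) the fact that $0$- and $1$-extensions preserve $(2,3)$-tightness, which is used implicitly when carrying the inductive hypothesis forward.
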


Inspired by the goal of extending the aforementioned result to encompass general polynomial maps $g$ and hypergraphs, we review a construction of combinatorial matroids on hypergraphs. 

\smallskip
Let $G=(\MV,E)$ be a $k$-uniform hypergraph.
For nonnegative integers $a$ and $b$, we say that $G$ is {\em $(a,b)$-sparse} if 
every sub-hypergraph $G'$ with $E(G')\neq \emptyset$ satisfies $|E(G')|\leq a|\MV(G')|-b$,
and an $(a,b)$-sparse $G$ is said to be {\em $(a,b)$-tight} if $|E|=a|\MV|-b$.
It is known that, if $0\leq b\leq ak-1$, the collection of edge sets of all $(a,b)$-sparse sub-hypergraphs of $K_n^k$ forms the independent set of a matroid on $E$ 
(see, e.g.,~\cite{whiteley1996some}),
which is called the {\em $(a,b)$-sparsity matroid} $\cS^{a,b}_n$.

Consider a polynomial map $g$ and a hypergraph $G=(V,E)$ with $G\subseteq K_n^k$.
By Proposition~\ref{prop:inf_rigid}, if $E$ is independent in the $g$-rigidity matroid ${\cal M}_g$, 
\[
|E'|\leq d|\MV'|-d_{\Gamma_g}
\]
for any subgraph $G'=(\MV',E')$ of $G$ with $|\MV'|\geq n_{\Gamma_g}$.
If $dk-d_{\Gamma_g}\geq 1$, the $(a,b)$-sparsity matroid $\cS^{a,b}_n$ is nontrivial, and its rank gives a {\it combinatorially expected rank} of $E$ in the $g$-rigidity matroid. It is known that  the rank of $E$ in $\cS^{a,b}_n$ is given by 
\begin{equation}\label{eq:sparsity_rank}
\min\left\{|F_0|+\sum_{i=1}^k \left(a\left|\bigcup_{e\in F_i} e\right|-b\right)\ \Big|\  \begin{array}{l}F_0\subseteq E\text{ and } \{F_1,\dots, F_k\}   \text{ is a partition of $E\setminus F_0$}\end{array}\right\},
\end{equation}
see, e.g.,~\cite{schrijver2003combinatorial}. 
Hence, we have the following ``combinatorially expected dimension" of $\overline{\pi_G({\rm im} f_{g, K_n^k})}$,
which can be strictly smaller than a trivial expected dimension in the context of secant varieties.
\begin{prop}\label{prop:sparsity}
Let $g:(\mathbb{F}^d)^k\rightarrow \mathbb{F}$ be a polynomial map, and let $r_{g,n}$ be the rank function of ${\cal M}_{g,n}$. 
Suppose that $k\geq n_{\Gamma_g}$ and $dk-d_{\Gamma_g}\geq 1$.
Then, for any $k$-uniform hypergraph $G=(V,E)$, 
\begin{equation}\label{eq:sparsity_count}
\dim(\overline{\pi_G({\rm im} f_{g,K_n^k}})) \nonumber =r_{g,n}(E)\leq 
\min\left\{|F_0|+\sum_{i=1}^k \left(d\left|\bigcup_{e\in F_i} e\right|-d_{\Gamma_g}\right)\ \Big|\  \begin{array}{l}F_0\subseteq E \text{ and } \{F_1,\dots, F_k\} \\  \text{ is a partition of $E\setminus F_0$}\end{array}\right\}. 
\end{equation}
\end{prop}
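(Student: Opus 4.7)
The plan is to split the statement into the equality $\dim(\overline{\pi_G({\rm im}\, f_{g,K_n^k})}) = r_{g,n}(E)$ and the subsequent upper bound. The equality is obtained by unpacking definitions: since $f_{g,G} = \pi_G \circ f_{g,K_n^k}$, the Zariski closures agree, and by Proposition~\ref{prop:matroid}(iii) applied to $f_{g,G}$ at a generic $p$, $\dim(\overline{{\rm im}\, f_{g,G}}) = \rank \J f_{g,G}(p)$. Meanwhile, via the equivalence $\mathcal{I}_1 = \mathcal{I}_3$ built into the definition of the algebraic matroid, $r_{g,n}(E)$ is exactly the rank of the rows of $\J f_{g,K_n^k}(p)$ indexed by $E$, which coincides with $\rank \J f_{g,G}(p)$.

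For the upper bound, the strategy is to show that every set independent in $\cM_{g,n}$ is $(d, d_{\Gamma_g})$-sparse. Once this is done, the independent sets of $\cM_{g,n}|_E$ are contained in those of $\cS^{d,d_{\Gamma_g}}_n|_E$, giving $r_{g,n}(E) \leq r_{\cS^{d,d_{\Gamma_g}}_n}(E)$. The hypothesis $dk - d_{\Gamma_g} \geq 1$ is precisely the condition under which $\cS^{d,d_{\Gamma_g}}_n$ is a matroid, and its rank is then given by formula (\ref{eq:sparsity_rank}), matching the right-hand side of the claim.

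To verify the sparsity, take a basis $F \subseteq E$ of $\cM_{g,n}|_E$ and consider any non-empty $F' \subseteq F$ with vertex span $V' = \bigcup_{e \in F'} e$. Since $F'$ remains independent in $\cM_{g,n}$, its cardinality equals the rank of the corresponding rows of $\J f_{g,K_n^k}(p)$, which in turn is at most $\rank \J f_{g,(V',F')}(p|_{V'})$ because all nonzero entries of these rows lie in the columns indexed by $V'$. The hypothesis $k \geq n_{\Gamma_g}$ ensures $|V'| \geq n_{\Gamma_g}$ whenever each hyperedge in $F'$ has $k$ distinct vertices, so Proposition~\ref{prop:inf_rigid} applies and yields $|F'| \leq d|V'| - d_{\Gamma_g}$, which is exactly $(d, d_{\Gamma_g})$-sparsity.

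The main obstacle I foresee is handling genuine multiset hyperedges where $|V'| < n_{\Gamma_g}$ can occur. In this case Proposition~\ref{prop:inf_rigid} does not apply directly and the naive $(d, d_{\Gamma_g})$-sparsity inequality can already fail on a single hyperedge. My plan is to absorb such pathological sub-hypergraphs into the $F_0$ summand of the min formula (\ref{eq:sparsity_rank}); each such edge contributes one unit through $|F_0|$ rather than through a color class, which still bounds $r_{g,n}$ from above and respects the combinatorial count on the right-hand side.
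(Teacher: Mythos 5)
Your equality proof and containment strategy coincide with the paper's inline argument preceding the proposition (the paper provides no separate formal proof of Proposition~\ref{prop:sparsity}, only the sketch in the surrounding text). You also correctly identify the genuine obstruction: a non-simple multiset hyperedge may have support of size less than $n_{\Gamma_g}$, Proposition~\ref{prop:inf_rigid} then does not apply, and the $(d, d_{\Gamma_g})$-sparsity count can fail already on a single hyperedge.

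What does not work is the proposed absorption of pathological sub-hypergraphs into $F_0$. The right-hand side of (\ref{eq:sparsity_count}) is a minimum over \emph{all} partitions, so bounding $r_{g,n}(E)$ above by that minimum requires the inequality $r_{g,n}(E)\le |F_0|+\sum_i \left(d\left|\bigcup_{e\in F_i}e\right|-d_{\Gamma_g}\right)$ to hold for \emph{every} partition; you do not get to choose which partition realises the minimum. In fact the stated inequality fails for non-simple hypergraphs: take $g=\langle\cdot,\cdot\rangle$ on $\mathbb{R}^3$, so $d=3$, $k=2$, $d_{\Gamma_g}=\binom{3}{2}=3$, $n_{\Gamma_g}=2$ (both hypotheses $k\ge n_{\Gamma_g}$ and $dk-d_{\Gamma_g}\ge 1$ hold), and $E=\{\{1,1\}\}$. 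The single Jacobian row is $2p(1)^\top$, generically nonzero, so $r_{g,n}(E)=1$; but the partition with $F_1=E$ and all other parts empty evaluates to $d\cdot 1-d_{\Gamma_g}=0$, so the minimum on the right is $0<1$. The proposition is therefore implicitly about simple hypergraphs (in which case $k\ge n_{\Gamma_g}$ forces every nonempty sub-hypergraph to have at least $n_{\Gamma_g}$ vertices and your containment argument closes cleanly), or about those $g$ that vanish identically on repeated-vertex tuples — as the squared-distance and determinant maps do — so that low-support multiset hyperedges are loops in $\cM_{g,n}$ to begin with. Steering a multiset edge into $F_0$ is not a move available to you, because you are proving an upper bound by a quantity that has already been minimised.
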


The structure of $(a,b)$-sparsity matroids are well-understood,
and there is an efficient algorithm for computing the right hand side of Equation (\ref{eq:sparsity_count}),  see \cite{lee2007graded, streinu2009sparse}.

A natural question arises concerning whether the inequality presented in Equation (\ref{eq:sparsity_count}) is actually an equality. While the Geiringer-Laman theorem, Theorem~\ref{thm:Laman}, establishes this equality for any graph $G$ when $g$ represents the squared distance function in two-dimensional Euclidean space, it is important to note that, in most naturally occurring examples in practical applications, there exists a hypergraph $G$ for which the inequality in Equation (\ref{eq:sparsity_count}) is strict.
Nevertheless, we conjecture that 
the equality always holds if $g$ is sufficiently generic.

\begin{conjecture}\label{con:genericform}
If $g$ is generic, 
then ${\cal M}_{g,n}={\cal S}_n^{d,0}$.
\end{conjecture}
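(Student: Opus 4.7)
The plan is to establish the equality of matroids by proving two inequalities for every $E\subseteq \binom{[n]}{k}$ when $g$ is generic: $r_{g,n}(E)\leq r_{{\cal S}_n^{d,0}}(E)$ and $r_{g,n}(E)\geq r_{{\cal S}_n^{d,0}}(E)$. The upper bound is essentially automatic from Proposition~\ref{prop:sparsity}: a generic polynomial $g$ has trivial stabiliser $\Gamma_g$, so $d_{\Gamma_g}=0$ and $n_{\Gamma_g}=0$, and the right-hand side of (\ref{eq:sparsity_count}) with $b=0$ coincides with the standard partition formula for the rank function of ${\cal S}_n^{d,0}$.

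For the lower bound I would use the following semicontinuity reduction: since the independence of a fixed row set of $\J f_{g, K_n^k}(p)$ at generic $p$ is a Zariski open condition in the pair $(g,p)$ and there are only finitely many candidates for $E$, it suffices to exhibit a single $g_0$ realising ${\cal M}_{g_0,n}\supseteq {\cal S}_n^{d,0}$. A natural candidate is $g_0(x_1,\ldots, x_k) = \sum_{i=1}^d h_i(x_{1,i}, \ldots, x_{k,i})$ for generic scalar polynomials $h_i:\mathbb{F}^k\to\mathbb{F}$; with this choice, $\J f_{g_0, K_n^k}(p)$ acquires a $d$-block column structure indexed by the coordinates of $\mathbb{F}^d$. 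Next, the matroid union theorem lets us write ${\cal S}_n^{d,0}=\bigvee_{i=1}^d {\cal S}_n^{1,0}$, so each $(d,0)$-independent $E$ admits a partition $E=E_1\sqcup\cdots\sqcup E_d$ with each $E_i$ being $(1,0)$-independent, and by a Hall-type orientation argument each $E_i$ comes with an injective map $\phi_i:E_i\to[n]$ sending $e\in E_i$ to a head $\phi_i(e)\in e$. I would then examine the square submatrix of $\J f_{g_0, K_n^k}(p)$ indexed by rows $E$ and the $|E|$ distinct columns $\{(\phi_i(e), i): e\in E_i,\ i\in[d]\}$, and show that its determinant is a nonzero polynomial in $p$ and in the coefficients of the $h_i$'s.

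The main obstacle will be ruling out cancellation of the identity-permutation term in the Leibniz expansion of this determinant. The identity contribution is a product of partial derivatives $\partial_{j_e} h_i$ evaluated at $p|_e$, which is manifestly a nonzero polynomial in the $h_i$ coefficients; other permutations yield more complicated products whose supports in the coefficient ring of $(h_1, \ldots, h_d)$ should be distinguishable by a monomial-comparison argument exploiting the coordinate-block structure and the algebraic independence of the $h_i$. A secondary issue is compatibility with the symmetry convention of Setup~\ref{setup}: the space of admissible $g$ is restricted to symmetric or anti-symmetric polynomials, so the argument must be carried out within the symmetric subspace. Taking the $h_i$ themselves to be generic symmetric polynomials of sufficiently high degree should suffice, with the anti-symmetric case handled by an analogous construction using anti-symmetrised coordinate-wise summands.
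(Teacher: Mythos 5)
The statement you are proving is not actually proved in the paper: it appears only as Conjecture~\ref{con:genericform}, and the authors explicitly write that they ``conjecture that the equality always holds if $g$ is sufficiently generic''. So there is no paper proof to compare against, and a correct argument here would resolve an open problem. Your architecture is the natural one — the upper bound via Proposition~\ref{prop:sparsity} with trivial stabiliser, semicontinuity reducing the lower bound to a single witness $g_0$, the matroid-union decomposition $\cS_n^{d,0}=\bigvee_{i=1}^d\cS_n^{1,0}$ together with Hall matchings $\phi_i$, and a block-column Laplace/Leibniz analysis of the resulting square minor.

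The genuine gap is exactly the step you flag as ``the main obstacle'': you never actually carry out the cancellation analysis, and this is where all the difficulty of the conjecture lives. Two concrete reasons your sketched ``monomial-comparison argument'' does not yet work. First, every term in the column-block Laplace expansion $\det M = \sum_{(F_1,\ldots,F_d)}\pm\prod_i\det M_i[F_i]$ has \emph{identical} multidegree $(|E_1|,\ldots,|E_d|)$ in the coefficient rings of $(h_1,\ldots,h_d)$, so algebraic independence of the $h_i$ and degree-counting in their coefficients alone cannot separate the identity term from the others; you would need to exhibit a genuine monomial ordering under which the identity-partition term has a private leading monomial, and you have not done this. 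Second, entries of $\J f_{g_0,K_n^k}(p)$ of the form $m_e(v)\,\partial_1 h_i\bigl(p_i(v),p_i(e-v)\bigr)$ are controlled by the pair $(v,e-v)$, and coincidences among these pairs (or, for special $h$, among the resulting polynomials) are precisely what produce circuits strictly inside $\cS_n^{d,0}$ — the paper itself notes this phenomenon for $h_{\rm prod}$, where even cycles are dependent. You must therefore prove both that generic $h_i$ forces all such evaluations to be distinct \emph{and} that the Laplace terms over different row-partitions $(F_1,\ldots,F_d)$ cannot cancel; neither is established. Note also that even the $d=1$ sub-claim implicitly needed by your argument — that for generic symmetric $h:\mathbb{F}^k\to\mathbb{F}$ the algebraic matroid of $f_{h,K_n^k}$ equals the transversal $(1,0)$-sparsity matroid of the incidence structure — is itself a nontrivial assertion that is part of the content of the conjecture and is not addressed. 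As written, the proposal is a credible plan of attack, not a proof.
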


\subsection{Matroid Union}
We now show that the decomposition property of $g$ is inherited by the corresponding matroid ${\cal M}_g$.

\begin{lemma}\label{lem:decomposition}
Suppose a polynomial map $g:(\mathbb{F}^d)^k\rightarrow \mathbb{F}$ is the sum of $t$ copies of $h:(\mathbb{F}^s)^k\rightarrow \mathbb{F}$ with $d=st$.
If a set $E$ of hyperedges is independent in ${\cal M}_{g,n}$, then
$E$ can be partitioned into $E_1, E_2, \dots, E_t$ such that 
each $E_i$ is independent in ${\cal M}_{h,n}$.
\end{lemma}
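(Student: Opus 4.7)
The plan is to exploit the block-column structure of $\J f_{g,G}(p)$ given by Equation~(\ref{eq:Jacobian_block}) and then appeal to Edmonds' matroid union theorem. Using the identification $(\mathbb{F}^d)^V\cong ((\mathbb{F}^s)^V)^t$, I would write a generic $p$ as $p=(q_1,\dots,q_t)$ with each $q_i\in (\mathbb{F}^s)^V$ also generic. Setting $A_i:=\J f_{h,G}(q_i)$, Equation~(\ref{eq:Jacobian_block}) gives
$$\J f_{g,G}(p)=\bigl(\,A_1\ \ A_2\ \ \cdots\ \ A_t\,\bigr),$$
and for any $E'\subseteq E$ the row-submatrix $A_i|_{E'}$ of $A_i$ has rank equal to $r_h(E')$, where $r_h$ denotes the rank function of ${\cal M}_{h,n}$, simply because $q_i$ is a generic configuration in $(\mathbb{F}^s)^V$.

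The key inequality I would establish is that the rank function $r_g$ of ${\cal M}_{g,n}$ satisfies
$$r_g(E')\ \leq\ t\cdot r_h(E')\qquad\text{for every } E'\subseteq E.$$
This follows from the standard subadditivity of rank under horizontal concatenation: the row space of $\bigl(A_1|_{E'}\ \cdots\ A_t|_{E'}\bigr)$ embeds into the direct sum $\bigoplus_{i=1}^t \mathrm{row}(A_i|_{E'})$, and hence
$$r_g(E')=\rank\bigl(A_1|_{E'}\ \cdots\ A_t|_{E'}\bigr)\ \leq\ \sum_{i=1}^t \rank(A_i|_{E'})\ =\ t\cdot r_h(E').$$

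Finally, I would invoke Edmonds' matroid union theorem applied to the $t$-fold union $\bigvee_{i=1}^t {\cal M}_{h,n}$. That theorem states that a set $I$ is independent in the union if and only if $|Y|\leq t\cdot r_h(Y)$ for every $Y\subseteq I$, and this is in turn equivalent to the existence of a partition $I=I_1\sqcup\cdots\sqcup I_t$ with each $I_i$ independent in ${\cal M}_{h,n}$. If $E$ is independent in ${\cal M}_{g,n}$, then $|E'|=r_g(E')\leq t\cdot r_h(E')$ for every $E'\subseteq E$ by the previous step, so Edmonds' criterion is met and the desired partition of $E$ exists.

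I do not foresee a serious obstacle; the only point requiring mild care is that genericity of $p$ in $(\mathbb{F}^d)^V$ passes to genericity of each $q_i$ in $(\mathbb{F}^s)^V$, so that $\rank A_i|_{E'}=r_h(E')$ holds uniformly in $i$. This is immediate from the product decomposition $(\mathbb{F}^d)^V\cong ((\mathbb{F}^s)^V)^t$.
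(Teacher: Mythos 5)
Your proof is correct, but it takes a genuinely different route from the paper's. The paper proves the lemma by first reducing to the case $d_{\Gamma_g}=0$ and to the case where $E$ is a basis of ${\cal M}_{g,n}$ (so that $\J f_{g,G[E]}(p)$ is a square matrix), and then applies the generalized Laplace expansion of $\det \J f_{g,G[E]}(p)$ along the block-column structure of Equation~(\ref{eq:Jacobian_block}): since the determinant is nonzero, some term $\prod_{i=1}^t \det \J f_{h,G[E_i]}(q_i)$ in the expansion is nonzero, and the corresponding partition $\{E_1,\dots,E_t\}$ does the job. You instead establish the rank inequality $r_g(E')\leq t\cdot r_h(E')$ for every $E'\subseteq E$ directly from the subadditivity of matrix rank under horizontal concatenation, and then invoke Edmonds' matroid union theorem to conclude that $E$ is independent in $t{\cal M}_{h,n}$. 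The comparison is instructive: the paper's argument is explicit and self-contained (it effectively re-derives the relevant direction of the union theorem in this special case via the Laplace expansion), whereas yours outsources the combinatorics to Edmonds' theorem and is thereby shorter and avoids both reductions --- it needs no assumption on $d_{\Gamma_g}$ and applies to independent sets directly rather than only bases. Both are valid; yours is arguably the cleaner and more general phrasing, and in fact the paper itself appeals to Edmonds' theorem in the remark immediately following the lemma, so the ingredients you use are already on the table there. One small stylistic note: your justification of $\rank(A_1|_{E'}\ \cdots\ A_t|_{E'})\leq\sum_i\rank(A_i|_{E'})$ via ``row space embeds into the direct sum of row spaces'' is a slightly roundabout way to phrase the standard fact that the column space of a horizontally concatenated matrix is the sum of the column spaces; either way the inequality is correct.
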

\begin{proof}
For the sake of simplicity, we will provide the proof specifically for the case when $d_{\Gamma_g}=0$. However, it is important to note that the proof can be readily adapted to the general case by considering a maximal non-singular submatrix of the Jacobian.
We may further assume that $E$ represents a basis of ${\cal M}_{g,n}$, as the property of decomposition holds for all bases, implying the same property for all independent sets. 

For a basis $E$ of the matroid ${\cal M}_{g,n}$, we denote $G[E]$ for the induced sub-hypergraph on the hyperedges $E$. Let $p$ be a generic point-configuration of $G[E]$.
Since $E$ is a basis and $d_{\Gamma_g}=0$, we have that $|E|=kn$, and so $\J f_{g,G[E]}(p)$ is a square matrix.
Since $g$ is the sum of $t$ copies of $h$, the structure of $\J f_{g,G[E]}(p)$ has the form of Equation (\ref{eq:Jacobian_block}).
Hence, the Laplace expansion tells us that 
\begin{equation}\label{eq:expansion2}
\det \J f_{g,G[E]}(p)=\sum_{\{E_1,\dots, E_t\}} \prod_{i=1}^t \det\J f_{h,G[E_i]}(q_i), 
\end{equation}
where the sum is taken over all partitions $\{E_1,\dots, E_t\}$ of $E$ with $|E_i|=sn\ (1\leq i\leq t)$.
Since $E$ is a basis of ${\cal M}_{g,n}$, $\det \J f_{g,G[E]}(p)\neq 0$.
Hence, there must be at least one non-zero term in the right hand side of Equation (\ref{eq:expansion2}), and the corresponding partition $\{E_1,\dots, E_t\}$ is a desired one.
\end{proof}

Given matroids ${\cal M}_1,\dots, {\cal M}_t$, their {\em union} is defined such that a set is independent if it can be partitioned into independent sets of each ${\cal M}_i$. The union of $t$ copies of ${\cal M}$ is denoted by $t{\cal M}$.
Lemma~\ref{lem:decomposition} implies that 
${\cal M}_{g,n} \preceq t{\cal M}_{h,n}$ in the weak order poset of matroids, i.e., any independent set of ${\cal M}_{g,n}$ is independent in $t{\cal M}_{h,n}$.
A natural question is whether they are indeed different.
This question is closely related to $t$-defectivity since 
${\cal M}_{g,n}=t{\cal M}_{h,n}$ holds if and only if 
$\overline{\pi_H({\rm im} f_{h,K_n^k})}$ is not $t$-defective for any $H\subseteq K_n^k$ having no coloop
 in $t{\cal M}_{h,n}$ (i.e., every element is covered by some circuit) by Edmonds' matroid union theorem, see, e.g.,~\cite{schrijver2003combinatorial}.

\section{Combinatorial Analysis in Our Example Models} \label{sec:examples2}
In this section, we will review the concepts and tools introduced in Section \ref{sec:tools} by applying them to concrete examples presented in Section \ref{sec:examples}. 

\paragraph{Euclidean rigidity.} 
In the ordinary Euclidean rigidity, $G$ is 2-uniform (i.e., a graph) and 
$g(x,y)=\sum_{i=1}^d (x_i-y_i)^2$ for $x=(x_1,\dots, x_d)^{\top}, y=(y_1,\dots, y_d)^{\top}\in \mathbb{R}^d$.
The stabiliser $\Gamma_g$ is the Euclidean group $E(d)$.
We have $d_{E(d)}={d+1\choose 2}$ and $n_{E(d)}=d$. Then
$(d,{d+1\choose 2})$-sparsity is a necessary condition for the independence in the $g$-rigidity matroid. 
When $d=2$, Theorem~\ref{thm:Laman} 
implies that the $g$-rigidity matroid is equal to the $(2,3)$-sparsity matroid. This is not true for $d\geq 3$ (see Figure \ref{fig:banana} for an example and \cite{GGJN} for a recent discussion) and determining which $(d,{d+1\choose 2})$-sparse graphs are independent is a fundamental open problem in rigidity theory.
See \cite{KJT} for a new candidate characterisation when $d=3$.

\begin{figure}[h]
\begin{center}
\begin{tikzpicture}[font=\small]
\foreach \i in {0,...,4}{
	\node[roundnode] at (36+72*\i:1cm) (\i) []{};
	\foreach \j in {0,...,\i}{
		\ifthenelse{\i=\j \OR \i=4 \AND \j=0}
			{}
			{\draw[] (\i)--(\j)};
	}
}
\begin{scope}[shift={(36:1cm)}]
\begin{scope}[shift={(-36:1cm)}]
\foreach \i in {5,...,7}{
	\node[roundnode] at ({144-72*(\i-4)}:1cm) (\i) []{};
	\draw[] (\i)--(0);
	\draw[] (\i)--(4);
	\foreach \j in {5,...,\i}{
		\ifthenelse{\i=\j \OR \i=4 \AND \j=0}
			{}
			{\draw[] (\i)--(\j)};
	}
}
\end{scope}
\end{scope}

\end{tikzpicture}
\end{center}
\caption{A $(3,6)$-sparse graph that is dependent in the $g$-rigidity matroid (when $g$ is Euclidean distance and $d=3$).}
\label{fig:banana}
\end{figure}
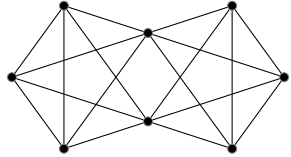

Let $h(a,b)=(a-b)^2$ for $a,b \in \mathbb{C}$.
Then $g(x,y)=\sum_{i=1}^d h(x_i,y_i)$. 
So the affine variety defined by $g$ is the $d$-secant of that of $h$.
The $h$-rigidity is easy to understand since it corresponds to 1-dimensional rigidity and it is characterised by graph connectivity. 
However, the affine variety defined by $h$ is defective, and currently there is no sufficient condition to guarantee the $d$-dimensional rigidity in terms of $1$-dimensional rigidity. 

Hendrickson~\cite{Hendrickson1992conditions} proved two natural necessary conditions for a graph to be globally $g$-rigid in dimension $d$. More precisely he proved that, when $g$ is the usual Euclidean distance, any globally $g$-rigid graph $G$ is $(d+1)$-connected and redundantly $g$-rigid (which means that the graph obtained from $G$ by deleting any single edge is $g$-rigid). Connelly~\cite{Con05} proved an algebraic sufficient condition based on stress matrices (weighted Laplacians). Using these two results, and specialised to the case when $d=2$, Jackson and Jord\'an~\cite{jackson2005connected} gave a combinatorial characterisation of generic global $g$-rigidity. While Hendrickson's necessary conditions are known to be insufficient when $d\geq 3$ \cite{Con93}, Gortler, Healy and Thurston~\cite{GHT10} proved that generic global $g$-rigidity does depend only on the underlying graph.

\paragraph{Pseudo-Euclidean rigidity.} Here the situation is similar, $G$ is 2-uniform, the stabiliser is still the Euclidean group, $d_{\Gamma_g}={d+1 \choose 2}$ and $n_{\Gamma_g}=d$. Thus $(d,{d+1\choose 2})$-sparsity is again a necessary condition for independence. In fact there is a direct linear algebraic transformation between independence in the Euclidean case and the pseudo-Euclidean case~\cite{SW07}. This translates, for example, the Geiringer-Laman theorem to pseudo-Euclidean planes.

Suppose $g$ is the usual Euclidean distance and $g'$ is a pseudo-Euclidean distance. Gortler and Thurston \cite{Gortler2014} proved that a generic framework $(G,p)$ is globally $g$-rigid if and only if it is globally $g'$-rigid. Along the way they extended Connelly's sufficient condition to pseudo-Euclidean spaces. However in the case of hyperbolic space, it appears to be open to decide if Hendrickson's redundant rigidity condition applies. Earlier Connelly and Whiteley \cite{Con10} proved that, generically, global rigidity in the Euclidean and spherical contexts coincide.

\paragraph{$\ell_p$-rigidity.} Suppose $p$ is a positive even integer not equal to 2.
Again $G$ is 2-uniform. The stabiliser is the $d$-dimensional group of translations. We have $d_{\Gamma_g}=d$, $n_{\Gamma_g}=1$ and $(d,d)$-sparsity is a necessary condition for independence. When $d=2$, an analogue of the Geiringer-Laman theorem was obtained in \cite{KP14} showing that the $g$-rigidity matroid is equivalent to the $(2,2)$-sparsity matroid on a simple graph. 
It is conjectured that the $d$-dimensional case is characterised by $(d,d)$-sparsity (for simple graphs). This would characterise $d$-dimensional rigidity in terms of 1-dimensional rigidity. However, such a combinatorial description is currently only known in certain very special cases \cite{Dewar2022}.

It is an open problem to analyse global $g$-rigidity in the $\ell_p$ case. Since $p$ is even, $\ell_p$-norms are analytic and hence the results of \cite{Dewar23} apply; they showed that the natural analogue of Hendrickson's necessary conditions apply in analytic normed spaces and gave a combinatorial characterisation analogous to Jackson-Jord\'an when the space is an analytic normed plane.

\paragraph{Volume constraint rigidity.} 
In the volume constraint rigidity, the map $g:(\mathbb{R}^d)^{d+1}\rightarrow \mathbb{R}$ is given  by 
\[
g(x_1, x_2, \dots, x_{d+1})=\det \begin{pmatrix} x_1 & x_2 & \dots & x_{d+1} \\
1 & 1 & \dots & 1\end{pmatrix}.
\]
for $x_1,\dots, x_{d+1}\in \mathbb{R}^{d+1}$.
The stabiliser $\Gamma_g$ is the special affine group $SA(d,\mathbb{R})$.
We have $d_{\Gamma_g}=d^2+d-1$ and $n_{\Gamma_g}= d+1$.
Then $(d,d^2+d-1)$-sparsity is a necessary condition for the independence in the $g$-rigidity matroid, but it is not sufficient in general, see, e.g.~\cite{bulavka2022volume}.
Very recently, Southgate \cite{Southgate} has proved that global $g$-rigidity, in this volume case, is not a generic property of the underlying hypergraph.

\paragraph{Positive semidefinite matrix completion problem.}
In this problem, $G$ is 2-uniform (i.e., a graph) and 
$g$ is the Euclidean inner product in $\mathbb{R}^d$.
The stabiliser $\Gamma_g$ is $O(d)$,
and $d_{E(d)}={d\choose 2}, n_{E(d)}=d-1$. Then
$(d,{d\choose 2})$-sparsity is a necessary condition for the independence in the $g$-rigidity matroid. 
This sparsity condition is sufficient for $d=1$,
but is not in general for $d\geq 2$~\cite{singer2010uniqueness}.
It is a challenging open problem to give a good characterisation of the independence in $d=2$, see~\cite{bernstein,JT21}.
The global $g$-rigidity is also characterised when $d=1$ in \cite{singer2010uniqueness}, but it is not well understood when $d\geq 2$. 
It is known that global $g$-rigidity is not a generic property of graphs~\cite{JJTunique}, see Section~\ref{subsec:global_graphs} for more details.

\paragraph{Symmetric tensor completion problem.}
Recall that $h_{\rm prod}:\mathbb{C}^k\rightarrow \mathbb{C}$ denotes the product map, i.e., $h(y_1, y_2,\dots, y_k)=y_1y_2\dots y_k$.
In the symmetric tensor completion with symmetric rank $d$,
the measurement $g: (\mathbb{C}^d)^k \rightarrow \mathbb{C}$ is given as the sum of $d$ copies of $h_{\rm prod}$.
Hence, we denote this $g$ by $dh_{\rm prod}$.

The $g$-rigidity property is very different between the cases of $k=2$ and $k\geq 3$.
When $k=2$, tensors are matrices and we have the same results as the positive semidefinite matrix completion problem discussed above. So in the subsequent discussion we focus on the case when $k\geq 3$.

The stabiliser $\Gamma$ of $dh_{\rm prod}$ is the set of diagonal matrices over $\mathbb{C}$ whose diagonal entries are  $k$-th roots of unity (up to permutation of indices). 
We have $d_{\Gamma}=0$ and $n_{\Gamma}=0$. 
The $(d,0)$-sparsity is a necessary condition for the independence in the $dh_{\rm prod}$-rigidity matroid, but it is not sufficient since no $k$-uniform $k$-partite hypergraph can be $dh_{\rm prod}$-rigid. We will discuss this point in Section~\ref{sec:variants}.

By definition, $G=(V,E)$ is locally $dh_{\rm prod}$-rigid if and only if $\rank \J f_{dh_{\rm prod},G}(p)=d|V|$ for a  generic $p\in (\mathbb{C}^r)^k$. 
Let $I_G$ be the edge-vertex incidence matrix of $G$, that is, the matrix of size $|E|\times |V|$ whose $(e,i)$-th entry is one if vertex $i$ is contained in hyperedge $e$, and zero otherwise. 
When $d=1$, it is easy 
to check that $\J f_{dh_{\rm prod},G}(p)$ is equivalent to $I_G$ (in the sense of row and column operations). 
Hence, if $d=1$, $G$ is locally $dh_{\rm prod}$-rigid if and only if the edge-vertex incidence matrix is full rank.
The problem becomes much harder when $d\geq 2$, and checking the locally $dh_{\rm prod}$-rigidity of $K_n^k$ is already a nontrivial question.

Since $dh_{\rm prod}$ is the sum of $d$ copies of $h_{\rm prod}$,
 $\overline{{\rm im} f_{dh_{\rm prod},G}}$ is the $d$-secant of $\overline{{\rm im} f_{h_{\rm prod},G}}$.
When $G=K_n^k$, $f_{h_{\rm prod},K_n^k}$ is the Veronese embedding, and $\overline{{\rm im} f_{h^{\rm prod},K_n^k}}$ is the affine cone of the Veronese variety of degree $k$.
It is relatively recent that Alexander and Hirschowitz~\cite{alexander1995polynomial} gave a concrete list of defective cases of Veronese varieties. 
Adapting this result to our language,  we have the following.

\begin{theorem}\label{thm:ah}
Let $k, n, d$ be positive integers with $k\geq 3$,
and  $p:[n]\rightarrow \mathbb{C}^d$ be a generic point configuration. 
Then
\[
\rk \J f_{dh_{\rm prod},K_n^k}(p)=\min\{|E(K_n^k)|, dn\},
\]
except for $(k,n,d)$ in the set $\{(3,5,7), (4,3,5), (4,4,9), (4,5,14)\}$.
In other words, $K_n^k$ is locally $dh_{\rm prod}$-rigid if and only if $|{n+k-1 \choose k}|\geq dn$ 
and $(k,n,d)\notin \{(3,5,7), (4,3,5), (4,4,9), (4,5,14)\}$.
\end{theorem}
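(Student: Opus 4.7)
The plan is to recognise the statement as a direct translation of the classical Alexander--Hirschowitz theorem on the dimensions of secant varieties of Veronese varieties, and then to carry the dictionary between the two settings out carefully. First I would identify the underlying algebraic variety. Given a point configuration $p:[n]\to\mathbb{C}^d$, I write $p=(q_1,\dots,q_d)$ with $q_i\in\mathbb{C}^n$ collecting the $i$-th coordinate of the $n$ points. Expanding the definition of $dh_{\rm prod}$, for any $e=\{v_1,\dots,v_k\}\in \genfrac{\{}{\}}{0pt}{}{[n]}{k}$ one has
\[
(dh_{\rm prod})(p(v_1),\dots,p(v_k)) = \sum_{i=1}^d q_i(v_1)\cdots q_i(v_k),
\]
so $f_{dh_{\rm prod},K_n^k}(p)=\sum_{i=1}^d f_{h_{\rm prod},K_n^k}(q_i)$. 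The map $f_{h_{\rm prod},K_n^k}\colon\mathbb{C}^n\to\mathbb{C}^{\binom{n+k-1}{k}}$ sends a point to the tuple of all its degree-$k$ monomials, hence $\overline{{\rm im}\, f_{h_{\rm prod},K_n^k}}$ is the affine cone over the Veronese variety $\nu_k(\mathbb{P}^{n-1})$, and consequently $\overline{{\rm im}\, f_{dh_{\rm prod},K_n^k}}$ is the affine cone over the $d$-th secant variety ${\rm Sec}_d(\nu_k(\mathbb{P}^{n-1}))$.

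Next I would translate ranks into dimensions. Since $p$ is generic, Proposition~\ref{prop:matroid}(iii) identifies $\rank \J f_{dh_{\rm prod},K_n^k}(p)$ with $\dim \overline{{\rm im}\, f_{dh_{\rm prod},K_n^k}}$. The stabiliser of $dh_{\rm prod}$ consists of permutation matrices composed with diagonal matrices whose entries are $k$-th roots of unity, which is finite, so $d_{\Gamma_{dh_{\rm prod}}}=0$; hence the expected value on the rigidity side is $\min\{\binom{n+k-1}{k},\,dn\}$, matching the affine secant bound.

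Finally I would invoke the Alexander--Hirschowitz theorem, which states that for $k\geq 3$ the projective secant ${\rm Sec}_d(\nu_k(\mathbb{P}^{n-1}))$ attains its expected projective dimension $\min\{\binom{n+k-1}{k}-1,\,dn-1\}$ except when $(k,n,d)$ lies in the listed exceptional set (obtained by converting the classical projective parameters $(k,n-1,d)$ to our affine indexing). Passing from projective variety to affine cone adds $1$ to each side of the expected-dimension inequality and preserves the defect, yielding the claimed rank formula; combining with Proposition~\ref{prop:inf_rigid} then gives the characterisation of local $dh_{\rm prod}$-rigidity of $K_n^k$. The hard part of the statement is the Alexander--Hirschowitz classification itself, whose proof relies on the Terracini lemma together with a delicate Horace-style specialisation/degeneration argument; within our framework it is invoked as a black box and the only remaining work is the combinatorial bookkeeping of indices and the affine/projective conversion.
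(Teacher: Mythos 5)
Your proposal is correct and takes essentially the same route as the paper: identify $\overline{{\rm im}\, f_{dh_{\rm prod},K_n^k}}$ as the affine cone over the $d$-th secant of the degree-$k$ Veronese variety, convert the generic Jacobian rank to the dimension of that cone via Proposition~\ref{prop:matroid}, and invoke the Alexander--Hirschowitz classification as a black box with the appropriate affine/projective index conversion. The paper presents this as a direct translation of the cited result rather than a full proof, so your more explicit bookkeeping (decomposition of $p$ into coordinate vectors $q_i$, the affine-cone dimension shift, and the $n\mapsto n-1$ conversion to the classical projective parameters) is exactly the content the paper leaves implicit.
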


See, e.g.,~\cite[Theorem~2.12]{bernardi} for more details on the Alexander-Hirschowitz theorem.

The identifiability problem for Veronese varieties has been also solved in \cite{chiantini2017generic,galuppi2019identifiability}, which implies a characterisation of the globally $dh_{\rm prod}$-rigid complete hypergraphs in our terminology.
Notably, complete hypergraphs which are both minimally locally $dh_{\rm prod}$-rigid and globally $dh_{\rm prod}$-rigid are classified in \cite{galuppi2019identifiability}. 
Adapting these results to our language,  we have the following.
\begin{theorem}\label{thm:sym_tensor_global}
Let $k, n, d$ be positive integers with $k\geq 3$. 
\begin{itemize}
\item Suppose ${n+1+k\choose k}>dn$.
Then $K_n^k$ is globally $dh_{\rm prod}$-rigid if and only if $(k,n,d)\notin \{(6,3,9), (4,4,8), (3,6,9)\}$.

\item Suppose ${n+1+k\choose k}=dn$.
Then $K_n^k$ is globally $dh_{\rm prod}$-rigid if and only if $(k,n,d)\in \{(3,4,5), (5,3,7)\}\cup \{(2s-1,2,s): s\geq 2, s\in \mathbb{Z}\}$.
\end{itemize}
\end{theorem}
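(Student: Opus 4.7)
My plan is to reduce the statement to the classical identifiability question for secants of the Veronese variety, and then invoke the two classification theorems of Chiantini--Ottaviani--Vannieuwenhoven \cite{chiantini2017generic} and Galuppi--Mella \cite{galuppi2019identifiability}. First, I would unpack the measurement map: writing $p\in(\mathbb{C}^d)^n$ as a $d\times n$ matrix with rows $x_1,\dots,x_d\in\mathbb{C}^n$, the map $f_{dh_{\rm prod},K_n^k}$ records all entries of the symmetric tensor $T(p)=\sum_{i=1}^d x_i^{\otimes k}\in S^k(\mathbb{C}^n)$. Hence $f_{dh_{\rm prod},K_n^k}(p)=f_{dh_{\rm prod},K_n^k}(q)$ is exactly the condition that $p$ and $q$ give two rank-$d$ symmetric decompositions of the same tensor. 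The stabiliser $\Gamma_{dh_{\rm prod}}$, consisting of matrices $\Sigma D$ with $\Sigma$ a permutation and $D$ diagonal with $k$-th roots of unity on the diagonal, realises exactly the trivial ambiguity $x_i\mapsto \zeta_i x_{\sigma(i)}$ with $\zeta_i^k=1$, since $(\zeta_i x_i)^{\otimes k}=x_i^{\otimes k}$. Consequently, $K_n^k$ is globally $dh_{\rm prod}$-rigid if and only if the $d$-th secant of the affine cone over the Veronese variety $\nu_k(\mathbb{P}^{n-1})$ is $d$-identifiable in the sense of \cite{bernardi}.

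With this translation in hand, the two bullets correspond to the two regimes studied in the algebraic geometry literature. The hypothesis of the first bullet places us in the \emph{subgeneric} range, where the $d$-th secant is a proper subvariety of $S^k(\mathbb{C}^n)$; I would combine the Alexander--Hirschowitz theorem (Theorem~\ref{thm:ah}), guaranteeing that the secant attains its expected dimension outside the known exceptions, with the main theorem of Chiantini--Ottaviani--Vannieuwenhoven \cite{chiantini2017generic}, whose list of non-identifiable subgeneric Veronese secants is exactly $(k,n,d)\in\{(6,3,9),(4,4,8),(3,6,9)\}$. For the second bullet, the equality assumption places us in the \emph{perfect} range, where the generic fibre of the decomposition map is finite and the question becomes whether that finite fibre has cardinality one modulo $\Gamma_{dh_{\rm prod}}$. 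Here I would directly invoke the classification of perfectly identifiable Veronese varieties due to Galuppi--Mella \cite{galuppi2019identifiability}, which enumerates precisely $(3,4,5)$, $(5,3,7)$, and the infinite family $(2s-1,2,s)$ with $s\ge 2$.

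The main technical point that needs care, and which I would isolate as a short lemma before invoking the two classification theorems, is the compatibility of the stabiliser action with the notion of identifiability as used in algebraic geometry. The literature typically works projectively, declaring two decompositions $\sum x_i^{\otimes k}$ and $\sum y_i^{\otimes k}$ equivalent when they agree up to permutation and arbitrary scalar rescaling $x_i\mapsto \lambda_i x_i$, whereas our stabiliser only allows rescalings by $k$-th roots of unity. The equivalence of the two notions follows from the observation that $(\lambda x)^{\otimes k}=x^{\otimes k}$ forces $\lambda^k=1$, so projective identifiability of the Veronese secant is the same as $\Gamma_{dh_{\rm prod}}$-uniqueness of affine decompositions. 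Once this compatibility is verified, the exception lists in \cite{chiantini2017generic,galuppi2019identifiability} transfer verbatim and yield the two bullet points exactly as stated.
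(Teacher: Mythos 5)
Your proof is correct and matches the paper's approach: the paper states Theorem~\ref{thm:sym_tensor_global} as a direct adaptation of the identifiability classifications in \cite{chiantini2017generic} and \cite{galuppi2019identifiability}, and your reduction (via the translation $p\mapsto T(p)=\sum_i x_i^{\otimes k}$, the matching of $\Gamma_{dh_{\rm prod}}$ with the permutation-and-$k$th-root ambiguity of Waring decompositions, and the use of Proposition~\ref{prop:identifiability_global}) is exactly the adaptation the paper leaves implicit. One small point worth making explicit: Proposition~\ref{prop:identifiability_global} only gives the direction ``identifiable $\Rightarrow$ globally $dh_{\rm prod}$-rigid''; for the converse you need that a generic tensor with an inequivalent second decomposition $\{[y_i^{\otimes k}]\}\neq\{[x_i^{\otimes k}]\}$ yields $q\notin\Gamma_{dh_{\rm prod}}\cdot p$, which follows immediately from the compatibility lemma you isolated, since any $\gamma=\Sigma D$ preserves the multiset of projective classes $\{[x_i^{\otimes k}]\}$.
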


\paragraph{Skew-symmetric tensor completions.}
Let $h_{\rm det}:(\mathbb{C}^k)^k\rightarrow \mathbb{C}$ be the determinant as a $k$-form over $\mathbb{C}^k$.
In the skew-symmetric tensor completion with skew-symmetric rank $r$,
the measurement map $g: (\mathbb{C}^{rk})^k \rightarrow \mathbb{C}$ is given as the sum of $r$ copies of $h_{\rm det}$.
Hence, we denote this $g$ by $rh_{\rm det}$.

The stabilizer $\Gamma$ of $rh_{\rm det}$ is 
$\{(A_1\oplus \dots \oplus A_r)(\Sigma\otimes I_k)\mid  A_i\in {\rm SL}(k,\mathbb{C}), \Sigma: \text{ a permutation matrix of size $r$}\}$.
We have $d_{\Gamma}=r(k^2-1)$ and $n_{\Gamma}=k$. 
The $(rk,r(k^2-1))$-sparsity is a necessary condition for the independence in the $rh_{\rm det}$-rigidity matroid, but it is not sufficient even when $r=1$  
(see Example~\ref{ex:0extension_fails} below for details).

As in the symmetric tensor case, proving the local $rh_{\rm det}$-rigidity 
for simple complete hypergraphs $\tilde{K}_n^k$ is already a difficult question.
Since $rh_{\rm det}$ is the sum of $r$ copies of $h_{\rm det}$,
 $\overline{{\rm im} f_{rh_{\rm det},G}}$ is the $r$-secant of $\overline{{\rm im} f_{h_{\rm det},G}}$.
When $G=\tilde{K}_n^k$, $\overline{{\rm im} f_{h_{\rm det},\tilde{K}_n^k}}$ is the affine cone of the Grassmannian ${\rm Gr}(k,n)$.
Hence, the locally $rh_{\rm det}$-rigid of $\tilde{K}_n^k$ is equivalent to the non-$r$-defectivity of ${\rm Gr}(k,n)$.
Unlike the Veronese case, the non-defectivity problem and the identifiability problem of Grassmannian varieties have not been solved yet.
See~\cite{AOP,massarenti2016non,MMident}.  

\paragraph{Chow decompositions.}
The same story holds for Chow decompositions if we replace the determinant function in the above discussion on skew-symmetric tensors with the permanent function. 
There is no complete answer to non-defectivity or identifiability of the corresponding varieties, so the local/global rigidity of complete hypergraphs are also not known. 
See~\cite{torrance2021all,torrance2022almost} for recent developments.

\section{Sufficient Conditions for Local $g$-rigidity}
\label{sec:suff}

The goal of this section is to extend combinatorial techniques in graph rigidity theory to $k$-uniform hyper-framework $g$-rigidity. 
Since $g$-rigidity is a rather general concept, we shall restrict our attention to polynomial maps $g$ which are multilinear or multiaffine (defined below).
Based on this assumption, we shall analyse combinatorial patterns of the Jacobians of the measurement maps $g$.  

\subsection{Multiaffine forms and combinatorial patterns of Jacobians}

A map $g:(\mathbb{F}^d)^k\rightarrow \mathbb{F}$ is said to be a {\em multilinear (resp.~multiaffine) $k$-form on $\mathbb{F}^d$} if 
it is linear (resp.~affine) on each argument on the vector space $\mathbb{F}^d$.
As we remarked when defining $g$-rigidity, $g$ is always assumed to be symmetric or anti-symmetric with respect to permutations of indices.
A symmetric (resp.~anti-symmetric) multilinear $k$-form is nothing but a symmetric (resp.~skew-symmetric) tensor of $(\mathbb{F}^d)^k$. 

Suppose $g$ is a symmetric (or~anti-symmetric) multiaffine $k$-form.
By regarding $g(x_1,\dots, x_k)$ as an affine function in the variable $x_j$, 
each entry of the gradient of $g$ with respect to $x_j$ can be considered as a polynomial map in the remaining points $x_1, \dots, x_{j-1}, x_{j+1},\dots, x_k$.
Moreover, since $g$ is symmetric (or anti-symmetric), the resulting polynomial map is independent of the choice of $j$ (up to the sign if $g$ is anti-symmetric).
Hence,  we may consider the gradient of $g$ with respect to a point $x_j$ 
as a polynomial map from $(\mathbb{F}^d)^{k-1}$ to $\mathbb{F}^d$,
and denote it by $\nabla g$. 

Based on Proposition~\ref{prop:inf_rigid}, our analysis of the local $g$-rigidity of a hypergraph $G=(V,E)$ proceeds by 
analyzing the rank of the Jacobian $\J f_{g,G}(p)$ at a generic $p$.
This Jacobian has the following structure.
Given that the map $f_{g,G}$ is defined from $(\mathbb{F}^d)^{V}$ to $\mathbb{F}^E$, we have that 
each row of $\J f_{g,G}(p)$ is indexed by a hyperedge in $E$ and 
each consecutive $d$-tuple of columns of $\J f_{g,G}(p)$ are indexed by a vertex in $V$.
The definition of $f_{g,G}$ further tells us that 
the $1\times d$ block $b(e,v)$ associated with a pair $(e,v)$ for $e\in E$ and $v\in V$ 
is 
\[
b(e,v)=\begin{cases}
\pm m_e(v)\nabla g(p(e-v))^{\top} & \text{if $v\in e$} \\
0 &\text{otherwise}.
\end{cases}
\]
Here, by $p(e-v)$, we mean a tuple of $(k-1)$ points in $\{p(u): u\in e-v\}$,
and $m_e(v)$ denotes the multiplicity of $v$ in $e$.
The sign is always positive if $g$ is symmetric, and it depends on the ordering of $v$ in $e$ if $g$ is anti-symmetric.

\begin{example}\label{ex:symmetric_tensor}
A primary example arises in the symmetric tensor completion problem. 
As we explained in Section~\ref{sec:examples2},
the corresponding map $g$ is the sum $dh_{\rm prod}$ of $d$ copies of $h_{\rm prod}$.
Clearly $h_{\rm prod}$ is symmetric multilinear,
and hence so is $dh_{\rm prod}$.
The map $\nabla (dh_{\rm prod}):(\mathbb{F}^d)^{k-1}\rightarrow \mathbb{F}^d$ is given by 
\[
\nabla (dh_{\rm prod})(x_1,\dots, x_{k-1})=
\begin{pmatrix} \nabla h_{\rm prod}(x_{11}, \dots, x_{1k-1}) \\ \vdots \\ \nabla h_{\rm prod}(x_{d1},\dots, x_{dk-1})\end{pmatrix}
=
\begin{pmatrix} x_{11}\dots x_{1k-1} \\ \vdots \\x_{d1}\dots  x_{dk-1}\end{pmatrix}
\]
for $x_i=(x_{1i}, \dots, x_{di})^{\top}\in\mathbb{F}^{d}$ for $i=1,\dots, k-1$.

Suppose
$k=3$, $d=2$, $G=(\MV,E)$ with $\MV=\{a,b,c,d\}$ and $E=\{aaa,aab,abc,bcd\}$.
Then, the Jacobian is the matrix 
\[
\kbordermatrix{
 & (a,1) & (a,2) & (b,1) & (b,2) & (c,1) & (c,2) & (d,1) & (d,2) \\
aaa & 3x_{1a}^2 & 3x_{2a}^2 & 0 & 0 & 0 & 0 & 0 & 0 \\
aab & 2x_{1a}x_{1b} & 2x_{2a}x_{2b} & x_{1a}^2 & x_{2a}^2 & 0 & 0 & 0 & 0\\
abc & x_{1b}x_{1c} & x_{2b} x_{2c} & x_{1c}x_{1a} & x_{2c}x_{2a} & x_{1a}x_{1b} & x_{2a}x_{2b} & 0 & 0\\
bcd & 0  & 0  & x_{1c}x_{1d} & x_{2c}x_{2d} & x_{1d}x_{1b} & x_{2d}x_{2b} & x_{1b}x_{1c} & x_{2b}x_{2c}
},
\]
where each row is associated with a hyperedge $e\in E$ and each column is associated with a pair $(v, i)$ of a vertex $v$ and a coordinate axis $i$.  
\end{example}

\subsection{Extension operations}
A {\em $d$-valent extension} creates a new $k$-uniform hypergraph from a given $k$-uniform hypergraph by adding a new vertex with $d$ distinct new hyperedges containing the new vertex.
We say that an extension operation is {\em simple} if each new hyperedge is simple.
See Figure \ref{fig:extension} for examples.
It is one of the most basic and widely used theorems in rigidity theory that any $d$-valent simple extension preserves $d$-dimensional rigidity.
We shall extend this to the $g$-rigidity setting for $g:(\mathbb{F}^d)^k\rightarrow \mathbb{F}$.

\begin{figure}[h]
\begin{center}
\begin{tikzpicture}[scale=0.8]

\draw (0,0) circle (30pt);
\draw (5,0) circle (30pt);

\filldraw (.5,.6) circle (3pt);
\filldraw (.5,-.6) circle (3pt);
\filldraw (.3,0) circle (3pt);

\filldraw (5.5,.6) circle (3pt);
\filldraw (5.5,-.6) circle (3pt);
\filldraw (5.3,0) circle (3pt);
\filldraw (7,0) circle (3pt);

\draw[black,thick]
(5.5,.6) -- (7,0) -- (5.5,-.6);

\draw[black,thick]
(5.3,0) -- (7,0);

\draw[black,thick]
(2,0) -- (3,0);

\draw[black,thick]
(2.8,.2) -- (3,0) -- (2.8,-.2);


\draw (10,0) circle (30pt);
\draw (15,0) circle (30pt);
\filldraw (10.3,0) circle (3pt);
\filldraw (10.5,.6) circle (3pt);
\filldraw (10.5,-.6) circle (3pt);

\filldraw (15.5,.6) circle (3pt);
\filldraw (15.5,-.6) circle (3pt);
\filldraw (15.3,0) circle (3pt);
\filldraw (17,0) circle (3pt);

\draw[thick] plot[smooth, tension=1] coordinates{(15.1,-.2)(15.4,.7) (17.3,-.1) (15.1,-.2)};

\draw[thick] plot[smooth, tension=1] coordinates{(15.1,.2)(15.4,-.7) (17.3,.1) (15.1,.2)};

\draw[black,thick]
(12,0) -- (13,0);

\draw[black,thick]
(12.8,.2) -- (13,0) -- (12.8,-.2);

\end{tikzpicture}
\end{center}
\caption{(Left) A 3-valent simple extension of a 2-uniform hypergraph. (Right) A 2-valent simple extension of a 3-uniform hypergraph.}
\label{fig:extension}
\end{figure}
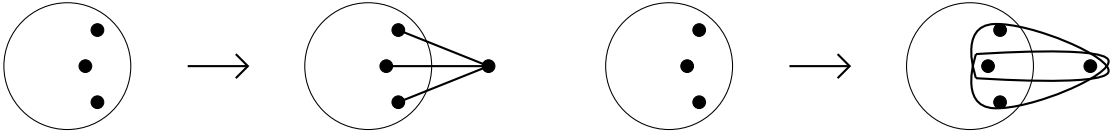

For a $d$-dimensional hyper-framework $(G,p)$ and a vertex $v\in V(G)$, consider the matrix $B$ obtained by aligning $\nabla g(p(e-v))^{\top}$ as row vectors over all hyperedges in $G$ incident to $v$, i.e., 
\begin{equation}\label{eq:B}
B=\begin{pmatrix}
    \nabla g(p(e_1-v))^{\top}\\
    \vdots \\
    \nabla g(p(e_j-v))^{\top}
\end{pmatrix},
\end{equation}
where $e_1,e_2, \dots, e_j$ are hyperedges in $G$ incident to $v$.
We say that $v$ is {\em stable} in $(G,p)$ if $B$ has full column rank, i.e., $\rank B=d$.

\begin{lemma}\label{lem:extension}
Assume Setup~\ref{setup}. Consider $g:(\mathbb{F}^d)^k\rightarrow \mathbb{F}$ and let $G=(V,E)$ be a $k$-uniform hypergraph.
\begin{itemize}
    \item If $(G,p)$ is infinitesimally $g$-rigid with $|V|\geq n_{\Gamma_g}+1$, then every vertex is stable in $(G,p)$.
    \item If $G$ is obtained from $H$ by 0-extension by adding a new vertex $v$ and $|V(H)|\geq n_{\Gamma_g}$, then $(G,p)$ is infinitesimally $g$-rigid if and only if 
    $(H,p_{|V(H)})$ is infinitesimally $g$-rigid and $v$ is stable in $(G,p)$.
\end{itemize}
 \end{lemma}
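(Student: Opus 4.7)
The plan is to exploit the block structure of the Jacobian $\J f_{g,G}(p)$ described just before the lemma and to use Proposition~\ref{prop:inf_rigid} to translate infinitesimal $g$-rigidity into a rank condition on the Jacobian.

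For the first claim, I would argue by contradiction. Suppose a vertex $v$ is not stable, so the matrix $B$ in (\ref{eq:B}) has a nonzero right-kernel vector $u \in \mathbb{F}^d$. Define $\dot{p} \in (\mathbb{F}^d)^V$ by $\dot{p}(v) = u$ and $\dot{p}(w) = 0$ for $w \neq v$. From the explicit formula $b(e,v) = \pm m_e(v)\,\nabla g(p(e-v))^{\top}$ (and $b(e,w)=0$ whenever $w \notin e$), the entry of $\J f_{g,G}(p)\dot{p}$ in row $e$ equals $\pm m_e(v)\,\nabla g(p(e-v))^{\top} u$ when $v\in e$, which vanishes by the choice of $u$, and it is $0$ when $v \notin e$. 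Hence $\dot{p} \in \ker \J f_{g,G}(p)$, and infinitesimal $g$-rigidity forces $\dot{p} = \dot{\gamma}\cdot p$ for some $\dot{\gamma} \in \mathfrak{g}$. Since $\dot{\gamma}\cdot p(w) = 0$ for all $w \neq v$ and $|V-v| \geq n_{\Gamma_g}$, the map $\mathfrak{g} \to (\mathbb{F}^d)^{V-v}$ sending $\dot{\gamma} \mapsto \dot{\gamma}\cdot p_{|V-v}$ is injective (this is what $n_{\Gamma_g}$ measures, using non-degeneracy of $p$), so $\dot{\gamma} = 0$ and therefore $u = 0$, a contradiction.

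For the second claim, order the rows of $\J f_{g,G}(p)$ so that hyperedges of $H$ come first, followed by the $d$ new hyperedges $e_1,\ldots,e_d$, and the columns so that vertices of $V(H)$ come first, followed by $v$. Since no hyperedge of $H$ contains $v$, this yields the block form
$$
\J f_{g,G}(p) = \begin{pmatrix} \J f_{g,H}(p_{|V(H)}) & 0 \\ C & D \end{pmatrix},
$$
where $D$ is the $d \times d$ matrix whose rows are $\pm m_{e_i}(v)\,\nabla g(p(e_i - v))^{\top}$; thus $\rank D = \rank B$, and $D$ is invertible precisely when $v$ is stable. When $D$ is invertible, the column operation that subtracts $D^{-1}C$ times the second block of columns (in transposed form, using $D$ to clear $C$) reduces the matrix to block-diagonal form, giving $\rank \J f_{g,G}(p) = \rank \J f_{g,H}(p_{|V(H)}) + d$. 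Since $|V(H)| \geq n_{\Gamma_g}$ and $|V(G)| \geq n_{\Gamma_g}+1$, Proposition~\ref{prop:inf_rigid} characterises infinitesimal $g$-rigidity of each framework by maximality of its Jacobian rank: $d|V(H)|-d_{\Gamma_g}$ for $(H,p_{|V(H)})$ and $d|V(G)|-d_{\Gamma_g} = d|V(H)|-d_{\Gamma_g}+d$ for $(G,p)$. The rank identity above then makes both directions of the equivalence immediate, with the first claim supplying the stability of $v$ in the forward direction.

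The main subtlety will be justifying that the rank characterisation of Proposition~\ref{prop:inf_rigid} applies and that the trivial motion space has the expected dimension $d_{\Gamma_g}$ in both frameworks; this is exactly the purpose of the size hypotheses $|V(H)|\geq n_{\Gamma_g}$ and $|V(G)|\geq n_{\Gamma_g}+1$ together with sufficient non-degeneracy of $p$ to make the linearisation $T_{p_{|V-v}}:\mathfrak{g}\to (\mathbb{F}^d)^{V-v}$ injective. Once this is in place, the argument is almost purely linear-algebraic: everything reduces to the rank computation of a block lower-triangular matrix whose bottom-right block is invertible exactly under the stability hypothesis.
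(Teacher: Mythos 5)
Your proof is correct and uses the same block-triangular Jacobian decomposition as the paper's proof, spelling out in part one the kernel argument that the paper condenses into a citation of Proposition~\ref{prop:inf_rigid}, and in part two the explicit rank computation via column reduction that the paper leaves as ``follows similarly''. The non-degeneracy subtlety you flag at the end (namely that the rank characterisation and the injectivity of $\dot\gamma \mapsto \dot\gamma\cdot p_{|V-v}$ require $\dim{\rm triv}_g$ to attain its maximal value $d_{\Gamma_g}$) is genuinely present but is equally implicit in the paper's own appeal to Proposition~\ref{prop:inf_rigid}, so it is not a gap specific to your argument.
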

\begin{proof}
Suppose that $H$ is obtained from $G$ by removing a vertex $v$. 
Let  $e_1,\dots, e_j$ be the hyperedges in $G$ incident to $v$.
Then, $\J f_{g,G'}$  is partitioned into submatrices as follows:
\[
\J f_{g,G}=\kbordermatrix{
 & v & V(H) \\
\{e_1,\dots, e_d\} & B & \ast \\
E(H) & 0 & \J f_{g,H}
}.
\]
Hence, since $|V(G)|\geq n_{\Gamma_g}+1$, 
we may use Proposition \ref{prop:inf_rigid} to deduce that every vertex is stable in $(G,p)$. 

The second statement follows similarly. If $|V(H)|\geq n_{\Gamma_g}$, $(H,p_{|V(H)})$ is infinitesimally $g$-rigid and $v$ is stable in $(G,p)$, then the block matrix form shown above allows us to deduce, via Proposition \ref{prop:inf_rigid}, that $(G,p)$ is infinitesimally $g$-rigid. Conversely, if $(G,p)$ is infinitesimally $g$-rigid then $v$ is stable in $(G,p)$ by the first statement and $(H,p_{|V(H)})$ is infinitesimally $g$-rigid.
\end{proof}

When $g$ is symmetric, we say that $g$-rigidity has {\em the extension property} if 
any $d$-valent extension of a $g$-rigid graph $G=(V,E)$ with $|V|\geq n_{\Gamma_g}$ is still $g$-rigid.
When $g$ is anti-symmetric, the extension property is defined by checking only simple extensions since the value of $g$ is zero for a non-simple hyperedge.

We shall now prove the extension property of $g$-rigidity for a certain class of multiaffine $k$-forms $g$. To see this, we need the following notion.
A variety is called {\em degenerate} if it is contained in a hyperplane.
We say that $g:(\mathbb{F}^d)^k\rightarrow \mathbb{F}$ is {\em non-degenerate} if $\overline{{\rm im}f_{g, K_n^k}}$ is non-degenerate for any $n$.
For example, $h_{\rm prod}$ is non-degenerate since 
$\overline{{\rm im}f_{h_{\rm prod}, K_n^k}}$ is the affine cone of the Veronese variety of degree $k$, which is known to be non-degenerate.

\begin{lemma}\label{lem:extension_tensor}
Consider $g:(\mathbb{F}^d)^k\rightarrow \mathbb{F}$ written as 
the sum of $d$ copies of a non-zero multiaffine $h:(\mathbb{F}^1)^k\rightarrow \mathbb{F}$.
Suppose that $\nabla h$  is non-degenerate.
Then $g$-rigidity has the extension property.
\end{lemma}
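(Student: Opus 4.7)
The plan is to reduce the statement to checking stability of the newly added vertex via Lemma~\ref{lem:extension}, and then to exploit the sum-of-copies structure of $g$ to reinterpret the stability matrix $B$ columnwise through the measurement map of $\nabla h$.

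First I would set up the reduction. Let $G'=(V\cup\{v\},E\cup\{e_1,\dots,e_d\})$ be a $d$-valent extension of a locally $g$-rigid hypergraph $G=(V,E)$ with $|V|\geq n_{\Gamma_g}$, and let $p\in (\mathbb{F}^d)^{V(G')}$ be generic. Then $p|_V$ is generic in $(\mathbb{F}^d)^V$, so $(G,p|_V)$ is infinitesimally $g$-rigid by Proposition~\ref{prop:inf}. By the second bullet of Lemma~\ref{lem:extension}, it remains to show that the matrix $B$ in Equation~(\ref{eq:B}) has rank $d$, i.e.\ that $v$ is stable in $(G',p)$.

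Next I would expose the block structure of $B$. Writing $x_j=(x_{j1},\dots,x_{jd})^\top$ and
\[
g(x_1,\dots,x_k)=\sum_{i=1}^{d} h(x_{1i},\dots,x_{ki}),
\]
the multiaffine assumption on $h$ gives that the $i$-th component of $\nabla g(x_1,\dots,x_{k-1})$ depends only on the $i$-th coordinates of the inputs and equals $\nabla h(x_{1i},\dots,x_{k-1,i})$. Hence, defining the $(k-1)$-uniform hypergraph $H_v$ on $W=\bigcup_{\ell}(e_\ell-v)$ with hyperedges $\{e_\ell-v:\ell=1,\dots,d\}$, the $i$-th column of $B$ is
\[
c_i \;=\; f_{\nabla h,H_v}\bigl(p^{(i)}|_W\bigr),
\]
where $p^{(i)}\in\mathbb{F}^{V(G')}$ is the $i$-th coordinate vector of $p$. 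Because $p$ is generic, the $p^{(i)}$ are independent generic points in $\mathbb{F}^{V(G')}$, and therefore $(c_1,\dots,c_d)$ is a generic point of the $d$-fold Cartesian power $X^d$, where $X=\overline{\operatorname{im} f_{\nabla h,H_v}}\subseteq\mathbb{F}^d$, by Proposition~\ref{prop:matroid}(ii).

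The core of the argument is then a non-degeneracy input. By hypothesis, $\overline{\operatorname{im} f_{\nabla h,K_n^{k-1}}}$ is non-degenerate for every $n$. Since $H_v$ is a subhypergraph of $K_n^{k-1}$ for $n=|W|$ and a coordinate projection of a non-degenerate variety is still non-degenerate (any affine hyperplane in the image pulls back to one in the source), $X$ spans $\mathbb{F}^d$. Hence there exist $d$ vectors in $X$ that are linearly independent, and the locus in $X^d$ where $\det(c_1,\dots,c_d)\neq 0$ is a non-empty Zariski-open subset. A generic point of $X^d$ therefore lies in it, so $B$ has rank $d$, $v$ is stable, and Lemma~\ref{lem:extension} yields that $(G',p)$ is infinitesimally $g$-rigid.

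The main obstacle I anticipate is the transfer of non-degeneracy from the complete hypergraph $K_n^{k-1}$ to the particular sub-hypergraph $H_v$ that arises. The coordinate-projection remark handles it cleanly, but one must be a bit careful in the anti-symmetric case to check that each $e_\ell-v$ really is a simple $(k-1)$-set (so that $H_v$ is a legitimate sub-hypergraph of the simple complete hypergraph) — this is guaranteed by the definition of the extension property in the anti-symmetric case, where the extensions are required to be simple.
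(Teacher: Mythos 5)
Your proof is correct and follows essentially the same route as the paper: reduce via Lemma~\ref{lem:extension} to showing the stability matrix $B$ is non-singular, observe that the coordinate-wise sum structure of $g$ makes the columns of $B$ into images of generic one-dimensional configurations under $f_{\nabla h, H}$, and then invoke the non-degeneracy of $\nabla h$ (preserved under coordinate projection) to conclude that $d$ generic points of $\overline{{\rm im}f_{\nabla h,H}}$ are linearly independent. Your invocation of Proposition~\ref{prop:matroid}(ii) to make the genericity of the tuple $(c_1,\dots,c_d)$ precise, and the closing remark on the anti-symmetric/simple case, are minor additions but do not change the argument.
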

\begin{proof}
    By Lemma~\ref{lem:extension}, it suffices to check that the matrix
    $B$ defined by Equation (\ref{eq:B}) is non-singular for any vertex $v$, $d$ distinct hyperedges $e_1,\dots, e_d$ incident to $v$, and a generic $d$-dimensional point-configuration $B$.
    Note that $\nabla h$ is a polynomial map from $(\mathbb{F}^1)^{k-1}$ to $\mathbb{F}$.
    In particular, $\nabla h$ is a $(k-1)$-form over $\mathbb{F}$,
    so the $\nabla h$-measurement map is defined for each $(k-1)$-uniform hypergraph.
    Let $H$ be the hypergraph consisting of $(k-1)$-uniform hyperedges $e_1-v, e_2-v, \dots, e_d-v$.
    Then the columns of $B$ are $f_{\nabla h, H}(x_1), f_{\nabla h, H}(x_2), \dots, f_{\nabla h, H}(x_d)$ for some $1$-dimensional generic point-configurations $x_1, \dots, x_d$.
   
    We now show that $f_{\nabla h, H}(x_1), \dots, f_{\nabla h, H}(x_d)$ are linearly independent.
    Since $\nabla h$ is non-degenerate, $\overline{{\rm im}f_{\nabla h, K_n^{k-1}}}$ is non-degenerate. 
    Since the non-degeneracy is preserved by projection, this in turn implies the non-degeneracy of $\overline{{\rm im}f_{\nabla h, H}}$. Therefore, any generic $d$ points on $\overline{{\rm im}f_{\nabla h, H}}$ spans the ambient space $\mathbb{F}^{E(H)}\simeq \mathbb{F}^d$. In particular, for generic $x_1, \dots, x_d$, we have that
    $f_{\nabla h, H}(x_1),\dots, f_{\nabla h, H}(x_d)$ are linearly independent. This implies that $B$ is non-singular, and hence $g$-rigidity has the extension property.  
\end{proof}

An important special case is when $g$ is the map for symmetric tensor completions.
\begin{corollary}\label{cor:extension}
    Suppose $g$ is the sum of $d$ copies of $h_{\rm prod}$.
    Then $g$-rigidity has the extension property.
\end{corollary}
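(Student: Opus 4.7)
The plan is to invoke Lemma~\ref{lem:extension_tensor} directly, after verifying its three hypotheses for $h = h_{\rm prod}$: that $h_{\rm prod}:(\mathbb{F}^1)^k\to \mathbb{F}$ is non-zero, multiaffine, and that $\nabla h_{\rm prod}$ is non-degenerate. The first two are immediate since $h_{\rm prod}(x_1,\dots,x_k)=x_1 x_2\cdots x_k$ is symmetric and multilinear (hence multiaffine) and clearly not identically zero, so the only genuine content lies in the non-degeneracy check.

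For the non-degeneracy, I would observe that by the product rule
\[
\nabla h_{\rm prod}(x_1,\dots,x_{k-1}) \;=\; \frac{\partial}{\partial x_k}\bigl(x_1 x_2\cdots x_k\bigr) \;=\; x_1 x_2\cdots x_{k-1},
\]
so $\nabla h_{\rm prod}$ is precisely the product map on $k-1$ variables. Consequently, for any $n$, the measurement map $f_{\nabla h_{\rm prod}, K_n^{k-1}}$ is (up to identification of coordinates) the degree $(k-1)$ Veronese embedding of $\mathbb{F}^n$, and hence $\overline{\mathrm{im}\, f_{\nabla h_{\rm prod}, K_n^{k-1}}}$ is the affine cone over the Veronese variety of degree $k-1$ on $n$ variables. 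This variety is classically non-degenerate (the monomials of degree $k-1$ in $n$ variables span their ambient space), so $\nabla h_{\rm prod}$ is non-degenerate in the sense defined just before Lemma~\ref{lem:extension_tensor}.

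With all three hypotheses verified, Lemma~\ref{lem:extension_tensor} applies and yields the extension property for $g = dh_{\rm prod}$. Since every step is essentially a direct appeal to the preceding lemma together with the textbook non-degeneracy of Veronese varieties, I do not anticipate a genuine obstacle; the only point worth stating carefully is the identification of $\nabla h_{\rm prod}$ with the $(k-1)$-variable product map so that the non-degeneracy of the Veronese can be invoked verbatim.
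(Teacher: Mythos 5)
Your proof is correct and follows essentially the same route as the paper: identify $\nabla h_{\rm prod}$ with the product map on $k-1$ variables, recognize its image variety as the affine cone over the degree-$(k-1)$ Veronese (which is non-degenerate), and then apply Lemma~\ref{lem:extension_tensor}. The only difference is that you spell out the trivial verifications of the "non-zero" and "multiaffine" hypotheses, which the paper leaves implicit.
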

\begin{proof}
Observe that $\nabla h_{\rm prod}$ is again the product map of $(k-1)$ variables. 
So $\overline{{\rm im}f_{\nabla h_{\rm prod}, K_n^k}}$ is the affine cone of the Veronese variety of degree $k-1$, which is non-degenerate.
Hence, $\nabla h_{\rm prod}$ is non-degenerate and Lemma~\ref{lem:extension_tensor} can be applied.
\end{proof}

Corollary~\ref{cor:extension} is false in general. The following example shows that the failure of the extension property can actually occur in real applications.

\begin{example}\label{ex:0extension_fails}
Let $h_{\rm det}:(\mathbb{F}^k)^k\rightarrow \mathbb{F}$ be the determinant as a multilinear $k$-form.
For $h_{\rm det}$-rigidity to have the extension property, 
it is necessary that the matrix $B$ in (\ref{eq:B}) is non-singular for any vertex $v$
and  any $d$ distinct hyperedges $e_1,\dots, e_d$ incident to $v$. 
Denote $e_i-v=\{u_{i1}, \dots, u_{ik-1}\}$.
Then $\nabla h_{\rm det}(p(e_i-v))=\pm p(u_{i1})\wedge p(u_{i2}) \wedge  \dots\wedge p(u_{ik-1})$ by using the standard basis in the exterior algebra of $\mathbb{F}$.
Then $B$ is singular if and only if 
$\{p(u_{i1})\wedge p(u_{i2}) \wedge  \dots\wedge p(u_{ik-1}): 1\leq i\leq k\}$ is linearly dependent.
The latter condition holds if and only if $\langle p(u_{i1}), p(u_{i2}), \dots, p(u_{ik-1})\rangle$ has a common nonzero subspace for $i=1,\dots, k$. If all hyperedges $e_1,\dots, e_k$ contain a common vertex other than $v$, then the condition fails. \qed
\end{example}

The precise characterisation of when $g$-rigidity exhibits the extension or simple extension property remains an open question. In the case where $g$ corresponds to the Euclidean distance, several recursive graph operations have been identified that preserve $g$-rigidity and have been used to characterise classes of $g$-rigid graphs (see, for example, \cite{HPG,whiteley1996some}). One of the most commonly used operations is the 1-extension operation, which operates on a graph by removing an edge ${v_1, v_2}$ and introducing a new vertex $v$ connected to $d+1$ vertices of $G$, including $v_1$ and $v_2$.
While this 1-extension operation can be extended to hypergraphs, it does not generally preserve $g$-rigidity. A simple example illustrating this is the 1-dimensional matrix completion problem, where the 1-extension operation corresponds to edge subdivision. In this case, it is well-known that odd cycles are independent in the $g$-rigidity matroid, whereas even cycles form circuits \cite{singer2010uniqueness}.
Therefore, the simple extension property is not universal for all types of polynomial maps $g$. Therefore, a deeper understanding of the properties and restrictions imposed by different types of $g$-rigidity is crucial in order to characterise the conditions under which the extension property holds.

\subsection{Packing-type sufficient condition}

In this section, we aim to establish a packing-type condition that is sufficient for the local rigidity of hypergraphs $G$. Let us assume that $g$ can be expressed as the sum of $t$ copies of $h$. In Lemma~\ref{lem:decomposition}, we demonstrated that any independent set in ${\cal M}_{g,n}$ can be decomposed into $t$ independent sets in ${\cal M}_{h,n}$.
While the converse direction does not hold in general, we will prove in this subsection that a decomposition with an additional property guarantees the validity of the converse direction.

Recall that, for a hyperedge $e\in  \genfrac{\{}{\}}{0pt}{}{[n]}{k}$ and $u\in e$,  $e-u$ (resp., $e+u$) denotes the multiset obtained from $e$ by reducing (resp., increasing) the multiplicity of $u$ by one.
We denote by ${\rm supp}(e)$ the set obtained from $e$ by ignoring the multiplicity of each element.

\medskip

Let $G=(V,E)$ be a $k$-uniform hypergraph.
For $X\subseteq V$, 
let $E_G[X]$ be the set of hyperedges  $e$ of $G$ with ${\rm supp}(e)\subseteq X$,
and  $G[X]$ be the subgraph of $G$ induced by $X$, i.e., $G[X]=(X,E_G[X])$.
 For $E'\subseteq E$, we define the closed neighbour set
 $N_G(E')$ as 
\[
N_G(E') = \{e-u+v : e\in E', u\in e, v\in [n]\} \cap E.
\]
Note that $E'\subseteq N_G(E')$.

\begin{theorem}\label{thm:packing}
Consider $g:(\mathbb{F}^d)^k\rightarrow \mathbb{F}$ written as 
the sum of $t$ copies of a non-zero multilinear $k$-form $h:(\mathbb{F}^s)^k\rightarrow \mathbb{F}$, where $st=d$.
Let   $G=([n], E)$ be a $k$-uniform hypergraph on $[n]$, 
and 
${\cal X}=\{X_1, X_2,\dots, X_t\}$ be a family of subsets $X_i$ of $[n]$ with $|X_i|\geq n_{\Gamma_h}$.
Denote  $F_i=N_G(E_G[X_i])$ for each $i=1,\dots, t$.
Suppose that:
\begin{itemize}
\item[{\rm (P1)}]\label{P1} for every  $i$, 
the hypergraph $([n],F_i)$ is locally $h$-rigid as a hypergraph on $[n]$;
\item[{\rm (P2)}]\label{P2} for every $i$, $G[X_i]$ is locally $h$-rigid;   and  
\item[{\rm (P3)}]\label{P3} for any $i,j$ with $i\neq j$, and for any  $e\in F_i$ and $v\in e$,
  ${\rm supp}(e-v)\not \subseteq X_j$.
\end{itemize}
Then $G$ is locally $g$-rigid.
\end{theorem}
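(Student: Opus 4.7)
The plan is to construct a carefully chosen specialisation $p^*$ of the point configuration at which I can directly compute a lower bound for $\rank \J f_{g,G}(p^*)$, and then lift this bound to a generic $p$ via lower semicontinuity of rank.

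Using $\mathbb{F}^d = (\mathbb{F}^s)^t$, I identify $p$ with $(q_1,\dots,q_t)$ where each $q_l\in (\mathbb{F}^s)^{[n]}$, and set
\[
q_l^*(v) = \begin{cases} z_l(v) & \text{if } v\in X_l,\\ 0 & \text{if } v\notin X_l,\end{cases}
\]
for a generic $z_l:X_l\to \mathbb{F}^s$. Because $h$ (and hence $\nabla h$) is multilinear, $\nabla h(q_l^*(e-v))=0$ whenever $\operatorname{supp}(e-v)\not\subseteq X_l$, since at least one argument of $\nabla h$ is forced to be zero. By the block decomposition in Equation~\eqref{eq:Jacobian_block}, the block-$q_l$ entries of row $e$ of $\J f_{g,G}(p^*)$ are $\pm m_e(v)\nabla h(q_l^*(e-v))^\top$ at columns $v\in e$. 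Combining this with hypothesis (P3), we see that when $e\in F_i$ and $l\ne i$, every entry in the block-$q_l$ row of $e$ vanishes. In particular, (P3) already forces the $F_i$'s to be pairwise disjoint, for if $e\in F_i\cap F_j$ with $i\ne j$ then the block-$q_j$ row of $e$ would be both zero (by the above) and nonzero (by $e\in F_j$).

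Next fix $i$ and look at the block-$q_i$ submatrix $M_i$ of rows in $F_i$. Splitting rows as $E_G[X_i]\sqcup (F_i\setminus E_G[X_i])$ and columns as $X_i\sqcup ([n]\setminus X_i)$, a second application of multilinearity yields the block-diagonal decomposition
\[
M_i \;=\; \begin{pmatrix} \J f_{h,G[X_i]}(z_i) & 0 \\ 0 & D_i \end{pmatrix}.
\]
Indeed, the top-right block vanishes because an edge $e\in E_G[X_i]$ has no vertex outside $X_i$, and the bottom-left block vanishes because each $e = e'-u+w\in F_i\setminus E_G[X_i]$ (with $w\notin X_i$) has $\nabla h(q_i^*(e-v))=0$ for every $v\ne w$ in $e$, since $e-v$ then still contains $w$. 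By hypothesis (P2) the top-left block has rank $s|X_i|-d_{\Gamma_h}$. For $D_i$, each row $e=e'-u+w$ carries a single nonzero $s$-entry $\pm m_e(w)\nabla h(z_i(e-w))^\top$ at its unique external vertex $w$. The $w$-column block of $D_i$ has rank $s$ precisely when $w$ is stable in $(F_i,q_i^*)$ in the sense of Lemma~\ref{lem:extension}. Since the relevant gradients depend only on coordinates in $X_i$ (because $\operatorname{supp}(e-w)\subseteq X_i$), they agree, as generic polynomial expressions, with those at a fully generic $q_i^\circ\in(\mathbb{F}^s)^{[n]}$; hypothesis (P1) together with Lemma~\ref{lem:extension} guarantees stability of every vertex of $F_i$ at $q_i^\circ$, hence also at $q_i^*$. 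This gives $\rank(D_i)=s(n-|X_i|)$ and so $\rank(M_i)=sn-d_{\Gamma_h}$.

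Because the $F_i$'s are disjoint and the nonzero entries of a row $e\in F_i$ lie entirely in the column block $q_i$, the matrices $M_i$ contribute their ranks independently:
\[
\rank \J f_{g,G}(p^*) \;\geq\; \sum_{i=1}^t \rank(M_i) \;=\; t(sn-d_{\Gamma_h}) \;=\; dn - t\,d_{\Gamma_h} \;\geq\; dn - d_{\Gamma_g},
\]
where the last inequality uses $\Gamma_g \supseteq \Gamma_h^{\,t}\rtimes S_t$ and hence $d_{\Gamma_g}\ge t\,d_{\Gamma_h}$. Lower semicontinuity of the rank of a polynomial matrix lifts this bound to a generic $p$, and the matching upper bound from Proposition~\ref{prop:inf_rigid} forces equality, so $\rank \J f_{g,G}(p)=dn-d_{\Gamma_g}$ at a generic $p$ and $G$ is locally $g$-rigid. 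The main technical delicacy, I expect, lies in the rank computation for $D_i$: one has to transfer the stability of each external vertex $w$ of $F_i$ from the fully generic configuration supplied by (P1) to the zero-extended specialisation $q_i^*$, and this step works only because $F_i$ was defined as the one-step neighbourhood of $E_G[X_i]$, which guarantees that every relevant gradient depends only on $X_i$-coordinates.
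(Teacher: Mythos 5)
Your proof is correct and follows essentially the same strategy as the paper's: you specialise each coordinate block $q_i$ to be generic on $X_i$ and zero elsewhere, use multilinearity and (P3) to block-diagonalise $\J f_{g,G}(p^*)$ into the pieces $M_i$, then split each $M_i$ along $E_G[X_i]$ versus its one-vertex-outside complement and invoke (P2) for the former and (P1) plus Lemma~\ref{lem:extension} (via the same ``gradients depend only on $X_i$-coordinates'' transfer argument the paper makes) for the latter, before summing ranks and lifting to a generic configuration. Your concluding paragraph correctly identifies where the delicacy lies, and your explicit appeal to lower semicontinuity makes the final lift slightly cleaner than the paper's presentation.
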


\begin{proof}
For simplicity of notation, we present the proof in the case when $s=1$ and $t=d$, but the extension to the general case is straightforward.
Let $G'=([n], \bigcup_i F_i)$. It is sufficient to show that $G'$ is locally $g$-rigid in $\mathbb{R}^d$ as $G'$ is a subgraph of $G$.

We first observe that $F_i\cap F_j=\emptyset$. Indeed, if $e\in F_i\cap F_j$,
then the definition of the operator $N_G$ implies that  there are some $u\in e$ and $v\in [n]$ such that $e-u+v\in E_G[X_j]$.
This, in particular, implies that ${\rm supp}(e-u)\subseteq X_j$, contradicting (P3).
Hence, ${F_1,\dots, F_d}$ is a family of disjoint edge sets. We also note that $E_G[X_i]\subseteq F_i$ by definition.

For each $i$ with $1\leq i\leq d$, we define $x_i\in \mathbb{F}^{n}$ by
\begin{equation}\label{eq:pi}
x_i(v)=\begin{cases}
x_{i,v} & \text{ if $v\in X_i$} \\
0 & \text{ otherwise},
\end{cases}
\end{equation}
where $x_{i,v}$ denotes a generic number.
Let $p\in (\mathbb{F}^d)^{n}$ be the point configuration obtained by stacking $x_i$ for all $1\leq i\leq d$ as row vectors,
i.e., $x_i$ is the $i$-th coordinate vector of $p$.
Consider $\J f_{g, G'}(p)$. Each column of $\J f_{g, G'}(p)$ is indexed by $(v,i)\in [n]\times [d]$ and each row is indexed by $e\in E(G')$.
We use $b(e,v,i)$ to denote the entry of $\J f_{g, G'}(p)$ indexed by $e$ and $(v,i)$.

\begin{claim}\label{claim:decomp}
Let $i,j\in [d]$,  $v\in [n]$, and $e\in F_i$.
\begin{itemize}
\item If $e\in E_G[X_i]$, then $b(e,v, j)\neq 0$ if and only if $i=j$ and $v\in e$.

\item If $e\in F_i\setminus E_G[X_i]$, then $b(e,v, j)\neq 0$ if and only if $i=j$ and $v\in {\rm supp}(e)\setminus X_i$. 
\end{itemize}
\end{claim}
\begin{proof}
Since $g=\sum_{i=1}^d h(x_i)$, we have
\begin{equation}\label{eq:claim}
b(e,v,j)=\begin{cases}
m_e(v) \nabla h(p_j(e-v)) &\text{ if $v\in e$}\\
0 & \text{ otherwise}.
\end{cases}
\end{equation}
Since $h$ is nonzero and multilinear, $\nabla h$ is also nonzero and multilinear.

We first consider a hyperedge $e\in E_G[X_i]$.
Suppose $i=j$ and $v\in e$. 
Then, by Equation (\ref{eq:pi}),  $p_j(e-v)$ is a $(k-1)$-tuple of  nonzero numbers, so $\nabla h(p_j(e-v))$ is nonzero.
Hence, $b(e,v,j)\neq 0$ by Equation (\ref{eq:claim}).

To see the converse direction, suppose next $b(e,v,j)\neq 0$.
Then, by Equation (\ref{eq:claim}), we have $v\in e$.
The multilinearity of $\nabla h$ and Equation (\ref{eq:pi}) further imply  that,
if ${\rm supp}(e-v)\not\subseteq X_j$, then 
$\nabla h(p_j(e-v))=0$.
So, by Equation (\ref{eq:claim}), ${\rm supp}(e-v)\subseteq X_j$ should hold.
Hence, $i=j$ follows from (P3). 

For the second statement, we consider a hyperedge $e\in F_i\setminus E_G[X_i]$.
Suppose $b(e,v,j)\neq 0$.
Then by Equation (\ref{eq:pi}) $i=j$ clearly holds. 
We further have $v\in {\rm supp}(e)\setminus X_i$,
since otherwise $e$ would contain some vertex $w\in {\rm supp}(e)\setminus X_i$ other than $v$
and $p_i(w)=0$ would follow by $w\notin X_i$, which would imply $b(e,v,j)=0$ by the multilinearity of $\nabla h$.
This completes the proof of the necessity for $b(e,v,j)\neq 0$.
The sufficiency can be checked easily from Equations (\ref{eq:pi}) and (\ref{eq:claim}).
\end{proof}

Let $G_i=([n],F_i)$.
Claim~\ref{claim:decomp} implies that $\J f_{g,G'}(p)$ is in the following block-diagonalized form:
\begin{equation}\label{eq:block}
\J f_{g,G'}(p)=\kbordermatrix{
 & I_1 & I_2 & \dots & I_d\\
F_1 & \J f_{h,G_1}(x_1) & 0 & \dots & 0 \\ 
F_2 & 0 & \J f_{h,G_2}(x_2) & \dots & 0 \\ 
\vdots & \vdots & 0 & \ddots & \vdots \\
F_d & 0 & \dots & 0 & \J f_{h,G_d}(x_d) 
}
\end{equation}
where $I_i:=\{(v,i): v\in [n]\}$ for $i=1,\ldots,d$. 
Let $\Gamma_h$ and $\Gamma_g$ be the stabilizers of $h$ and $g$, respectively.
\begin{claim}\label{claim:block}
We have $\rank \J f_{h,G_i}(x_i)=n-d_{\Gamma_h}$  for all $1\leq i \leq d$.
\end{claim}
\begin{proof}
(For completeness we give a proof of the claim for general $s$. 
We show $\rank \J f_{h,G_i}(x_i)=sn-d_{\Gamma_h}$.)
Let $F_{i,v}$ be the set of hyperedges incident to $v$ in $G_i$,
and consider the submatrix $B_v$ of $\J f_{h,G_i}(x_i)$ induced by the rows indexed by $F_{i,v}$ and the corresponding $s$ columns of $v$. This submatrix $B_v$ corresponds to the matrix $B$ in Equation~(\ref{eq:B}) for $v$ and $F_{i,v}=\{e_1,\dots, e_j\}$.
By Claim~\ref{claim:decomp}, $\J f_{h,G_i}(x_i)$ has the form 
\begin{equation}\label{eq:block2}
\J f_{h,G_i}(p)=\kbordermatrix{
 & X_i & v_1 & \dots & v_l\\
F_i\setminus \bigcup_{v\in [n]\setminus X_i} F_{i,v} & \J f_{h,G[X_i]}(x_i) & 0 & \dots & 0 \\ 
F_{i,v_1} & 0 & B_{v_1} & \dots & 0 \\ 
\vdots & \vdots & 0 & \ddots & \vdots \\
F_{i,v_l} & 0 & \dots & 0 & B_{v_l} 
}
\end{equation}
where we denote $[n]\setminus X_i=\{v_1,\dots, v_l\}$.
By (P2), $G[X_i]$ is locally $h$-rigid, and hence $\rank \J f_{h,G[X_i]}(x_i)=sn-d_{\Gamma_h}$.
So it suffices to show that $\rank B_{v_j}=s$ for each $v_j\in [n]\setminus X_i$, i.e., $v_j$ is stable in $(G_i,x_i)$.

By (P1) and Lemma~\ref{lem:extension}, every vertex $v_j$ is stable in a generic framework of $G_i$ (but the current $x_i$ is not generic).
Recall that every edge in $F_{i,v_j}$ is written in the form $f+v_j$ for some $(k-1)$-multiset $f$ with ${\rm supp}(f)\subseteq X_i$,
and so the entries of $B_{v_j}$ are determined by the coordinates of $X_i$.
Since the coordinates of $X_i$ in $x_i$ are generic, $v_j$ is still stable in $(G_i,x_i)$.
This completes the proof of Claim~\ref{claim:block}.
\end{proof}
Hence,
\begin{align*}
    dn-d_{\Gamma_g}&\geq \rank \J f_{g,G'}(p) \qquad \text{(by Proposition~\ref{prop:inf_rigid})} \\ 
    &=\sum_{i=1}^d \rank \J f_{h,G_i}(x_i) \qquad \text{(by Equation (\ref{eq:block}))}\\
    &=d(n-d_{\Gamma_h}) \qquad \text{(by Claim~\ref{claim:block})} \\
    &\geq dn-d_{\Gamma_g} \qquad \text{(by the definition of $\Gamma_h$ and $\Gamma_g$)},
\end{align*}
implying the local $g$-rigidity of $(G',p_i)$ by Proposition~\ref{prop:inf_rigid}.
This completes the proof.
\end{proof}

We give two applications of Theorem~\ref{thm:packing}. 

\begin{example}
Consider the product map $h_{\rm prod}$ and the sum $dh_{\rm prod}$ of $d$ copies of $h_{\rm prod}$. This corresponds to the case of symmetric tensor completions.
Consider $k\geq 3$ and a $k$-uniform hypergraph $G=([n],E)$ whose edge set is given by 
\[
E=\left\{e \in  \genfrac{\{}{\}}{0pt}{}{[n]}{k}:  m_e(v)\geq k-1 \text{ for some $v\in [n]$}\right\}.
\]
This hypergraph is the underlying graph of a tridiagonal symmetric tensor.
Suppose $d\leq n$.
We choose $\cX=\{X_1,\dots, X_d\}$ with $X_v=\{v\}$ for each $v\in [d]$.
We show $\cX$ satisfies the hypotheses (P1)-(P3) of Theorem~\ref{thm:packing}.
We use $v_1^{i_1}\dots v_s^{i_s}$ to denote the hyperedge consisting of $v_1,\dots, v_s$ with $m_e(v_j)=i_j$.
Then, $E=\{v^{k-1}w: v, w\in [n]\}$.
For each $v\in [d]$,  $E_G[X_v]=\{v^k\}$, and $F_v=\{v^{k-1}w: w\in [n]\}$.
The hypergraph $G[X_v]$ is locally $h_{\rm prod}$-rigid since $G[X_v]$ is a single-vertex graph with one edge. So (P2) holds.
Note that $F_v$ is obtained from $G[X_v]$ by a sequence of $1$-valent extension, implying (P1) by Corollary~\ref{cor:extension}.
To see (P3), consider any $e\in F_v$ and $u\in e$.
Since $e=v^{k-1}w$ for some $w\in [n]$, $e-u=v^{k-2}w$ if $v=u$ and $e-u=v^{k-1}$ if $u=w$.
Hence, by $k\geq 3$, ${\rm supp}(e-u)\not\subset X_{v'}=\{v'\}$ holds for any $v'$ with $v\neq v'$, and (P3) follows.
By Theorem~\ref{thm:packing}, we conclude that $G$ is locally $dh_{\rm prod}$-rigid if $d\leq n$.
 \end{example}

 \begin{example}
For another (more exciting) application of Theorem~\ref{thm:packing}, we need the following notation.
For positive integers $\alpha, \beta$, an $\alpha$-uniform family $\cX\subseteq {[n]\choose \alpha}$  is called {\em $\beta$-sparse} if 
$|X\cap Y|<\beta$ for any $X, Y\in \cX$.
We define the packing number by 
\[
{\rm packing}(n,\alpha,\beta)=\max\left\{ |\cX|: \cX\subseteq {[n]\choose \alpha}, \cX \text{ is $\beta$-sparse}\right\}.
\]
An asymptotic bound by R{\"o}dl \cite{rodl1985packing} gives
$
{\rm packing}(n,\alpha,\beta)=(1-o(1)) {n\choose \beta}/{\alpha\choose \beta}.
$

\smallskip
Recall that  $\tilde{K}_n^k$ denotes the simple complete $k$-uniform hypergraph on $n$ vertices.

\begin{corollary}\label{cor:packing_Kn}
Let $n, k\geq 3$ 
be positive integers 
and consider $g:(\mathbb{F}^d)^k\rightarrow \mathbb{F}$ written as 
the sum of $t$ copies of a non-zero multilinear $k$-form $h:(\mathbb{F}^s)^k\rightarrow \mathbb{F}$ with $d=st$.
Suppose there is a positive integer $a$ such that 
\begin{itemize}
    \item $\tilde{K}_a^k$ is locally $h$-rigid,
    \item $a\geq n_{\Gamma_h}+1$, and
    \item $t\leq {\rm packing}(n,a,k-2)$.
\end{itemize}
Then $\tilde{K}_n^k$ is locally $g$-rigid.
\end{corollary}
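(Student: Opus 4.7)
The plan is to apply Theorem~\ref{thm:packing} to $G = \tilde{K}_n^k$ with an appropriately chosen family $\cX = \{X_1, \dots, X_t\}$. Since $t \leq {\rm packing}(n, a, k-2)$, we may select $\cX \subseteq {[n] \choose a}$ to be $(k-2)$-sparse, so that $|X_i \cap X_j| \leq k-3$ for all $i \neq j$. Each $X_i$ has $|X_i| = a \geq n_{\Gamma_h}+1 > n_{\Gamma_h}$, as required by Theorem~\ref{thm:packing}. It remains to verify conditions (P1), (P2), (P3).

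Condition (P2) is immediate, since $G[X_i]$ is a copy of $\tilde{K}_a^k$, which is locally $h$-rigid by hypothesis. For (P3), observe that, because $G = \tilde{K}_n^k$ is simple,
\[
F_i = N_G(E_G[X_i]) = \left\{ e \in {[n] \choose k} : |e \cap X_i| \geq k-1 \right\}.
\]
Hence, for $e \in F_i$ and $v \in e$ we have $|(e \setminus \{v\}) \cap X_i| \geq k-2$; if ${\rm supp}(e - v) = e \setminus \{v\}$ were contained in some $X_j$ with $j \neq i$, then $|X_i \cap X_j| \geq k-2$, contradicting the $(k-2)$-sparseness of $\cX$.

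The core step is condition (P1), that $([n], F_i)$ is locally $h$-rigid. The plan is to build $([n], F_i)$ from $\tilde{K}_a^k$ on $X_i$ by adding each vertex $v \in [n] \setminus X_i$ one at a time, together with all ${a \choose k-1}$ hyperedges $\{v\} \cup T$ with $T \in {X_i \choose k-1}$, and to invoke Lemma~\ref{lem:extension} at every step. This reduces the problem to verifying stability of each newly added $v$. Since $\tilde{K}_a^k$ is locally $h$-rigid and $a \geq n_{\Gamma_h}+1$, the first part of Lemma~\ref{lem:extension} applied to $\tilde{K}_a^k$ ensures that every vertex $w \in X_i$ is already stable there; that is, at a generic $p|_{X_i}$ the gradients $\{\nabla h(p(T)) : T \in {X_i \setminus \{w\} \choose k-1}\}$ span $\mathbb{F}^s$. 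Enlarging this set to $\{\nabla h(p(T)) : T \in {X_i \choose k-1}\}$ can only preserve the spanning property, which yields stability of $v$. Moreover, the relevant gradient matrix for each new $v$ depends only on $p|_{X_i}$, so successive extensions do not interfere with one another.

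With (P1), (P2), and (P3) all verified, Theorem~\ref{thm:packing} immediately yields local $g$-rigidity of $\tilde{K}_n^k$. The main obstacle is (P1); its verification rests on propagating stability of vertices inside $X_i$ in $\tilde{K}_a^k$ to stability of newly attached vertices outside $X_i$, which is precisely where the hypotheses $a \geq n_{\Gamma_h}+1$ and the local $h$-rigidity of $\tilde{K}_a^k$ are used. The other two conditions follow routinely from the combinatorial choice of $\cX$.
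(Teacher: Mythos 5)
Your proof is correct and follows essentially the same route as the paper: invoke Theorem~\ref{thm:packing} for a $(k-2)$-sparse family $\cX\subseteq\binom{[n]}{a}$ and verify (P1)--(P3) from the sparseness and the local $h$-rigidity of $\tilde K_a^k$. Your argument for (P1) replaces the paper's symmetry argument with the direct observation that, for a new vertex $v\notin X_i$, the gradient set $\{\nabla h(p(T)) : T\in\binom{X_i}{k-1}\}$ contains $\{\nabla h(p(T)) : T\in\binom{X_i\setminus\{w\}}{k-1}\}$ which already spans by stability of $w$ in $\tilde K_a^k$; this is a tidy simplification of the same underlying idea.
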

\begin{proof}
We may assume $t={\rm packing}(n,a,k-2)$. 
Let $\cX=\{X_1,\dots, X_t\} \subseteq {[n]\choose a}$ be a $(k-2)$-sparse family achieving $p(n,a,k-2)$.
By Theorem~\ref{thm:packing}, it suffices to show that $\cX$ satisfies (P1)-(P3).

Choose $X_i\in \cX$.
Since $|X_i|=a$, the subgraph $\tilde{K}_n^k[X_i]$ is isomorphic to $\tilde{K}_a^k$, so
(P2) follows from the local $h$-rigidity of $\tilde{K}_a^k$.

To see (P1), consider $F_i$ as defined in the statement of Theorem~\ref{thm:packing} in the case when $G=\tilde{K}_n^k$, and let $G_i=([n],F_i)$.
Pick any vertex $u\in [n]\setminus X_i$.
Since $a\geq n_{\Gamma_h}+1$ and $K_a^k$ is locally $h$-rigid,
Lemma~\ref{lem:extension} implies that each vertex $v$ in $X_i$ is stable in a generic framework of $\tilde{K}_n^k[X_i]$.
Hence each vertex $v$ in $X_i$ remains stable in a generic framework of $\tilde{K}_n^k[X_i\cup \{u\}]$. 
Since $\tilde{K}_n^k[X_i\cup \{u\}]$ is a simple complete hypergraph, the symmetry further implies that $u$ is also stable in a generic framework  of $\tilde{K}_n^k[X_i\cup \{u\}]$.
Hence, by Lemma~\ref{lem:extension}, a subgraph of $\tilde{K}_n^k[X_i\cup \{u\}]$ is obtained from 
$\tilde{K}_n^k[X_i]$ by an $s$-valent extension which preserves local $g$-rigidity.
Note that $\tilde{K}_n^k[X_i\cup \{u\}]$ is a subgraph of $G_i$.
Applying the same argument to each vertex $u\in [n]\setminus X_i$,
we can further construct 
a spanning subgraph of $G_i$ from $\tilde{K}_n^k[X_i]$ by a sequence of $s$-valent extensions keeping local $g$-rigidity.
Thus $G_i$ is locally $g$-rigid.

Finally, suppose, for a contradiction, that (P3) does not hold. 
Then ${\rm supp}(e-v)\subseteq X_j$ for some $e\in F_i$, $v\in e$, and $X_j$ with $i\neq j$.
Since $e\in F_i$, there are some $u,w\in [n]$ such that $e-u+w\in E_{\tilde{K}_n^k}[X_i]$, so ${\rm supp}(e-u-v)\subseteq X_i\cap X_j$.
Since $|{\rm supp}(e-u-v)|=k-2$, this contradicts the fact that $\cX$ is $(k-2)$-sparse.
Thus (P3) holds, and the $g$-rigidity of $G$ follows from Theorem~\ref{thm:packing}.
\end{proof}
\end{example}

We note that when $h$ is the determinant function, Corollary~\ref{cor:packing_Kn} aligns with an observation in \cite{AOP}.

\section{Conditions for Global $g$-rigidity}\label{sec:global}

In this section we analyse global $g$-rigidity.

\subsection{Generic global $g$-rigidity}\label{subsec:global_graphs}
In Section~\ref{sec:tools}, we observed that 
local $g$-rigidity is a generic property of the underlying hypergraphs. This notion naturally leads to the concept of local $g$-rigidity of hypergraphs. By applying Chevalley's theorem, similar to the approach in Proposition \ref{prop:matroid}(iv), we can extend this result to global $g$-rigidity when considering constraints over the field of complex numbers.

\begin{prop}
Global $g$-rigidity is a generic property of the underlying hypergraphs if the underlying field is complex. 
\end{prop}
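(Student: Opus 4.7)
The plan is to express the condition ``$(G,p)$ is globally $g$-rigid'' as a $\mathbb{Q}$-constructible condition on the configuration $p$, and then exploit the irreducibility of $(\mathbb{C}^d)^V$ to conclude that the generic fiber of this condition is either everything or empty. This closely mirrors the Chevalley-based argument used in the proof of Proposition~\ref{prop:matroid}(iv).

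First, I would introduce two auxiliary subsets of $(\mathbb{C}^d)^V \times (\mathbb{C}^d)^V$. The fiber set
\[
T := \{(p,q) : f_{g,G}(p) = f_{g,G}(q)\}
\]
is $\mathbb{Q}$-algebraic because $f_{g,G}$ is a $\mathbb{Q}$-polynomial map. The orbit set
\[
O := \{(p, \gamma\cdot p) : p \in (\mathbb{C}^d)^V,\ \gamma \in \Gamma_g\}
\]
is the image of the $\mathbb{Q}$-polynomial map $(p,\gamma)\mapsto (p,\gamma\cdot p)$ defined on $(\mathbb{C}^d)^V \times \Gamma_g$, which is a $\mathbb{Q}$-algebraic variety since $g$ (and hence its stabiliser) is defined over $\mathbb{Q}$; by Chevalley's theorem $O$ is therefore $\mathbb{Q}$-constructible. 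By definition, $(G,p)$ is globally $g$-rigid iff every $q$ with $(p,q)\in T$ also satisfies $(p,q)\in O$. Hence the set of ``bad'' configurations is the projection onto the first factor of $T\setminus O$:
\[
S_G^{c} := \{p \in (\mathbb{C}^d)^V : (G,p) \text{ is not globally $g$-rigid}\} = \pi_1(T \setminus O).
\]
Since differences and images of $\mathbb{Q}$-constructible sets under $\mathbb{Q}$-polynomial maps are $\mathbb{Q}$-constructible (again by Chevalley), $S_G^c$, and therefore $S_G := (\mathbb{C}^d)^V \setminus S_G^c$, is $\mathbb{Q}$-constructible.

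Next I would invoke irreducibility. Since $(\mathbb{C}^d)^V \cong \mathbb{C}^{d|V|}$ is irreducible, a $\mathbb{Q}$-constructible subset $S_G$ satisfies one of two dichotomous cases. Either $\overline{S_G} = (\mathbb{C}^d)^V$, in which case the constructible set $S_G$ contains a nonempty Zariski-open subset $U$ defined over $\mathbb{Q}$, and every generic $p$ (which must avoid the proper $\mathbb{Q}$-algebraic set $(\mathbb{C}^d)^V \setminus U$) lies in $S_G$; or $\overline{S_G}$ is a proper $\mathbb{Q}$-algebraic subset, and then no generic $p$ lies in $S_G$. In either case, whether a generic framework of $G$ is globally $g$-rigid depends only on $G$, as claimed.

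The main subtlety, and the reason the analogous real statement fails, is that Chevalley's theorem requires working over an algebraically closed field: over $\mathbb{R}$ one has to replace it with Tarski--Seidenberg quantifier elimination, producing semialgebraic rather than constructible sets, so the neat dichotomy collapses and genericity of global rigidity over $\mathbb{R}$ may genuinely fail (as noted in Section~\ref{subsec:global_graphs}). A second point to check carefully is that $\Gamma_g$ is a $\mathbb{Q}$-algebraic subgroup of ${\rm Aff}(d,\mathbb{C})$, but this is immediate from the definition of $\Gamma_g$ as the zero-locus of the $\mathbb{Q}$-polynomial equations $g(\gamma^{-1}\cdot x_1,\dots,\gamma^{-1}\cdot x_k) - g(x_1,\dots,x_k) = 0$.
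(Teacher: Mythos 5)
Your proof is correct and follows the same route the paper sketches: apply Chevalley's theorem to show the set of configurations for which global $g$-rigidity fails is $\mathbb{Q}$-constructible, then use irreducibility of $(\mathbb{C}^d)^V$ to get the dichotomy. The paper only gestures at this (``By applying Chevalley's theorem, similar to the approach in Proposition~\ref{prop:matroid}(iv)'') without filling in the details, so your write-out of $T$, $O$, and $\pi_1(T\setminus O)$ supplies exactly what the paper leaves implicit. One small rough edge: the equations $g(\gamma^{-1}\cdot x_1,\dots,\gamma^{-1}\cdot x_k)=g(x_1,\dots,x_k)$ are rational rather than polynomial in the entries of $\gamma=(A,t)$ because of $A^{-1}$; it is cleaner to observe that this condition is equivalent to $g(\gamma\cdot x_1,\dots,\gamma\cdot x_k)=g(x_1,\dots,x_k)$ (substitute $x_i\mapsto\gamma\cdot x_i$), which is honestly polynomial, so $\Gamma_g$ is the intersection of a $\mathbb{Q}$-closed set with the $\mathbb{Q}$-open set $\{\det A\neq 0\}$, hence $\mathbb{Q}$-constructible.
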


Additionally, in the case of Euclidean distance where $g$ is the squared Euclidean distance function, there are notable results in the real case by Gortler, Healy, and Thurston \cite{Gortler2014, Jackson19}. They show that global $g$-rigidity is a generic property in the real setting. However, it is important to note that this result does not hold for other maps. For example, if $g$ represents the Euclidean inner product of two points over $\mathbb{R}$ \cite{JJTunique} or if $g$ is the map for volume rigidity \cite{Southgate}, then global $g$-rigidity is not a generic property over reals.

While global $g$-rigidity may not be a generic property in general, it would be useful if we can guarantee the global $g$-rigidity for any generic realization of a given hypergraph. Therefore, we define a hypergraph $G$ to be {\em globally $g$-rigid} (over $\mathbb{F}$) if $(G,p)$ is globally $g$-rigid for any generic point-configuration $p$ over $\mathbb{F}$. 

\smallskip
The following proposition provides motivation for focusing on the complex case when establishing a sufficient condition for global rigidity.
\begin{prop}\label{prop:complex_to_real}
If a hypergraph $G$ is globally $g$-rigid over $\mathbb{C}$, then it is globally $g$-rigid over $\mathbb{R}$.
\end{prop}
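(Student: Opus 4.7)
The plan is to use that a generic real point-configuration is automatically generic over $\mathbb{C}$ (both notions being defined by algebraic independence over $\mathbb{Q}$), so that the complex hypothesis can be applied directly, and then to argue that the resulting group element is forced to be real by the reality of the data.

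Concretely, let $p\in(\mathbb{R}^d)^V$ be generic over $\mathbb{R}$; its coordinates are algebraically independent over $\mathbb{Q}$, hence $p$ is also generic as an element of $(\mathbb{C}^d)^V$. Suppose $q\in(\mathbb{R}^d)^V$ satisfies $f_{g,G}(q)=f_{g,G}(p)$. Since $f_{g,G}$ is a $\mathbb{Q}$-polynomial map, this equality holds in $\mathbb{C}^E$ as well, so the hypothesis of complex global $g$-rigidity produces $\gamma=(A,t)\in\Gamma_g^{\mathbb{C}}\subseteq{\rm Aff}(d,\mathbb{C})$ with $q=\gamma\cdot p$, where $\Gamma_g^{\mathbb{C}}$ denotes the stabiliser computed over $\mathbb{C}$.

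The heart of the argument is to show $\gamma$ is automatically real. Writing $A=A_1+iA_2$ and $t=t_1+it_2$ with $A_1,A_2$ real matrices and $t_1,t_2$ real vectors, the identity $q(v)=Ap(v)+t$, together with the reality of $p(v)$ and $q(v)$, forces $A_2 p(v)+t_2=0$ for every $v\in V$. Subtracting such equations in pairs yields $A_2(p(v)-p(u))=0$ for all $u,v\in V$; when $|V|\geq d+1$, the genericity of $p$ ensures that the differences $p(v)-p(u)$ span $\mathbb{R}^d$, so $A_2=0$ and hence $t_2=0$, i.e.\ $\gamma\in{\rm Aff}(d,\mathbb{R})$. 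Because $g$ is a $\mathbb{Q}$-polynomial map, $\Gamma_g$ is cut out by polynomial equations with rational coefficients, so any real element of $\Gamma_g^{\mathbb{C}}$ lies in the real stabiliser $\Gamma_g^{\mathbb{R}}$. Thus $q=\gamma\cdot p$ with $\gamma\in\Gamma_g^{\mathbb{R}}$, and $(G,p)$ is globally $g$-rigid over $\mathbb{R}$.

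The main obstacle I anticipate is the low-vertex regime $|V|\leq d$, where the affine span of $p(V)$ is a proper subspace and the descent step above degenerates: the constraints $A_2 p(v)+t_2=0$ no longer force $A_2=0$, and one would have to argue instead via the $\mathrm{Gal}(\mathbb{C}/\mathbb{R})$-structure on the coset $\gamma\cdot\mathrm{Stab}_{\Gamma_g^{\mathbb{C}}}(p)$ (a Galois cohomology point of view). In practice this case is either vacuous (too few vertices for complex global $g$-rigidity to hold nontrivially) or resolvable by a separate, structure-specific inspection of $\Gamma_g$, so the argument above yields the proposition in the generic setting relevant to the applications.
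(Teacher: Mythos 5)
Your argument opens with the same key observation as the paper's proof: a real generic $p$ is also generic over $\mathbb{C}$, so the complex hypothesis applies to it. The paper stops there, concluding immediately that $(G,p)$ is globally $g$-rigid over $\mathbb{R}$; you go further and confront the point the paper's terse argument glosses over, namely that complex global $g$-rigidity only hands you $\gamma\in\Gamma_g^{\mathbb{C}}$ with $q=\gamma\cdot p$, whereas the real definition demands $\gamma\in\Gamma_g^{\mathbb{R}}$. Your descent step is correct: taking imaginary parts of $q(v)=Ap(v)+t$ yields $A_2p(v)+t_2=0$ for all $v$, subtraction gives $A_2(p(v)-p(u))=0$, and when $|V|\geq d+1$ the generic differences span $\mathbb{R}^d$, forcing $A_2=0$ and then $t_2=0$. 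The observation that $\Gamma_g^{\mathbb{C}}\cap\mathrm{Aff}(d,\mathbb{R})=\Gamma_g^{\mathbb{R}}$ because the stabiliser equations are $\mathbb{Q}$-polynomial identities is also right. So where the paper's proof is a two-line appeal to the field extension of genericity, yours is the same strategy made explicit, and it buys you an actual proof of the reality of $\gamma$ rather than an implicit appeal to it.

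The place where you should be careful is exactly where you already are: the regime $|V|\leq d$. Your Galois-descent remark is the right conceptual framework (if $\gamma\cdot p$ is real and $p$ is real then $\bar\gamma\cdot p=\gamma\cdot p$, so $\gamma^{-1}\bar\gamma\in\mathrm{Stab}_{\Gamma_g^{\mathbb{C}}}(p)$, and a nontrivial stabiliser leaves room for a genuinely nonreal coset). Since the paper's definition of global $g$-rigidity and the stated proposition are unconditional in $|V|$, your proof as written is slightly narrower than the claim; a clean way to close it would be to note that if $\mathrm{Stab}_{\Gamma_g^{\mathbb{C}}}(p)$ is trivial then $\gamma$ is forced to be real by the displayed identity, and that for generic $p$ with $|V|\geq n_{\Gamma_g}$ the stabiliser is indeed zero-dimensional, leaving only a finite ambiguity that one can treat case-by-case. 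In summary, your proof is correct in the regime where the proposition is genuinely used, and it is a more careful and complete rendering of the paper's rather compressed argument.
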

\begin{proof}
Since $G$ is globally $g$-rigid over $\mathbb{C}$,
any generic hyperframework $(G,p)$ of $G$ over $\mathbb{C}$ is globally rigid.
Since the genericity is defined in terms of algebraic independence over $\mathbb{Q}$, any generic configuration $q$ over $\mathbb{R}$ is also generic over $\mathbb{C}$.
So for any generic real configuration $q$,  $(G,q)$ is a generic hyperframework over $\mathbb{C}$, and it is globally rigid.
\end{proof}
\subsection{Identifiability and global $g$-rigidity}
\label{subsec:identifiability}
The following proposition highlights the usefulness of identifiability in verifying global rigidity.
\begin{prop}\label{prop:identifiability_global}
Suppose  $g:(\mathbb{F}^d)^k\rightarrow \mathbb{F}$ is the sum of $t$ copies of a homogeneous  $h:(\mathbb{F}^s)^k\rightarrow \mathbb{F}$.
Suppose further that $\overline{{\rm im} f_{h,G}}$ is $t$-identifiable.
Then $G$ is globally $g$-rigid if and only if $G$ is globally $h$-rigid.
\end{prop}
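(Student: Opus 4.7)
\bigskip
\noindent\textbf{Proof proposal.} My plan is to prove both directions using the $t$-identifiability hypothesis to translate between $g$-measurements and the $t$ individual $h$-measurements, and to exploit the embedding $\Gamma_h \wr S_t \hookrightarrow \Gamma_g$ (which is checked directly from the formula $g=\sum_{i=1}^t h$ and the definition of the action). Throughout I identify $(\mathbb{F}^d)^V$ with $((\mathbb{F}^s)^V)^t$ and write $p=(q_1,\dots,q_t)$.

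For the direction $(\Leftarrow)$, suppose $G$ is globally $h$-rigid. Take a generic $p=(q_1,\dots,q_t)$ and any $p'=(q_1',\dots,q_t')$ with $f_{g,G}(p')=f_{g,G}(p)$, i.e.\ $\sum_i f_{h,G}(q_i')=\sum_i f_{h,G}(q_i)$. Since $p$ is generic, the $t$ points $f_{h,G}(q_i)$ form a generic $t$-tuple on $\overline{{\rm im}\, f_{h,G}}$, and in particular are linearly independent in the ambient $\mathbb{F}^E$ (otherwise $\dim \overline{{\rm im}\, f_{g,G}}$ would drop). Applying $t$-identifiability to the equality of sums produces a permutation $\sigma\in S_t$ and scalars $\lambda_i$ with $f_{h,G}(q_i')=\lambda_i f_{h,G}(q_{\sigma(i)})$; the sum constraint $\sum_i(\lambda_i-1)f_{h,G}(q_{\sigma(i)})=0$ together with linear independence forces $\lambda_i=1$. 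Global $h$-rigidity applied at each generic $q_{\sigma(i)}$ then yields $\gamma_i\in\Gamma_h$ with $q_i'=\gamma_i\cdot q_{\sigma(i)}$. Assembling the $\gamma_i$ and the permutation $\sigma$ into $\gamma=(\gamma_1\oplus\cdots\oplus\gamma_t)\pi_\sigma$, one checks directly that $\gamma\in\Gamma_h\wr S_t\subseteq\Gamma_g$ and that $\gamma\cdot p=p'$.

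For the direction $(\Rightarrow)$, I embed the $h$-rigidity question into a $g$-rigidity statement. Given a generic $q\in(\mathbb{F}^s)^V$ and any $q'$ with $f_{h,G}(q')=f_{h,G}(q)$, choose generic $q_2,\dots,q_t$ algebraically independent from $q$ and from each other, set $p=(q,q_2,\dots,q_t)$ and $p'=(q',q_2,\dots,q_t)$. Then $p$ is generic and $f_{g,G}(p')=f_{g,G}(p)$, so global $g$-rigidity delivers $\gamma\in\Gamma_g$ with $\gamma\cdot p=p'$. Writing $\gamma$ as an affine map $y\mapsto Ay+b$ on $\mathbb{F}^d=(\mathbb{F}^s)^t$ in block form $A=(A_{ij})$, $b=(b^{(i)})$, the identity $\gamma\cdot p=p'$ evaluated at each $v\in V$, combined with the generic independence of the coordinates of $q,q_2,\dots,q_t$, forces $A_{ii}=I$, $A_{ij}=0$ for $i\geq 2,j\neq i$, and $b^{(i)}=0$ for $i\geq 2$. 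The condition $\gamma\in\Gamma_g$, i.e.\ $g\circ\gamma=g$, then reduces upon setting coordinates to zero to $(A_{11},b^{(1)})\in\Gamma_h$ together with the statement that $h$ is jointly translation-invariant in each of its $k$ arguments by vectors in the column span $U=\sum_{j\geq 2} A_{1j}(\mathbb{F}^s)$. Since $h$ is homogeneous of positive degree and nonzero, this forces $U=0$, so $A_{1j}=0$ for $j\geq 2$; hence $q'=A_{11}\cdot q+b^{(1)}\in\Gamma_h\cdot q$, giving global $h$-rigidity.

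The main technical point is the last step, ruling out the ``off-diagonal'' blocks $A_{1j}$ for $j\geq 2$ from the stabiliser equation for $g$. For symmetric $h$ one can evaluate at the diagonal $z_j=z$ and reduce to the fact that a nonzero homogeneous polynomial of positive degree is not translation-invariant; for anti-symmetric $h$ (e.g.\ $h_{\det}$) one instead tests against an independent $k$-tuple $(e_1,\dots,e_k)$ to see that translation-invariance on every argument would force $h\equiv 0$. Modulo this structural observation, identifiability does the rest of the work by pinning down the decomposition of $\gamma$ up to the natural $\Gamma_h\wr S_t$-action.
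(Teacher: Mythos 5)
Your ($\Leftarrow$) direction is essentially the paper's proof with the details written out: decompose $p=(q_1,\dots,q_t)$, apply $t$-identifiability to match the $f_{h,G}(q_i')$ against the $f_{h,G}(q_{\sigma(i)})$ up to scaling, kill the scalars via linear independence of a generic $t$-tuple, and then apply global $h$-rigidity to each block. The paper compresses all of this into three sentences and does not spell out the scaling or independence steps, but the route is identical.

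Your ($\Rightarrow$) direction is where the real difference lies, and it contains a genuine gap. The paper disposes of this direction with a bare ``Clearly global $h$-rigidity is necessary for global $g$-rigidity.'' You rightly notice that this deserves justification and supply the correct mechanism: extend $q$ to a generic $p=(q,q_2,\dots,q_t)$, obtain $\gamma\in\Gamma_g$ with $\gamma\cdot p=p'$, and use the algebraic independence of the blocks to force all rows of the linear part of $\gamma$ other than the first to be trivial. That part is fine. The problem is the very last step, where you need $A_{1j}=0$ for $j\geq 2$. You correctly reduce the stabiliser condition $\gamma\in\Gamma_g$ to the statement that $h(z_1-w_1,\dots,z_k-w_k)=h(z_1,\dots,z_k)$ for all $w_j$ in the subspace $W$ spanned by the off-diagonal columns, and then assert that $W=0$ because ``a nonzero homogeneous polynomial of positive degree is not translation-invariant.'' This is false at the required level of generality: a nonzero homogeneous polynomial can perfectly well be invariant under translation by a proper nonzero subspace. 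For instance $h\colon(\mathbb{F}^2)^2\to\mathbb{F}$, $h\bigl((a,b),(c,d)\bigr)=ac$, is homogeneous, symmetric, nonzero, and invariant under translating either argument by any element of $\{0\}\times\mathbb{F}$. With such an $h$, your argument cannot rule out $A_{1j}\neq 0$, and you never obtain $q'\in\Gamma_h\cdot q$. What is missing is a second appeal to the hypothesis that $G$ is globally $g$-rigid: if $h$ admitted a nonzero multi-translation invariance subspace $W$, then $W^{\oplus t}\subseteq\mathbb{F}^d$ would be an invariance subspace for $g$, and perturbing $p$ vertex by vertex inside $W^{\oplus t}$ gives a family of points in $f_{g,G}^{-1}(f_{g,G}(p))$ of dimension $|V|\,t\,\dim W$, which exceeds $\dim\Gamma_g$ once $|V|$ is large enough and so cannot lie in a single $\Gamma_g$-orbit, contradicting global $g$-rigidity. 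Only this contradiction yields $W=0$ and lets the argument close. For the concrete $h$'s occurring in the paper ($h_{\rm prod}$, $h_{\rm det}$, $h_{\rm perm}$) no such $W$ exists, which is presumably why the paper treats the direction as obvious; but your stated justification for $U=0$ does not hold for an arbitrary homogeneous $h$.
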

\begin{proof}
Clearly global $h$-rigidity is necessary for global $g$-rigidity.
To see the converse direction, we take any generic $p\in (\mathbb{F}^d)^{V}$ 
and show that, if $G$ is globally $h$-rigid then $(G,p)$ is globally $g$-rigid.

Since $g$ is the sum of $t$ copies of $h$, 
$p\in (\mathbb{F}^d)^{V}$ can be decomposed into $(q_1,\dots, q_t)$ with $q_i\in (\mathbb{F}^s)^{V}$ such that 
$f_{g,G}(p)=\sum_{i=1}^t f_{h,G}(q_i)$.
Since $p$ is generic, each $q_i$ is generic.
So $(G,q_i)$ is globally $h$-rigid.
Hence, the $t$-identifiability of $\overline{{\rm im} f_{h,G}}$ implies the global $g$-rigidity of $(G,p)$.    
\end{proof}

We next review some sufficient conditions for checking the identifiability of varieties.
A key notion is that of tangential weak defectiveness due to Chiantini and Ottaviani~\cite{CO2012}.
Let ${\cal V}$ be a variety over $\mathbb{C}$, 
and denote by $T_x{\cal V}$ the tangent space at $x\in {\cal V}$.
The {\em $t$-tangential contact locus} ${\cal C}$ at  generic $t$ points $x_1,\dots, x_t$ is defined as 
$
{\cal C}=\overline{\{y\in {\cal V}: T_y{\cal V}\subseteq \langle T_{x_1}{\cal V},\dots,T_{x_t}{\cal V}\rangle\}}.
$
A variety ${\cal V}$ is said to be {\em $t$-tangentially weakly defective} 
if an irreducible component of ${\cal C}$ that contains $x_i$ has dimension at least two for some $i$.
(Since we are working in affine space, the tangential contact locus always contains a one-dimensional subspace of scaling.) 

It was shown that $t$-tangential weak non-defectiveness is a sufficient condition for $t$-identifiability.
\begin{theorem}[Chiantini and Ottaviani~\cite{CO2012}]\label{thm:twd}
Suppose an affine variety ${\cal V}$ over $\mathbb{C}$ is $t$-tangentially weakly nondefective.
Then ${\cal V}$ is $t$-identifiable.
\end{theorem}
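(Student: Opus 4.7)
The proof plan rests on two classical ingredients, Terracini's lemma and the rigidity of the tangential contact locus, combined with a deformation argument matching the points of any two decompositions of a generic sum.

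I would first fix a generic $y \in {\rm Sect}_t({\cal V})$ together with a generic decomposition $y = x_1 + \dots + x_t$, and aim to show that any other decomposition $y = x_1' + \dots + x_t'$ with $x_i' \in {\cal V}$ agrees with $(x_1, \dots, x_t)$ up to permutation and scaling. The affine form of Terracini's lemma, which is immediate from the block structure of the Jacobian displayed in Equation (\ref{eq:Jacobian_block}), identifies the tangent space $T_y\,{\rm Sect}_t({\cal V})$ with $\Sigma := \langle T_{x_1}{\cal V}, \dots, T_{x_t}{\cal V}\rangle$. Applying the same lemma to the second decomposition yields $T_y\,{\rm Sect}_t({\cal V}) = \langle T_{x_1'}{\cal V}, \dots, T_{x_t'}{\cal V}\rangle$, which must coincide with $\Sigma$. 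Consequently $T_{x_i'}{\cal V} \subseteq \Sigma$ for each $i$, so every $x_i'$ lies in the $t$-tangential contact locus ${\cal C}$ at $x_1,\dots,x_t$.

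Next I would exploit the hypothesis on ${\cal C}$. Since ${\cal V}$ is an affine cone, ${\cal C}$ is invariant under coordinate scaling, so the line $\mathbb{C}\, x_i$ is contained in the irreducible component of ${\cal C}$ through $x_i$. The $t$-tangential weak non-defectiveness hypothesis forces every such component to have dimension at most one, so it is exactly the scaling line $\mathbb{C}\, x_i$. The remaining, and principal, task is to show that each $x_i'$ lies on one of these $t$ scaling lines rather than on some further component of ${\cal C}$ disjoint from $\{x_1,\dots,x_t\}$; I expect this to be the main obstacle. Following Chiantini-Ottaviani \cite{CO2012}, I would set up the incidence variety $\mathcal{I}=\{(z_1,\dots,z_t,w)\in {\cal V}^t \times {\rm Sect}_t({\cal V}) : w=\sum_i z_i\}$ and, using the genericity of $y$, propagate the second decomposition along a small deformation of $(x_1,\dots,x_t)$ via the implicit function theorem applied to the projection $\mathcal{I} \to {\rm Sect}_t({\cal V})$. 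The minimality of the components of ${\cal C}$ through the $x_i$ then prevents the propagated $x_i'$ from drifting off the scaling lines, for any drift would produce a positive-dimensional family of decompositions of $y$ modulo scaling, contradicting the generic fibre dimension $\dim {\cal V}^t - \dim {\rm Sect}_t({\cal V})$ predicted by Terracini.

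Having pinned each $x_i'$ to a scaling line $\mathbb{C}\, x_{\sigma(i)}$, I would finish by writing $x_i' = \lambda_i x_{\sigma(i)}$ with $\lambda_i \in \mathbb{C}^*$ and substituting into $\sum_i \lambda_i x_{\sigma(i)} = \sum_j x_j$. Generic linear independence of $x_1,\dots,x_t$ forces $\sigma$ to be a permutation of $[t]$ with matching scalars, delivering the $t$-identifiability. The technical heart of the argument is the deformation step confining each $x_i'$ to a component of ${\cal C}$ through some $x_j$; the weak non-defectiveness hypothesis enters there precisely to rule out extra components of positive dimension that would permit genuinely distinct decompositions.
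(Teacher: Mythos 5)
The paper does not supply a proof of Theorem~\ref{thm:twd}; it is a black-box citation of Chiantini and Ottaviani~\cite{CO2012}. So the comparison must be against that reference rather than against anything internal to the paper. Your overall strategy — use Terracini to place the alternative decomposition points inside the tangential contact locus, use the weak non-defectiveness hypothesis to confine them to the scaling lines through the $x_i$, and finish by generic linear independence — does follow the shape of the Chiantini--Ottaviani argument. The Terracini step and the endgame are fine, and you correctly handle the affine versus projective bookkeeping (scaling lines accounting for the forced one-dimensional component of ${\cal C}$ through each $x_i$).

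The genuine gap is exactly the step you flag as the main obstacle, and your proposed patch for it does not close it. Having shown each $x_i'\in{\cal C}$, you must rule out $x_i'$ lying on an irreducible component of ${\cal C}$ that does \emph{not} pass through any $x_j$. Weak non-defectiveness bounds only the dimension of components \emph{through} the $x_j$; it is silent about additional components. Your argument that ``any drift would produce a positive-dimensional family of decompositions of $y$ modulo scaling'' does not follow from the implicit function theorem applied to $\mathcal{I}\to{\rm Sect}_t({\cal V})$: for a fixed generic $y$ the fibre is already finite (zero-dimensional modulo scaling, since weak non-defectiveness implies non-defectiveness), so there is no one-parameter family of decompositions of a fixed $y$ to exhibit, and once you let $y$ vary the contact locus and its ``extra'' components vary with it, so no immediate dimension count gives a contradiction. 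What Chiantini and Ottaviani actually do is a \emph{global monodromy} argument on the incidence variety (the universal contact locus / abstract secant), exploiting its irreducibility so that the monodromy can be used to swap an alternative decomposition point $x_i'$ with one of the $x_j$, thereby forcing a positive-dimensional component through some $x_j$ and contradicting weak non-defectiveness. The local IFT deformation you describe does not, by itself, produce such a swap. To make this a complete proof you would need to make the monodromy step explicit.
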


A combination of Proposition~\ref{prop:identifiability_global} and Theorem~\ref{thm:twd} leads to the following criterion for global $g$-rigidity.

\begin{corollary}\label{cor:cov}
Suppose a homogeneous  $g:(\mathbb{C}^d)^k\rightarrow \mathbb{C}$ is the sum of $t$ copies of $h:(\mathbb{C}^s)^k\rightarrow \mathbb{C}$.
Then $G$ is globally $g$-rigid if 
\begin{itemize}
    \item $\overline{{\rm im} f_{h,G}}$ is $t$-tangentially weakly nondefective, and
    \item $G$ is globally $h$-rigid.
\end{itemize}
\end{corollary}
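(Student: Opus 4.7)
The proof plan is essentially to chain together the two results cited immediately before the statement. First, I would invoke Theorem~\ref{thm:twd} of Chiantini and Ottaviani applied to the affine variety $\overline{{\rm im} f_{h,G}} \subseteq \mathbb{C}^{E}$: the hypothesis of Corollary~\ref{cor:cov} asserts that this variety is $t$-tangentially weakly nondefective, so Theorem~\ref{thm:twd} yields that $\overline{{\rm im} f_{h,G}}$ is $t$-identifiable. This upgrades the tangent-space hypothesis (which is the checkable condition in practice) into the combinatorial uniqueness-of-decomposition property that the subsequent step demands.

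Second, I would feed the resulting identifiability into Proposition~\ref{prop:identifiability_global}. That proposition takes as input precisely (i) $g$ being the sum of $t$ copies of a homogeneous $h$, which is part of the Corollary's setup, and (ii) $t$-identifiability of $\overline{{\rm im} f_{h,G}}$, which was just established. Its conclusion is that global $g$-rigidity of $G$ is equivalent to global $h$-rigidity of $G$. Since the second bullet of the Corollary assumes $G$ is globally $h$-rigid, this equivalence immediately delivers global $g$-rigidity of $G$, completing the argument.

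There is no real obstacle, since all the substantive work has been done in proving Theorem~\ref{thm:twd} and Proposition~\ref{prop:identifiability_global}. The only point worth double-checking before writing the final version is that the hypotheses of the two invoked results line up cleanly with those of the Corollary: homogeneity of $h$ is explicit in both places, the ``sum of $t$ copies'' structure is inherited verbatim, and the variety $\overline{{\rm im} f_{h,G}}$ is the same object whether viewed from the rigidity or the secant/identifiability side (as was observed in Section~\ref{sec:tools} after Equation~(\ref{eq:sum})). Consequently the proof can be written in two or three lines.
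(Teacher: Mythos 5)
Your proposal is correct and matches the paper's own reasoning exactly: the paper states Corollary~\ref{cor:cov} as ``a combination of Proposition~\ref{prop:identifiability_global} and Theorem~\ref{thm:twd},'' which is precisely the two-step chain you describe. Nothing further is needed.
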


The tangential contact locus can be characterised algebraically, allowing for the development of numerical algorithms to test for $t$-tangentially weak defectiveness. Chiantini, Ottaviani, and Vannieuwenhoven \cite{COV2014} proposed an algorithm based on this description, which we will briefly review as it is closely related to a well-known method in rigidity.

Let $G$ be a $k$-uniform hypergraph with $n$ vertices and $m$ edges,
and suppose $g:(\mathbb{C}^d)^k\rightarrow \mathbb{C}$ is the sum of $t$ copies of $h:(\mathbb{C}^s)^k\rightarrow \mathbb{C}$. 
In view of Corollary~\ref{cor:cov}, we want to check 
the $t$-tangentially weakly defectiveness of 
$\overline{{\rm im} f_{h,G}}$.
Let $p\in (\mathbb{C}^d)^{n}$ be a generic point-configuration,
and let $\omega_1,\dots, \omega_k\in \mathbb{C}^m$ be a basis of the orthogonal complement of the tangent space of $\overline{{\rm im} f_{g,G}}$ at $f_{g,G}(p)$.
Since the tangent space of $\overline{{\rm im} f_{h,G}}$ at $f_{h,G}(q)$ is the image of $\J f_{h,G}(q)$, 
the $t$-tangential contact locus ${\cal C}$ is the closure of 
$\{q\in (\mathbb{C}^s)^{n}: 
\omega_i \in \ker \J f_{h,G}(q)^{\top} \ (i=1,\dots, k)\}$.
Note that $\omega_i \in \ker \J f_{h,G}(q)^{\top}$  forms a polynomial system in $q$ (where $\omega_i$ is fixed). Therefore, there is a polynomial map $\alpha_i:(\mathbb{C}^s)^{n} 
\rightarrow (\mathbb{C}^{s})^{n}$
 such that 
$\omega_i \in \ker \J f_{h,G}(q)^{\top}$ if and only if $\alpha_i(q)=0$.
So, the tangential contact locus is written as the closure of 
$\{q\in(\mathbb{C}^s)^n: \alpha_i(q)=0 \ (i=1,\dots, k)\}$.

It was shown in \cite{COV2014} that, if an affine variety ${\cal V}$ is $t$-tangentially weakly defective, then the $t$-tangential contact locus formed by generic points $x_1,\dots,x_t$ contains a smooth path connecting any $x_i$ to the locus.
Hence, in our terminology above, this implies that one can determine $t$-tangential weak non-defectiveness by examining the rank of the Jacobian matrix of $\alpha_i$.

\begin{remark}
It is worth mentioning that the resulting Jacobian matrix of $\alpha_i$ is the Laplacian matrix of $G$ weighted by $\omega_i$, in the case that $g$ is the squared Euclidean distance. So, the numerical condition in \cite{COV2014} has a similarity to the stress matrix condition  for global rigidity due to Connelly~\cite{Con05}
(or more generally to the shared stress kernel condition due to Gortler-Healy-Thurston~\cite{Gortler2014}). It should be noted, however, that in the case of the squared $d$-dimensional Euclidean distance the underlying variety is always $d$-tangentially weakly defective if $d>1$. 

\end{remark}
 
A recent result of Massarenti and Mella \cite{MMident} gives a powerful criterion for checking $t$-identifiability. 

\begin{theorem}[\cite{MMident}]\label{thm:meka}
Let ${\cal V}$ be a non-degenerate affine variety in $\mathbb{C}^m$.
Suppose that $(t+1)\dim {\cal V}\leq m$ and ${\cal V}$ is $(t+1)$-nondefective and $1$-tangentially weakly nondefective.
Then ${\cal V}$ is $t$-identifiable.
\end{theorem}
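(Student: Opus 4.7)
The plan is to argue by contradiction, using the tangential projection from a generic point of ${\cal V}$ to set up a dimension configuration that is forced to be inconsistent by the two hypotheses. Suppose for contradiction that ${\cal V}$ is not $t$-identifiable, so a general $p \in {\rm Sec}_t({\cal V})$ admits two distinct decompositions $p = x_1 + \cdots + x_t = y_1 + \cdots + y_t$ modulo the natural $S_t \times (\mathbb{C}^*)^t$-action. I would then fix a generic point $x_0 \in {\cal V}$, independent of the $x_i$ and $y_i$, form $q = p + x_0 \in {\rm Sec}_{t+1}({\cal V})$, and introduce the tangential projection $\pi : \mathbb{C}^m \dashrightarrow \mathbb{C}^m / T_{x_0}{\cal V}$ with image variety $\overline{{\cal V}} := \overline{\pi({\cal V})}$.

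The first key step is to prove that $\pi|_{{\cal V}}$ is birational onto $\overline{{\cal V}}$, so in particular $\dim \overline{{\cal V}} = \dim {\cal V}$. This is precisely the role of the $1$-tangentially weakly nondefective hypothesis: if $\pi|_{{\cal V}}$ had a positive-dimensional generic fiber, one could extract a curve $y(s) \subseteq {\cal V}$ with $\pi(y(s))$ constant, so the tangent direction $\dot{y}(0)$ would lie in $T_{x_0}{\cal V}$; a standard incidence argument relating nearby points to their tangent spaces then produces a $2$-dimensional component of the $1$-tangential contact locus at $x_0$, contradicting the hypothesis. Granting birationality, I would apply $\pi$ to both decompositions of $q$: the summand $x_0$ is killed (its contribution lies in $T_{x_0}{\cal V}$) and the two decompositions descend to
\[
\overline{p} := \pi(p) = \pi(x_1) + \cdots + \pi(x_t) = \pi(y_1) + \cdots + \pi(y_t),
\]
two $t$-decompositions of $\overline{p} \in {\rm Sec}_t(\overline{{\cal V}})$. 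Birationality ensures that these two lists remain genuinely distinct modulo the $S_t \times (\mathbb{C}^*)^t$-action on $\overline{{\cal V}}^t$, because any coincidence $\pi(x_i) = \pi(y_j)$ would lift to $x_i$ equalling $y_j$ up to scaling.

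To close the argument, I would invoke the $(t{+}1)$-nondefectivity hypothesis together with the bound $(t{+}1)\dim {\cal V} \leq m$. Under $\pi$, the secant variety ${\rm Sec}_{t+1}({\cal V})$ maps to ${\rm Sec}_t(\overline{{\cal V}})$; Terracini's lemma (cf.~Section~\ref{subsec:identifiability}) gives $\dim {\rm Sec}_{t+1}({\cal V}) = (t{+}1)\dim {\cal V}$ on the source, and combined with $\dim T_{x_0}{\cal V} = \dim {\cal V}$ and the birationality from the previous step, a direct fiber-dimension computation yields $\dim {\rm Sec}_t(\overline{{\cal V}}) = t\dim \overline{{\cal V}}$, so the projected variety is itself $t$-nondefective. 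In particular the $t$-join map $\overline{{\cal V}}^{\,t} \to {\rm Sec}_t(\overline{{\cal V}})$ is generically finite modulo symmetries; the two distinct decompositions of $\overline{p}$ then lift, via the fiber of $\pi$ over $\overline{p}$, to a positive-dimensional family of decompositions of $q$ in ${\cal V}^{t+1}$ that all share the summand $x_0$ — this directly contradicts the generic finiteness of the $(t{+}1)$-join map of ${\cal V}$ forced by $(t{+}1)$-nondefectivity and $(t{+}1)\dim {\cal V} \leq m$.

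The hardest part of the plan is the birationality step, where $1$-tangentially weak nondefectivity must be translated into the statement that $\pi|_{{\cal V}}$ is generically one-to-one rather than merely generically finite; the subtle point is that the contact locus is defined as the closure of points with $T_y {\cal V} \subseteq T_{x_0}{\cal V}$, which a priori controls only the Gauss behaviour rather than the projection fibers, so bridging these two pieces of information requires a careful limiting/incidence argument. The rest of the proof is then dimensional bookkeeping, but this geometric translation is what distinguishes Theorem~\ref{thm:meka} from the weaker Chiantini--Ottaviani criterion (Theorem~\ref{thm:twd}) and is the main technical hurdle.
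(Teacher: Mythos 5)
The paper does not prove Theorem~\ref{thm:meka}; it is cited without proof from Massarenti--Mella~\cite{MMident}, so there is no in-paper argument against which to compare your proposal. Evaluated on its own, the proposal captures the right general flavor — such identifiability results are indeed approached via tangential projections — but it contains two genuine gaps.

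The first is the birationality claim, which you flag yourself. The argument you sketch does not close it: a positive-dimensional fiber of $\pi|_{\cal V}$ produces a curve $y(s)$ with $\dot y(s)\in T_{x_0}{\cal V}$, which controls a single tangent direction, not the full inclusion $T_{y(s)}{\cal V}\subseteq T_{x_0}{\cal V}$ required to place $y(s)$ in the $1$-tangential contact locus; moreover $1$-tangential weak defectivity requires a component of the contact locus of dimension at least two, not merely a curve. Bridging fibers of the tangential projection and the contact locus is a substantive theorem in the Chiantini--Ciliberto circle of ideas, not a ``standard incidence argument'' to be waved through.

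The second gap is in the closing ``dimensional bookkeeping.'' Two distinct decompositions of $\bar p\in\mathrm{Sec}_t(\overline{\cal V})$ lift (given birationality) to two distinct decompositions of $q$ through $x_0$, but that alone does not contradict generic finiteness of the $(t{+}1)$-join map --- finiteness only bounds the number of decompositions, it does not force uniqueness. Your claim that the fiber of $\pi$ over $\bar p$ produces a \emph{positive-dimensional} family of decompositions of $q$ is not substantiated by anything you have set up; the fiber $p+T_{x_0}{\cal V}$ need not meet $\mathrm{Sec}_{t}({\cal V})$ along a positive-dimensional set through $p$. The main line of argument in~\cite{MMident} runs instead through a version of Bronowski's conjecture, relating $t$-identifiability to birationality of the $t$-fold tangential projection, with $(t{+}1)$-nondefectivity and $1$-tangential weak nondefectivity used to verify that birationality.
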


Combining Proposition~\ref{prop:identifiability_global} and Theorem~\ref{thm:meka}, we have the following criterion for global $g$-rigidity.

\begin{corollary}\label{cor:global1}
For a homogeneous 
$h:(\mathbb{C}^s)^k\rightarrow \mathbb{C}$ and a positive integer $d$, let $dh$ be the sum of $d$ copies of $h$, and suppose $d_{\Gamma_{(d+1)h}}=(d+1) d_{\Gamma_h}$.
Then, for a positive integer $d$, $G$ is globally $dh$-rigid if 
\begin{itemize}
    \item $G$ is locally $(d+1)h$-rigid, 
    \item $\overline{{\rm im} f_{h,G}}$ is 1-tangentially weakly nondefective, and
    \item $G$ is globally $h$-rigid.
\end{itemize}
\end{corollary}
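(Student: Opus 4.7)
The plan is to reduce global $dh$-rigidity of $G$ to $d$-identifiability of ${\cal V} := \overline{{\rm im} f_{h,G}}$ via Proposition~\ref{prop:identifiability_global}, and to establish that identifiability by applying Theorem~\ref{thm:meka} with parameter $t = d$. The three hypotheses of the corollary are precisely tailored to supply (after translation) the three hypotheses of Theorem~\ref{thm:meka}.

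First I would verify the dimensional conditions required by Theorem~\ref{thm:meka}. Global $h$-rigidity implies local $h$-rigidity, so Proposition~\ref{prop:inf_rigid} yields $\dim{\cal V} = s|V| - d_{\Gamma_h}$. Since $\overline{{\rm im} f_{(d+1)h,G}} = {\rm Sect}_{d+1}({\cal V})$ by the additive decomposition explained in Section~\ref{sec:tools}, local $(d+1)h$-rigidity of $G$ combined with Proposition~\ref{prop:inf_rigid} gives
\[
\dim{\rm Sect}_{d+1}({\cal V}) = (d+1)s|V| - d_{\Gamma_{(d+1)h}} = (d+1)(s|V| - d_{\Gamma_h}) = (d+1)\dim{\cal V},
\]
where the middle equality invokes the stabilizer hypothesis $d_{\Gamma_{(d+1)h}} = (d+1)d_{\Gamma_h}$. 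Since ${\rm Sect}_{d+1}({\cal V}) \subseteq \mathbb{C}^{E}$, this simultaneously delivers the inequality $(d+1)\dim{\cal V} \leq m := |E|$ and the $(d+1)$-nondefectiveness of ${\cal V}$. The remaining hypothesis of Theorem~\ref{thm:meka}, namely 1-tangential weak nondefectiveness of ${\cal V}$, is given directly.

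The one ingredient of Theorem~\ref{thm:meka} not listed in the corollary is non-degeneracy of ${\cal V}$ in its ambient affine space; I would handle this by working inside the affine span of ${\cal V}$, since dimensions, secant dimensions, tangent spaces and tangential contact loci are all intrinsic, so every verified condition persists while ${\cal V}$ becomes non-degenerate by construction. With all hypotheses of Theorem~\ref{thm:meka} in place, we obtain $d$-identifiability of ${\cal V}$. Combined with global $h$-rigidity of $G$, Proposition~\ref{prop:identifiability_global} applied with $g = dh$ and $t = d$ then yields global $dh$-rigidity of $G$. The main subtlety I anticipate is this non-degeneracy reduction, since it is not addressed in the stated hypotheses; the rest of the proof is the direct stabilizer-dimension bookkeeping above, together with the concatenation of the two cited results.
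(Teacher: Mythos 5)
Your proof is essentially the paper's intended argument: the authors introduce the corollary with ``Combining Proposition~\ref{prop:identifiability_global} and Theorem~\ref{thm:meka}, we have the following criterion,'' which is precisely your two-step reduction of global $dh$-rigidity to $d$-identifiability of ${\cal V}=\overline{{\rm im}\,f_{h,G}}$ via the stabilizer-dimension computation you spell out. Your handling of the non-degeneracy hypothesis of Theorem~\ref{thm:meka} --- passing to the linear span of the homogeneous variety ${\cal V}$, inside which ${\rm Sect}_{d+1}({\cal V})$ also lives so that both the $(d+1)$-nondefectivity and the ambient-dimension bound $(d+1)\dim{\cal V}\le m'$ survive --- is a legitimate repair of a point the paper leaves implicit, and is a welcome addition rather than a departure.
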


The $1$-tangential contact locus is just an ordinary contact locus, so $1$-tangential weak defectiveness is equivalent to the degeneracy of the Gauss map.

\subsection{Sufficient conditions}

Corollary~\ref{cor:cov} or Corollary~\ref{cor:global1} provides a  way to reduce the global $g$-rigidity problem to the global $h$-rigidity problem, however it is a non-trivial problem to establish global $h$-rigidity.  
In order to accomplish this, we will extend Connelly's sufficient condition \cite{Con05} from graph rigidity theory.

The result is described in terms of the {\em adjacency matrix} of an edge weighted hypergraph.
Suppose $G=(V,E)$ is a $k$-uniform hypergraph and $w\in \mathbb{F}^{E}$ is a vector representing the weight of each edge. 
Consider the collection 
$E^{(k-1)}:=\{\sigma\in \genfrac{\{}{\}}{0pt}{}{V}{k-1}: \sigma\subseteq e\in E\}$ of multisets of size $(k-1)$ contained in some hyperedge in $G$.
We define the adjacency matrix $A_{G,w}$ to be 
an $\mathbb{F}$-matrix of size 
$|V|\times |E^{(k-1)}|$ such that:
\[
A_{G,w}[v,\sigma]=\begin{cases}
    m_e(v) w(e) & (\text{if $e=\sigma+v$ is in $G$})\\
    0 & (\text{otherwise}),
\end{cases}
\]
where each row is indexed by a vertex $v\in V$ and each column is indexed by $\sigma\in E^{(k-1)}$.
We will often look at the case when the edge weight $w$ is in the left kernel of $\J f_{g,G}(p)$.
See Example~\ref{eq:global_tensor_new} for an example.

In order to deal with the case of anti-symmetric functions, we  consider the signed variant of the adjacency matrix.
For this, we assume that the elements of $V$ are totally ordered, and consider~$A^{s}_{G,w}$~with:
\[
A^{s}_{G,w}[v,\sigma]=\begin{cases}
    {\rm sign}(e,v)w(e)m_e(v) & (\text{if $e=\sigma+v$ is in $G$})\\
    0 & (\text{otherwise})
\end{cases}
\]
for each $v\in V$ and $\sigma\in E^{(k-1)}$,
where ${\rm sign}(e,v)$ denotes the standard sign function of permutations, which is positive (resp. negative) if the ordering of $v$ in $e$ is odd (resp. even). 

\begin{example}\label{eq:global_tensor_new}
Consider a $4$-uniform hypergraph $G_1=(\{a,b\}, \{aaaa, abbb,bbbb\})$ with edge-weight $\omega_1$ and 
a $4$-uniform hypergraph  $G_2=(\{a,b\}, \{aaaa,aabb,bbbb\})$ with edge-weight $\omega_2$.
Then
\begin{align*}
A_{G_1,\omega_1}&=
\kbordermatrix{
 & aaa & abb & bbb \\
 a & 4\omega_1(aaaa) & 0 & \omega_1(abbb) \\
 b & 0 & 3\omega_1(abbb) & 4\omega_1(bbbb)
}
\\
A_{G_2,\omega_2}&=
\kbordermatrix{
 & aaa & aab & abb & bbb \\
 a & 4\omega_2(aaaa) & 0 & 2\omega_2(aabb) & 0 \\
 b & 0 & 2\omega_2(aabb) & 0 & 4\omega_2(bbbb)
}.
\end{align*}
\end{example}

\begin{prop}\label{prop:G1}
Suppose $g:(\mathbb{F}^d)^k\rightarrow \mathbb{F}$ is a  multilinear $k$-form over $\mathbb{F}^d$, and $(G,p)$ is a $k$-uniform hyper-framework.
Let $P$ be a matrix of size $d\times |E^{(k-1)}|$ obtained by aligning $\nabla g(p(\sigma))$ as a column vector for each $\sigma\in E^{(k-1)}$. 
\begin{itemize}
 \item[{\rm (i)}] If $g$ is symmetric, then $A_{G,w}P^{\top}=w^{\top} \J f_{g,G}(p)$ for any edge weight $w$ of $G$.
 \item[{\rm (ii)}] If $g$ is anti-symmetric, then $A^{s}_{G,w}P^{\top}=w^{\top} \J f_{g,G}(p)$ for any edge weight $w$ of $G$.
 \item[{\rm (iii)}] If $(G,p)$ is infinitesimally $g$-rigid, then $\rank P=d$.
\end{itemize}
\end{prop}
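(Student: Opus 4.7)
The plan is to treat parts (i) and (ii) as a direct entry-by-entry verification, and then deduce (iii) by using (i)/(ii) to confine the row space of $\J f_{g,G}(p)$ to a subspace whose dimension is controlled by $\rank P$, and comparing this with the dimension count afforded by infinitesimal $g$-rigidity.

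For (i) and (ii), I will identify the $|V|\times d$ matrix $A_{G,w}P^{\top}$ (respectively $A^{s}_{G,w}P^{\top}$) with the row vector $w^{\top}\J f_{g,G}(p)$ of length $d|V|$ via the natural indexing of columns of $\J f_{g,G}(p)$ by pairs $(v,i)\in V\times[d]$. Fixing such a pair, the $(v,i)$-entry of $w^{\top}\J f_{g,G}(p)$ equals
\[
\sum_{e\in E,\, e\ni v} w(e)\,(\pm\, m_e(v))\,[\nabla g(p(e-v))]_i,
\]
where the sign is $+1$ in the symmetric case and $\mathrm{sign}(e,v)$ in the anti-symmetric case, as dictated by the description of $\J f_{g,G}(p)$ in Section~\ref{subsec:infinitesimal_rigidity}. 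On the other side, $(A_{G,w}P^{\top})[v,i]=\sum_{\sigma\in E^{(k-1)}}A_{G,w}[v,\sigma]\,[\nabla g(p(\sigma))]_i$, and the only $\sigma$ contributing are those with $\sigma+v\in E$; reparametrising by $e=\sigma+v$ gives exactly the same sum. The signed variant $A^{s}_{G,w}$ matches the anti-symmetric signs by construction, settling both (i) and (ii).

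For (iii), I will use (i)/(ii) to observe that, after the above identification, every row of $\J f_{g,G}(p)$ has its $v$-th block of $d$ coordinates lying in the column span of $P$, since that block is a linear combination of columns of $P$ indexed by $\sigma = e-v$. Hence the row space of $\J f_{g,G}(p)$ is contained in $\mathrm{col}(P)^{V}\subseteq(\mathbb F^d)^V$, which gives
\[
\rank \J f_{g,G}(p)\;\leq\; |V|\cdot \rank P.
\]
Infinitesimal $g$-rigidity yields $\rank \J f_{g,G}(p)=d|V|-\dim\mathrm{triv}_g(p)\geq d|V|-d_{\Gamma_g}$, so $\rank P\geq d-d_{\Gamma_g}/|V|$. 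Since $\rank P$ is an integer bounded above by $d$, this forces $\rank P=d$ whenever $|V|>d_{\Gamma_g}$, which is automatic in the non-trivial regime of infinitesimal rigidity with non-empty~$E$.

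I expect the main nuisance to be the sign bookkeeping in the anti-symmetric case: one must confirm that $\mathrm{sign}(e,v)$ built into $A^{s}_{G,w}$ agrees with the sign implicitly used to write the Jacobian block at $(e,v)$ as $\pm m_e(v)\nabla g(p(e-v))^{\top}$, relative to the chosen total order on vertices. A cleaner alternative route to (iii) would be: if $\rank P<d$, pick $0\neq y\perp\mathrm{col}(P)$ and verify from the row formula of $\J f_{g,G}(p)$ that the constant motion $\dot p\equiv y$ lies in $\ker \J f_{g,G}(p)$; infinitesimal rigidity then places it in $\mathrm{triv}_g(p)$, and a dimension count on the linear map $\mathfrak g\to(\mathbb F^d)^V$, $\dot\gamma\mapsto\dot\gamma\cdot p$, yields the same contradiction once $|V|$ is sufficiently large.
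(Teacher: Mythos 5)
Parts (i) and (ii) of your argument are the same direct reshaping/reindexing computation that the paper uses, and they are correct (the sign check in the anti-symmetric case is just a matter of unwinding the definitions of $A^s_{G,w}$ and of the $(e,v)$-block of the Jacobian, exactly as you anticipate).

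Part (iii) is where you diverge, and your primary route has a genuine gap. The bound $\rank \J f_{g,G}(p)\leq |V|\cdot \rank P$ is correct, but the deduction $\rank P = d$ requires $|V| > d_{\Gamma_g}$, and this is \emph{not} automatic. Concretely, take $g = h_{\rm det}:(\mathbb{C}^k)^k\to\mathbb{C}$ with $k\geq 3$, so $d=k$, $\Gamma_g = {\rm SL}(k)$, $d_{\Gamma_g}=k^2-1$, $n_{\Gamma_g}=k$. The simple complete hypergraph $\tilde K_{k+1}^k$ is infinitesimally $g$-rigid (its image closure is the cone over $\mathrm{Gr}(k,k+1)\cong\mathbb{P}^k$, of dimension $k+1 = k(k+1)-(k^2-1)$), yet $|V|=k+1 < k^2-1 = d_{\Gamma_g}$. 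Your inequality then only gives $\rank P \geq d - d_{\Gamma_g}/|V|$, which is far from forcing $\rank P = d$. The paper instead invokes Lemma~\ref{lem:extension}: infinitesimal $g$-rigidity (with $|V|\geq n_{\Gamma_g}+1$) implies every vertex $v$ is \emph{stable}, i.e.\ the $d$-tuples $\nabla g(p(e_i-v))$ over hyperedges $e_i\ni v$ already span $\mathbb{F}^d$. Since these are columns of $P$, $\rank P = d$ follows immediately; this local argument needs the much weaker condition $|V|\geq n_{\Gamma_g}+1$ (here $k+1$, not $k^2$). Your alternative route — $y\perp\mathrm{col}(P)$ gives a constant infinitesimal motion $\dot p\equiv y$, hence a trivial motion, hence a contradiction — is closer in spirit to the paper's stability argument, but the concluding ``dimension count'' is the actual content you would need to supply: you must show that for generic $p$ with $|V|$ not too small, no $\dot\gamma\in\mathfrak g$ induces a nonzero constant motion, which is precisely a stability-type statement. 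As written, it does not close the gap.
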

\begin{proof}
    Assume $g$ is symmetric. By definition, the $v$-th entry of $w^{\top} \J f_{g,G}(p)$
    is 
    \[
    \sum_{\sigma\in E^{(k-1)}: \sigma+v\in E} w(\sigma+v) m_{\sigma+v}(v) \nabla g(p(\sigma)).
    \]
    Reshaping this in terms of $\nabla g(p(\sigma))$, 
    we obtain $A_{G,w}P^{\top}.$ The same argument also applies to the case when $g$ is anti-symmetric. We thus obtain (i) and (ii).

To see (iii), assume $(G,p)$ is infinitesimally $g$-rigid.
By Lemma~\ref{lem:extension},
 every vertex is stable in $(G,p)$.
 So the columns of $P$ indexed by $d$ hyperedges incident to a vertex forms an independent set, which means that $P$ has full row-rank.
\end{proof}

\begin{prop}\label{prop:G2}
Suppose $g:(\mathbb{F}^d)^k\rightarrow \mathbb{F}$ is a  multilinear $k$-form and $(G,p)$ is a generic locally $g$-rigid $k$-uniform hyper-framework.
Suppose 
\begin{itemize}
    \item $\dim\left( \bigcap_{\omega\in \ker \J f_{G,g}(p)^{\top}} \ker A_{G,\omega}\right)=d$ if $f$ is symmetric, and
\item $\dim\left( \bigcap_{\omega\in \ker  \J f_{G,g}(p)^{\top}} \ker A^{s}_{G,\omega}\right)=d$ if $f$ is anti-symmetric.
\end{itemize}
Then for each $q\in f_{g,G}^{-1}(f_{g,G}(p))$ there exists $T\in \mathbb{F}^{d\times d}$ such that 
\[
\nabla g(q(\sigma))=T\nabla g(p(f)) \qquad \text{for all $\sigma\in E^{(k-1)}$}.
\]
\end{prop}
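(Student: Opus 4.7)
The plan is to identify, via Proposition~\ref{prop:G1}, the rowspace of the $d\times |E^{(k-1)}|$ matrix $P$ (whose $\sigma$-th column is $\nabla g(p(\sigma))$) with the intersection $K:=\bigcap_{\omega\in\ker\J f_{g,G}(p)^{\top}}\ker A_{G,\omega}$ (using $A^{s}_{G,\omega}$ in the anti-symmetric case), and then to show that the analogous matrix $Q$ built from $q$ has its rows in $K$ as well. Once this is done, each row of $Q$ is a linear combination of the rows of $P$, which is precisely the statement $\nabla g(q(\sigma))=T\nabla g(p(\sigma))$ for some $T\in\mathbb{F}^{d\times d}$ and every $\sigma\in E^{(k-1)}$.

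For the first identification, Proposition~\ref{prop:G1}(i) (or (ii)) gives, for every $\omega\in\ker\J f_{g,G}(p)^{\top}$, that $A_{G,\omega}P^{\top}=\omega^{\top}\J f_{g,G}(p)=0$, so the rows of $P$ lie in every $\ker A_{G,\omega}$, hence in $K$. Since $(G,p)$ is infinitesimally $g$-rigid at a generic $p$, Proposition~\ref{prop:G1}(iii) gives $\rank P=d$; combined with the hypothesis $\dim K=d$, the rows of $P$ form a basis of $K$. Applying Proposition~\ref{prop:G1}(i) at $q$ shows that the rows of $Q$ lie in $K$ if and only if $\omega^{\top}\J f_{g,G}(q)=0$ for every $\omega\in\ker\J f_{g,G}(p)^{\top}$, i.e., every stress of $(G,p)$ is also a stress of $(G,q)$.

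The crux is therefore to transfer stresses from $p$ to $q$. I will work with the polynomial $\phi_\omega(x):=\omega^{\top}f_{g,G}(x)$, which is homogeneous of degree $k$ in $x\in(\mathbb{F}^d)^V$ because $g$ is a multilinear $k$-form. Euler's identity combined with the stress condition yields $\phi_\omega(p)=\tfrac{1}{k}p^{\top}\nabla\phi_\omega(p)=0$, and the assumption $f_{g,G}(p)=f_{g,G}(q)$ gives $\phi_\omega(q)=0$. To upgrade this scalar identity to the vector identity $\nabla\phi_\omega(q)=0$, I will combine the edge-by-edge equalities $g(p_e)=g(q_e)$ with the multilinear Euler identity $g(x_e)=x(v)^{\top}\nabla g(x(e-v))$, valid for any $v\in{\rm supp}(e)$, and argue vertex by vertex that $\sum_{e\ni v}\omega_{e}m_{e}(v)\nabla g(q(e-v))=0$. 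The anti-symmetric case follows the same blueprint with $A^{s}_{G,\omega}$ and the sign conventions coming from Proposition~\ref{prop:G1}(ii).

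The main obstacle is precisely this stress-transfer step. In general, the vanishing of a degree-$k$ homogeneous polynomial at two points with vanishing gradient at one of them does not force its gradient to vanish at the other, so the argument must exploit the fact that $f_{g,G}(p)=f_{g,G}(q)$ holds edge by edge, not merely after pairing with $\omega$. Extracting a clean vertex-by-vertex argument from the multilinear identities at both $p$ and $q$ is the essentially combinatorial content of Proposition~\ref{prop:G2} and is where the work of the proof concentrates.
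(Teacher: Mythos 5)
The first half of your argument matches the paper exactly: using Proposition~\ref{prop:G1}(i)/(ii) to place the rows of $P$ (and, conditionally, of $Q$) in $K=\bigcap_{\omega}\ker A_{G,\omega}$, using Proposition~\ref{prop:G1}(iii) together with the dimension hypothesis to conclude the rows of $P$ are a basis of $K$, and then writing $Q=TP$. You also correctly identify the crux: one must show $\ker\J f_{g,G}(q)^{\top}=\ker\J f_{g,G}(p)^{\top}$, i.e., that stresses of $(G,p)$ are stresses of $(G,q)$.

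The gap is that you never prove this stress-transfer step, and the route you sketch --- Euler identities for the degree-$k$ homogeneous polynomial $\phi_\omega$, attacked edge by edge and vertex by vertex --- is not a viable path. As you yourself observe, $\phi_\omega(q)=0$ together with $\nabla\phi_\omega(p)=0$ does not give $\nabla\phi_\omega(q)=0$ in general, and the edge-by-edge identity $g(p_e)=g(q_e)$ adds no leverage here: that is exactly the hypothesis $f_{g,G}(p)=f_{g,G}(q)$ restated, and one can easily write down singular $q$ in the fibre of a non-generic $p$ whose stresses strictly contain those of $p$. Multilinearity alone cannot rule this out; the combinatorics of $G$ are not what forces the kernels to coincide.

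What does force it is the genericity of $p$, used geometrically rather than combinatorially, and this is the ingredient your proposal leaves on the table. By Proposition~\ref{prop:matroid}(iv), genericity of $p$ makes $f_{g,G}(p)$ a \emph{regular value} of $f_{g,G}$, so every $q$ in the fibre is a regular point. At a regular point the image of the Jacobian equals the tangent space of $\overline{{\rm im}\,f_{g,G}}$ at the image point, and since $f_{g,G}(q)=f_{g,G}(p)$ is one and the same point of the variety, the column spaces of $\J f_{g,G}(q)$ and $\J f_{g,G}(p)$ coincide; taking orthogonal complements gives $\ker\J f_{g,G}(q)^{\top}=\ker\J f_{g,G}(p)^{\top}$ immediately. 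This is the step your proof needs; the Euler/multilinearity machinery is a detour that cannot close the gap without reintroducing exactly this regular-value argument.
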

\begin{proof}
We give the proof only in the case when $f$ is symmetric
since the anti-symmetric case is identical.

By Proposition~\ref{prop:matroid}(iv) and the generic assumption on $p$, $f_{g,G}(p)$ is a regular value of $f_{g,G}$, so for any $q\in f_{g,G}^{-1}(f_{g,G}(p))$ 
the image of $ \J f_{g,G}(q)$ is the tangent space of $\overline{{\rm im} f_{g,G}}$ at $f_{g,G}(q)$. We have that: 
\begin{equation}\label{eq:G2-1}
\ker \J f_{g,G}(q)^{\top}=\ker \J f_{g,G}(p)^{\top},
\end{equation}
since $f_{g,G}(p)=f_{g,G}(q)$. For any point-configuration $q$ of $G$, let 
$Q$ be a matrix of size $d\times |E^{(k-1)}|$ obtained by aligning $\nabla g(q(\sigma))$ as the columns for all $\sigma\in E^{(k-1)}$. 
By Proposition~\ref{prop:G1} and Equation (\ref{eq:G2-1}),
\begin{equation}\label{eq:G2-2}
A_{G,w} Q^{\top}=0\quad \text{for any } \omega\in \ker \J f_{G,g}(p)^{\top}. 
\end{equation}
In particular, $A_{G,w}P^{\top}=0$.
Hence each row of $P$ belongs to $\bigcap_{\omega\in \ker  \J f_{G,g}(p)^{\top}} \ker A_{G,\omega}$.
 
Note that we have assumed 
the dimension of $\bigcap_{\omega\in \ker  \J f_{G,g}(p)^{\top}} \ker A_{G,\omega}$ is $d$.
Moreover, we have $\rank P=d$ by Proposition~\ref{prop:G1}.
By combining those two facts with $A_{G,w}P^{\top}=0$,
we may conclude that 
the rows of $P$ form a basis of 
$\bigcap_{\omega\in \ker  \J f_{G,g}(p)^{\top}} \ker A_{G,\omega}$.
Since each row of $Q$ belongs to this space by Equation (\ref{eq:G2-2}),
there must be $T\in \mathbb{F}^{d\times d}$ such that 
$Q=TP$.
This is equivalent to the statement.
\end{proof}

\begin{prop}\label{prop:G3}
    Suppose $g:(\mathbb{F}^d)^k\rightarrow \mathbb{F}$ is a  multilinear $k$-form, $(G,p)$ is a generic locally $g$-rigid $k$-uniform hyper-framework, and $q\in f_{g,G}^{-1}(f_{g,G}(p))$.
Suppose there is $T\in \mathbb{F}^{d\times d}$ such that 
\[
\nabla g(q(\sigma))=T\nabla g(p(\sigma)) \qquad (\sigma\in E^{(k-1)}).
\]
Then 
\[
p(v)=T^{\top} q(v) \qquad (v\in V). 
\]
\end{prop}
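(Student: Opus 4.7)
The plan is to leverage multilinearity of $g$ to convert each scalar equation $g(p(e)) = g(q(e))$ into a linear constraint on $p(v) - T^{\top} q(v)$, and then to invoke stability of each vertex (which follows from local $g$-rigidity) to conclude that this difference vanishes.

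First, I would establish the pointwise identity
\[
g(p(e)) = \epsilon(e,v)\,\langle p(v),\ \nabla g(p(e-v))\rangle \qquad (e \in E,\ v \in e),
\]
where $\epsilon(e,v) = 1$ in the symmetric case and $\epsilon(e,v) = \pm 1$ in the anti-symmetric case, the sign being determined by the position of $v$ in the ordered tuple representing $e$ and in particular being independent of the underlying point configuration. This comes from fixing all arguments of $g$ apart from a single occurrence of $p(v)$: multilinearity makes $g$ linear in that free occurrence, and (anti)symmetry identifies the corresponding partial gradient with $\epsilon(e,v) \nabla g(p(e-v))$. The very same identity applies to $q$ with the same sign $\epsilon(e,v)$.

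Next, writing this identity for both $p$ and $q$, using $f_{g,G}(p) = f_{g,G}(q)$, and substituting the hypothesis $\nabla g(q(e-v)) = T\nabla g(p(e-v))$ (valid since $e - v \in E^{(k-1)}$), the sign cancels and the inner product identity $\langle q(v), T w\rangle = \langle T^{\top} q(v), w\rangle$ yields
\[
\langle p(v) - T^{\top} q(v),\ \nabla g(p(e-v))\rangle = 0
\]
for every $e \in E$ containing $v$. Now fix $v \in V$. Since $(G,p)$ is generic and locally $g$-rigid, Proposition~\ref{prop:inf} yields infinitesimal $g$-rigidity, and Lemma~\ref{lem:extension} then forces $v$ to be stable: the matrix $B$ of Equation~(\ref{eq:B}), whose rows are the $\nabla g(p(e-v))^{\top}$ over all $e \ni v$, has full column rank $d$. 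Hence $\{\nabla g(p(e-v)) : v \in e \in E\}$ spans $\mathbb{F}^{d}$, and the above linear system forces $p(v) = T^{\top} q(v)$, as required.

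The main potential obstacle is the careful bookkeeping in the first step: one must correctly account for repeated vertices in the symmetric case (by freezing all but one occurrence of $v$) and pin down the sign convention for $\nabla g$ in the anti-symmetric case. The key observation that sidesteps these complications is that the signs $\epsilon(e,v)$ cancel identically between the two sides of the key equation, and the multiplicity factor $m_e(v)$ appearing in the Jacobian block $b(e,v)$ does not enter the scalar identity for $g(p(e))$ itself (since the latter arises from differentiating along a single coordinate rather than all $m_e(v)$ coincident ones). After this is settled, the remainder is a direct application of stability.
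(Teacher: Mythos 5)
Your proof is correct and takes essentially the same route as the paper: reduce the pointwise equality $g(p(e))=g(q(e))$ via multilinearity to $\langle p(v)-T^{\top}q(v),\nabla g(p(e-v))\rangle=0$ over all $e\ni v$, then use the stability of $v$ (from generic local $g$-rigidity via Lemma~\ref{lem:extension}) to conclude. Your explicit bookkeeping of the sign $\epsilon(e,v)$ and the observation that it cancels, and that the multiplicity factor $m_e(v)$ does not appear in the scalar identity for $g(p(e))$, are welcome elaborations of details the paper leaves implicit but do not change the substance of the argument.
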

\begin{proof}
Since $(G,p)$ is generic and locally $g$-rigid, 
every vertex is stable by Lemma~\ref{lem:extension}.
So there are $d$ hyperedges $e_1,\dots, e_d$ incident to $v$ in $G$
such that $\{\nabla g(p(e_i-v)): i=1,\dots, d\}$ is a basis of $\mathbb{F}^d$.
Since $q\in f_{g,G}^{-1}(f_{g,G}(p))$, we have 
$f_{g,G}(q)=f_{g,G}(p)$. By the multilinearity of $g$, this implies that
\[
\langle q(v), \nabla g(q(e_i-v))\rangle= \langle p(v), \nabla g(p(e_i-v))\rangle,
\]
and hence 
\[
\langle T^{\top} q(v)-p(v), \nabla g(p(e_i-v))\rangle =0 \quad (i=1,\dots, d). 
\] 
The fact that $\{\nabla g(p(e_i-v)): i=1,\dots, d\}$ is a basis now gives 
$p(v)=T^{\top} q(v)$ for all $v$.
\end{proof}

When $g$ is the determinant, a combination of Propositions~\ref{prop:G2} and~\ref{prop:G3} gives the following sufficient condition for global $g$-rigidity.

\begin{theorem}\label{thm:global_determinant}
Suppose $g:(\mathbb{F}^d)^d\rightarrow \mathbb{F}$ is the determinant as a multilinear $d$-form over $\mathbb{F}^d$.
Then $G=(V,E)$ is globally $g$-rigid if 
there exists a point-configuration $p\in (\mathbb{F}^d)^V\rightarrow \mathbb{F}$ such that
\begin{itemize}
\item $(G,p)$ is infinitesimally $g$-rigid, and 
\item $\dim \left(\bigcap_{w\in \ker  \J f_{g,G}(p)^{\top}} \ker A^{s}_{G,w}\right)=d$.
\end{itemize}
\end{theorem}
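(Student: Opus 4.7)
The plan is to use the hypotheses at $p$ together with a semi-continuity argument to deduce that both conditions persist at a generic configuration $p'$ near $p$, then to invoke Propositions~\ref{prop:G2} and~\ref{prop:G3} at $p'$ to produce a matrix $T$ relating $p'$ and any $q\in f_{g,G}^{-1}(f_{g,G}(p'))$ via $p'(v) = T^{\top} q(v)$. The multilinear determinant structure of $g$ will then force $(T^{\top})^{-1} \in \mathrm{SL}(d, \mathbb{F}) = \Gamma_g$, yielding global $g$-rigidity of $G$.

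First I would verify that the two hypotheses transfer from $p$ to a nearby generic $p'$. Infinitesimal $g$-rigidity is an open condition, so it holds on a Euclidean neighbourhood $U$ of $p$, inside which there are plenty of generic configurations. On $U$ the rank of $\J f_{g,G}$ is constant, so $\ker \J f_{g,G}(p)^{\top}$ is spanned locally by a basis $\omega_1(p),\dots,\omega_r(p)$ varying algebraically in $p$. Stacking the matrices $A^{s}_{G,\omega_i(p)}$ gives a single matrix $M(p)$ with $\ker M(p) = \bigcap_{w \in \ker \J f_{g,G}(p)^{\top}} \ker A^{s}_{G,w}$; the rank of $M(p)$ is upper semi-continuous, so the dimension of this intersection is lower semi-continuous on $U$. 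Meanwhile Proposition~\ref{prop:G1}(i) and (iii) show that at every infinitesimally $g$-rigid configuration this intersection contains the $d$ linearly independent rows of $P$, so its dimension is at least $d$. Combining the hypothesis at $p$ with these two bounds forces the equality $\dim \bigcap_w \ker A^{s}_{G,w} = d$ at a generic $p' \in U$.

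At such a generic $p'$ I would apply Propositions~\ref{prop:G2} and~\ref{prop:G3}: for each $q \in f_{g,G}^{-1}(f_{g,G}(p'))$ there is $T \in \mathbb{F}^{d\times d}$ with $\nabla g(q(\sigma)) = T\nabla g(p'(\sigma))$ for all $\sigma \in E^{(k-1)}$, and $p'(v) = T^{\top} q(v)$ for every $v \in V$. The non-emptiness of $E$ forces $|V| \geq d$, so for generic $p'$ the vectors $\{p'(v) : v \in V\}$ span $\mathbb{F}^d$, making $T^{\top}$ invertible; setting $A := (T^{\top})^{-1}$ gives $q(v) = A\, p'(v)$. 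For any simple hyperedge $e = \{v_1,\dots,v_d\} \in E$, the equality $f_{g,G}(q) = f_{g,G}(p')$ at the $e$-coordinate reads
\[
(\det A - 1)\,\det\!\bigl[p'(v_1) \mid \cdots \mid p'(v_d)\bigr] = 0.
\]
The polynomial $\det[p'(v_1)\mid\cdots\mid p'(v_d)]$ in the coordinates of $p'$ is nonzero for generic $p'$, so $\det A = 1$ and $A \in \Gamma_g$. Hence $(G, p')$ is globally $g$-rigid, and as $p'$ can be chosen arbitrarily generic, $G$ is globally $g$-rigid.

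The main obstacle I foresee is the semi-continuity step: one must arrange that the algebraically varying basis $\omega_i(p)$ of $\ker \J f_{g,G}(p)^{\top}$ yields a matrix $M(p)$ whose kernel dimension is genuinely lower semi-continuous on the infinitesimally $g$-rigid locus, so that the universal lower bound from Proposition~\ref{prop:G1} really pins down the equality $\dim \bigcap_w \ker A^{s}_{G,w} = d$ at a generic $p' \in U$. Once this is in place, the remaining steps are essentially bookkeeping on top of the previously established Propositions~\ref{prop:G2} and~\ref{prop:G3}.
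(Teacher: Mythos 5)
Your overall plan matches the paper's proof closely: perturb to a nearby generic configuration preserving both hypotheses, invoke Propositions~\ref{prop:G2} and~\ref{prop:G3} to produce $T$, then exploit the multiplicativity of $\det$ over a hyperedge to force $\det T = 1$. Your explicit verification that $T^{\top}$ is invertible (via the span of $\{p'(v)\}$) is a welcome addition that the paper leaves implicit in the determinant case, though it does spell it out in the parallel Theorem~\ref{thm:global_tensor}. So the skeleton and the key lemmas are the same.

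There is one genuine logical error in the semi-continuity step. You write that the rank of $M(p)$ is upper semi-continuous and hence $\dim\ker M(p)$ is lower semi-continuous; this is backwards. For a continuously varying matrix, rank is \emph{lower} semi-continuous (a nonzero $r\times r$ minor at $p$ stays nonzero nearby), so $\dim\ker M(p)$ is \emph{upper} semi-continuous. This matters: with your stated direction, both the semi-continuity bound and the Proposition~\ref{prop:G1} bound assert $\dim\bigcap_w\ker A^{s}_{G,w}\geq d$ near $p$, and you cannot conclude equality at $p'$. With the correct direction, the upper semi-continuity gives $\dim\leq d$ on a neighbourhood of $p$, Proposition~\ref{prop:G1}(iii) gives $\dim\geq d$ at every infinitesimally $g$-rigid configuration, and the two pinch to $\dim = d$ at any generic $p'$ in that neighbourhood. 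With this one-word fix the argument closes and is in substance identical to the paper's ``any sufficiently small perturbation does not change the dimension'' step.
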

\begin{proof}
The infinitesimal rigidity of $(G,p)$ implies that the rank of $\J f_{g,G}(p)$ takes the maximum possible value over all point-configurations.
Note that $\ker \J f_{g,G}(p)$ changes continuously if we perturb the entries of $p$ continuously.
So any sufficiently small continuous perturbation of the entries of $p$ does not change $\dim \left(\bigcap_{w\in \ker  \J f_{g,G}(p)^{\top}} \ker A_{G,w}\right)$. 
Hence, we may assume that $p$ is generic.

To see the global $g$-rigidity of $(G,p)$, pick any $q\in f_{g,G}^{-1}(f_{g,G}(p))$. By Propositions~\ref{prop:G2} and~\ref{prop:G3},
there is $T\in \mathbb{F}^{d\times d}$ such that 
$q(v)=Tp(v)$ for all $v\in V(G)$.
Pick any hyperedge $e$ in $G$, and assume $e$ consists of $k$ vertices $v_1,\dots, v_k$.
Since $q\in f_{g,G}^{-1}(f_{g,G}(p))$, we have 
\[
\det \begin{pmatrix} p(v_1) & \dots & p(v_k)\end{pmatrix}=\det \begin{pmatrix} q(v_1) & \dots & q(v_k)\end{pmatrix}=\det T \det \begin{pmatrix} p(v_1) & \dots & p(v_k)\end{pmatrix},
\]
so $\det T=1$.
This implies that $T$ is in the stabilizer of $g$, and the global $g$-rigidity of $(G,p)$ follows.
\end{proof}

By combining Theorem~\ref{thm:global_determinant} with either Corollary~\ref{cor:cov} or Corollary~\ref{cor:global1}, we can establish a sufficient condition for global rigidity when $g$ is the sum of copies of the determinant map. However, when $g$ is the sum of copies of the product map (as in the case of symmetric tensor completion), we can provide a direct sufficient condition without the need for identifiability tests.
A key ingredient in establishing this condition is the following consequence of a generalised trisecant lemma given in \cite{CC2001}.

\begin{prop}\label{lem:prod_trisecant}
Let $h:=h_{\rm prod}:\mathbb{C}^k\rightarrow \mathbb{C}$ be the product map, $G=(V,E)$ a hypergraph,
and $x_1, x_2, \dots, x_t$ some generic points in $\overline{{\rm im}f_{h,G}}$.
If $t\leq |E|-\rank \J f_{h,G}$, then any point in the intersection of $\overline{{\rm im}f_{h,G}}$ and ${\rm span}\{x_1, x_2,\dots, x_t\}$ is a scalar multiple of some $x_i$. 
\end{prop}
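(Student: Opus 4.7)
The plan is to reduce the statement to the generalised trisecant lemma from \cite{CC2001} by passing to projective space. Since $h = h_{\rm prod}$ is homogeneous of degree $k$, the variety $X := \overline{\text{im}\, f_{h,G}} \subset \mathbb{C}^{|E|}$ is an irreducible affine cone: irreducibility follows from $X$ being the Zariski closure of the image of the irreducible affine space $\mathbb{C}^{|V|}$ under a polynomial map. Let $Y := \mathbb{P}(X) \subset \mathbb{P}^{|E|-1}$ be its projectivisation, so $\dim Y = \rank \J f_{h,G} - 1$.

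Before applying the trisecant lemma, I would verify that $Y$ is non-degenerate, i.e.\ not contained in any hyperplane. This follows because the monomials $\prod_{v \in e} z_v$ indexed by distinct multisets $e \in E$ are linearly independent as polynomials on $\mathbb{C}^{|V|}$, so no non-trivial linear form on $\mathbb{C}^{|E|}$ vanishes identically on $X$. Setting $N := |E|-1$ and $n := \dim Y$, the hypothesis $t \leq |E| - \rank \J f_{h,G}$ becomes $t \leq N - n$. Since $[x_1], \dots, [x_t]$ are generic on $Y$, they are in linearly general position and span a $(t-1)$-dimensional projective subspace $L \subset \mathbb{P}^N$. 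The generalised trisecant lemma from \cite{CC2001}, applied to the irreducible non-degenerate variety $Y$, then yields $Y \cap L = \{[x_1], \dots, [x_t]\}$.

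Translating back to affine coordinates, for any $y \in X \cap \Span\{x_1, \dots, x_t\}$, either $y = 0$ (trivially a scalar multiple of each $x_i$) or $[y] \in Y \cap L$, so $[y] = [x_i]$ for some $i$, and hence $y = \lambda x_i$ for some $\lambda \in \mathbb{C}^{\times}$. The main obstacle I anticipate is extracting from \cite{CC2001} the precise form of the generalised trisecant lemma with the sharp dimensional hypothesis $t \leq N - n$, together with verifying that the genericity of $x_1, \dots, x_t$ in the affine cone $X$ forces the linear general position needed on the projective side. A minor secondary point is ensuring the irreducibility and non-degeneracy of $X$ persist for an arbitrary hypergraph $G$; irreducibility is immediate, and non-degeneracy reduces to the linear-independence-of-monomials observation above.
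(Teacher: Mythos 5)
Your proposal is correct and takes essentially the same approach as the paper: both reduce to the generalised trisecant lemma of \cite{CC2001} applied to the projectivisation of the affine cone $\overline{{\rm im}f_{h,G}}$, after checking irreducibility and non-degeneracy. The only cosmetic difference is that you verify non-degeneracy directly via linear independence of the monomials $\prod_{v\in e} z_v$, whereas the paper cites non-degeneracy of the Veronese cone and the fact that coordinate projection preserves non-degeneracy, and you spell out the affine-to-projective translation that the paper leaves implicit.
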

\begin{proof}
A generalised trisecant lemma~\cite{CC2001} states the following.
Suppose ${\cal X}$ is an irreducible, nondegenerate, $d$-dimensional projective variety in $\mathbb{P}^n$.
Then for any set $\{y_1, y_2, \dots, y_t\}$ of generic points in ${\cal X}$, we have ${\rm span}\{y_1, y_2,\dots, y_t\}\cap {\cal X}=\{y_1, y_2,\dots, y_t\}$, provided $t\leq n-d$.
We apply this lemma to $\overline{{\rm im}f_{h,G}}$.
Note that $\overline{{\rm im}f_{h,G}}$ is an affine variety in $\mathbb{F}^{|E|}$, whose dimension is equal to $\rank \J f_{h,G}(z)$ at a generic $z$.
So it remains to check that $\overline{{\rm im}f_{h,G}}$ is irreducible and nondegenerate.
The irreducibility of $\overline{{\rm im}f_{h,G}}$ follows from the fact that it is the image of a polynomial map.
To see the nondegeneracy, recall that $\overline{{\rm im}f_{h,K_n^k}}$ is the affine cone of the Veronese variety of degree $k$, which is known to be nondegenerate (and is easily checked).
Since $\overline{{\rm im}f_{h,G}}$ is a projection of $\overline{{\rm im}f_{h,K_n^k}}$ and a projection preserves nondegeneracy, $\overline{{\rm im}f_{h,G}}$ is nondegenerate.
\end{proof}

\begin{theorem}\label{thm:global_tensor}
Suppose that $g$ is the sum of $d$ copies of $h_{\rm prod}:\mathbb{F}^k\rightarrow \mathbb{F}$.
Then $G=(V,E)$ is globally $g$-rigid if 
there exists a point-configuration $p\in (\mathbb{F}^d)^V\rightarrow \mathbb{F}$  such that 
\begin{itemize}
\item $(G,p)$ is infinitesimally $g$-rigid, 
\item $|E^{(k-1)}|\geq |V|+d$, 
and 
\item $\dim \left(\bigcap_{w\in \ker \J f_{g,G}(p)^{\top}} \ker A_{G,w}\right)=d$.
\end{itemize}
\end{theorem}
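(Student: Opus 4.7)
The plan is to follow the template of the proof of Theorem~\ref{thm:global_determinant}. After reducing to a generic $p$ via the same perturbation argument (which is permissible since both hypotheses are open conditions, and global $g$-rigidity is a generic property of the hypergraph when $\mathbb{F}=\mathbb{C}$), we pick an arbitrary $q \in f_{g,G}^{-1}(f_{g,G}(p))$ and apply Propositions~\ref{prop:G2} and~\ref{prop:G3} to obtain $T \in \mathbb{F}^{d\times d}$ satisfying
\[
\nabla g(q(\sigma)) = T\,\nabla g(p(\sigma)) \;\; (\sigma \in E^{(k-1)}), \qquad p(v) = T^{\top} q(v) \;\; (v \in V).
\]
The proof then reduces to showing that $(T^{\top})^{-1}$ belongs to $\Gamma_g$, i.e., is a monomial matrix whose nonzero entries are $k$-th roots of unity; once this is established, $q = (T^{\top})^{-1}\cdot p$ gives the desired conclusion.

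The key new ingredient, replacing the determinant manipulation of Theorem~\ref{thm:global_determinant}, is Proposition~\ref{lem:prod_trisecant}. Let $H = (V, E^{(k-1)})$ and write $\pi_i(p) \in \mathbb{F}^V$ for the vector of $i$-th coordinates of $p$. Form the $d \times |E^{(k-1)}|$ matrices $P$ and $Q$ whose $\sigma$-th columns are $\nabla g(p(\sigma))$ and $\nabla g(q(\sigma))$ respectively; then the $i$-th row of $P$ equals $f_{h_{\rm prod}, H}(\pi_i(p))$, similarly for $Q$, and the first identity above becomes $Q = TP$. Since $(G,p)$ is infinitesimally $g$-rigid and $p$ is generic, Proposition~\ref{prop:matroid}(iv) ensures $f_{g,G}(p)$ is a regular value, so $(G,q)$ is infinitesimally $g$-rigid as well; Proposition~\ref{prop:G1}(iii) then forces $P$ and $Q$ to have rank $d$, whence $T$ is invertible.

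Each row of $Q$ lies in $\overline{{\rm im}\,f_{h_{\rm prod}, H}}\cap\Span\{f_{h_{\rm prod}, H}(\pi_j(p)):j\in[d]\}$. The hypothesis $|E^{(k-1)}| \geq |V|+d$, combined with $\rank \J f_{h_{\rm prod}, H}\leq|V|$, yields $d \leq |E^{(k-1)}| - \rank \J f_{h_{\rm prod}, H}$, so Proposition~\ref{lem:prod_trisecant} applies and gives $f_{h_{\rm prod}, H}(\pi_i(q)) = \lambda_i f_{h_{\rm prod}, H}(\pi_{\sigma(i)}(p))$ for some non-zero scalar $\lambda_i$ and some index $\sigma(i) \in [d]$. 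Linear independence of the rows of $P$ and $Q$ forces $\sigma$ to be a permutation of $[d]$, and, together with $Q = TP$, this shows that $T$ is a monomial matrix.

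It remains to show each $\lambda_i$ is a $k$-th root of unity. Since $T$ is monomial, the relation $p(v) = T^{\top}q(v)$ translates directly into the coordinate identity $\pi_i(q)(v) = \lambda_i^{-1}\pi_{\sigma(i)}(p)(v)$ for every $v \in V$. Substituting this into the trisecant identity $\prod_{v\in\tau}\pi_i(q)(v) = \lambda_i \prod_{v\in\tau}\pi_{\sigma(i)}(p)(v)$, valid for every $\tau \in E^{(k-1)}$, produces $\lambda_i^{-(k-1)} = \lambda_i$, whence $\lambda_i^k = 1$. The main subtlety of the whole argument is precisely this closing step: the trisecant lemma only constrains products of $k-1$ coordinates, while the stabiliser condition concerns products of $k$ coordinates, so the coordinate-level identity from Proposition~\ref{prop:G3} must be re-injected into the trisecant conclusion to bridge the gap.
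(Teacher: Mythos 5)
Your proof is correct and follows the paper's structure for the bulk of the argument: reduce to generic $p$ by perturbation, apply Propositions~\ref{prop:G2} and~\ref{prop:G3} to obtain $T$, identify each row of $P$ and $Q$ as a point of $\overline{{\rm im}\,f_{h_{\rm prod},H}}$ with $H=(V,E^{(k-1)})$, invoke the generalised trisecant lemma (Proposition~\ref{lem:prod_trisecant}) under the hypothesis $|E^{(k-1)}|\geq|V|+d$ to conclude that $T$ is a monomial matrix, and reduce to $T$ diagonal. Where you diverge is the closing step, and your route is cleaner than the paper's. The paper writes ``by Proposition~\ref{prop:G3}, $q(v)=Tp(v)$'' (a small slip --- Proposition~\ref{prop:G3} actually gives $p(v)=T^\top q(v)$, so for diagonal $T$ one has $q(v)=T^{-1}p(v)$, though this does not affect the final conclusion since $t^k=1 \Leftrightarrow (t^{-1})^k=1$), then compares $f_{g,G}(p)=f_{g,G}(q)$ coordinate-wise to obtain $\sum_i (t_i^k-1)\prod_{j}p(v_j)_i=0$ for every $e\in E$, and finally appeals --- rather briefly, ``using the same trick'' --- to the claim that the Hadamard products $\bigodot_j p(v_j)$ span $\mathbb{F}^d$, forcing $t_i^k=1$. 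Your argument bypasses that spanning claim altogether: instead of passing to the $k$-fold products from $f_{g,G}(p)=f_{g,G}(q)$, you re-inject the vertex-level identity $\pi_i(q)(v)=\lambda_i^{-1}\pi_{\sigma(i)}(p)(v)$ (from Proposition~\ref{prop:G3} and the monomial form of $T$) directly back into the $(k-1)$-fold trisecant identity $\prod_{v\in\tau}\pi_i(q)(v)=\lambda_i\prod_{v\in\tau}\pi_{\sigma(i)}(p)(v)$, immediately giving $\lambda_i^{-(k-1)}=\lambda_i$ and hence $\lambda_i^k=1$. Both routes arrive at the stabiliser condition, but yours avoids the extra ``spanning of $\bigodot_j p(v_j)$'' step and is the tighter version of the argument.
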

\begin{proof}
In the same manner as in the proof of Theorem~\ref{thm:global_determinant}, we may suppose, by a sufficiently small perturbation, that $p$ is generic.
Also, by Proposition~\ref{prop:complex_to_real}, we may assume $\mathbb{F}=\mathbb{C}$.
By Proposition~\ref{prop:G2}, there is $T\in \mathbb{F}^{d\times d}$ such that 
$\nabla g(q(\sigma))=T\nabla g(p(\sigma))$ for every $\sigma\in E^{(k-1)}$.
Let $P$ be a matrix of size $d\times |E^{(k-1)}|$ obtained by aligning $\nabla g(p(\sigma))$ for all $\sigma\in E^{(k-1)}$,
and let $Q$ be the corresponding matrix for $q$.
Then $Q=TP$.

Let us denote the row vectors of $P$  by $x_1,\dots, x_d$ and those of $Q$  by $y_1,\dots, y_d$.
Also, let $H$ be a $(k-1)$-uniform hypergraph on $V$ with the edge set $E^{(k-1)}$.
Since $g$ is the sum of $d$ copies of the product map of $k$ variables, each coordinate of $\nabla g(p(\sigma))$ is also an image of the product map of $(k-1)$ variables. 
So, each $x_i$ is the image of $f_{h,H}$ at a generic point configuration, where $h$ denotes the  product map of $(k-1)$ variables.
Since the dimension of  $\overline{{\rm im}f_{h,H}}$ is upper bounded by $n$ (as it is a projection of the affine cone of a Veronese variety), we can apply Proposition~\ref{lem:prod_trisecant} to deduce that 
any point in the intersection of $\overline{{\rm im}f_{h,H}}$ and 
$\Span \{x_1,\dots, x_d\}$ is a scalar multiple of some $x_i$.
Since each $y_j$ belongs to $\overline{{\rm im}f_{h,H}}$ and $Q=TP$,
\begin{equation}\label{eq:global_tensor1}
\text{each $y_j$ is a scalar multiple of some $x_i$.}
\end{equation}
Also, by Proposition~\ref{prop:matroid}, $q$ is also regular, so $(G,q)$ is infinitesimally $g$-rigid. 
Hence, by Proposition~\ref{prop:G1}, $\rank Q=d$.
So by $Q=TP$ and (\ref{eq:global_tensor1}), 
$T$ is written as $T=\Sigma D$ for some permutation matrix $\Sigma$ and a diagonal matrix $D$. 
By reordering the coordinates of $q$ if necessary, we may assume $\Sigma$ is the identity matrix, so $T=D$.

Let $t_i$ be the $i$-th diagonal entry $T$.
By Proposition~\ref{prop:G3}, $q(v)=Tp(v)$ for all $v\in V(G)$.
Since $f_{g,G}(p)=f_{g,G}(q)$, comparing the $e$-th coordinate for each $e=\{i_1, i_2, \dots, i_k\}\in E$, we obtain 
\begin{equation}\label{eq:global_tensor2}
\left\langle (t_1^k\ t_2^k\ \dots t_k^k), \bigodot_{j=1}^k p(i_j)\right\rangle = \bigodot_{j=1}^k p(i_j),\end{equation}
where $(t_1^k\ t_2^k\ \dots t_k^k)$ is the $k$-dimensional vector whose $i$-th coordinate is $t_i^k$.
Using the same trick as the second paragraph of this proof,
one can check from Proposition~\ref{lem:prod_trisecant} that 
there are $d$ linearly independent vectors among 
$\{\bigodot_{j=1}^k p(i_j): e=\{i_1, i_2, \dots, i_k\}\in E\}$. 
So Equation (\ref{eq:global_tensor2}) implies that 
$t_j^k=1$ for every $j=1,\dots, k$. In other words, each diagonal entry of $T$ is the $k$-th root of unity. 
So $T$ belongs to the stabilizer of $g$, and the global $g$-rigidity of $G$ follows.
 \end{proof}
Note that the theorem is analogous to \cite[Theorem 6.2]{JJT14} which gives a sufficient condition in the matrix completion case. However the proof technique does not apply in the matrix completion case. For example, the second bullet point in the hypotheses always fails in the matrix completion context.

\begin{example}\label{eq:global_tensor}
    Consider the symmetric tensor completion problem of symmetric rank one and order four, that is, $k=4$ and $g=h_{\rm prod}$.
    We analyse two slightly different $4$-uniform hyper-frameworks
     $(G_1,p)$ and $(G_2, p)$, where 
    $G_1=(\{a,b\}, \{aaaa, abbb,bbbb\})$, $G_2=(\{a,b\}, \{aaaa, aabb,bbbb\})$ and $p: a\mapsto x_a, b\mapsto x_b$.
    Then,
    \[
\J f_{g,G_1}(p)=
\kbordermatrix{
 & a & b \\
 aaaa & 4x_a^3 & 0 \\
 abbb & x_b^3 & 3x_ax_b^2 \\
 bbbb & 0 & 4x_b^3
    },
\qquad 
\J f_{g,G_2}(p)=
\kbordermatrix{
 & a & b \\
 aaaa & 4x_a^3 & 0 \\
 aabb & 2x_ax_b^2 & 2x_a^2x_b \\
 bbbb & 0 & 4x_b^3
    }.
    \]
    Since $\rank \J f_{g,G_i}(p)=2$, each $(G_i, p)$ is infinitesimally $g$-rigid.
    
    For $i=1,2$, $\omega_i\in \mathbb{C}^E$ is in the left kernel of $\J f_{g,G_i}(p)$ if 
    \begin{align*}
    &\omega_1(aaaa)=\frac{1}{x_a^4}, \omega_1(abbb)=-\frac{4}{x_ax_b^3}, \omega_1(bbbb)=\frac{3}{x_b^4}, \\
    &\omega_2(aaaa)=\frac{1}{x_a^4}, \omega_2(aabb)=-\frac{2}{x_a^2x_b^2}, \omega_2(bbbb)=\frac{1}{x_b^4}.
    \end{align*}
The corresponding weighted adjacency matrices are 
\begin{align*}
A_{G_1,\omega_1}&=
\kbordermatrix{
 & aaa & abb & bbb \\
 a & 4\omega_1(aaaa) & 0 & \omega_1(abbb) \\
 b & 0 & 3\omega_1(abbb) & 4\omega_1(bbbb)
}
=\kbordermatrix{
 & aaa & abb & bbb \\
 a & \frac{4}{x_a^4} & 0 & -\frac{4}{x_ax_b^3} \\
 b & 0 & -\frac{12}{x_ax_b^3} & \frac{12}{x_b^4}
}\\
A_{G_2,\omega_2}&=
\kbordermatrix{
 & aaa & aab & abb & bbb \\
 a & 4\omega_2(aaaa) & 0 & 2\omega_2(aabb) & 0 \\
 b & 0 & 2\omega_2(aabb) & 0 & 4\omega_2(bbbb)
}
=
\kbordermatrix{
 & aaa & aab & abb & bbb \\
 a & \frac{4}{x_a^4} & 0 & -\frac{4}{x_a^2x_b^2} & 0 \\
 b & 0 & -\frac{4}{x_a^2x_b^2} & 0 & \frac{4}{x_b^4}
}.
\end{align*}
Since $\dim \ker A_{G_1,\omega_1}=1$, we can conclude from Theorem \ref{thm:global_tensor} that $G_1$ is globally $g$-rigid,
whereas we cannot apply Theorem \ref{thm:global_tensor} to $G_2$ since $\dim \ker A_{G_2, \omega_2}>1$.
Indeed $(G_2,p)$ cannot be globally $g$-rigid
since the polynomial system $x_a^4=t_1, x_a^2x_b^2=t_2, x_b^4=t_3$ with variables $x_a, x_b$ and  fixed scalars $t_1, t_2, t_3$ only determines the value of $(x_a^2, x_b^2)$.
(On the other hand for $(G_1, p)$ the corresponding polynomial system $x_a^4=t_1, x_ax_b^3=t_2, x_b^4=t_3$ with variables $x_a, x_b$ and  fixed scalars $t_1, t_2, t_3$ does determine the value of $(x_a, x_b)$
up to simultaneous multiplication by a 4-th root of unity).

We also compute the matrix representation $H_i$ of the Gauss map of 
$\overline{{\rm im} f_{g,G_i}}$ according to the technique by Chiantini, Ottaviani, and Vannieuwenhoven~\cite{COV2014} for each $i=1,2$:
\begin{align*}
H_1&=
\kbordermatrix{
 & a & b \\
 a & 12 x_a^2 \omega_1(aaaa) &  3x_b^2\omega_1(abbb)\\
 b & 3x_b^2 \omega_1(abbb) & 6x_ax_b\omega_1(abbb)+12x_b^2\omega_1(bbbb) 
}=
\kbordermatrix{
 & a & b \\
 a & \frac{12}{x_a^2} &  -\frac{12}{x_ax_b} \\
 b & -\frac{12}{x_ax_b} & \frac{12}{x_b^2}
}\\
H_2&=
\kbordermatrix{
 & a & b \\
 a & 12 x_a^2 \omega_2(aaaa)+2x_a^2\omega_{2}(aabb) &  4x_ax_b\omega_2(aabb)\\
 b & 4x_ax_b \omega_2(aabb) & 12x_b^2\omega_2(bbbb) 
}=
\kbordermatrix{
 & a & b \\
 a & \frac{8}{x_a^2} &  -\frac{8}{x_ax_b} \\
 b & -\frac{8}{x_ax_b} & \frac{8}{x_b^2}
}.
\end{align*}
Hence $H_1$ and $H_2$ are  equal up to scalar multiplication.
\end{example}
\subsection{Necessary conditions}\label{subsec:necessity}
In the context of Euclidean rigidity theory, Hendrickson \cite{Hendrickson1992conditions} established two necessary conditions for a graph to be globally $g$-rigid in dimension $d$. However, we will now provide examples that demonstrate that both of these conditions fail to be necessary for global $g$-rigidity. 

As described in \cite{JJTunique} Hendrickson's argument does not extend to the context of matrix completion, and hence does not extend to an arbitrary polynomial map $g$. Hendrickson's approach involves considering a generic globally $g$-rigid hyper-framework $(G,p)$ and assuming that removing an edge $e \in E(G)$ leads to a configuration $(G-e,p)$ that is not $g$-rigid. The key step in Hendrickson's strategy is to prove that the configuration space $f_{g,G}^{-1}(f_{g,G}(p))/\Gamma$ is compact. However, it turns out that relatively often, this configuration space can be unbounded. While it is possible that some configuration spaces might satisfy the required property to continue Hendrickson's strategy, this is not the case in several natural examples. This indicates that redundant rigidity is not necessarily a prerequisite for global $g$-rigidity in the multilinear case, as demonstrated by the following lemma, providing an infinite family of examples. 
\begin{lemma}\label{lem:0extension_global}
Assume Setup~\ref{setup}, 
and suppose $g$ is multilinear.
Then any simple $d$-valent extension that preserves local $g$-rigidity preserves global $g$-rigidity.
In other words, if $G$ is globally $g$-rigid 
and a simple $d$-valent extension $H$ of $G$ is locally $g$-rigid, then $H$ is globally $g$-rigid.
\end{lemma}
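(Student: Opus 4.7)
The plan is to take a generic $p\in(\mathbb{F}^d)^{V(H)}$ and an arbitrary $q\in f_{g,H}^{-1}(f_{g,H}(p))$, and to show $q=\gamma\cdot p$ for some $\gamma\in\Gamma_g$. Let $v$ be the new vertex of $H$ and $e_1,\dots,e_d$ the new hyperedges; each $e_i$ is simple and contains $v$ exactly once, so $e_i-v\subseteq V(G)$. Since $G$ is a sub-hypergraph of $H$ and $p|_{V(G)}$ is generic, the global $g$-rigidity of $G$ produces $\gamma\in\Gamma_g$ with $q|_{V(G)}=\gamma\cdot p|_{V(G)}$. I would then replace $q$ by $\gamma^{-1}\cdot q$, which still lies in $f_{g,H}^{-1}(f_{g,H}(p))$ because $\Gamma_g$ stabilises $g$, reducing matters to the situation $q|_{V(G)}=p|_{V(G)}$; it now suffices to prove $q(v)=p(v)$.

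Next, multilinearity enters. Because each $e_i$ is simple and $q$ agrees with $p$ on $V(G)$, we have $q(e_i-v)=p(e_i-v)$. Fixing the arguments indexed by $e_i-v$, the multilinear (anti-)symmetric form $g$ becomes linear in the remaining slot, giving
\[
g(p(e_i))=s_i\langle p(v),\nabla g(p(e_i-v))\rangle,\qquad g(q(e_i))=s_i\langle q(v),\nabla g(p(e_i-v))\rangle
\]
for a common sign $s_i\in\{\pm 1\}$ that depends only on the ordering convention. Equating these for each $i$ produces the linear system
\[
\langle p(v)-q(v),\nabla g(p(e_i-v))\rangle=0,\qquad i=1,\dots,d.
\]

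To close, I would appeal to the local $g$-rigidity of $H$: since $p$ is generic, Proposition~\ref{prop:inf} makes $(H,p)$ infinitesimally $g$-rigid, and the first bullet of Lemma~\ref{lem:extension} then forces $v$ to be stable. Stability says precisely that the $d\times d$ matrix with rows $\nabla g(p(e_i-v))^{\top}$ is non-singular, so the displayed system admits only the trivial solution. Hence $p(v)=q(v)$, and altogether $q=\gamma\cdot p$. The main thing to check carefully is the multilinearity identity for $g$ on the new edges together with the $\Gamma_g$-invariance reduction; once these are in place, the stability of $v$ inherited from local $g$-rigidity of $H$ finishes the argument by elementary linear algebra.
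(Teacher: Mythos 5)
Your proof follows the paper's argument essentially step for step: reduce by $\Gamma_g$-action to the case $q|_{V(G)}=p|_{V(G)}$ via global rigidity of $G$, use multilinearity and simplicity of the new hyperedges to derive $\langle p(v)-q(v),\nabla g(p(e_i-v))\rangle=0$ for $i=1,\dots,d$, and conclude $p(v)=q(v)$ from the stability of $v$ supplied by Lemma~\ref{lem:extension} and the local $g$-rigidity of $H$. You are slightly more explicit than the paper about why the $\gamma^{-1}$-shift stays in the fibre and about the sign bookkeeping in the multilinearity identity, but the route is the same.
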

\begin{proof}
Let $G$ be a globally $g$-rigid graph
and suppose $H$ is obtained from $G$ by a simple $d$-valent extension by adding a new vertex $v$.
Pick any generic $p\in (\mathbb{F}^d)^{V(H)}$. We will show that $(H,p)$ is globally $g$-rigid.

For this, choose any $q\in f_{g,H}^{-1}(f_{g,H}(p))$.
Since $G$ is globally $g$-rigid, we may assume 
$q(u)=p(u)$ for all $u\in V(G)$.
Since $g$ is multilinear, $f_{g,H}(p)=f_{g,G}(q)$ leads to the equality:
\[
\langle q(v) \nabla g(q(e_i-v))\rangle= \langle p(v), \nabla g(p(e_i-v))\rangle,
\]
for all hyperedges $e_1,\dots, e_d$ incident to $v$.
Since the $d$-valent extension is simple, $e_i-v$ does not contain $v$, we have that $q(u)=p(u)$ for any $u\in V(G)$ and so:
\[
\langle (p(v)-q(v)), \nabla g(p(e_i-v))\rangle=0,
\]
for $i=1,\dots, d$.
From the fact that the extension preserves local $g$-rigidity and Lemma~\ref{lem:extension}, 
$\{\nabla g(p(e_i-v)): i=1,\dots, d\}$ is a basis of $\mathbb{F}^d$.
So we get $p(v)=q(v)$, implying that $p=q$ and $(G,p)$ is globally $g$-rigid.
\end{proof}

By repeatedly applying the lemma, we can construct an infinite family of hypergraphs that exhibit both minimally local $g$-rigidity and global $g$-rigidity when $g$ is a multilinear polynomial map. 
There are also infinite families of hypergraphs which are locally $g$-rigid but not globally $g$-rigid. 

We next consider Hendrickson's second necessary condition which is in terms of graph connectivity. We will present an analogous condition where the level of connectivity depends strongly on the map $g$. 

Let $G=(V,E)$ be an $k$-uniform hypergraph.
A subset $X\subset V$ is called a {\em separator} if 
$G-X$ is disconnected.
We say that $G$ is $r$-connected if $|V(G)|\geq r+1$ and it has no separator of size less than $r$.

The following lemma extends the well-known fact that, in the $d$-dimensional Euclidean rigidity, $d$-connectivity is necessary for local rigidity.
\begin{lemma}\label{lem:connectivity}
Assume Setup~\ref{setup}, and suppose $|V|\geq n_{\Gamma_g}+1$.
If $G$ is locally $g$-rigid, then $G$ is $n_{\Gamma_g}$-connected.
\end{lemma}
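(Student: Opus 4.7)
The plan is a proof by contradiction. Suppose $G$ has a separator $X_0$ with $|X_0|<n_{\Gamma_g}$. I will construct a generic configuration $p$ and a nonzero element of $\ker \J f_{g,G}(p)$ that does not lie in ${\rm triv}_g(p)$, contradicting the infinitesimal (hence local) $g$-rigidity of $(G,p)$ guaranteed by Proposition~\ref{prop:inf}. The two main ingredients are (i) the block structure of $\J f_{g,G}(p)$ determined by the separation $V\setminus X_0=A\sqcup B$, since no hyperedge has support meeting both $A$ and $B$, and (ii) the fact that the Lie algebra map $\dot\gamma\mapsto \dot\gamma\cdot p|_Y$ has nontrivial kernel when $|Y|<n_{\Gamma_g}$ and is injective (for generic $p$) when $|Y|\geq n_{\Gamma_g}$.

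First I would enlarge $X_0$ to a separator $X$ of size exactly $n_{\Gamma_g}-1$, still with a nonempty bipartition $V\setminus X=A\sqcup B$. This is done by transferring vertices from $A$ and $B$ into $X_0$ while keeping at least one vertex on each side; adding vertices to a separator preserves the separator property. The need to add $n_{\Gamma_g}-1-|X_0|$ vertices from $V\setminus X_0$ while retaining $|A|,|B|\geq 1$ requires $|V|\geq n_{\Gamma_g}+1$, which is exactly our hypothesis. After enlargement, $|A\cup X|\geq n_{\Gamma_g}$ and $|B\cup X|\geq n_{\Gamma_g}$, while $|X|<n_{\Gamma_g}$.

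Next I would build the motion. Pick a generic $p$. Because $|X|<n_{\Gamma_g}$, the image ${\rm triv}_g(p|_X)$ of the map $\mathfrak{g}\to (\mathbb{F}^d)^X$ has dimension strictly less than $d_{\Gamma_g}$, so its kernel is nontrivial; choose a nonzero $\dot\gamma\in\mathfrak{g}$ with $\dot\gamma\cdot p|_X=0$. Define $\dot p\in (\mathbb{F}^d)^V$ by $\dot p|_A=\dot\gamma\cdot p|_A$ and $\dot p|_{X\cup B}=0$. I would check $\dot p\in\ker\J f_{g,G}(p)$ by treating the rows of the Jacobian by hyperedge type: rows indexed by hyperedges with support in $B\cup X$ vanish on $A$-columns and $\dot p$ vanishes on $X\cup B$; rows indexed by hyperedges with support in $A\cup X$ vanish on $B$-columns, and on $A\cup X$ the vector $\dot p$ coincides with the trivial motion $\dot\gamma\cdot p$ (both are zero on $X$), which is in $\ker\J f_{g,G}(p)$.

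Finally I would verify $\dot p\notin{\rm triv}_g(p)$. If $\dot p=\dot\delta\cdot p$ for some $\dot\delta\in\mathfrak{g}$, then $\dot\delta\cdot p|_{X\cup B}=0$; genericity of $p$ and $|X\cup B|\geq n_{\Gamma_g}$ give $\dim{\rm triv}_g(p|_{X\cup B})=d_{\Gamma_g}$, so $\mathfrak{g}\to (\mathbb{F}^d)^{X\cup B}$ is injective, forcing $\dot\delta=0$ and hence $\dot p=0$; but then $\dot\gamma\cdot p|_{A\cup X}=0$, and the symmetric injectivity argument on $A\cup X$ (again of size at least $n_{\Gamma_g}$) gives $\dot\gamma=0$, contradicting its choice. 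I expect the main obstacle to be the preliminary enlargement: the hypothesis $|V|\geq n_{\Gamma_g}+1$ is exactly tight, and the final nontriviality argument relies crucially on having both $|A\cup X|\geq n_{\Gamma_g}$ and $|X\cup B|\geq n_{\Gamma_g}$ simultaneously, a condition that would fail without the enlargement step.
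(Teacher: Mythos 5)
Your proposal is correct and takes essentially the same route as the paper's proof: enlarge the separator to exactly $n_{\Gamma_g}-1$ vertices (using $|V|\geq n_{\Gamma_g}+1$ to keep both sides nonempty), pick a nonzero $\dot\gamma$ in the stabiliser of $p|_X$, extend it by zero across the cut using the block structure of the Jacobian, and rule out triviality via the injectivity of $\mathfrak{g}\to(\mathbb{F}^d)^{Y}$ for generic $p|_Y$ with $|Y|\geq n_{\Gamma_g}$. The only cosmetic difference is that the paper explicitly refines the choice of $\dot\gamma$ to lie outside ${\rm stab}_\mathfrak{g}(p|_{V_2})$ so that $\dot p\neq 0$ is immediate, whereas you establish $\dot p\neq 0$ a posteriori inside the final contradiction; both are valid.
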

\begin{proof}
We first introduce some terminology for the proof.
Recall that $\mathfrak{g}$ denotes the Lie algebra of $\Gamma_g$.
For a point configuration $q\in (\mathbb{F}^d)^{n}$, 
let ${\rm stab}_\mathfrak{g}(q)=\{\gamma\in \mathfrak{g}: \gamma\cdot q=0\}$.
Clearly ${\rm stab}_\mathfrak{g}(q)$ is a linear subspace of $\mathfrak{g}$.
By definition of $n_{\Gamma_g}$, 
${\rm stab}_\mathfrak{g}(q)=\{0\}$ if and only if $n\geq n_{\Gamma_g}$.

We now move to the proof.
Let $(G,p)$ be a generic framework of $G$,
and suppose that $G$ has a separator  of size less than $n_{\Gamma_g}$.
Since $|V(G)|\geq n_{\Gamma}+1$, $G$ has a separator $X$ of size equal to $n_{\Gamma_g}-1$.
Then we can take $V_1, V_2\subseteq V(G)$ such that 
$V_i\setminus X\neq \emptyset (i=1,2), V(G)=V_1\cup V_2, X=V_1\cap V_2$, and there is no hyperedge intersecting $V_1\setminus X$ and $V_2\setminus X$.
Moreover, since $|X|\geq n_{\Gamma_g}-1$, we have that $|V_i|\geq n_{\Gamma_g}$.

Since $|X|<n_{\Gamma_g}$, 
${\rm stab}_\mathfrak{g}(p_{|X})\neq \{0\}$.
Also, since $p$ is generic and $V_2\setminus X\neq \emptyset$, we have ${\rm stab}_\mathfrak{g}(p_{|V_2})\subsetneq {\rm stab}_\mathfrak{g}(p_{|X})$.
Pick any $\gamma\in {\rm stab}_\mathfrak{g}(p_{|X})\setminus {\rm stab}_\mathfrak{g}(p_{|V_2})$,
and define $\dot{p}\in (\mathbb{F}^d)^{V(G)}$ by 
\[
\dot{p}(v)=\begin{cases}
    0 & (v\in V_1\setminus X) \\
    0=\gamma\cdot p(v) & (v\in  X) \\
    \gamma\cdot p(v) & (v\in V_2\setminus X).
\end{cases}
\]
From the fact that $\gamma\in \mathfrak{g}$ and there is no hyperedge intersecting $V_1\setminus X$ and $V_2\setminus X$, $\dot{p}$ is an infinitesimal $g$-motion of $(G,p)$.
Also, $\dot{p}$ is nonzero since $\gamma\notin {\rm stab}_\mathfrak{g}(p_{|V_2})$.
Moreover, $\dot{p}$ is not trivial.
Indeed, if $\dot{p}$ is trivial,
then there must be some $\gamma'\in \mathfrak{g}$ such that $\dot{p}(v)=\gamma'\cdot p(v)$ for all $v\in V(G)$.
Then, for all $v\in V_1$, $\gamma'\cdot p(v)=\dot{p}(v)=0$, implying 
$\gamma'\in {\rm stab}_\mathfrak{g}(p_{|V_1})$.
Since $|V_1|\geq n_{\Gamma_g}$ and hence
${\rm stab}_\mathfrak{g}(p_{|V_1})=\{0\}$,  this further implies 
$\gamma'=0$.
On the other hand, for any $v\notin V_1$,
$\dot{p}(v)=\gamma' \cdot p(v)=0$, contradicting that $\dot{p}$ is nonzero.
Thus, $(G,p)$ admits a non-trivial infinitesimal $g$-motion,
and $G$ is not locally $g$-rigid.
\end{proof}

Since local $g$-rigidity is a necessary condition for global $g$-rigidity, the same necessary connectivity condition holds for global $g$-rigidity. The rank 1 tensor completion problem shows that this cannot be improved. However in many applications, such as Euclidean rigidity and higher rank tensor completions, one may add one to the connectivity condition; that is for many choices of $g$, global $g$-rigidity implies $(n_{\Gamma_g}+1)$-connectivity.

\section{Multipartite Rigidity Model}\label{sec:variants}

In certain applications, we will encounter the identifiability question  defined using a structure of $k$-partiteness of $k$-partite hypergraphs. We now introduce the corresponding rigidity concept and analyse the unique tensor completion problem as an illustrative example.

\subsection{Birigidity}\label{subsec:rigidity_partite}

We say that a hypergraph $G=(V,E)$ is {\em $k$-partite} if 
there is a partition $\{V_1,\dots, V_k\}$ of $V$ into $k$ nonempty subsets $V_i$ such that $|V_i\cap e|=1$ for any $e\in E$ and $i=1,\dots, k$.
For example, when $k=2$, a $k$-partite hypergraph is an ordinary bipartite graph.

Suppose $(G,p)$ is a $k$-partite hyper-framework.
When we denote a hyperedge $e$ as $e=(v_1,\dots, v_k)\in E$, we mean that $v_i\in V_i$ for $i=1,\ldots,k$.
As in Setup~\ref{setup}, 
for a polynomial map $g:(\mathbb{F}^d)^k\rightarrow \mathbb{F}$,
we define 
the $g$-measurement map $f_{g,G}:(\mathbb{F}^d)^{V}\rightarrow \mathbb{F}^E$.
Since $G$ is $k$-partite, $g$ may not be symmetric or anti-symmetric.

The main difference from the rigidity model given in Section~\ref{sec:rigid} occurs in the action of the affine group to $(\mathbb{F}^d)^k$.
Suppose the general affine group ${\rm Aff}(d,\mathbb{F})$ acts on $\mathbb{F}^d$ by $\gamma\cdot x=Ax+t$ for $\gamma=(A,t)$, as before. 
In the multipartite model, we consider the direct product ${\rm Aff}(d,\mathbb{F})^k$ of $k$ copies of ${\rm Aff}(d,\mathbb{F})$,
and consider the component-wise action of ${\rm Aff}(d,\mathbb{F})^k$ to $(\mathbb{F}^d)^k$.
The induced action on $g:(\mathbb{F}^d)^k\rightarrow \mathbb{F}$ is given by $\gamma\cdot g(x_1,\dots, x_k)=g(\gamma_1^{-1} \cdot x_1,\dots, \gamma_k^{-1} x_k)$ for $x_1,\dots, x_k\in \mathbb{F}^d$ and $\gamma=(\gamma_1,\dots, \gamma_k)\in {\rm Aff}(d,\mathbb{F})^k$.
The stabilizer of $g$ is denoted by $\Gamma_g^{\times}$.

Since in this $k$-partite model the underlying vertex set $V$ is assumed to be partitioned into $\{V_1,\dots, V_k\}$, we can also define the action of ${\rm Aff}(d,\mathbb{F})^k$ to $(\mathbb{F}^d)^V$ such that 
$(\gamma\cdot p)(v)=\gamma_i \cdot p(v)$ for each $v\in V_i$ for
$p\in (\mathbb{F}^d)^V$ and $\gamma=(\gamma_1,\dots, \gamma_k)\in {\rm Aff}(d,\mathbb{F})^k$.
\begin{definition}
    We say that $(G,p)$ is {\em globally $g$-birigid}  if 
for any $q\in f^{-1}_{g,G}(f_{g,G}(p))$ there is $\gamma \in \Gamma_G^{\times}$ such that 
$q=\gamma \cdot p$.
We say that $(G,p)$ is {\em locally $g$-birigid} if there is an open neighbourhood $N$ of $p$ in $(\mathbb{F}^d)^{V}$ (in the Euclidean topology) such that
for any $q\in f^{-1}_{g,G}(f_{g,G}(p))\cap N$ there is $\gamma \in \Gamma_G^{\times}$ such that $q=\gamma \cdot p$.
\end{definition}

\begin{example}\label{eq:rectangular_matrix}
Consider the case when $G$ is 2-uniform (i.e., a graph) and $g(x,y)=\langle x,y\rangle$ for $x,y\in \mathbb{F}^d$.
In the model in Section~\ref{sec:rigid}, 
the stabilizer of $g$ is $O(d)$ and its Lie algebra is ${d\choose 2}$-dimensional.
On the other hand, in the $k$-partite model, 
the stabilizer of $g$ is $\{(A, A^{-\top}): A\in {\rm GL}(d,\mathbb{F})\}$,
and its Lie algebra is $d^2$-dimensional.
\end{example}

We define the infinitesimal $g$-birigidity in the same manner as described in Section~\ref{subsec:infinitesimal_rigidity}. To do so, we denote $d_{\Gamma_g^{\times}}$ for the dimension of the variety $\Gamma_g^{\times}$,
and let  ${\rm triv}_{\Gamma_g^{\times}}(p)$ be the space of trivial infinitesimal motions of $(G,p)$. 
If $|V_i|$ is sufficiently large for all $i$, then $\dim {\rm triv}_{\Gamma_g^{\times}}(p)=d_{\Gamma_g^{\times}}$.
The minimum size of $V_i$ that guarantees this property is denoted by $n_{\Gamma_g^{\times}}$.
Specifically, let $$n_{\Gamma_g^{\times}}:=\min\{|V_i|: \dim {\rm triv}_{\Gamma}(p)=d_{\Gamma_g^{\times}} \text{ for some } p: V\rightarrow\mathbb{F}^d\}.$$

All the materials in Section~\ref{sec:tools} can be extended to the multipartite rigidity model.
For example, the following proposition can be shown easily.

\begin{prop}\label{prop:g-birigidity}
Let $g:(\mathbb{F}^d)^k\rightarrow \mathbb{F}$ be a polynomial map and  $G=(V,E)$ be a $k$-partite hypergraph with 
$|V_i|\geq n_{\Gamma_g^{\times}}$.
Then the following are equivalent:
\begin{itemize}
    \item $(G,p)$ is locally $g$-birigid for some generic $p$.
    \item $(G,p)$ is infinitesimally $g$-birigid for some generic $p$.
    \item The rank of $\J f_{g,G}(p)$ is equal to $d|\MV|-d_{\Gamma_g^{\times}}$ for some generic $p$.
    \item $\dim(\overline{{\rm im}f_{g,G}(p)})=d|V|-d_{\Gamma_g^{\times}}$. 
\end{itemize}
\end{prop}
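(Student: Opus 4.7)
The plan is to prove the four conditions are equivalent by mirroring the arguments given earlier for the (non-multipartite) case, specifically Propositions \ref{prop:inf}, \ref{prop:inf_rigid}, and \ref{prop:matroid}, adapted to the action of $\Gamma_g^{\times}$ rather than $\Gamma_g$. The hypothesis $|V_i|\geq n_{\Gamma_g^{\times}}$ for all parts $V_i$ is the precise analogue of $|V|\geq n_{\Gamma_g}$ in Proposition~\ref{prop:inf_rigid}, and plays the same role, namely ensuring $\dim {\rm triv}_{\Gamma_g^{\times}}(p)=d_{\Gamma_g^{\times}}$.

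First I would establish $(2)\Leftrightarrow(3)$. By the multipartite analogue of Definition~\ref{def:inf}, $(G,p)$ is infinitesimally $g$-birigid if and only if $\ker \J f_{g,G}(p)={\rm triv}_{\Gamma_g^{\times}}(p)$. The containment ${\rm triv}_{\Gamma_g^{\times}}(p)\subseteq \ker \J f_{g,G}(p)$ holds by the same argument as in Section~\ref{subsec:infinitesimal_rigidity}: the $\Gamma_g^{\times}$-action on $(\mathbb{F}^d)^V$ preserves $f_{g,G}$, so differentiating yields that the image of $\mathfrak{g}^{\times}\times\{p\}\to (\mathbb{F}^d)^V$ lies in the kernel of $\J f_{g,G}(p)$. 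Given $|V_i|\geq n_{\Gamma_g^{\times}}$, by the definition of $n_{\Gamma_g^{\times}}$ we have $\dim {\rm triv}_{\Gamma_g^{\times}}(p)=d_{\Gamma_g^{\times}}$ for generic $p$. Hence equality of the two spaces is equivalent to $\dim\ker \J f_{g,G}(p) = d_{\Gamma_g^{\times}}$, i.e. $\rank \J f_{g,G}(p)=d|V|-d_{\Gamma_g^{\times}}$ by rank-nullity.

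Next I would address $(3)\Leftrightarrow(4)$. This is a direct application of Proposition~\ref{prop:matroid}(ii)-(iv). For generic $p$, Proposition~\ref{prop:matroid}(iv) ensures that $f_{g,G}(p)$ is a regular value, and Proposition~\ref{prop:matroid}(ii) guarantees that $f_{g,G}(p)$ is generic in $\overline{{\rm im}f_{g,G}}$. By the inverse function theorem applied at $p$, the rank of $\J f_{g,G}(p)$ equals the local dimension of the image, and since $\overline{{\rm im}f_{g,G}}$ is irreducible (being the closure of a polynomial image), this equals $\dim \overline{{\rm im}f_{g,G}}$. Thus $\rank \J f_{g,G}(p)=d|V|-d_{\Gamma_g^{\times}}$ if and only if $\dim \overline{{\rm im}f_{g,G}}=d|V|-d_{\Gamma_g^{\times}}$.

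Finally, for $(1)\Leftrightarrow(2)$, I would invoke the multipartite analogue of Proposition~\ref{prop:inf}. The proof goes through verbatim: using the semicontinuity of the rank of both $\J f_{g,G}$ and the map $T_p:\mathfrak{g}^{\times}\to (\mathbb{F}^d)^V$ given by $\dot{\gamma}\mapsto \dot{\gamma}\cdot p$, infinitesimal $g$-birigidity at a point $p$ forces the constant rank theorem to apply in a neighbourhood, and $f_{g,G}$ becomes locally equivalent to a coordinate projection onto a subspace of dimension $d|V|-d_{\Gamma_g^{\times}}$. This in turn yields local $g$-birigidity. For the converse, since $p$ is generic, $f_{g,G}(p)$ is a regular value of $f_{g,G}$ by Proposition~\ref{prop:matroid}(iv), so $f_{g,G}^{-1}(f_{g,G}(p))$ is a smooth manifold of dimension $\dim\ker \J f_{g,G}(p)$; local $g$-birigidity then forces this kernel to coincide with ${\rm triv}_{\Gamma_g^{\times}}(p)$, giving infinitesimal $g$-birigidity. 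None of the steps should present a serious obstacle since the multipartite action of $\Gamma_g^{\times}$ on $(\mathbb{F}^d)^V$ is again smooth and the associated Lie algebra generates ${\rm triv}_{\Gamma_g^{\times}}(p)$ in exactly the same way as in the symmetric case; the only adjustment is a bookkeeping check that the hypothesis is imposed simultaneously on each part $V_i$, which is precisely what $n_{\Gamma_g^{\times}}$ encodes.
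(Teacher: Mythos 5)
Your proposal is correct and follows exactly the route the paper intends: the paper provides no explicit proof, stating only that "all the materials in Section~\ref{sec:tools} can be extended to the multipartite rigidity model," and your argument carries out that extension, transporting Propositions~\ref{prop:matroid}, \ref{prop:inf} and \ref{prop:inf_rigid} to the $\Gamma_g^{\times}$-action and using $|V_i|\geq n_{\Gamma_g^{\times}}$ in place of $|V|\geq n_{\Gamma_g}$ to guarantee $\dim{\rm triv}_{\Gamma_g^{\times}}(p)=d_{\Gamma_g^{\times}}$. The one cosmetic note is that in the $(3)\Leftrightarrow(4)$ step the relevant tool is the constant rank theorem rather than the inverse function theorem, but this does not affect the substance of the argument.
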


Let $K_{n_1,\dots, n_k}^k$ be the $k$-uniform complete hypergraph with $|V_i|=n_i$.
The generic $g$-birigidity matroid ${\cal M}_{g,n_1,\dots, n_k}^{\times}$ is defined on $E(K_{n_1,\dots, n_k}^k)$ whose independence is defined by the row independence in $\J f_{g,K_{n_1,\dots, n_k}^r}(p)$ at a generic $p$.
If $g$ is symmetric or anti-symmetric, ${\cal M}_{g,n_1,\dots, n_k}^{\times}$ is the restriction of the $g$-rigidity matroid ${\cal M}_{g,n}$ of $K_{n}^r$ to the edge set of $K_{n_1,\dots, n_k}^k$ by regarding 
$K_{n_1,\dots, n_k}^k$ as a subgraph of $K_n^k$ with $n=n_1+\dots+n_k$.

\subsection{Tensor Completion}
\label{subsec:tensor}
A primary example of $g$-birigidity occurs in the rectangular tensor completion problem.
\medskip

Let $W_1,\dots,W_k$ be a collection of finite dimensional vector spaces over $\mathbb{C}$. We denote $W_1\otimes \dots \otimes W_k$ for the set of all order $k$ tensors of dimensions $n_1=\dim(W_1),\dots,n_k=\dim(W_k)$.
We fix a basis for each $W_i$, and assume that each $T\in W_1\otimes \dots \otimes W_k$ is represented by a $k$-dimensional array of numbers in $\mathbb{C}$.
In particular, any tensor can be written as 
\begin{equation}\label{eq:tensor}
T =\sum_{i=1}^d  x_i^1\otimes x_i^2\otimes \dots \otimes x_i^k
\end{equation}
for some  vectors $x_i^j \in W_j$ and $\lambda_1,\dots, \lambda_r\in \mathbb{F}$.
The smallest possible $d$ for which we can write $T$ in the form of Equation (\ref{eq:tensor}) is called the {\it (CP) rank} of $T$. 

As in the case of matrix completion or symmetric tensor completion, we can ask the low rank tensor completion problem as the problem of filling the missing entries of a given partially-filled matrix. Let $V_i=[n_i]$ for $1\leq i\leq k$ and $V$ be the disjoint union of all $V_i$.
We can use a subset $E$ of $V_1 \times \dots \times V_k$ to represent the known entries in the tensor completion problem. In this manner, we encode the underlying combinatorics of each instance of the completion problem using a $k$-partite hypergraph $(V, E)$.

As in the symmetric tensor completion, the decomposition in Equation (\ref{eq:tensor}) can be converted to a form of an algebraic relation among points in $\mathbb{C}^d$.
This gives the following equivalent formulation of the tensor completion problem: Given a $k$-partite hypergraph $G=(V,E)$ with $E\subseteq V_1 \times \dots \times V_k$ and  $a_e\in \mathbb{C}$ for $e\in E$, find $p:V\rightarrow \mathbb{C}^r$ such that 
\begin{equation}\label{eq:system1_partite}
\mathbf{1}\cdot \bigodot_{v\in e} p(v)=a_e \qquad \text{for $e\in E$}.
\end{equation}
Observe that this equation is the same as Equation (\ref{eq:system1}), and we can recast the rectangular tensor completion problem as a special case of the symmetric tensor completion when the underlying hypergraph is  $k$-partite.

The unique completion problem can be formulated as the $g$-birigidity problem using the same polynomial map $g$ as that in the symmetric tensor case, that is, the sum of $d$ copies of the product function $h_{\rm prod}$. The key difference from the symmetric tensor case is the stabilizer $\Gamma_g^{\times}$.
When $k\geq 3$, 
$\Gamma_g^{\times}$ is the set of $k$-tuples $(D_1\Sigma , D_2 \Sigma, \dots, D_k \Sigma)$ such that each $D_i$ is a diagonal matrix of size $d$ with $\prod_{i=1}^k D_i=I_d$
and $\Sigma$ is a permutation matrix.
Then $d_{\Gamma_g^{\times}}=d(k-1)$ and 
$n_{\Gamma_g^{\times}}=1$.

The fact that $d_{\Gamma_g^{\times}}=d(k-1)$, in particular, implies that the rank of the $g$-birigidity matroid of~$K_{n_1,\dots, n_k}^r$ is at most $d(n_1+\dots +n_k)-d(r-1)$.
An obvious question is whether the equality always holds (equivalently, whether $K_{n_1,\dots, n_k}^r$ is locally $g$-birigid). 
This question is equivalent to determining the non-defectivity of Segre varieties.

Indeed, since $g=dh_{\rm prod}$ is the sum of $d$ copies of $h_{\rm prod}$,
 $\overline{{\rm im} f_{dh_{\rm prod},G}}$ is the $d$-secant of $\overline{{\rm im} f_{h_{\rm prod},G}}$.
When $G=K_{n_1,\dots, n_k}^k$,  $\overline{{\rm im} f_{h_{\rm prod},K_{n_1,\dots, n_k}^k}}$ is the affine cone of the Segre variety with parameter $(n_1,\dots, n_k)$ whose dimension is $n_1+\dots +n_k-(k-1)$.
Therefore, $K_{n_1,\dots, n_k}^k$ is locally $g$-birigid
if and only if $\overline{{\rm im} f_{h^{\rm prod},K_{n_1,\dots, n_k}^k}}$ is not $d$-defective.

Unlike the Veronese case, the non-defectivity problem of Segre varieties has not been fully resolved, and it remains an open question in algebraic geometry. Although there is a long-standing conjecture that aims to provide a complete answer, a conclusive proof is still lacking, see, e.g.~\cite{bernardi}.  

\section{Applications}
\label{sec:apps}

Our formulation of $g$-rigidity clarifies a connection between graph rigidity theory and the analysis of secant varieties. We believe this connection is fruitful, and we can obtain further results if we explore structures of specific varieties. As motivating evidence, we briefly provide two examples.

\subsection{Random projection of Secant varieties along coordinate axis}\label{subsec:random}
Let ${\cal V}$ be an affine variety of dimension $m$.
The Noether normalization theorem says that 
if ${\cal V}$ is projected to a random $t$-dimensional space
then with high probability the Zariski closure of the resulting set has dimension equal to $\min\{m,t\}$.
Hence $t=m$ is the threshold to preserve the dimension by a random projection.
As we have seen in Section~\ref{subsec:combinatorial}, this fact is no longer true if the projection is an orthogonal projection along the coordinate axis. 
The question we want to address here is to estimate the threshold dimension $t$ such that a random axis-aligned projection to a $t$-dimensional subspace preserves the dimension. 
We give an asymptotic answer to this question.

Although our strategy works in the full generality of $g$-rigidity, for simplicity of the description we shall focus on the case when ${\cal V}$ is the $d$-secant of the balanced Segre variety.
Indeed this is one of the most important cases in both theory and applications since it answers a question about the sampling probability of entries in the low-rank tensor completion problem to guarantee the unique recovery. 
It is an important problem in machine learning to identify the sampling complexity for the unique recovery of low-rank tensors, see, e.g.~\cite{barak,song}.

Let us recall the setting of Section~\ref{subsec:tensor}.
Let $K_{n,\dots, n}^k$ be the complete $k$-partite hypergraph with 
$|V_i|=n$,
and let $h_{\rm prod}:\mathbb{C}^k\rightarrow \mathbb{C}$ be the product function. Then ${\cal S}_n:=\overline{{\rm im} f_{h_{\rm prod},K_{n,\dots, n}^k}}$ is the affine cone of the Segre variety with balanced parameter $(n,\dots, n)$ whose dimension is $kn-(k-1)$.

We are interested in the dimension of a random axis-parallel projection of ${\rm Sect}_d({\cal S}_n)$ (recalling the definition of $d$-secant given in Equation (\ref{eqn:secant})).
Namely, we analyse the dimension of $\pi_G({\rm Sect}_d({\cal S}_n))$ for a random subgraph $G$ of $K_{n,\dots, n}^k$. 
We consider the Erdős-Rényi type random subgraph model $\mathbb{G}_{n,t}$, which is 
a probability distribution over the subgraphs of $K_{n,\dots, n}^r$
defined by selecting each  hyperedge with probability $t$ independently. 

Our main result in this subsection is the following.
\begin{theorem}\label{thm:random}
Let $n,k,d$ be positive integers with $n\geq d$, and $c$ be any number with $c>1$.
Let $\mathbb{G}_{n,t}$ be the random subgraph model of $K_{n,\dots, n}^k$, and $G\sim \mathbb{G}_{n,t}$.
If 
\[
1\geq t\geq \frac{kd^{k-1}\log (cdk^2 n)}{n^{k-1}},
\]
then with probability at least $1-\frac{1}{c}$, 
$G$ is locally $dh_{prod}$-birigid, i.e., 
the projection $\pi_G$ preserves the dimension of ${\rm Sect}_d({\cal S}_n)$.
\end{theorem}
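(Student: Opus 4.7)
The plan is to combine a multipartite variant of the packing theorem (Theorem~\ref{thm:packing}) with an Erd\H{o}s--R\'enyi-type concentration argument. Since $g = dh_{\rm prod}$ is the sum of $d$ copies of $h_{\rm prod}$, local $g$-birigidity of $G$ reduces to producing $d$ anchor sets $X_1, \ldots, X_d \subseteq V$ inside $G$ that jointly certify rigidity via the block-diagonal structure of $\J f_{g,G}(p)$ seen in Equation~(\ref{eq:Jacobian_block}).

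Anchor construction. Using $n \geq d$, I take pairwise disjoint $k$-partite anchors $X_1, \ldots, X_d$ with $|X_i \cap V_j| = a$ for every part $V_j$, where $a = a(n,k,d)$ is chosen so that $dka \leq n$, so that the complete $k$-partite hypergraph $K_{a,\ldots,a}^k$ is locally $h_{\rm prod}$-birigid, and so that the expected number of edges from a fixed $v \notin X_i$ to $X_i$, namely $ta^{k-1}$, is at least $\Omega(d\log(cdk^2n))$. The multipartite analogue of the sparsity condition (P3) holds automatically: when $k \geq 3$, two disjoint $k$-partite anchors cannot share a $(k-2)$-subset of any hyperedge, since each anchor contains at most $a$ vertices per part. (The borderline case $k = 2$ reduces to matrix completion and is covered by the classical result of Singer and Cucuringu~\cite{singer2010uniqueness}.)

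Probabilistic verification. It remains to show that, with probability at least $1 - 1/c$ over $G \sim \mathbb{G}_{n,t}$, (i) each $G[X_i]$ is locally $h_{\rm prod}$-birigid, and (ii) every vertex $v \in V \setminus X_i$ has at least $d$ hyperedges of $G$ to $X_i$ whose gradient vectors $\nabla h_{\rm prod}(p(e-v)) \in \mathbb{C}^d$ are linearly independent. The expected counts for both follow from the threshold $t \geq kd^{k-1}\log(cdk^2n)/n^{k-1}$; a Chernoff bound gives failure probability $O((cdk^2 n)^{-1})$ per pair $(v, X_i)$, and a union bound over the at most $kn$ vertices and $d$ anchors yields total failure probability at most $1/c$. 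Linear independence of the gradient vectors, conditional on their existence, follows from the non-degeneracy of $\nabla h_{\rm prod}$ established in the proof of Lemma~\ref{lem:extension_tensor}.

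Main obstacle. The chief subtlety is converting the edge-count estimate into a rank statement on the block Jacobian: it is not enough that $v$ have many hyperedges to $X_i$, one also needs the corresponding gradient vectors to span $\mathbb{C}^d$. The non-degeneracy of $\nabla h_{\rm prod}$, combined with the genericity of $p$, resolves this because generic points on a non-degenerate variety are in general linear position. A secondary technical step is to extend Theorem~\ref{thm:packing} cleanly from the symmetric to the multipartite setting, replacing ``locally $h$-rigid'' by ``locally $h$-birigid'' throughout and checking that the block-diagonalisation of the Jacobian in its proof still yields the correct rank $dkn - d(k-1)$ needed for birigidity.
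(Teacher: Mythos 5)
Your approach — the deterministic packing theorem (Theorem~\ref{thm:packing}) plus a Chernoff-and-union-bound argument on edge counts — is genuinely different from the paper's. The paper fixes the structured configuration $p$ sending every vertex in $V_{i,j}$ to the standard basis vector $e_j$, computes the full spectrum of the Gram matrix $\J f(p)^{\top}\J f(p)$ (Lemma~\ref{lem:singular}, where it block-diagonalises into scaled identities and adjacency matrices of complete multipartite graphs), and then applies a Matrix Chernoff inequality (Theorem~\ref{thm:chernoff}) to the random sum $\sum_{e\in E(G)} R_e^{\top}R_e$. A single lower bound on $\lambda_{\min}$ controls everything; there is no need to verify local conditions vertex-by-vertex.

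There are concrete gaps in your route. First, hypothesis (P2) of Theorem~\ref{thm:packing} requires each $G[X_i]$ to be locally $h_{\rm prod}$-birigid, which for $s=1$ means the edge--vertex incidence matrix of the \emph{random} subhypergraph $G[X_i]$ has corank exactly $k-1$. This is a nontrivial probabilistic statement — essentially the $d=1$ base case of the theorem — and you neither prove it nor cite a result that gives it; an expected-edge-count argument does not yield a rank bound on its own. Second, your condition (ii) confuses the two levels of the decomposition: stability for (P1) is a statement about $\nabla h_{\rm prod}$, which maps into $\mathbb{C}^1$ (not $\mathbb{C}^d$), so ``$d$ linearly independent gradient vectors'' is a category error; you have silently replaced $\nabla h$ by $\nabla g$ and thereby lost exactly the reduction to the $h$-level matroid that makes the packing theorem bite. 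Third, $F_i=N_G(E_G[X_i])$ is itself random: a hyperedge $e$ with one vertex outside $X_i$ belongs to $F_i$ only if some swap $e-u+w$ lies in $E(G)$ as well, a conditioning your edge-count analysis ignores. Finally, Theorem~\ref{thm:packing} is proved in the paper only for the symmetric/anti-symmetric $k$-uniform setting; porting it to the $k$-partite birigidity setting (stabiliser of dimension $d(k-1)$ rather than $d_{\Gamma_g}$, different degree-of-freedom bookkeeping) is not a cosmetic change and would need to be carried out before the rest of the argument could start.
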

Theorem~\ref{thm:random} implies that,
among $n^k$ hypereges in $K_{n,\dots, n}^k$, only $O(n\log n )$ hyperedges are enough to make a random sub-hypergraph  $dh_{prod}$-birigid.
This bound is asymptotically tight (in $n$)
since $G$ has an isolated vertex with high probability if 
$t< \frac{\log n}{n^{k-1}}$.

We prove Theorem~\ref{thm:random} by adapting the argument in \cite{JT22} for proving a corresponding statement for Euclidean rigidity.
The argument in \cite{JT22} is based on the following Matrix Chernoff bound.
For a non-zero positive semidefinite matrix $X$, $\lambda_{\max}(X)$ denotes the largest eigenvalue and $\lambda_{\min}(X)$ denotes the smallest non-zero eigenvalue.

\begin{theorem}\label{thm:chernoff}
Let $E$ be a finite set, $c\geq 1$, and
$X_e$ be a positive semidefinite $m\times m$ matrix indexed by $e$ for each $e\in E$.
Let $r = \rank \sum_{e\in E} X_e$
and $h = \max_{e\in E}{\lambda_{\max}(X_e)}$.
If 
\[
1\geq t\geq  \frac{h\log (rc)}{\lambda_{\min} (\sum_{e\in E} X_e)},
\]
then  for  a subset $E(t)$ of $E$ obtained by taking each $e\in E$ independently with probability $t$, 
we have 
$\rank \sum_{e\in E(t)} X_e = r$ with probability at least $1-\frac{1}{c}$.
\end{theorem}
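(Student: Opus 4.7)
The plan is to apply Tropp's Matrix Chernoff lower-tail inequality to the independent positive semidefinite random matrices $Y_e := \xi_e X_e$, where the $\xi_e$ are independent Bernoulli$(t)$ indicators encoding membership in $E(t)$. Two preliminary observations set this up. First, since the sum $\sum_{e \in E} X_e$ has rank $r$ (possibly less than the ambient dimension $m$), we restrict attention to its range, on which the sum is strictly positive definite. Second, the event we want to control, $\rank \sum_{e \in E(t)} X_e < r$, corresponds to $\lambda_{\min} = 0$ on that range, so we will use the lower-tail bound at the boundary value $\delta = 1$.

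For the restriction, let $V = \operatorname{range}(\sum_e X_e)$, an $r$-dimensional subspace of $\mathbb{F}^m$. Because each $X_e$ is positive semidefinite, any $v \in \ker(\sum_e X_e)$ satisfies $0 = v^\top (\sum_e X_e) v = \sum_e v^\top X_e v$, so each summand vanishes and $X_e v = 0$. Thus $\operatorname{range}(X_e) \subseteq V$ for every $e$, and the restrictions $X_e|_V$ are well-defined PSD operators on $V$ whose sum is invertible on $V$ with smallest eigenvalue equal to $\lambda_{\min}(\sum_e X_e)$ (the smallest nonzero eigenvalue in the ambient space). Moreover, $\rank \sum_{e \in E(t)} X_e = r$ if and only if $\sum_e \xi_e X_e|_V$ is invertible on $V$.

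Tropp's Matrix Chernoff lower-tail inequality asserts that, for independent PSD random matrices $Y_e$ on an $r$-dimensional space with $\lambda_{\max}(Y_e) \le L$ almost surely and $\mu_{\min} := \lambda_{\min}(\mathbb{E} \sum_e Y_e)$,
\[
\Pr\left[\lambda_{\min}\left(\sum_e Y_e\right) \le (1-\delta)\mu_{\min}\right] \le r \cdot \left[\frac{e^{-\delta}}{(1-\delta)^{1-\delta}}\right]^{\mu_{\min}/L}, \qquad 0 \le \delta \le 1.
\]
Applying this with $Y_e = \xi_e X_e|_V$, $L = h$, and $\mu_{\min} = t \cdot \lambda_{\min}(\sum_e X_e)$, at the boundary $\delta = 1$ (where $(1-\delta)^{1-\delta} \to 1$ and the left-hand event collapses to $\lambda_{\min} = 0$), gives
\[
\Pr\left[\rank \sum_{e \in E(t)} X_e < r\right] \le r \cdot \exp\!\left(-\frac{t\, \lambda_{\min}(\sum_e X_e)}{h}\right).
\]
The hypothesis $t \ge h \log(rc)/\lambda_{\min}(\sum_e X_e)$ is exactly what is needed to make the right-hand side at most $1/c$, yielding the claimed probability bound.

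The argument is essentially a direct invocation of a standard concentration inequality, so no deep technical obstacle is expected. The only subtleties are the reduction to the range $V$, which aligns the statement's convention of "smallest nonzero eigenvalue" with the hypotheses of Tropp's theorem, and the correct interpretation of the $\delta = 1$ boundary of the lower-tail bound (using the convention $0^0 = 1$, or equivalently a continuity argument as $\delta \uparrow 1$). Both are routine.
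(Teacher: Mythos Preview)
Your proposal is correct and follows exactly the approach the paper indicates: the paper does not give its own proof but states that the theorem ``is an adaptation of the Matrix Chernoff bound (see, e.g.,~\cite{tropp}), and a detailed exposition of this form can be seen in \cite{JT22}.'' Your reduction to the range $V$ (to reconcile the ``smallest nonzero eigenvalue'' convention with Tropp's hypotheses) and the evaluation of the lower-tail bound at $\delta=1$ are the standard steps in that adaptation, and the algebra matches the stated threshold precisely.
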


Theorem~\ref{thm:chernoff} is an adaptation of the Matrix Chernoff bound (see, e.g.,~\cite{tropp}), and a detailed exposition of this form can be seen in \cite{JT22}.

Theorem~\ref{thm:random} is a direct corollary of Theorem~\ref{thm:chernoff} and the following lemma concerning the smallest nonzero singular value of the Jacobian of the measurement map.
The following lemma is based on a discussion with Kota Nakagawa.
The proof can be considered as a quantitative extension of the proof of Theorem~\ref{thm:packing}.
\begin{lemma}\label{lem:singular}
Let $n, d, k$ be positive integers with $k\geq 3$ and $n\geq d$,
and denote $V=V(K_{n,\dots, n}^k)$
and $f=f_{dh_{\rm prod}, K_{n,\dots, n}^k}$.
Then there exists $p:V\rightarrow \mathbb{R}^d$ such that
\begin{itemize}
\item[{\rm (i)}] $\rank \J f(p)=d(kn-(k-1))$,
\item[{\rm (ii)}] $\lambda_{\min}(\J f(p)^{\top} \J f(p))\geq \frac{n^{k-1}}{d^{k-1}}$, and
\item[{\rm (iii)}] each row of $\J f(p)$ has Euclidean norm at most $\sqrt{k}$ as a vector.
\end{itemize}
\end{lemma}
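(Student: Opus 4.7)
My plan is to build $p$ explicitly using the packing construction from the proof of Theorem~\ref{thm:packing}. Fix a balanced partition of each part $V_l$ of $K_{n,\ldots,n}^k$ into $d$ subsets $V_l = X_1^l \sqcup \cdots \sqcup X_d^l$, each of size $\lfloor n/d\rfloor$ or $\lceil n/d\rceil$ (all nonempty because $n\geq d$), set $X_i := \bigsqcup_l X_i^l$, and put $p(v)_i = 1$ for $v\in X_i$ and $p(v)_i = 0$ otherwise. I will then read off (i)--(iii) from the sparsity pattern of the resulting Jacobian.

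The key observation is that, since $\nabla h_{\rm prod}$ is coordinate-wise multiplicative and $p(v)_i = 0$ outside $X_i$, the entry $\partial (f_{e})/\partial p(v)_i$ vanishes unless $v\in e$ and every other vertex of $e$ lies in $X_i$, in which case it equals $1$. Consequently $\J f(p) = J_1\oplus\cdots\oplus J_d$, where each $J_i$ is a $0/1$ matrix whose nonzero rows are indexed by the edges containing at least $k-1$ vertices of $X_i$. Since the $X_i$'s are pairwise disjoint, for each $v\in e$ at most one index $i$ satisfies $e\setminus\{v\}\subseteq X_i$, so every row of $\J f(p)$ has at most $k$ ones, which immediately gives (iii). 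For (i), restrict $J_i$ to its $X_i$-columns and to the rows of edges internal to $X_i$: this submatrix is the Jacobian of $f_{h_{\rm prod},K^k_{|X_i^1|,\ldots,|X_i^k|}}$ at the all-ones configuration, whose right kernel (by a short direct argument, fixing $k-1$ of the $k$ parts and varying the last) is the $(k-1)$-dimensional space of functions constant on each part whose part-values sum to zero. Hence this block has rank $\sum_l|X_i^l|-(k-1)$, consistent with the non-defectivity of the balanced Segre variety. Each outside vertex $v\in V\setminus X_i$ is incident in $J_i$ to $\prod_{l\neq s}|X_i^l|\geq 1$ edges on which $\nabla h_{\rm prod}$ evaluates to $1$, hence is stable; iterating Lemma~\ref{lem:extension} over the outside vertices adds one independent row per vertex, giving $\rank J_i = kn-(k-1)$ and $\rank \J f(p) = d(kn-(k-1))$.

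For (ii), a direct count shows that $(J_i^\top J_i)_{v,v} = \prod_{l\neq s}|X_i^l|$ when $v\in V_s$, that $(J_i^\top J_i)_{v,v'} = \prod_{l\neq s,s'}|X_i^l|$ when $v\in V_s\cap X_i$ and $v'\in V_{s'}\cap X_i$ sit in distinct parts, and $0$ otherwise. Thus $J_i^\top J_i$ decomposes as a principal block $M_i$ on $X_i$ plus a strictly positive diagonal block on $V\setminus X_i$. In the balanced case $|X_i^l| = x$, $M_i$ takes the clean Kronecker form $x^{k-1}I_{kx} + x^{k-2}(J_k - I_k)\otimes J_x$; diagonalising along $\mathbf{1}_k$ versus $\mathbf{1}_k^\perp$ and $\mathbf{1}_x$ versus $\mathbf{1}_x^\perp$, and using the eigenvalues $k-1,-1$ of $J_k-I_k$ and $x,0$ of $J_x$, yields the spectrum $\{0^{(k-1)},\,(x^{k-1})^{(k(x-1))},\,k x^{k-1}\}$ of $M_i$. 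The $k-1$ zero eigenvalues are precisely the trivial $h_{\rm prod}$-motions (matching $d_{\Gamma_{h_{\rm prod}}^\times}=k-1$), so the smallest nonzero eigenvalue of $M_i$ is $x^{k-1}=(n/d)^{k-1}$; the outside diagonal block contributes entries of the same size, yielding (ii).

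The hard part will be the unbalanced case $d\nmid n$, where $|X_i^l|$ takes two consecutive values and $M_i$ loses its clean Kronecker factorisation. I expect to handle this either by a Loewner-type comparison of $M_i$ against its balanced counterpart built from the smaller cell size $\lfloor n/d\rfloor$, or, more robustly, by rescaling $p(v)_i\mapsto\beta_i p(v)_i$ on the smaller blocks by a bounded factor $\beta_i\leq\sqrt{n/(d\lfloor n/d\rfloor)}$ in order to restore the exact lower bound $(n/d)^{k-1}$; any resulting multiplicative loss in (iii) is a bounded constant depending only on $k$ that should fit comfortably within the hypotheses of the Matrix Chernoff step used to derive Theorem~\ref{thm:random}.
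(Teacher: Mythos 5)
Your construction of $p$, the $0/1$ sparsity analysis of the Jacobian, the block-diagonalisation of $\J f(p)^{\top}\J f(p)$, and the eigenvalue computation of the principal block $M_i$ are exactly those of the paper's proof; your explicit Kronecker diagonalisation $M_i=x^{k-1}I+x^{k-2}(J_k-I_k)\otimes J_x$ is simply the unpacked form of the spectral-graph-theory fact the paper cites for $A_{K_{n/d,\dots,n/d}}$. The only genuine variant is that you derive (i) separately, from the $(k-1)$-dimensional kernel of the complete $k$-partite incidence matrix together with the stability/extension argument (essentially re-using the Claim~\ref{claim:block} mechanism from the proof of Theorem~\ref{thm:packing}), rather than reading the rank off the eigenvalue multiplicities; both routes are sound and equivalent in substance. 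On the $d\nmid n$ case: the paper's own proof assumes $d\mid n$ ``for simplicity of exposition,'' so this loose end is shared by the source. Of your two proposed fixes, beware that a Loewner comparison against a balanced block of cell size $\lfloor n/d\rfloor$ has a dimension mismatch (the two matrices are not the same size), and in fact a direct computation shows that with the unrescaled $0/1$ configuration and a per-coordinate balanced partition one only gets $\lambda_{\min}\geq\lfloor n/d\rfloor^{k-1}$, strictly below the stated $(n/d)^{k-1}$; but your rescaling idea is correct: multiplying the coordinates on the $\lfloor n/d\rfloor$-sized cells by $\beta=\sqrt{n/(d\lfloor n/d\rfloor)}<\sqrt{2}$ scales the corresponding Jacobian rows by $\beta^{k-1}<2^{(k-1)/2}$, which restores (ii) exactly while worsening (iii) only by a $k$-dependent constant that is absorbed harmlessly in the Matrix Chernoff step of Theorem~\ref{thm:random}.
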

\begin{proof}
For simplicity of exposition, we assume that $n$ is an integer multiple of $d$. 
Since $K_{n,\dots, n}^k$ consists of $k$ disjoint vertex sets of size $n$, we denote them by $V_1, V_2,\dots, V_k$.
For $i\in [k]$, we partition $V_i$ into $d$ sets $V_{i,j} \ (j=1,\dots, d)$ of size $n/d$.
We define $p:V\rightarrow \mathbb{R}^d$ by,
\[
p(v)=
e_j \qquad \text{for  $v\in V_{i,j}$ with $i\in [k]$ and $j\in [n]$},
\]
where $e_j$ is the unit vector in $\mathbb{R}^d$ whose $i$-th entry is one and other entries are zero.
We show the properties of the statement for $\J f(p)$.

The idea of the proof is the same as that of Theorem~\ref{thm:packing}.
Since the definition of $p$ above is very special, the entries of $\J f(p)$ is well structured as in Claim~\ref{claim:decomp} and 
we will see a block-diagonalized form in $\J f(p)^{\top}\J f(p)$ using the structures of the entries.
Then we will analyse each block by applying a known fact from spectral graph theory.

For such an analysis, we first partition a subset of $E(K_{n\dots, n}^k)$ into $2d$ subsets as follows: for $j=1,\dots, d$,
\begin{align*}
A_j&:=\{e=(v_1,v_2,\dots, v_k)\in E(K_{n\dots, n}^k): v_i\in V_{i,j}\ \text{for every $i\in [k]$}\}\\
B_j&:=\{e=(v_1,v_2,\dots, v_k)\in E(K_{n\dots, n}^k): v_i\in V_{i,j}\ \text{for exactly $k-1$ indices $i$ in $[k]$}\}.
\end{align*}
Note that all sets $A_j$ and $B_j$ are disjoint. 

As in the proof of Theorem~\ref{thm:packing}, we use $b(e,v,k)$ to denote the entry of $\J f(p)$ indexed by a hyperedge $e$, vertex $v\in V$, and a coordinate $k\in [d]$.
\begin{claim}\label{claim:decomp2}
Each entry of $\J f(\bp)$ is either $0$ or $1$.
For $e\in E(K_{n\dots,n}^k)$, $v\in V$, and $l\in [d]$,
$b(e,v,l)=1$ holds if and only if either one of the followings holds:
\begin{itemize}
\item[{\rm (i)}] $e\in A_j$ for some $j\in [d]$, $v\in e\cap V_{i,j}$ for some $i\in [k]$, and $l=j$;
\item[{\rm (ii)}] $e\in B_j$ for some $j\in [d]$, $v\in e\cap V_{i,j'}$ for some $i\in [k]$ and $j'\in [d]\setminus \{j\}$, and $l=j$.
\end{itemize}
\end{claim}
\begin{proof}
By definition of $\J f(p)$, 
the entry $b(e,v,l)$ 
is 
the product of the $l$-th coordinates of $p(u)$ over all $u\in e\setminus \{v\}$ if $v\in e$,
and zero otherwise.
Since each coordinate of $p(v)$ is either one or zero, 
$b(e,v,l)$ is either one or zero.

The latter claim can be checked similarly by using the definition of $p$.
\end{proof}

Recall that each row of $\J f(p)$ is indexed by a hyperedge
and each column of $\J f(p)$ is indexed by a pair $(v,j)$
of $v\in V$ and $j\in [d]$,
and hence each entry of $\J f(p)^{\top} \J f(p)$ is indexed by a pair $((u,j),(v,j'))$ of such pairs.
For $j=1,\dots, d$, let 
\begin{align}
I_j&=\{(v,j): v\in V_{i,j} \text{ for some } i\in [k]\}, \mbox{ and} \\
I_j'&=\{(v,j): v\in V_i\setminus V_{i,j} \text{ for some } i\in [k]\}.
\end{align}
Note that $I_j, I_j'\ (j=1,\dots, d)$ are mutually disjoint and they partition $V\times [d]$.
Also, $|I_j|=k\frac{n}{d}$
and $|I_j'|=k(n-\frac{n}{d})$.

\begin{claim}\label{claim:entries2}
$\J f(p)^{\top} \J f(p)$ has the following block-diagonalized form:
\[
\J f(p)^{\top} \J f(p)=
\kbordermatrix{
& I_1 & I_1'  & \dots & I_k & I_k' \\
 I_1 & Q_1 & 0 & \dots   & \dots & 0 \\ 
 I_1' & 0 & R_1 &  0 & \dots  & 0 \\
 \vdots & \vdots & & \ddots &  & \vdots  \\
 I_k & 0 & & &  Q_k & 0 \\
 I_k' & 0 & \dots & &  0 & R_k 
}.
\]
Moreover, in $Q_j$ (resp. $R_j$), the value of the entry indexed by $((u,j), (v,j))$ is equal to the number of hyperedges in $A_j$ (resp. $B_j$) that contain $u$ and $v$.
\end{claim}
\begin{proof}
For each $e\in E(K_{n,\dots n}^k)$, let $R_e$ be the row vector of $\J f(p)$ associated with $e$,
and let $X_e=R_e^{\top} R_e$ by regarding $R_e$ as a matrix of size $1\times dn$.
Note that $\J f(p)^{\top} \J f(p)=\sum_{e\in E(K_{n,\dots n}^k)} X_e$.
Hence, it is enough to check the contribution of each $X_e$ to each entry of $\J f(p)^{\top} \J f(p)$.

Claim~\ref{claim:decomp2} implies that $X_e$ is a $0-1$ matrix.
Claim~\ref{claim:decomp2}(i) further implies that, if $e=(v_1, v_2,\dots, v_k)\in A_j$, 
then $R_e$ has nonzero entries only at the coordinates indexed by $(v_1,j), (v_2, j), \dots, (v_k,j)$.
Hence, $X_e$ contributes to increase 
the entry indexed by $((u,j),(v,j))$ for each $u,v\in e$ by one.
This is the reason why, in $Q_j$ the value of the entry indexed by $((u,j), (v,j))$ is equal to the number of hyperedges in $A_j$ that contain both $u$ and $v$.

A similar argument using Claim~\ref{claim:decomp2}(ii) also provides the entries of $R_j$.
\end{proof}

Observe that, for $(u,j), (v,j)\in I'_j$, 
the number of hyperedges in $B_j$ that contain $u$ and $v$ 
is $\left(\frac{n}{d}\right)^{k-1}$ if $u=v$,
and it is zero otherwise (i.e., when $u\neq v$).
So Claim~\ref{claim:entries2} implies that
\[
R_j = \frac{n^{k-1}}{d^{k-1}} I
\]
where $I$ denotes the identity matrix (of an appropriate size).
Hence, by the block-diagonal form in Claim~\ref{claim:entries2}, in order to show that $\rank \J f(p)=d(kn-(k-1))$ and 
$\lambda_{\min}(\J f(p)^{\top} \J f(p))\geq \frac{n^{k-1}}{d^{k-1}}$,
it suffices to show that 
$\dim \ker  Q_j= k-1$ and 
$\lambda_{\min}(Q_j)\geq \frac{n^{k-1}}{d^{k-1}}$ for each $j=1,\dots, d$.

Now we analyse $Q_j$ based on Claim~\ref{claim:entries2}.
By counting the number of hyperedges in $A_j$ that contain two vertices $u$ and $v$, we see that 
$Q_j$ is equal to the adjacency matrix of the edge-weighted complete $k$-partite graph on $V_{1,j}\cup V_{2,j}\cup \dots \cup V_{k,j}$ with a loop at each vertex, where the edge-weight $w$ is defined by
\[
w(uv) = 
\begin{cases}
\left(\frac{n}{d}\right)^{k-1} & (u=v \in V_{i,j} \text{ for some } i\in [k]) \\
\left(\frac{n}{d}\right)^{k-2} &  (u\in V_{i,j}, v\in V_{i',j} \text{ for some } i, i'\in [k], i\neq j) 
\end{cases}
\]
for each edge $uv$.
In other words,
\[
Q_j=\left(\frac{n}{d}\right)^{k-2} A_{K_{n/d,\dots, n/d}}+ \left(\frac{n}{d}\right)^{k-1} I
\]
where $A_{K_{n,\dots, n}}$ is the adjacency matrix of 
the (non-weighted) complete $k$-partite graph with $n/d$ vertices on each side and $I$ denotes the identity matrix of size $\frac{kn}{d}$.
It is a well-known fact~\cite{brouwer} from spectral graph theory that the spectrum of $A_{K_{n/d,\dots, n/d}}$
is $[(\frac{kn}{d}-\frac{n}{d})^1, 0^{k(\frac{n}{d}-1)}, (-\frac{n}{d})^{k-1}]$, where
$a^b$ means $a$ is an eigenvalue of multiplicity $b$.
So the spectrum of $Q_j$ is 
$[\left(k\frac{n^{k-1}}{d^{k-1}}\right)^1, \left(\frac{n^{k-1}}{d^{k-1}}\right)^{k(n-1)}, (0)^{k-1}]$.
Thus, $\dim \ker  Q_j=k-1$ and $\lambda_{\min}(Q_j)=\frac{n^{k-1}}{d^{k-1}}$.
This completes the proof.
\end{proof}

\begin{proof}[\bf{Proof of Theorem~\ref{thm:random}}]
Pick $p:V\rightarrow \mathbb{R}^d$ given in Lemma~\ref{lem:singular},
and for simplicity let us denote $R=\J f_{dh_{\rm prod}, K_{n,\dots, n}^k}(p)$.
Recall that
each row in $R$ is associated with an edge in $K_{n,\dots, n}^k$.
For each $e\in E(K_{n,\dots,n}^k)$, let 
$R_e$ be the row vector of $R$ associated with $e$.
By regarding each $R_e$ as a matrix of size $1\times dn$, let $X_e=R_e^{\top} R_e$.
Then $R^{\top}R=\sum_{e\in E(K_{n,\dots,n}^k)} X_e$.

Let $G\sim \mathbb{G}_{n,t}$ be a random subgraph,
and denote $R_G=\J f_{dh_{\rm prod}, G}(p)$.
Then $R_G^{\top}R_G=\sum_{e\in E(G)} X_e$,
and one can apply Theorem~\ref{thm:chernoff} to get a lower bound of $t$ for $G\sim \mathbb{G}_{n,t}$ to have $\rank R_G^{\top} R_G=\rank R^{\top}R$.
Specifically, since $\lambda_{\min}(R^{\top} R)\geq \frac{n^{k-1}}{d^{k-1}}$
by Lemma~\ref{lem:singular}(ii)
and $\lambda_{\max}(X_e)=k$ by Lemma~\ref{lem:singular}(iii),
Theorem~\ref{thm:chernoff} ensures that $\rank R_G= \rank R$ holds
when
\[
1\geq t\geq \frac{kd^{k-1} \log (cdk(kn-(k-1)))}{n^{k-1}}.
\]
Since $\rank R_G = \rank R = d(kn-(k-1))$,
the local  $dh_{\rm prod}$-birigidity of $G$ follows.
\end{proof}

It is an open problem to establish the global rigidity counterpart of Theorem~\ref{thm:random}.
For Euclidean rigidity, there are various methods available to establish a connection between local rigidity and global rigidity. Extending these methods would be a fruitful direction for further research.

\subsection{Identifiability of projections of $p$-Cayley-Menger varieties and $\ell_p$-norm global rigidity}
In Section~\ref{sec:examples}, we briefly looked at graph rigidity in $\ell_p$-space, where 
instead of conventional Euclidean distances 
we are concerned with $\ell_p$-distances between points in a real vector space.
An extensive study has been done for $\ell_p$ local rigidity,
but $\ell_p$ global rigidity is not yet well understood~\cite{Dewar23,Dewar22gen}.
A major open problem in this context is whether global rigidity in $\ell_p$-space is a generic property of graphs as in the case of Euclidean global rigidity by Gortler-Hearly-Thurston~\cite{GHT10},
see a discussion in Section~\ref{sec:global}.

The difficulty to extend techniques in Euclidean global rigidity is the lack of an obvious counterpart definition of stress matrices.
The theory of stress matrices, developed by Connelly~\cite{Con93,Con05}, is currently the central tool in Euclidean global rigidity analysis, and it heavily relies on the fact that the squared Euclidean distance is quadratic.
Interestingly, $t$-tangentially weakly non-defectiveness discussed in Section~\ref{subsec:identifiability} provides a counterpart tool.

To see this, we shall introduce the $\ell_p$-analogue of the Cayley-Menger variety.
For a positive integer $p$, 
define $h_p:\mathbb{C}^2\rightarrow \mathbb{C}$ by 
\[
h_p(x_1,x_2)=(x_1-x_2)^p.
\]
When $p=2$ and the domain is real, $f_{h_2,K_n}(x)$ is
the list of square Euclidean distances among $n$ points $x=(x_1,x_2, \dots, x_n)$ on a line,
and the image of $f_{h_2, K_n}$ is called the {\em Cayley-Menger variety}.
Motivated by this, we call the image of $f_{h_p, K_n}$ the {\em $p$-Cayley-Menger variety} ${\cal CM}_n^p$.

Let $g_p$ be the sum of $d$ copies of $h_p$.
If $p$ is a positive even integer and the domain is restricted to real numbers, then $f_{g_p, K_n}$ is the measurement map that outputs the list of $p$-th powered $\ell_p$-distances in $d$-dimensional real vector space.
Hence, the global $g_p$-rigidity of a $d$-dimensional framework $(G,p)$ in our terminology is equivalent to  the global rigidity of $(G,p)$ in $d$-dimensional $\ell_p$-space.

Since $g_p$ is the sum of $d$ copies of $h_p$,
one may try to apply Corollary~\ref{cor:cov} to show the global $g_p$-rigidity of generic frameworks.
To apply Corollary~\ref{cor:cov}, we need to understand when 
$\overline{\pi_G({\cal CM}_n^p)}$ is $d$-tangentially weakly nondefective
and when $G$ is globally $h_p$-rigid.
It is a folklore fact that global rigidity in 1-dimensional $\ell_p$-space is characterised by the 2-connectivity of the underlying graph,
which in turn implies that $G$ is globally $h_p$-rigid if and only if $G$ is 2-connected.
In \cite{sugiyama}, Sugiyama and the fourth author 
further gave a graph theoretical characterisation of the $2$-tangentially weak non-defectiveness of $\overline{\pi_G({\cal CM}_n^p)}$.

\begin{theorem}[Sugiyama and Tanigawa~\cite{sugiyama}]\label{thm:Lp}
Let $p$ be an even positive integer with $p\neq 2$, $n\geq 3$ a positive integer, and $G$ a connected graph with $n$ vertices.
Then the following are equivalent:
\begin{itemize}
\item[{\rm (i)}] Some/every generic $2$-dimensional framework of $G$ is globally rigid in the $\ell_p$-plane.
\item[{\rm (ii)}] $\overline{\pi_G({\cal CM}^p_n)}$ is $2$-identifiable.
\item[{\rm (iii)}] $\overline{\pi_G({\cal CM}^p_n)}$ is $2$-tangentially weakly nondefective.
\item[{\rm (iv)}] $G$ is $2$-connected and $G-e$ contains two edge-disjoint spanning trees for every $e\in E(G)$.
\item[{\rm (v)}] $G$ is $2$-connected and $G-e$ is locally rigid in the $\ell_p$-plane for every $e\in E(G)$.
\end{itemize}
\end{theorem}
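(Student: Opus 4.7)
The plan is to close the cycle $\mathrm{(i)} \Rightarrow \mathrm{(v)} \Leftrightarrow \mathrm{(iv)} \Rightarrow \mathrm{(iii)} \Rightarrow \mathrm{(ii)} \Rightarrow \mathrm{(i)}$, with the $2$-connectivity contained in (iv) and (v) propagating forward through the chain. The equivalence $\mathrm{(iv)} \Leftrightarrow \mathrm{(v)}$ is purely combinatorial: Kitson--Power \cite{KP14} identifies the generic $\ell_p$-rigidity matroid in the plane with the $(2,2)$-sparsity matroid on simple graphs, so local $\ell_p$-rigidity of $G-e$ is equivalent to $G-e$ containing a spanning $(2,2)$-tight subgraph, which by Nash-Williams--Tutte is equivalent to $G-e$ decomposing into two edge-disjoint spanning trees. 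For $\mathrm{(i)} \Rightarrow \mathrm{(v)}$, since $p$ is a positive even integer the $\ell_p$-norm is analytic, and Dewar's extension of Hendrickson's theorem \cite{Dewar23} immediately yields $2$-connectivity and redundant local rigidity. The implication $\mathrm{(iii)} \Rightarrow \mathrm{(ii)}$ is Theorem~\ref{thm:twd} applied to $\mathcal{V} = \overline{\pi_G(\mathcal{CM}^p_n)}$. Finally $\mathrm{(ii)} \Rightarrow \mathrm{(i)}$ follows from Proposition~\ref{prop:identifiability_global} applied with $h = h_p$ and $t = d = 2$, using that for $p$ even the measurements $(x_i-x_j)^p$ determine $|x_i-x_j|$, so global $h_p$-rigidity over $\mathbb{R}$ coincides with $1$-dimensional Euclidean global rigidity and hence with $2$-connectivity (which is supplied by the cycle); the transfer from $\mathbb{C}$ to $\mathbb{R}$ is Proposition~\ref{prop:complex_to_real}.

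The main obstacle is the implication $\mathrm{(iv)} \Rightarrow \mathrm{(iii)}$, which converts a combinatorial hypothesis into an algebraic non-defectivity statement about $\overline{\pi_G(\mathcal{CM}^p_n)}$. The starting point is that for an edge $\{i,j\}$ the corresponding row of $\J f_{h_p,G}(x)$ equals $p(x_i-x_j)^{p-1}(\mathbf{e}_i-\mathbf{e}_j)^\top$, so $\J f_{h_p,G}(x)$ is a signed incidence matrix of $G$ with edge weights $p(x_i-x_j)^{p-1}$, and a left null-vector $\omega$ is a ``$p$-stress'' on $G$. For generic $x^{(1)}, x^{(2)} \in \mathbb{C}^n$ the $2$-tangential contact locus is
\[
\mathcal{C} = \overline{\{y : \ker \J f_{h_p,G}(x^{(1)})^\top \cap \ker \J f_{h_p,G}(x^{(2)})^\top \subseteq \ker \J f_{h_p,G}(y)^\top\}},
\]
and we must show that the component of $\mathcal{C}$ containing each $x^{(i)}$ is one-dimensional (just the scaling direction forced by the homogeneity of $h_p$). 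The approach is to use the two edge-disjoint spanning trees in $G-e$ as a ``bi-basis'' for the stress space: varying $e$ over all edges of $G$ produces a family of independent common stresses, and differentiating the containment condition at $y = x^{(i)}$ yields a linear system on $y$ whose solution space should collapse to the scaling direction.

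The key technical difficulty will be the genericity analysis, namely verifying that for generic $x^{(1)}, x^{(2)}$ the common stresses are rich enough that the resulting equations on $y$ are non-degenerate. My plan is an ear-decomposition argument compatible with the two spanning trees of $G-e$, tracking how the common stress space grows as each ear is attached and using the redundancy over all choices of $e$ to control the contribution of individual edges. A delicate point is that the case $p = 2$ is excluded precisely because $w \mapsto w^{p-1}$ is linear, which is exactly the mechanism making the classical Cayley-Menger variety $2$-tangentially weakly defective; for $p \neq 2$ the strict non-linearity of the weight function must be exploited and should manifest as a non-vanishing determinant in the local computation at $x^{(i)}$. Controlling this determinant combinatorially using the spanning-tree hypothesis is the crux of the argument.
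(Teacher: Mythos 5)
Your cycle of implications matches the one outlined in the paper directly after Theorem~\ref{thm:Lp}, and the peripheral steps are handled correctly: $\mathrm{(i)}\Rightarrow\mathrm{(v)}$ via Dewar's analytic Hendrickson analogue, $\mathrm{(v)}\Leftrightarrow\mathrm{(iv)}$ via Kitson--Power plus Nash-Williams--Tutte, $\mathrm{(iii)}\Rightarrow\mathrm{(ii)}$ via Theorem~\ref{thm:twd}, and $\mathrm{(ii)}\Rightarrow\mathrm{(i)}$ via Proposition~\ref{prop:identifiability_global}. On this last step be careful: to invoke Proposition~\ref{prop:identifiability_global} over $\mathbb{C}$ and then descend to $\mathbb{R}$ by Proposition~\ref{prop:complex_to_real} you need global $h_p$-rigidity of $G$ over $\mathbb{C}$, whereas the ``$(x_i-x_j)^p$ determines $|x_i-x_j|$'' observation only establishes it over $\mathbb{R}$; over $\mathbb{C}$ each measurement only pins down $x_i-x_j$ up to a $p$-th root of unity, and one must argue that $2$-connectivity forces a single common root of unity across all edges. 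This is true, but it needs an argument.

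The decisive gap is $\mathrm{(iv)}\Rightarrow\mathrm{(iii)}$, which you explicitly leave as a plan rather than a proof, and the plan as stated is unlikely to succeed. You propose a generic ear-decomposition induction, but the class defined by $\mathrm{(iv)}$ --- $2$-connected graphs with $G-e$ containing two edge-disjoint spanning trees for every edge $e$ --- is not closed under ear addition: the base case of an ear decomposition is a cycle, which fails $\mathrm{(iv)}$ since deleting any edge leaves a single spanning tree, and attaching a long ear creates degree-$2$ vertices that destroy the redundant double-tree property. So the intermediate graphs of your induction will leave the class in which the nondefectivity statement is supposed to hold, and you will have no inductive hypothesis to work with. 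The proof described in the paper, following~\cite{sugiyama}, avoids this by using the recursive characterisation of the class $\mathrm{(iv)}$ due to Dewar, Hewetson and Nixon~\cite{Dewar2022}, built from $K_4^-$-extensions and generalised vertex splittings --- two local moves specifically designed to preserve $\mathrm{(iv)}$ --- and then verifying that each of these two moves preserves $2$-tangential weak nondefectiveness of $\overline{\pi_G(\mathcal{CM}^p_n)}$. To rescue your approach you would need to replace the ear decomposition by an induction whose moves stay inside $\mathrm{(iv)}$, at which point you would essentially be reconstructing that argument; the observation you make about the role of the nonlinearity of $w\mapsto w^{p-1}$ for $p\neq 2$ is the right heuristic, but it must be verified move-by-move for an admissible set of moves.
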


To the best of our knowledge, such a purely graph-theoretical characterisation of the identifiability of an orthogonal projection of a variety is new.
Also, the theorem  solves the generic global rigidity problem in $\ell_p$-planes.

We should remark that the proof of Theorem~\ref{thm:Lp} relies on sophisticated graph theoretical techniques from graph rigidity theory.
In particular, the proof of (iv) $\Rightarrow$ (iii) uses the inductive construction due to Dewar, Hewetson and Nixon~\cite{Dewar2022}, who showed that a graph satisfying the combinatorial condition (iv) can be built up from a small graph by recursively applying two local graph-operations, called $K_4^-$-extension and generalised vertex splitting.
Sugiyama and Tanigawa~\cite{sugiyama} have confirmed that each local graph-operation preserves the 2-tangentially weakly nondefective of $\overline{\pi_G({\cal CM}^p_n)}$, which implies (iv) $\Rightarrow$ (iii).
The proof of (iii) $\Rightarrow$ (ii) $\Rightarrow$ (i)
is done by applying a general tool from algebraic geometry explained in Section~\ref{subsec:identifiability}.
Finally, (i) $\Rightarrow$ (v) and (v) $\Rightarrow$ (iv)
have been already shown in~\cite{Dewar2022} and~\cite{KP14}, respectively.


We believe Theorem~\ref{thm:Lp} is valid in general dimension.

\begin{conjecture}
Let $p$ be an even positive integer with $p\neq 2$, $n\geq 3$ be a positive integer, and $G$ be a connected graph with $n$ vertices.
Then the following are equivalent:
\begin{itemize}
\item[{\rm (i)}] Some/every generic $d$-dimensional framework of $G$ is globally rigid in $d$-dimensional $\ell_p$-space.
\item[{\rm (ii)}] $\overline{\pi_G({\cal CM}^p_n)}$ is $d$-identifiable.
\item[{\rm (iii)}] $\overline{\pi_G({\cal CM}^p_n)}$ is $d$-tangentially weakly nondefective.
\item[{\rm (iv)}] $G$ is 2-connected and $G-e$ contains $d$ edge-disjoint spanning trees for every $e\in E(G)$.
\item[{\rm (v)}] $G$ is 2-connected and $G-e$ is locally rigid in the $\ell_p$-plane for every $e\in E(G)$.
\end{itemize}
\end{conjecture}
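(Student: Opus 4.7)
The plan is to establish the cycle of implications $(iv) \Rightarrow (iii) \Rightarrow (ii) \Rightarrow (i) \Rightarrow (v) \Rightarrow (iv)$, mirroring the planar case of Theorem~\ref{thm:Lp}. Three of these five links transfer essentially verbatim and are dimension-free: $(iii) \Rightarrow (ii)$ is Theorem~\ref{thm:twd} of Chiantini-Ottaviani; $(ii) \Rightarrow (i)$ follows from Proposition~\ref{prop:identifiability_global} applied to $g_p$ as the sum of $d$ copies of $h_p$, together with the elementary observation that a $2$-connected graph is globally $h_p$-rigid in dimension one (points on a line are determined up to translation and reflection by distances along any $2$-edge-connected graph); and $(i) \Rightarrow (v)$ is obtained by invoking the Hendrickson-type necessary conditions for global rigidity in analytic normed spaces established in \cite{Dewar23}, which yield redundant local $g_p$-rigidity and $2$-connectivity. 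The implication $(v) \Rightarrow (iv)$ would require the combinatorial characterisation of local $\ell_p$-rigidity in dimension $d$ as $(d,d)$-sparsity; this is itself a conjecture for $d \geq 3$, so one must either assume it or prove it as part of the programme. Modulo this, the Nash-Williams-Tutte theorem translates $(d,d)$-tightness of $G-e$ together with the $2$-connectivity spanning condition into the existence of $d$ edge-disjoint spanning trees in $G-e$.

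The heart of the proof is the hardest implication $(iv) \Rightarrow (iii)$. Here I would follow the strategy of Sugiyama-Tanigawa \cite{sugiyama}: find a combinatorial inductive construction of graphs satisfying $(iv)$ using a short list of local graph operations, then verify that each operation preserves $d$-tangential weak nondefectiveness of $\overline{\pi_G({\cal CM}^p_n)}$. The candidate operations would be the $d$-valent simple $0$-extension (which preserves local $g_p$-rigidity by Lemma~\ref{lem:extension_tensor} applied to $h_p$, noting that $\nabla h_p$ is non-degenerate for even $p\neq 2$), a $1$-extension, and higher-dimensional analogues of the $K_4^-$-extension and generalised vertex splitting from \cite{Dewar2022}. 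For each operation one tracks the tangential contact locus through the operation by analysing the orthogonal complement of the tangent space via Proposition~\ref{prop:G1}: since $h_p$ is multiaffine after the change of variables $y_i = x_i - x_j$, the Jacobian $\J f_{h_p,G}(p)$ has the structured form studied in Section~\ref{sec:suff}, and the extra rows introduced by the operation should force any element of the contact locus through the new vertex to coincide with the scaling direction.

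The main obstacle is twofold. First, a combinatorial inductive construction of graphs satisfying $(iv)$ is not currently known for $d \geq 3$; even the planar case of \cite{Dewar2022} required substantial work combining matroid-union arguments with delicate case analysis at vertices of small degree. One promising line is to work in the matroid union of $d$ copies of the graphic matroid with an extra rank-one term, and to peel off vertices according to their local structure in this matroid, but completing such a programme appears non-trivial. Second, even granting the construction, checking that generalised vertex splitting preserves $d$-tangential weak nondefectiveness for $d \geq 3$ will require a genuinely new computation, since the planar proof exploits an explicit low-codimension description of the normal space to $\overline{\pi_G({\cal CM}^p_n)}$ that does not generalise directly. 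A plausible workaround is to combine the $(d+1)$-nondefectivity criterion of Corollary~\ref{cor:global1} with a dimension count based on Proposition~\ref{prop:sparsity}, reducing tangential weak nondefectiveness to a purely linear-algebraic statement about the adjacency matrices $A_{G,\omega}$ defined in Section~\ref{sec:global}. If this reduction works, the remaining step becomes a finite verification for each graph operation, analogous to the gain-graph sparsity arguments in planar rigidity.
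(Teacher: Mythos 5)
This statement is a \emph{conjecture} in the paper, and the authors explicitly acknowledge which implications are established and which are not: they state that $(\mathrm{iii}) \Rightarrow (\mathrm{ii}) \Rightarrow (\mathrm{i}) \Rightarrow (\mathrm{v})$ carry over from the planar case, that $(\mathrm{v}) \Rightarrow (\mathrm{iv})$ follows from the Maxwell count, and that the proof of $(\mathrm{iv}) \Rightarrow (\mathrm{iii})$ is currently missing. You correctly identify $(\mathrm{iv}) \Rightarrow (\mathrm{iii})$ as the genuinely open bottleneck, and your sketch of the established links $(\mathrm{iii}) \Rightarrow (\mathrm{ii}) \Rightarrow (\mathrm{i}) \Rightarrow (\mathrm{v})$ via Chiantini--Ottaviani, Proposition~\ref{prop:identifiability_global}, and the Hendrickson-type necessary conditions in analytic normed spaces matches the paper's account.

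However, you mischaracterise $(\mathrm{v}) \Rightarrow (\mathrm{iv})$. You claim this implication requires the combinatorial characterisation of local $\ell_p$-rigidity as $(d,d)$-sparsity, which is open for $d \geq 3$, and hence must be assumed. This conflates the two directions of that characterisation. Going from $(\mathrm{v})$ to $(\mathrm{iv})$ one needs only the \emph{necessary} direction: if $G-e$ is locally $g_p$-rigid in dimension $d$, then the generic rigidity matroid has rank $d|V|-d$ (since $d_{\Gamma_{g_p}}=d$, $n_{\Gamma_{g_p}}=1$), so a basis is a spanning subgraph with $d|V|-d$ edges that is $(d,d)$-sparse by Proposition~\ref{prop:inf_rigid}, hence $(d,d)$-tight; Nash--Williams then gives $d$ edge-disjoint spanning trees. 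The \emph{sufficiency} of $(d,d)$-sparsity (which is what is conjectural for $d\geq 3$) would only be needed for the converse $(\mathrm{iv}) \Rightarrow (\mathrm{v})$, which is not part of the cycle you are trying to close. The paper explicitly notes that $(\mathrm{v}) \Rightarrow (\mathrm{iv})$ "follows from the well-known Maxwell count argument," so this link is not in doubt. Beyond this, your discussion of strategies for $(\mathrm{iv}) \Rightarrow (\mathrm{iii})$ is a reasonable research plan, but---as you yourself acknowledge and as the paper states---it is not a proof.
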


By the theory in Section~\ref{sec:global},
the proofs of (iii) $\Rightarrow$ (ii) $\Rightarrow$ (i) $\Rightarrow$ (v) in Theorem~\ref{thm:Lp} still work in general dimension.
(v) $\Rightarrow$ (iv) also follows from the well-known Maxwell count argument.
Currently, the proof for (iv) $\Rightarrow$ (iii) is missing.

\section{Conclusion}
\label{sec:concluding}


We have introduced a far reaching generalisation of graph rigidity theory to the general setting of identifiability of points, which encompasses diverse applications.  Throughout the paper, we have provided analogues of several fundamental theorems in this generalised framework. 
We now conclude the paper by describing a number of open problems and potential extensions of our work.

From a rigidity theoretic perspective, a natural question arises regarding the search for combinatorial characterisations of (global) $g$-rigidity. A notable instance is presented in Conjecture \ref{con:genericform}, where we propose a potential characterisation of $g$-rigidity in the case when $g$ is of a generic form. Establishing such combinatorial characterisations would provide insights into the fundamental properties of $g$-rigidity. 

As mentioned earlier, we expect that Theorem~\ref{thm:global_determinant} holds for a broader class of polynomial maps $g$, extending beyond the specific cases explored in the paper. A particularly interesting to resolve would be the case when $g$ is the permanent function. Exploring the rigidity properties associated with the permanent could shed light on its connection to other combinatorial structures.

In Section~\ref{sec:global} we have shown a sufficient condition for global $g$-rigidity using weighted adjacent matrices,
which is an analogue of Connelly's stress-matrix condition. An important remaining question is to establish a graph theoretical property to guarantee the applicability of the theorem.

Another interesting direction for future research would be to develop sufficient conditions for (global) $g$-rigidity in the multipartite model discussed in Section \ref{sec:variants}.

We would like to add that Corollary~\ref{cor:packing_Kn} also has significant application in statistics, in particular in the identifiability problem of Gaussian mixture models, which aims to determine the minimum number of parameters necessary to uniquely determine the statistical model given $n$ sample points. The goal is to find parameters whose corresponding mixture function approximates the sample points. The method of moments is a commonly used approach for parameter recovery in Gaussian mixture models (see, e.g.~\cite{
agostini2021moment, amendola2018algebraic}). In the method of moments, the identifiability problem is reduced to determining the non-defectivity 
of secants of the corresponding varieties. However, this is only done for the specific cases, e.g. when the covariance matrix is zero, as in these cases the moment variety corresponds to the classical Veronese varieties, whose secants' non-defectivity are determined by the Alexander-Hirschowitz Theorem. 
Therefore, we hope that our arguments will shed light on the more general case of the problem. In particular, it would be interesting to capture the identifiability of Gaussian mixture models as specific instances of the $g$-rigidity model.





\subsection*{Acknowledgements}

This project grew from discussions at the Fields Institute Thematic Program on Geometric Constraint Systems, Framework Rigidity, and Distance Geometry, and we are grateful to the organizers for bringing us together. 
F.M. was partially supported by the KU Leuven grant iBOF/23/064, the UiT Aurora project MASCOT, and the FWO grants
G0F5921N (Odysseus) and G023721N. 
A.N. was partially supported by EPSRC grant numbers EP/W019698/1 and EP/X036723/1. 
S.T. was partially supported by JST ERATO
Grant Number JPMJER1903, JST PRESTO Grant Number JPMJPR2126, and JSPS KAKENHI Grant Number 20H05961.
S.T.~would like to thank Bill Jackson for pointing out a simpler proof of Proposition~\ref{prop:complex_to_real}
and Kota Nakagawa for stimulating discussion on the $g$-rigidity of random hypergraphs.

\smallskip
\bibliographystyle{abbrv} 
\bibliography{Rigidity.bib}

\begin{thebibliography}{10}

\bibitem{abbot}
T.~G. Abbott.
\newblock Generalizations of {K}empe’s universality theorem.
\newblock Master's thesis, Massachusetts Institute of Technology, 2008.

\bibitem{AOP}
H.~Abo, G.~Ottaviani, and C.~Peterson.
\newblock Non-defectivity of {G}rassmannians of planes.
\newblock {\em Journal of Algebraic Geometry}, (21):1--20, 2012.

\bibitem{agostini2021moment}
D.~Agostini, C.~Am{\'e}ndola, and K.~Ranestad.
\newblock Moment identifiability of homoscedastic {G}aussian mixtures.
\newblock {\em Foundations of Computational Mathematics}, 21:695--724, 2021.

\bibitem{alexander1995polynomial}
J.~Alexander and A.~Hirschowitz.
\newblock Polynomial interpolation in several variables.
\newblock {\em Journal of Algebraic Geometry}, 4(2):201--222, 1995.

\bibitem{phylogenetics}
E.~S. Allman and J.~A. Rhodes.
\newblock The identifiability of tree topology for phylogenetic models,
  including covarion and mixture models.
\newblock {\em Journal of Computational Biology}, 13(5):1101--1113, 2006.

\bibitem{amendola2018algebraic}
C.~Am{\'e}ndola, K.~Ranestad, and B.~Sturmfels.
\newblock Algebraic identifiability of {G}aussian mixtures.
\newblock {\em International mathematics research notices},
  2018(21):6556--6580, 2018.

\bibitem{asimow}
L.~Asimow and B.~Roth.
\newblock The rigidity of graphs.
\newblock {\em Transactions of the American Mathematical Society},
  245:279--289, 1978.

\bibitem{barak}
B.~Barak and A.~Moitra.
\newblock Noisy tensor completion via the sum-of-squares hierarchy.
\newblock In {\em Proceedings of Conference on Learning Theory}, 2016.

\bibitem{basu2006algorithms}
S.~Basu, R.~Pollack, and M.-F. Roy.
\newblock {\em Algorithms in Real Algebraic Geometry}.
\newblock Springer, 2006.

\bibitem{bernardi}
A.~Bernardi, E.~Carlini, M.~V. Catalisano, A.~Gimigliano, and A.~Oneto.
\newblock The hitchhiker guide to: Secant varieties and tensor decomposition.
\newblock {\em Mathematics}, 6(12), 2018.

\bibitem{bernstein}
D.~I. Bernstein.
\newblock Completion of tree metrics and rank 2 matrices.
\newblock {\em Linear Algebra Appl.}, (533):1--13, 2017.

\bibitem{Borcea02}
C.~Borcea.
\newblock Point configurations and {C}ayley-{M}enger varieties.
\newblock {\em arXiv preprint arXiv:0207110}, 2002.

\bibitem{borcea}
C.~S. Borcea and I.~Streinu.
\newblock Realizations of volume frameworks.
\newblock {\em International Workshop on Automated Deduction in Geometry},
  pages 110--119, 2012.

\bibitem{botta}
P.~Botta.
\newblock Linear transformations that preserve the permanent.
\newblock {\em Proc.~Amer.~Math.~Soc.}, 18, 2019.

\bibitem{breiding2021algebraic}
P.~Breiding, F.~Gesmundo, M.~Micha{\l}ek, and N.~Vannieuwenhoven.
\newblock Algebraic compressed sensing.
\newblock {\em Applied and Computational Harmonic Analysis}, 65:374--406, 2023.

\bibitem{brouwer}
A.~Brouwer and W.~Haemers.
\newblock {\em Spectra of Graphs}.
\newblock Springer, 2012.

\bibitem{bulavka2022volume}
D.~Bulavka, E.~Nevo, and Y.~Peled.
\newblock Volume rigidity and algebraic shifting.
\newblock {\em arXiv preprint arXiv:2211.00574}, 2022.

\bibitem{Cauchy}
A.-L. Cauchy.
\newblock Sur les polygones et polyedres | second memoire.
\newblock {\em J. Ec. polytech. Math.}, 9:87--98, 1813.

\bibitem{CC2001}
L.~Chiantini and C.~Ciliberto.
\newblock Weakly defective varieties.
\newblock {\em Transactions of the American Mathematical Society},
  354(1):151--178.

\bibitem{CO2012}
L.~Chiantini and G.~Ottaviani.
\newblock On generic identifiability of 3-tensors of small rank.
\newblock {\em SIAM Journal on Matrix Analysis and Applications},
  33(3):1018--1037, 2012.

\bibitem{COV2014}
L.~Chiantini, G.~Ottaviani, and N.~Vannieuwenhoven.
\newblock An algorithm for generic and low-rank specific identifiability of
  complex tensors.
\newblock {\em SIAM Journal on Matrix Analysis and Applications},
  35(4):1265--1287, 2014.

\bibitem{chiantini2017generic}
L.~Chiantini, G.~Ottaviani, and N.~Vannieuwenhoven.
\newblock On generic identifiability of symmetric tensors of subgeneric rank.
\newblock {\em Transactions of the American Mathematical Society},
  369(6):4021--4042, 2017.

\bibitem{KJT}
K.~Clinch, B.~Jackson, and S.-i. Tanigawa.
\newblock Abstract 3-rigidity and bivariate ${C}_2^{1}$-splines ii:
  Combinatorial characterization.
\newblock {\em Discrete Analysis}, (2022:3), 2022.

\bibitem{Con93}
R.~Connelly.
\newblock On generic global rigidity.
\newblock {\em Applied geometry and discrete mathematics}, page 147–155,
  1993.

\bibitem{Con05}
R.~Connelly.
\newblock Generic global rigidity.
\newblock {\em Discrete \& Computational Geometry}, 33(4):549--563, 2005.

\bibitem{connelly_guest_2022}
R.~Connelly and S.~D. Guest.
\newblock {\em Frameworks, Tensegrities, and Symmetry}.
\newblock Cambridge University Press, 2022.

\bibitem{Con10}
R.~Connelly and W.~Whiteley.
\newblock Global rigidity: the effect of coning.
\newblock {\em Discrete and Computational Geometry}, 43(4):717--735, 2010.

\bibitem{Dewar23}
S.~Dewar, J.~Hewetson, and A.~Nixon.
\newblock Uniquely realisable graphs in analytic normed planes.
\newblock {\em arXiv preprint arXiv:2206.07426}, 2022.

\bibitem{Dewar2022}
S.~Dewar, D.~Kitson, and A.~Nixon.
\newblock Which graphs are rigid in $\ell_p^d$?
\newblock {\em Journal of Global Optimization}, 83:49–71, 2022.

\bibitem{Dewar22gen}
S.~Dewar and A.~Nixon.
\newblock Generalised rigid body motions in non-{E}uclidean planes with
  applications to global rigidity.
\newblock {\em Journal of Mathematical Analysis and Applications},
  514(1):126259, 2022.

\bibitem{galuppi2019identifiability}
F.~Galuppi and M.~Mella.
\newblock Identifiability of homogeneous polynomials and {C}remona
  transformations.
\newblock {\em Journal f{\"u}r die reine und angewandte Mathematik},
  2019(757):279--308, 2019.

\bibitem{GHT10}
S.~J. Gortler, A.~D. Healy, and D.~P. Thurston.
\newblock Characterizing generic global rigidity.
\newblock {\em American Journal of Mathematics}, 132(4):897--939, 2010.

\bibitem{Gortler2014}
S.~J. Gortler and D.~P. Thurston.
\newblock Generic global rigidity in complex and pseudo-{E}uclidean spaces.
\newblock {\em Rigidity and Symmetry}, pages 131--154, 2014.

\bibitem{GGJN}
G.~Grasegger, H.~Guler, B.~Jackson, and A.~Nixon.
\newblock Flexible circuits in the d-dimensional rigidity matroid.
\newblock {\em Journal of Graph Theory}, 100(2):315--330, 2022.

\bibitem{Hendrickson1992conditions}
B.~Hendrickson.
\newblock Conditions for unique graph realizations.
\newblock {\em SIAM journal on computing}, 21(1):65--84, 1992.

\bibitem{grover}
F.~Holweck, H.~Jaffali, and I.~Nounouh.
\newblock Grover's algorithm and the secant varieties.
\newblock {\em Quantum Information Processing}, 15(11):4391--4413, 2016.

\bibitem{jackson2005connected}
B.~Jackson and T.~Jord{\'a}n.
\newblock Connected rigidity matroids and unique realizations of graphs.
\newblock {\em Journal of Combinatorial Theory, Series B}, 94(1):1--29, 2005.

\bibitem{JJT14}
B.~Jackson, T.~Jord\'{a}n, and S.~Tanigawa.
\newblock Combinatorial conditions for the unique completability of low-rank
  matrices.
\newblock {\em SIAM Journal on Discrete Mathematics}, 28(4):1797--1819, 2014.

\bibitem{JJTunique}
B.~Jackson, T.~Jord\'an, and S.~Tanigawa.
\newblock Unique low rank completability of partially filled matrices.
\newblock {\em Journal of Combinatorial Theory, Series B}, 121:432--462, 2016.
\newblock Fifty years of The Journal of Combinatorial Theory.

\bibitem{Jackson19}
B.~Jackson and J.~Owen.
\newblock Equivalent realisations of a rigid graph.
\newblock {\em Discrete Applied Mathematics}, 256:42--58, 2019.
\newblock Distance Geometry Theory and Applications (DGTA 16).

\bibitem{JT21}
B.~Jackson and S.-i. Tanigawa.
\newblock Maximal matroids in weak order posets.
\newblock {\em arXiv:2102.09901}, 2021.

\bibitem{JT22}
T.~Jord{\'a}n and S.-i. Tanigawa.
\newblock Rigidity of random subgraphs and eigenvalues of stiffness matrices.
\newblock {\em SIAM Journal on Discrete Mathematics}, (36):2367--2392, 2022.

\bibitem{kiraly2015algebraic}
F.~J. Kir{\'a}ly, L.~Theran, and R.~Tomioka.
\newblock The algebraic combinatorial approach for low-rank matrix completion.
\newblock {\em J. Mach. Learn. Res.}, 16(1):1391--1436, 2015.

\bibitem{KP14}
D.~Kitson and S.~C. Power.
\newblock {Infinitesimal rigidity for non-{E}uclidean bar-joint frameworks}.
\newblock {\em Bulletin of the London Mathematical Society}, 46(4):685--697, 04
  2014.

\bibitem{Laman}
G.~Laman.
\newblock On graphs and rigidity of plane skeletal structures.
\newblock {\em J. Engrg. Math.}, 4:331--340, 1970.

\bibitem{lee2007graded}
A.~Lee, I.~Streinu, and L.~Theran.
\newblock Graded sparse graphs and matroids.
\newblock {\em arXiv preprint arXiv:0711.2838}, 2007.

\bibitem{MMident}
A.~Massarenti and M.~Mella.
\newblock Bronowski's conjecture and the identifiability of projective
  varieties.
\newblock {\em arXiv preprint arXiv:2210.13524}, 2022.

\bibitem{massarenti2016non}
A.~Massarenti and R.~Rischter.
\newblock Non-secant defectivity via osculating projections.
\newblock {\em The Annali della Scuola Normale Superiore di Pisa, Classe di
  Scienze}, (19):1--34, 2019.

\bibitem{maxwell}
J.~C. Maxwell.
\newblock On the calculation of the equilibrium and stiffness of frames.
\newblock {\em The London, Edinburgh, and Dublin Philosophical Magazine and
  Journal of Science}, 27(182):294--299, 1864.

\bibitem{milnor1965topology}
J.~W. Milnor.
\newblock {\em Topology form Differential Point of View}.
\newblock The University Press of Virginia, 1965.

\bibitem{HPG}
H.~Pollaczek-Geiringer.
\newblock Über die gliederung ebener fachwerke.
\newblock {\em Zeitschrift für Angewandte Mathematik und Mechanik}, 7:58--72,
  1927.

\bibitem{signal}
W.~Rao, D.~Li, and J.~Q. Zhang.
\newblock A tensor-based approach to l-shaped arrays processing with enhanced
  degrees of freedom.
\newblock {\em IEEE Signal Processing Letters}, 25(2):1--5, 2018.

\bibitem{rodl1985packing}
V.~R{\"o}dl.
\newblock On a packing and covering problem.
\newblock {\em European Journal of Combinatorics}, 6(1):69--78, 1985.

\bibitem{rosen2020algebraic}
Z.~Rosen, J.~Sidman, and L.~Theran.
\newblock Algebraic matroids in action.
\newblock {\em The American Mathematical Monthly}, 127(3):199--216, 2020.

\bibitem{SW07}
F.~Saliola and W.~Whiteley.
\newblock Some notes on the equivalence of first-order rigidity in various
  geometries.
\newblock {\em arXiv preprint arXiv:0709.3354}, 2007.

\bibitem{saxe}
J.~B. Saxe.
\newblock Embeddability of weighted graphs in $k$-space is strongly {NP}-hard.
\newblock In {\em In Proceedings of the 17th Annual Allerton Conference on
  Communication, Control, and Computing}, 1979.

\bibitem{schrijver2003combinatorial}
A.~Schrijver.
\newblock {\em Combinatorial Optimization: Polyhedra and Efficiency}.
\newblock Springer Science, 2003.

\bibitem{shafarevich1994basic}
I.~R. Shafarevich.
\newblock {\em Basic Algebraic Geometry 1: Varieties in Projective Space}.
\newblock Springer-Verlag, 1994.

\bibitem{singer2010uniqueness}
A.~Singer and M.~Cucuringu.
\newblock Uniqueness of low-rank matrix completion by rigidity theory.
\newblock {\em SIAM Journal on Matrix Analysis and Applications},
  31(4):1621--1641, 2010.

\bibitem{song}
Q.~Song, H.~Ge, J.~Caverlee, and X.~Hu.
\newblock Tensor completion algorithms in big data analytics.
\newblock {\em ACM Trans. Knowl. Discov. Data}, 13(1), 2019.

\bibitem{Southgate}
J.~Southgate.
\newblock Bounds on embeddings of triangulations of spheres.
\newblock {\em arXiv preprint arXiv:2301.04394}, 2023.

\bibitem{streinu2009sparse}
I.~Streinu and L.~Theran.
\newblock Sparse hypergraphs and pebble game algorithms.
\newblock {\em European Journal of Combinatorics}, 30(8):1944--1964, 2009.

\bibitem{sugiyama}
T.~Sugiyama and S.-i. Tanigawa.
\newblock Generic global rigidity in $\ell_p$-space and the identifiability of
  $p$-{C}ayley-{M}enger varieties.
\newblock {\em manuscript}, 2024.

\bibitem{torrance2021all}
D.~Torrance and N.~Vannieuwenhoven.
\newblock All secant varieties of the {C}how variety are nondefective for
  cubics and quaternary forms.
\newblock {\em Transactions of the American Mathematical Society},
  374(7):4815--4838, 2021.

\bibitem{torrance2017generic}
D.~A. Torrance.
\newblock Generic forms of low {C}how rank.
\newblock {\em Journal of Algebra and Its Applications}, 16(03):1750047, 2017.

\bibitem{torrance2022almost}
D.~A. Torrance and N.~Vannieuwenhoven.
\newblock Almost all subgeneric third-order {C}how decompositions are
  identifiable.
\newblock {\em Annali di Matematica Pura ed Applicata}, 201(6):2891--2905,
  2022.

\bibitem{tropp}
J.~A. Tropp.
\newblock User-friendly tail bounds for sums of random matrices.
\newblock {\em Found Comput.~Math.}, (12):389--434, 2012.

\bibitem{whiteley1996some}
W.~Whiteley.
\newblock Some matroids from discrete applied geometry.
\newblock {\em Contemporary Mathematics}, 197:171--312, 1996.

\bibitem{Zak}
F.~L. Zak.
\newblock {\em Tangents and Secants of Algebraic Varieties}.
\newblock Translations of Mathematical Monographs. AMS, 1993.

\end{thebibliography}

\bigskip 

\noindent
\footnotesize {\bf Authors' addresses:}

\medskip

\noindent{School of Mathematical and Statistical Sciences,
University of Galway,
Ireland
\hfill {\tt james.cruickshank@universityofgalway.ie}
}

\noindent{Departments of Mathematics and Computer Science, KU Leuven, Belgium 
\hfill {\tt fatemeh.mohammadi@kuleuven.be}

\noindent{Mathematics and Statistics, 
Lancaster
University,
UK
\hfill {\tt a.nixon@lancaster.ac.uk}
}

\noindent{Department of Mathematical Informatics, University of Tokyo, Japan}  \hfill {\tt tanigawa@mist.i.u-tokyo.ac.jp}

\end{document}